\patchcmd{\section}{\scshape}{\bfseries}{}{}
\renewcommand{\@secnumfont}{\bfseries}
\theoremstyle{plain}
\newcommand{\mc}{\mathcal}
\newcommand{\Hom}{\operatorname{Hom}}
\newcommand{\rk}{\operatorname{rk}}  
\newcommand{\ns}{\operatorname{null}}
\newcommand{\angles}[1]{\left\langle #1 \right\rangle}
\newcommand{\Mat}{\textbf{Mat}_\bullet}
\newcommand{\FMat}{F\text{-}\textbf{Mat}_\bullet}
\newcommand{\TRS}{\textbf{TRS}_\bullet^\Sigma}
\newcommand{\MTRS}{\textbf{MTRS}_\bullet^\Sigma}
\newcommand{\EE}{\mathfrak{E}}          
\newcommand{\MM}{\mathfrak{M}}          
\theoremstyle{definition}
\newtheorem{mydef}{\textbf{Definition}}[section]
\newtheorem{myeg}[mydef]{\textbf{Example}}
\newtheorem{rmk}[mydef]{\textbf{Remark}}
\theoremstyle{plain}
\newtheorem{mythm}[mydef]{\textbf{Theorem}}
\newtheorem*{nothma}{\textbf{Theorem A}}
\newtheorem*{nothmb}{\textbf{Theorem B}}
\newtheorem*{nothmc}{\textbf{Theorem C}}
\newtheorem*{nothmd}{\textbf{Theorem D}}
\newtheorem{mytheorem}[mydef]{\textbf{Theorem}}
\newtheorem{lem}[mydef]{\textbf{Lemma}}
\newtheorem{pro}[mydef]{\textbf{Proposition}}
\newtheorem{cor}[mydef]{\textbf{Corollary}}
\tikzset{main node/.style={circle,fill=black,draw,minimum size=0.3cm,inner sep=0pt},
}
\begin{document}

	\title{Proto-exact categories of matroids over idylls and tropical toric reflexive sheaves}

	\author{Jaiung Jun}
	\address{Department of Mathematics, State University of New York at New Paltz, NY, USA}
	\email{junj@newpaltz.edu}
	
	\author{Alex Sistko}
	\address{Department of Mathematics, State University of New York at New Paltz, NY, USA}
	\email{sistkoa@newpaltz.edu}
	
	\author{Cameron Wright}
	\address{Department of Mathematics, University of Washington, WA, USA}
	\email{wrightc8@uw.edu}

	\subjclass[2010]{}
	\keywords{}
	
	\dedicatory{}
	
\makeatletter
\@namedef{subjclassname@2020}{%
	\textup{2020} Mathematics Subject Classification}
\makeatother

\subjclass[2020]{}
\keywords{matroids, idylls, tropical toric vector bundles, tropical toric reflexive sheaves, proto-exact category, proto-abelian category, stability, Harder-Narasimhan filtration}	
	
	\maketitle
	
	
\begin{abstract}
We study the category $F$-$\textbf{Mat}_\bullet$ of matroids over an idyll $F$. We show that $F\text{-}\textbf{Mat}_\bullet$ is a proto-exact category, a non-additive generalization of an exact category by Dyckerhoff and Kapranov. We further show that $F\text{-}\textbf{Mat}_\bullet$ is proto-abelian in the sense of Andr\'e. As an application, we establish that the category $\textbf{TRS}_\bullet^\Sigma$ of tropical toric reflexive sheaves associated to a fan $\Sigma$, introduced by Khan and Maclagan, is also proto-exact and proto-abelian. We then investigate the stability of modular tropical toric reflexive sheaves within the framework of proto-abelian categories and reformulate Harder–Narasimhan filtrations in this setting.
\end{abstract} 

	
\section{Introduction}

In recent years, combinatorial structures such as graphs, matroids, and simplicial complexes have played an increasingly central role in various areas of mathematics, including algebraic geometry, representation theory, and topology. These structures are not only of intrinsic combinatorial interest but also serve as models that encode essential features of geometric and algebraic objects. Meanwhile, methods from geometry and category theory have led to surprising breakthroughs in the study of combinatorial invariants. The current paper aims to advance this growing interface by developing a categorical framework for understanding combinatorial phenomena, grounded in the theory of proto-exact categories.

A proto-exact category is a pointed category $\mathcal{C}$ with two distinguished classes of morphisms $\mathfrak{M}$ (admissible monomorphisms) and $\mathfrak{E}$ (admissible epimorphisms) satisfying some conditions. Proto-exact categories were first introduced by Dyckerhoff and Kapranov \cite{dyckerhoff2019higher} in the context of Hall algebras and 2-Segal spaces, and they have become powerful tools in algebra and representation theory.

One notable feature of proto-exact structures is that they allow the construction of Hall algebras in non-additive settings, such as categories of matroids or finite pointed sets. As in the additive case, a proto-exact category admits a Waldhausen $S_\bullet$-construction, which yields a $2$-Segal simplicial groupoid. This structure provides a foundational framework for developing algebraic $K$-theory and for studying various Hall algebras associated with the category. The power of proto-exact categories lies in their capacity to formalize extension-like behavior in discrete contexts, making them particularly well-suited for analyzing combinatorial structures through a categorical perspective. For further details, see \cite{jun2020toric}, \cite{jun2020quiver}, \cite{jun2023proto}, \cite{jun2024coefficient}, \cite{eberhardt2022group}, \cite{mozgovoy2025proto}, \cite{sistko2025semisimple}.

With this categorical language to study combinatorial objects, our main focus is on \emph{matroids over idylls}. A matroid is a combinatorial structure generalizing linear independence from vector spaces to arbitrary finite sets. It consists of a finite set and a collection of subsets (called independent sets) satisfying axioms mimicking properties of linearly independent vectors, allowing one to study independence in a wide range of mathematical and applied contexts. In their celebrated work \cite{baker2017matroids}, Baker and Bowler introduced the notion of \emph{matroids over idylls}, inspired by earlier work of Dress and Wenzel \cite{dress1991grassmann}.\footnote{In \cite{baker2017matroids} and subsequent works, the notion is developed in a broader context. However, in the current paper, we focus only on the case of idylls.}

An \emph{idyll} is a multiplicative abelian group $F$ equipped with a proper ideal $N_F \subseteq \mathbb{N}[F]$\footnote{Here, $N_F$ is an ideal in the group semiring $\mathbb{N}[F]$.} satisfying two conditions: (1) $0 \in N_F$, and (2) for every $a \in F$, there exists a unique $b \in F$ such that $a + b \in N_F$. Intuitively, $N_F$ captures a notion of linear dependence over $\mathbb{N}$. A matroid over an idyll $F$ is then defined via a generalization of Grassmann–Plücker functions taking values in $F$. Consequently, an idyll $F$ is said to be \emph{perfect} if vectors are orthogonal to covectors for all matroids over $F$. For details, see \cite[Section 1.3]{jarra2024quiver}.

Another central theme of this paper concerns \emph{tropical toric reflexive sheaves} and \emph{tropical toric vector bundles}. In tropical geometry, an algebraic variety $X$ is studied through its combinatorial shadow $\textrm{Trop}(X)$, known as the tropicalization of $X$. This process turns $X$ into a polyhedral complex, which provides a simpler combinatorial model while still reflecting much of the structure of the original variety. Matroids naturally appear in tropical geometry as a combinatorial counterpart of linear spaces. 

The theory of line bundles in tropical geometry, together with its applications to the study of algebraic curves, has been one of the driving forces behind the field. In contrast, the theory of higher-rank vector bundles in tropical geometry remains far less developed. 

In \cite{khan2024tropical}, Khan and Maclagan introduced a notion of tropical toric vector bundles by reformulating Klyachko’s classification of toric vector bundles \cite{klyachko1990equivariant}, wherein vector spaces and their filtrations are replaced by valuated matroids and filtrations by flats of the underlying matroids.

As with many other combinatorial objects, the categorical aspects of matroids remain relatively underexplored, largely because both weak and strong morphisms between matroids tend to behave poorly (see \cite{heunen2017category}). Nevertheless, interesting categorical results can still be obtained. For example, \cite{eppolito2022infinite} shows that finite matroids, when viewed in the category of infinite matroids as defined in \cite{bruhn2013axioms}, are precisely the finitely presentable objects, and it characterizes a class of infinite matroids arising as colimits of finite matroids.

In \cite{eppolito2018proto}, with Eppolito and Szczesny, the first author took a step forward by examining the category $\mathbf{Mat}_{\bullet}$ of pointed matroids\footnote{For technical reasons, we work with pointed matroids rather than matroids.} and strong maps, and established several categorical properties of $\mathbf{Mat}_{\bullet}$, which make it possible to define its Hall algebra, following the framework of Dyckerhoff and Kapranov \cite{dyckerhoff2019higher}. In particular, in \cite{eppolito2018proto},  it is proved that $\mathbf{Mat}_{\bullet}$ has the structure of a finitary proto-exact category. Furthermore, the Hall algebra of $\mathbf{Mat}_{\bullet}$ is a Hopf algebra dual to a matroid-minor Hopf algebra. The same authors further generalize the notion of matroid-minor Hopf algebras to the case of matroids over ``hyperfields'' in \cite{eppolito2020hopf}.\footnote{A hyperfield can be considered as a special class of idylls.}

In this paper, we continue the categorical exploration of matroids, pursuing two principal aims. First, we extend the aforementioned result to the setting of matroids over idylls by employing the notion of submonomial matrices, introduced by Jarra, Lorscheid, and Vital in \cite{jarra2024quiver}.\footnote{See also \cite{iezzi2023tropical} for the case of valuated matroids.} As a direct application, we show that several categories arising from tropical reflexive sheaves are proto-exact. 

The second aim of this paper concerns stability phenomena in a non-additive setting. The notion of stability, first introduced by Mumford in the context of geometric invariant theory, has since become a fundamental tool in the study of moduli spaces. Stable and semistable objects have subsequently been defined in a wide range of mathematical contexts. In the theory of Hall algebras, for instance, stability plays a central role through Harder–Narasimhan filtrations; see \cite{reineke2003harder} and \cite{schiffmann2006lectures}. 

Our initial motivation for the paper was to introduced a notion of stability for matroids over an idyll, such that the Harder-Narasimhan filtration in \cite{khan2024tropical} arises naturally as a special case within the general framework of proto-exact categories. A major difficulty is that in \cite{khan2024tropical}, the uniqueness and existence of Harder-Narasimhan filtrations hold only under a specific combinatorial condition (modularity of flats). Moreover, we have found examples where, even when there exists an epi-monic morphism $f:\mathcal{M} \to \mathcal{N}$ between tropical toric reflexive sheaves, the inequality $\mu(\mathcal{M}) \leq \mu(\mathcal{N})$ does not hold, where the slope function $\mu$ is defined in \cite{khan2024tropical} via a tropical analogue of the first Chern class in the classical setting. This complicates the interpretation of stability in categorical terms. In this paper, we overcome this issue by combining the categorical approach to slope filtrations by Andr\'e \cite{andre2009slope}\footnote{In \cite{andre2009slope}, Andr\'e defined a notion of proto-abelian categories (Definition \ref{definition: Andre's proto-abelian}) as a tool to combine various existing notions of slope filtrations. We emphasize here that Andr\'e's definition of proto-abelian categories and Dyckerhoff-Kapranov formalism of proto-abelian categories share certain features but one does not imply the other in general.} and Li \cite{li2023categorification}. We hope that this perspective will further clarify the structure of the moduli space of matroids introduced by Baker and Lorscheid \cite{baker2021moduli}.

\subsection{Summary of results}

In Section \ref{section: prelim}, we start by recalling basic definitions for matroids over idylls, tropical toric vector bundles, proto-exact categories in the sense of Dyckerhoff and Kapranov \cite{dyckerhoff2019higher} and their Hall algebras and $K$-theory, and proto-abelian categories in the sense of Andr\'e \cite{andre2009slope} and Harder-Narasimhan theory developed by Li \cite{li2023categorification}.

In \cite{eberhardt2022group}, the notions of a proto-exact category with exact direct sum and a combinatorial proto-exact category were introduced. A proto-exact category with \emph{exact direct sum} fulfills a minimal set-up in which Quillen's Group Completion Theorem can be formulated. A combinatorial proto-exact category means that any subobject $U$ of $X\oplus Y$ splits as the direct sum of subobjects $U\cap X \subseteq X$ and $U \cap Y \subseteq Y$. This is typically satisfied in a combinatorial setting, for instance, vector bundles over a monoid scheme.\footnote{In fact, this condition was first considered by the first author and Szczesny in \cite{jun2020toric} to ensure a bialgebra structure on the Hall algebra $H_\mathcal{C}$ of a proto-exact category $\mathcal{C}$. See \cite[Section 2.2]{jun2020toric}.} 

For an idyll $F$, a morphism $f:M \to N$ between $F$-matroids are defined by a submonomial matrix over $F$ which sends vectors of $M$ to vectors of $N$ (Definition \ref{definition: morphism F-matroids}). In Section \ref{sec:category_FMat}, we study various properties of the category $F$-$\Mat$ of matroids over an idyll, and by appealing to these results, we prove the following. 

\begin{nothma}[Theorems \ref{thm:F mat proto ex} and \ref{theorem: further structure}]
For each perfect idyll $F$, the category $F$-$\Mat$ with $\mathfrak{M}$ matroid restrictions and $\mathfrak{E}$ matroid contractions is a combinatorial proto-exact category with duality and with exact direct sum.
\end{nothma}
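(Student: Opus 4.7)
The plan is to verify, in turn, the proto-exact axioms of Dyckerhoff--Kapranov, the combinatorial splitting property, the existence of a duality, and exactness of the direct sum. Throughout, the essential tool is the submonomial-matrix description of morphisms in $\FMat$, which lets matroid-theoretic arguments be tracked at the level of morphisms. I would first fix the zero object as the empty $F$-matroid, declare $\mathfrak{M}$ to consist of those morphisms realizing a matroid restriction $M|_A\hookrightarrow M$ at the level of submonomial matrices, and $\mathfrak{E}$ those realizing a contraction $M\twoheadrightarrow M/B$. Closure of both classes under composition follows from the identities $(M|_A)|_B=M|_B$ for $B\subseteq A$ and $(M/A)/B=M/(A\cup B)$ for disjoint $A,B$, applied to the underlying submonomial matrices; all isomorphisms are admissible by construction.

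The central Dyckerhoff--Kapranov axiom requires that every cospan formed by an admissible mono and an admissible epi admits a pullback in $\FMat$ which is again admissible (and dually for pushouts), and that the resulting square is bicartesian. Given a restriction $i:M|_A\hookrightarrow M$ and a contraction $q:M\twoheadrightarrow M/B$, I would build the pushout as $(M|_A)/(A\cap B)\cong (M/B)|_{A\setminus B}$ with the induced submonomial matrices. The classical matroid identity underlying this isomorphism is the commutativity of disjoint restriction and contraction; this is precisely where the perfectness of $F$ enters, since it guarantees consistency of the vector and covector descriptions and thus that the identity lifts from underlying matroids to $F$-matroids. The universal property in $\FMat$ is then checked by tracking submonomial matrices through the square. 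This step is the principal obstacle: without perfectness, the pullback/pushout objects may not be admissible $F$-matroid constructions in a functorial way, and verifying bicartesianness against arbitrary test morphisms, rather than just at the level of ground sets, requires careful bookkeeping of the $F$-valued coefficients.

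For the remaining conditions, the combinatorial property asserts that any admissible mono $U\hookrightarrow X\oplus Y$ satisfies $U=(U\cap X)\oplus(U\cap Y)$; because the realizing submonomial matrix has each column supported entirely in either the $X$-block or the $Y$-block, the ground set of $U$ partitions into the corresponding pieces and the decomposition follows. Duality is the assignment $M\mapsto M^*$, which exists and is involutive for any idyll via the theory of dual Grassmann--Pl\"ucker functions, and which swaps restrictions with contractions, hence exchanges $\mathfrak{M}$ and $\mathfrak{E}$. Finally, exactness of the direct sum reduces to showing that $(-)\oplus Z$ carries an admissible sequence to an admissible sequence, which follows from a routine block-diagonal computation with submonomial matrices.
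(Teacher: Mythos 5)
Your overall route is the paper's: admissible monomorphisms are restrictions and admissible epimorphisms are contractions (each taken up to composition with isomorphism), the central step is the bicartesian square coming from the commutation of restriction and contraction, verified by submonomial-matrix bookkeeping against the Grassmann--Pl\"ucker criterion for morphisms; duality is $M\mapsto M^*$, the direct sum is handled blockwise, and the combinatorial property comes from splitting the restricted ground set across the two summands. However, you locate perfectness in the wrong place. The identity $(M|A)/B=(M/B)|A$ (equivalently, commutation of disjoint deletion and contraction) is proved by a direct Grassmann--Pl\"ucker computation valid over any idyll; what perfectness actually underwrites is the morphism theory itself: the cryptomorphic characterization of which submonomial matrices are morphisms, and the fact that $f:M\to N$ is a morphism if and only if $f^t:N^*\to M^*$ is. The paper leans on this transpose statement both to deduce the Cartesian half of the key lemma from the coCartesian half and to make $(-)^*$ a functor exchanging $\mathfrak{M}$ and $\mathfrak{E}$; your aside that duality works ``for any idyll'' is true at the level of objects but not for the categorical duality you need on morphisms.

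Second, the structures named in the theorem are defined by axioms your plan does not address. ``Exact direct sum'' here means (DS1)--(DS4), so besides exactness of $\oplus$ you must also verify injectivity of the comparison maps $\Hom(U\oplus V,W)\to\Hom(U,W)\times\Hom(V,W)$ and $\Hom(W,U\oplus V)\to\Hom(W,U)\times\Hom(W,V)$, and the unique-splitting condition (DS4) against arbitrary sections and retracts; ``duality'' requires (D3), that $(-)^*$ carries bicartesian squares to bicartesian squares, which the paper gets from $(M|S)^*=M^*/S$, $(M/S)^*=M^*|S$ together with the key lemma. These are routine matrix/GP verifications given your toolkit, but they are part of the statement. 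Two smaller points: since the category is pointed, the zero object is $U_1^0$ (basepoint only), not an empty matroid; and $\mathfrak{M}$, $\mathfrak{E}$ must explicitly include isomorphism factors --- a bare restriction map never realizes a nontrivial isomorphism (which may rescale by units of $F$) --- after which closure under composition needs the small extra argument that an isomorphism can be pushed past a restriction.
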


In Section \ref{sec: simple F matroids}, we further explore categorical properties of the full subcategory $F$-$\mathbf{SMat}_\bullet$ of $F$-$\Mat$ consisting of $F$-matroids whose underlying matroids are simple. Among other things, we prove the following. 

\begin{nothmb}
For a perfect idyll $F$, the category $F$-$\mathbf{SMat}_\bullet$ is proto-abelian in the sense of Andr\'e (Definition \ref{definition: Andre's proto-abelian}).
\end{nothmb}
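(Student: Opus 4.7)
The plan is to verify André's axioms for $F$-$\mathbf{SMat}_\bullet$ by leveraging the proto-exact structure on $F$-$\mathbf{Mat}_\bullet$ established in Theorem A and exploiting the special features of simple matroids. The key point is that while restrictions preserve simplicity, contractions of simple matroids need not be simple (they can produce loops and parallel elements), so the admissible epimorphisms in $F$-$\mathbf{SMat}_\bullet$ should be taken to be contractions followed by simplification. The first step is to check that this composition is well-defined on morphisms represented by submonomial matrices, and that it furnishes cokernels in $F$-$\mathbf{SMat}_\bullet$ that enjoy the appropriate universal property.

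Next, I would establish epi-mono factorizations. Given a morphism $f \colon M \to N$ represented by a submonomial matrix $A$, the restriction of $N$ to the set of coordinates on which $A$ acts non-trivially (``image'') and the contraction-plus-simplification of $M$ along the kernel coordinates (``coimage'') should both again be objects of $F$-$\mathbf{SMat}_\bullet$. The morphism $f$ then factors canonically through these two, and I would show the resulting ``coimage $\to$ image'' comparison is an isomorphism. This uses Theorem A in an essential way: the existence of the dual matroid over a perfect idyll $F$ lets us transfer the factorization argument between restrictions and contractions via duality.

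With these factorizations in hand, I would verify the remaining conditions in Definition \ref{definition: Andre's proto-abelian}: closure of $\mathfrak{M}$ and of the simplified $\mathfrak{E}$ under composition, the relevant push-out/pull-back squares producing again admissible morphisms, and the compatibility of these squares with simplification. Closure under composition for restrictions is immediate from matroid theory, and closure for simplified contractions reduces to showing that simplification commutes (up to canonical isomorphism) with composition of contractions, a statement one can verify directly on the underlying submonomial matrices.

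The main obstacle will be the coimage-to-image comparison: although in $F$-$\mathbf{Mat}_\bullet$ an analogue follows from the proto-exact axioms, passing to simple matroids requires that simplification does not destroy the canonical factorization. I expect to handle this by first working inside $F$-$\mathbf{Mat}_\bullet$, identifying the image and coimage there, and then applying the simplification functor to both sides, arguing that simplification sends the proto-exact isomorphism in $F$-$\mathbf{Mat}_\bullet$ to an isomorphism in $F$-$\mathbf{SMat}_\bullet$. The perfectness of $F$ enters here because it ensures that the duality used to switch between restrictions and contractions is compatible with the submonomial matrix representation of morphisms, so the factorization is truly canonical rather than depending on auxiliary choices.
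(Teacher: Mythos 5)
There is a genuine gap, and it lies exactly where you lean on duality. You propose to ``transfer the factorization argument between restrictions and contractions via duality,'' and again invoke duality (and perfectness of $F$) to make the coimage-to-image comparison canonical. But duality does not descend to $F$-$\mathbf{SMat}_\bullet$: the dual of a simple matroid need not be simple (take the graphic matroid of a simple planar graph whose planar dual has parallel edges), which is precisely the content of Remark \ref{rmk: FSMat not simple}. This is why the paper is careful to give a duality-free proof of the Cartesian half of the key biCartesian lemma (see the last paragraph of the proof of Lemma \ref{lem:aux}): in the simple setting one must argue both the restriction side and the contraction side directly. Any step of your argument that converts a statement about contractions into one about restrictions by dualizing will leave the subcategory and cannot be completed as written.

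A second, smaller issue is that much of your effort is aimed at the wrong target. Andr\'e's Definition \ref{definition: Andre's proto-abelian} asks only for kernels and cokernels, axiom (PA1) (zero kernel implies mono, zero cokernel implies epi), and axiom (PA2) (pullbacks of strict epis along strict monos are strict epis, and dually). It does not require the coimage-to-image comparison to be an isomorphism; indeed epi-monics in these matroid categories are typically not isomorphisms (Remark \ref{rmk:epi-mono}), so that comparison genuinely fails to be invertible in general and is not needed. The paper's route is also considerably shorter than what you sketch: $F$-$\mathbf{SMat}_\bullet$ is a reflective subcategory of $F$-$\Mat$, restrictions preserve simplicity (giving kernels, which are restrictions to flats), the simplification functor is a left adjoint and hence preserves cokernels (giving cokernels as contractions at flats, in accordance with Remark \ref{rmk: kernels for simples} --- your observation that contractions must be followed by simplification is correct and matches this), (PA1) is inherited from $F$-$\Mat$, and (PA2) follows from the flat-level version of Lemma \ref{lem:aux} together with Proposition \ref{prop:protoex_then_protoab}. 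If you replace your duality step with the direct verification of the Cartesian property and drop the coimage-image discussion, your outline can be repaired along these lines.
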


In Section \ref{section: Hall algebra and K-theory}, we discuss two immediate consequences of Theorem A. From Theorem A, we can define the Hall algebra and algebraic $K$-theory of $F$-$\Mat$. We prove that the Hall algebra $H_{F\text{-}\Mat}$ of $F$-$\Mat$ for a finite hyperfield $F$ (viewed as a idyll) is isomorphic to the graded Hopf dual of the $F$-matroid-minor Hopf algebra defined in \cite{eppolito2020hopf}, generalizing the result in \cite{eppolito2018proto} that $H_{\Mat}$ is isomorphic to the graded Hopf dual of the matroid-minor Hopf algebra to the case of finite hyperfields. We further prove that as in the case of $\Mat$, $K_0(F\text{-}\Mat)$ is isomorphic to $\mathbb{Z} \oplus \mathbb{Z}$. 

In Section \ref{sec: trs proto ex}, we turn our attention to tropical toric reflexive sheaves (Definition \ref{definition: tropical toric reflexive sheaf}), which are defined as a valuated matroid with some extra data, and tropical toric vector bundles  as a special class of them (Definition \ref{definition: tropical toric vector bundle}). Note that valuated matroids serve as analogues of linear spaces in tropical geometry and can be identified with matroids over the idyll $\mathbb{T}$, known as the \emph{tropical hyperfield}. Therefore, it is natural to find potential applications of our categorical investigation of $F$-$\mathbf{Mat}_{\bullet}$ in tropical toric reflexive sheaves and tropical toric vector bundles. 

In \cite{khan2024tropical}, Khan and Maclagan did not consider categorical aspects of tropical toric reflexive sheaves. They only defined a notion of isomorphism (\cite[Definition 6.7]{khan2024tropical}) motivated by the result of Klyachko \cite[Corollary 1.2.4]{klyachko1990equivariant} that for two indecomposable toric vector bundles $\mathcal{E}$ and $\mathcal{F}$ on a complete toric variety, if $\mathcal{E}$ and $\mathcal{F}$ are isomorphic as ordinary bundles, then $\mathcal{E}$ is equivariantly isomorphic to $\mathcal{F}\otimes \text{div}(\chi^m)$ for some character $\chi$. So, we start by defining a notion of morphisms for tropical toric reflexive sheaves (Definition \ref{definition: morphism of trs}) to define the category $\TRS$ of tropical toric reflexive sheaves associated to a fan $\Sigma$. We then confirm that in $\TRS$ isomorphisms are precisely isomorphisms as defined in \cite{khan2024tropical} (Proposition \ref{pro: trs isos}).

To define proto-exact structure for $\TRS$, we define $\mathfrak{M}_t$ (admissible monos) in terms of restriction on flats and $\mathfrak{E}_t$ (admissible epis) in terms of contractions by flats (Definition \ref{definition: admissible}). We also consider the full subcategory $\MTRS$ of $\TRS$ consisting of tropical toric reflexive sheaves whose underlying matroids are modular.

\begin{nothmc}[Theorem \ref{theorem: proto toric} and Corollary \ref{corollary: MTRS proto-abelian}] 
The following hold.
\begin{enumerate}
	\item 
$\TRS$ is a proto-exact category with $\mathfrak{M}_t$ and $\mathfrak{E}_t$ as above. 
\item 
$\TRS$ is proto-abelian (in the sense of Andr\'e). 
\item 
The proto-exact and proto-abelian structures on $\TRS$ descends to $\MTRS$, yielding that it is also proto-exact and proto-abelian (in the sense of Andr\'e). 
\end{enumerate} 	
\end{nothmc}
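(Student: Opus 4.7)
The strategy is to bootstrap Theorem~A: since the tropical hyperfield $\mathbb{T}$ (viewed as an idyll) is perfect, $\TMat$ is a combinatorial proto-exact category with exact direct sum whose admissible monos are matroid restrictions and whose admissible epis are contractions. By definition, a tropical toric reflexive sheaf on $\Sigma$ is a valuated matroid --- i.e.\ a $\mathbb{T}$-matroid --- together with, for each ray $\rho\in\Sigma(1)$, a filtration of its underlying matroid by flats. A morphism in $\TRS$ is a morphism of the underlying $\mathbb{T}$-matroids satisfying a flag-compatibility condition, so there is a faithful forgetful functor $U:\TRS\to\TMat$, and by construction $U$ sends $\mathfrak{M}_t$ into the admissible monos and $\mathfrak{E}_t$ into the admissible epis of $\TMat$. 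The plan is to transfer each axiom across $U$, adding a combinatorial check that the ray-indexed flat-filtrations are preserved.

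\textbf{Step 1 (Proto-exact structure on $\TRS$).} I would verify the Dyckerhoff--Kapranov axioms by pulling back from $\TMat$. Closure of $\mathfrak{M}_t$ and $\mathfrak{E}_t$ under composition, and the containment of all isomorphisms, follow from the corresponding properties in $\TMat$ together with two classical matroid-theoretic facts: (i) a composition of restrictions by flats is a restriction by a flat, and dually for contractions; and (ii) intersection with a flat (resp.\ join with a flat) transports filtrations by flats to filtrations by flats of the restricted (resp.\ contracted) matroid. The bicartesian square axioms are the main point: given an admissible mono and an admissible epi in $\TRS$, I construct pushouts and pullbacks on the underlying $\mathbb{T}$-matroids via Theorem~A, then equip the result with the induced ray-filtrations --- for a restriction to a flat $G$, the filtration coming from $\{F_i^\rho\}$ is $\{F_i^\rho\cap G\}$, while for a contraction by $G$ it is $\{(F_i^\rho\vee G)/G\}$ --- and check that these are filtrations by flats of the new matroid which fit together into an object of $\TRS$.

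\textbf{Step 2 (André proto-abelian structure and descent to $\MTRS$).} For (2), every morphism $f:\mathcal{M}\to\mathcal{N}$ in $\TRS$ factors uniquely as an admissible epi followed by an admissible mono, since its underlying morphism in $\TMat$ does (via the matroid-theoretic image factorization through a flat from Theorem~A), and the two induced flat-filtrations on the image coincide by the compatibility in (ii) above. The remaining conditions of Definition~\ref{definition: Andre's proto-abelian} are then inherited from the analogous properties of $\TMat$. For (3), since $\MTRS\hookrightarrow\TRS$ is full, it suffices to check that $\mathfrak{M}_t$ and $\mathfrak{E}_t$ preserve modularity: this is the classical fact that restrictions to flats and contractions by flats of a modular matroid are again modular, and one checks that the induced filtrations $\{F_i^\rho\cap G\}$ and $\{(F_i^\rho\vee G)/G\}$ consist of modular flats whenever the originals do. Parts (1) and (2) then descend verbatim to $\MTRS$.

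\textbf{Main obstacle.} The bulk of the technical work lies in Step~1, namely confirming that the pushouts of admissible monos and pullbacks of admissible epis produced by Theorem~A on the $\TMat$ side, once decorated with the induced flat-filtrations above, genuinely give a tropical toric reflexive sheaf in the sense of Definition~\ref{definition: tropical toric reflexive sheaf}. One must simultaneously verify, for every $\rho\in\Sigma(1)$, that the candidate filtration consists of flats of the new matroid and that it is compatible with the valuation data transferred from $\TMat$. Each individual verification is standard matroid combinatorics, but coordinating these checks across all rays of $\Sigma$ and with the underlying $\mathbb{T}$-matroid structure is where the careful bookkeeping is required.
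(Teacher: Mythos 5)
Your Step 1 is essentially the paper's strategy: the paper also proves proto-exactness of $\TRS$ by lifting the unique induced morphism of a (co)Cartesian square from the matroid level and then checking flag-compatibility, the only genuinely new content being the production of the twist vector $u\in\Lambda$ (morphisms in $\TRS$ are pairs $(f_{\mathbb{T}},u)$, and in the key biCartesian lemma the induced morphism inherits its $u$ from the given one). Two caveats on your transfer: the paper works over the simple category $\mathbb{T}$-$\mathbf{SMat}_\bullet$ with admissibles given by restriction/contraction at \emph{flats} (the Section \ref{sec: simple F matroids} structure), not over $\TMat$ with arbitrary subsets as in Theorem A; this matters because objects of $\TRS$ have simple underlying matroids, and the contraction of a simple matroid at a flat need not be simple, so quotients must be formed as in the simple category (via simplification), which your bookkeeping through $\TMat$ does not address. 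Also, your "flag-compatibility condition" omits the lattice vector $u$ entirely, and finding the correct $u$ for the induced map is precisely the verification carried out in Lemma \ref{lemma: TRS bi-Cartesian lemma}.

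The genuine gap is in Step 2. The claim that every morphism of $\TRS$ factors uniquely as an admissible epi followed by an admissible mono, "since its underlying morphism in $\TMat$ does," is false, and Theorem A provides no such image factorization. For example, the identity-on-ground-set morphism $U_{3,3}\to U_{2,3}$ (say with trivial valuations, and decorated with trivial flags) is epi-monic but not an isomorphism; any factorization through a contraction at a flat followed by a restriction at a flat would force the contracted flat to be trivial and the restricting flat to be everything, i.e.\ force the morphism to be an isomorphism. (The paper's Remark \ref{rmk:epi-mono} already stresses that epi-monic morphisms need not be isomorphisms, and the introduction exploits exactly such morphisms between reflexive sheaves.) Fortunately, Andr\'e's Definition \ref{definition: Andre's proto-abelian} does not require any epi-mono factorization: what is needed is the existence of kernels and cokernels in $\TRS$ (the restriction to $\underline{f_\mathbb{T}}^{-1}(*_N)$ and the contraction by the closure of the image, as in Proposition \ref{pro: ker coker in trs}), axiom (PA1) (inherited from Proposition \ref{prop:inj_surj}), and axiom (PA2), which follows from the proto-exact axioms via Proposition \ref{prop:protoex_then_protoab} once one notes that $\mathfrak{M}_t$ and $\mathfrak{E}_t$ consist exactly of isomorphisms composed with kernels, respectively cokernels composed with isomorphisms. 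Your treatment of part (3) is fine: fullness of $\MTRS\hookrightarrow\TRS$ plus closure of modularity under restriction and contraction at flats is exactly the paper's argument (and since every flat of a modular matroid is modular, the condition on the induced filtrations is automatic).
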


Finally, we move on to the slope stability of $\MTRS$. In \cite{li2023categorification}, Li studied categorification of Harder-Narasimhan filtrations building on the work of Andr\'e on the slope filtrations on proto-abelian categories (Theorem \ref{thm:li HN filtr}), which we call a \emph{categorical slope filtration} in this paper. We then recast the Harder-Narasimhan filtration in \cite{khan2024tropical} for modular tropical reflexive sheaves in this setting. To be precise, we prove the following. 

\begin{nothmd}[Corollary \ref{cor: HN filtr coincide}]
	The Harder-Narasimhan filtrations of Khan-Maclagan associated to modular tropical reflexive sheaves are given by the categorical slope filtration on $\MTRS$ given by Li. In other words, the Harder-Narasimhan filtration of \cite{khan2024tropical} for modular tropical reflexive sheaves is a categorical slope filtration in the sense of \cite{li2023categorification}.
\end{nothmd}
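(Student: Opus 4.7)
The plan is to apply Li's uniqueness theorem for categorical slope filtrations (Theorem \ref{thm:li HN filtr}) to the proto-abelian category $\MTRS$ (established in Theorem C) equipped with the Khan--Maclagan slope function $\mu$, and then identify the Khan--Maclagan Harder--Narasimhan filtration as a solution of the same universal problem. Once both filtrations are exhibited as categorical slope filtrations in the same proto-abelian category with the same slope function, uniqueness will force them to coincide.

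First I would recall both constructions in parallel. A Khan--Maclagan HN filtration of a modular tropical reflexive sheaf $\mathcal{E}$ is a chain $0 = \mathcal{E}_0 \subsetneq \mathcal{E}_1 \subsetneq \cdots \subsetneq \mathcal{E}_r = \mathcal{E}$ of subsheaves whose successive quotients are $\mu$-semistable with strictly decreasing slopes. Li's categorical slope filtration on $\MTRS$ is the analogous chain of admissible monomorphisms in $\mathfrak{M}_t$, with admissible quotients in $\mathfrak{E}_t$ that are categorically semistable and whose slopes strictly decrease. By Theorem C(3), the classes $(\mathfrak{M}_t, \mathfrak{E}_t)$ coming from restrictions and contractions by flats make $\MTRS$ proto-abelian in the sense of Andr\'e, which is precisely the setting in which Li's formalism applies.

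Next I would verify that $\mu$ qualifies as a slope function on $\MTRS$ in the Andr\'e--Li sense: it must take constant value on isomorphism classes, and it must be additive on admissible short exact sequences $\mathcal{F} \hookrightarrow \mathcal{E} \twoheadrightarrow \mathcal{G}$ with $\mathcal{F} \in \mathfrak{M}_t$ and $\mathcal{G} \in \mathfrak{E}_t$. Additivity reduces to the behavior of the tropical first Chern class and of rank under restriction and contraction by flats, which is precisely how $\mu$ is set up in \cite{khan2024tropical}. I would then show that the induced categorical notion of semistability on $\MTRS$ agrees with the Khan--Maclagan notion: an object is $\mu$-semistable in their sense iff every admissible sub-object $\mathcal{F} \in \mathfrak{M}_t$ satisfies $\mu(\mathcal{F}) \leq \mu(\mathcal{E})$. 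With additivity, semistability, and the slope inequality all verified on $(\mathfrak{M}_t, \mathfrak{E}_t)$, the Khan--Maclagan filtration is a valid categorical slope filtration in Li's sense, and conversely Li's filtration satisfies the Khan--Maclagan axioms. Uniqueness in Theorem \ref{thm:li HN filtr} then yields the desired identification.

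The main obstacle is the verification that $\mathfrak{M}_t$ captures exactly the class of potentially destabilizing sub-objects that appear in the Khan--Maclagan framework. The introduction flags precisely this difficulty: $\mu$ need not be monotone along arbitrary epi-monic morphisms, so working with all sub-objects breaks the slope inequality. The resolution is that admissible monos in $\mathfrak{M}_t$ correspond to restrictions by flats, and modularity of the underlying matroid ensures that the maximal $\mu$-destabilizing subsheaves of a modular $\mathcal{E}$ are always of this form. Establishing this compatibility---that in the modular setting, the maximal destabilizing sub-object is automatically in $\mathfrak{M}_t$---is where the careful use of modularity enters, and it is the technical heart of the argument. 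Once this is in place, an induction on rank, applied to the admissible quotient $\mathcal{E}/\mathcal{E}_1 \in \mathfrak{E}_t$, propagates the comparison up the filtration and completes the proof.
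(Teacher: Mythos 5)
Your high-level plan --- exhibit both filtrations as solutions of the same universal problem in the proto-abelian category $\MTRS$ and let Li's uniqueness force them to coincide --- is exactly the paper's strategy, and the preliminary checks you list (degree and rank constant on isomorphism classes and additive on admissible short exact sequences, with $\rk(\mathcal{E})=0$ forcing $\mathcal{E}=0$) are the right ones. The gap is in what you identify as the technical heart. The hypothesis of Theorem \ref{thm:li HN filtr} is not that maximal destabilizing subobjects lie in $\mathfrak{M}_t$, nor does the argument proceed by induction on rank; the hypothesis is the \emph{strong slope inequality} \eqref{eqn:strong slope ineq}: for any two strict subobjects $\mathcal{F},\mathcal{G}\rightarrowtail\mathcal{E}$ one must show $\mu(\mathcal{F}/(\mathcal{F}\cap\mathcal{G}))\leq\mu((\mathcal{F}+\mathcal{G})/\mathcal{G})$. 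You assert "the slope inequality" is verified but give no argument for it, and this is the one step that genuinely requires work. The paper proves it (Proposition \ref{pro: stron slope ineq trs}) by computing sums and intersections of subobjects lattice-theoretically, $(\mathcal{E}|F)+(\mathcal{E}|G)=\mathcal{E}|(F\vee G)$ and $(\mathcal{E}|F)\cap(\mathcal{E}|G)=\mathcal{E}|(F\wedge G)$, rewriting both quotients as $(\mathcal{E}|F)/(F\wedge G)$ and $(\mathcal{E}/F)|(F\vee G)$, and invoking Lemma 7.24 of Khan--Maclagan, where modularity is used. Once this inequality is in hand, Li's theorem already delivers existence and uniqueness, so your proposed induction on rank through the quotients $\mathcal{E}/\mathcal{E}_1$ re-proves existence and is not needed.

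The obstacle you flag --- whether $\mathfrak{M}_t$ captures "exactly the class of potentially destabilizing sub-objects" --- is a non-issue in this framework: semistability in both Theorem \ref{thm:li HN filtr} and Theorem \ref{thm:khmac HN filtr} is formulated relative to strict subobjects, and in $\MTRS$ every flat of the underlying matroid is modular, so every strict subobject is automatically a restriction at a modular flat. The failure of $\mu$ to be monotone along epi-monics (mentioned in the introduction) is precisely why one must use Li's strong-slope-inequality formalism rather than Andr\'e's original monotonicity hypothesis; it is not resolved by analyzing maximal destabilizers. You should also record the Artinian/Noetherian condition on objects of $\MTRS$ (inherited from $\mathbb{T}$-$\mathbf{SMat}_\bullet$ as in Remark \ref{rmk: FSMat art noeth}), which is an explicit hypothesis of Theorem \ref{thm:li HN filtr}.
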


\bigskip

\textbf{Acknowledgment} J.J. acknowledges AMS-Simons Research Enhancement Grant for Primarily Undergraduate Institution (PUI) Faculty during the writing of this paper, and was a member of the Institute for Advanced Study supported by the Bell System Fellowship Fund during most of the period in which this work was carried out.

\section{Preliminaries} \label{section: prelim}

\subsection{Bands and Idylls}

Bands and idylls are generalizations of rings and fields, respectively, which are of fundamental interest to us. Following \cite{baker2017matroids} and \cite{jarra2024quiver}, we introduce matroids over idylls and their pointed versions, as well as a handful of concepts promoted from classical matroid theory to matroids over idylls. The theory of matroids we encounter here follows the works \cite{baker2017matroids} and \cite{jarra2024quiver}. For more details on bands, idylls, and relations to other partial algebraic structures, see \cite{baker2025new}. 

A \emph{pointed monoid} is a (multiplicative) commutative monoid $B = B'\sqcup \{0\}$ such that $B'$ is a submonoid and $0$ is an absorbing element, i.e., $0\cdot b = 0$ for all $b\in B$. To each pointed monoid $B$ is associated an \emph{ambient semiring} $B^+ = \mathbb{N}[B]/\langle 0\rangle$, where $\langle0 \rangle$ is the semiring ideal generated by $0$. A \emph{band} is a pointed monoid equipped with a \emph{null set} $N_B$, which is a proper ideal of $B^+$ satisfying 
\begin{enumerate}
	
	\item[(B0)] $0\in N_B$
	
	\item[(B1)] For every $a\in B$ there exists a unique element $b\in B$ such that $a+b\in N_B$. We denote this element by $-a$. In particular, note that each band contains an element $-1$ satisfying $1 + (-1) \in N_B$, where $1$ is the multiplicative unit of $B$.
	
\end{enumerate}

We think of the null set of a band $B$ as capturing a notion of linear dependence over $\mathbb{N}$. An \emph{idyll} is a band $F$ such that $F^\times = F\setminus \{0\}$. A morphism of bands $\Phi:B\to C$ is a morphism of monoids with $\Phi(0_B)=0_C$ such that $\sum_i \Phi(b_i)\in N_C$ for all $\sum_i b_i\in N_B$. With this notion of morphisms, bands form a category and idylls form a subcategory thereof. 

\begin{myeg}
	\begin{enumerate}
		\item 
Any commutative ring $R$ is a band with $N_R = \{\sum_i a_i : \sum_i a_i=0\}$.
		\item 
The \emph{Krasner hyperfield} is the idyll $\mathbb{K}$ with underlying set $\{0,1\}$ whose multiplicative structure is inherited from $\mathbb{Z}$ and whose nullset is $\mathbb{N}\setminus\{1\}$. This is the final object in the category of idylls.
		\item The \emph{regular partial field} is the idyll $\mathbb{F}_1^{\pm}$ with underlying set $\{-1,0,1\}$ whose multiplicative structure is inherited from $\mathbb{Z}$ and whose null set is $N_{\mathbb{F}_1^{\pm}} = \langle 1 + (- 1) \rangle$.
		\item The \emph{tropical hyperfield} is the idyll $\mathbb{T}$ with underlying set $\mathbb{R}\sqcup \{\infty\}$ and multiplicative structure given by the additive group $\mathbb{R}$ and with additive structure defined by $a+b := \max\{a,b\}$. The null set is given by $N_\mathbb{T} = \{\sum_i a_i : \text{the maximum is attained at least twice}\}$.
		\item The \emph{hyperfield of signs} is the idyll $\mathbb{S}$ with underlying set $\{-1,0,1\}$ whose multiplicative structure is again inherited from $\mathbb{Z}$ but whose null set is defined by
		$N_\mathbb{S} = \{m\cdot 1 + n\cdot (-1) : m=n=0\text{ or }m\neq 0 \neq n\}$. 
	\end{enumerate}
\end{myeg}

We will be particularly interested in the case where $B=F$ is a \emph{perfect} idyll. In short, an idyll $F$ is perfect precisely when the vectors of any $F$-matroid are orthogonal to the covectors of that $F$-matroid. The idylls $\mathbb{F}_1^{\pm}$, $\mathbb{K}$, $\mathbb{T}$, and $\mathbb{S}$ are perfect, as are all fields $\mathbf{k}$.

For ordered tuples of elements of a set, we use the following notation. Let $E$ be a set and take some $n\in\mathbb{N}$. Then for a vector $\mathbf{x} \in E^n$, write $\mathbf{x}_k$ for the $k$th entry of $\mathbf{x}$ and write $\mathbf{x}_{\hat{k}}$ for the vector of length $n-1$ obtained from $\mathbf{x}$ by deleting the $k$th entry. Write $|\mathbf{x}|$ for the \emph{support} of $\mathbf{x}$, which is the set of nonzero elements of $F$ appearing as entries of $\mathbf{x}$.

\subsection{Matroids over Idylls}\label{subsec:F matroids}

We define matroids over idylls as equivalence classes of Grassmann-Pl\"{u}cker functions, following notation and definitions of \cite{baker2017matroids} and \cite{jarra2024quiver}. For the duration of this section $F$ will always denote an idyll. A \emph{Grassmann-Pl\"{u}cker function} of rank $r$ on a finite ground set $E$ over $F$ is a function $\mu:E^r\to F$ satisfying the following properties:
\begin{enumerate}
	\item[(GP0)] 
If $r>0$ then $\mu$ is not identically zero.
	\item[(GP1)] 
	$\mu$ is alternating in the sense that $\mu(\mathbf{x}) = \text{sign}(\sigma)(\mathbf{x}^\sigma)$ for each permutation $\sigma\in S_r$ and $\mu(\mathbf{x})=0$ whenever there are $i\neq j$ with $x_i = x_j$.	
	\item[(GP2)]
	 For every $\mathbf{x}=(x_0,\ldots,x_r)\in E^{r+1}$ and every $\mathbf{y} = (y_2,\ldots,y_r)\in E^{r-1}$, we have that
	\[\sum_{k=0}^r (-1)^k\mu(\mathbf{x}_{\hat{k}})\cdot\mu(x_k,\mathbf{y}) \in N_F.\]
\end{enumerate}

By convention, if $r=0$ and $E$ is a pointed set with basepoint $*$, we set $E^0:= \{*\}$. We say that two Grassmann-Pl\"{u}cker functions $\mu$ and $\mu'$ are \emph{equivalent} if there is some $\lambda\in F^\times$ such that $\mu' = \lambda\cdot \mu$. 

\begin{mydef}
An $F$-\emph{matroid} over $F$ of rank $r$ is an equivalence class $M = [\mu]$ of a Grassmann-Pl\"{u}cker function $\mu$ of rank $r$.\footnote{Strictly speaking, these are \emph{strong} $F$-matroids as in \cite{baker2017matroids}. But since we only consider a perfect idyll $F$, two notions of strong and weak $F$-matroids agree in our setting. }
\end{mydef} 

In the case that $F=\mathbb{K}$ is the Krasner hyperfield, an $F$-matroid of rank $r$ on $E$ is a matroid of rank $r$ on the set $E$ in the classical sense. Likewise, a $\mathbb{T}$-matroid is a \emph{valuated matroid} and an $\mathbb{S}$-matroid is an \emph{oriented matroid}. For a field $\mathbf{k}$, a $\mathbf{k}$-matroid of rank $r$ on $E$ is an $r$-dimensional linear subspace of $\mathbf{k}^E$. See \cite{baker2017matroids} for more on these characterizations.

Let $M=[\mu]$ be an $F$-matroid of rank $r$ on the set $E_M$ and fix a total order on $E_M$. Let $n=\#|E_M|$. The \emph{dual} of $M$ is the $F$-matroid $M^*=[\mu^*]$, where $\mu^*:E_M^{n-r}\to F$ is the Grassmann-Pl\"{u}cker function defined by 
\[\mu^*(\mathbf{x}) = \begin{cases} 0 & \exists i\neq j\text{ with }x_i=x_j, \\ \text{sign}(\mathbf{x},\mathbf{x}')\cdot\mu(\mathbf{x}') & \text{else} ,\end{cases}\]
where $\mathbf{x}'\in E_M^r$ is such that the support of $\mathbf{x}$ and $\mathbf{x}'$ form a partition of $E_M$ and the function $\text{sign}(\cdot)$ assigns the sign of the permutation of $E_M$ defined by the string $(\mathbf{x},\mathbf{x}')$. The dual $M^*$ is independent of the choice of total order on $E_M$. If $M$ is a pointed matroid then we define $M^*$ to be the matroid obtained by deleting $0_M$, taking the dual, and appending a loop $0_{M^*}$.

\begin{rmk}
	Some idylls $F$ come equipped with a natural involution which interact directly with the structure of matroids over $F$. For example, the hyperfield of signs $\mathbb{S}$ is equipped with the involution which exchanges + and -. For $F$-matroids over idylls with involution $\overline{(-)}$, the dual matroid is instead defined by 
	\[\mu^*(\mathbf{x}) = \begin{cases} 0 & \exists i\neq j\text{ with }x_i=x_j, \\ \text{sign}(\mathbf{x},\mathbf{x}')\cdot\overline{\mu(\mathbf{x}')} & \text{else}. \end{cases}\]
	See \cite{jarra2024quiver} Subsection 2.7 for more details.
\end{rmk}

\subsubsection{Circuits and vectors of $F$-matroids}

Following \cite{baker2017matroids} and \cite{anderson2019vectors}, to each $F$-matroid $M$ of rank $r$ is associated collections $\mathcal{C}_M, \mathcal{V}_M\subseteq F^r$ of \emph{circuits} and \emph{vectors}, respectively. In the case that $F=\mathbb{K}$ (resp.~$F=\mathbb{S}$), these are the circuit and vector sets of the associated (resp.~oriented) matroid. We will use these notions only in the next subsection in order to define morphisms of matroids, and mention them primarily to emphasize that this theory is a direct generalization of classical (oriented) matroid theory. As such, we refer the reader to \cite{baker2017matroids}, \cite{anderson2019vectors}, and \cite{jarra2024quiver} for more details.

For set $E$, we let $F^E$ be the set of vectors indexed by $E$, i.e., $F^E=\{X=(X_e)_{e\in E} \mid X_e \in F\}$. For $X \in F^E$, the \emph{support} of $X$ is the set $\underline{X}:=\{e \in E \mid X_e \neq 0_F\}$ and for a subset $A \subseteq F^E$, the support of $A$ is $\underline{A}:=\{\underline{X} \mid X \in A\}$. For $X,Y \in F^E$, we say that $X$ and $Y$ are \emph{orthogonal}, denoted by $X \perp Y$, if
\[
X\cdot Y:=\sum_{e \in E} X_e \cdot Y_e \in N_F,
\]
where $X_e\cdot Y_e$ is the multiplication in $F$ and $N_F$ is the null set of $F$. For $A \subseteq F^E$, we let $A^\perp$ be the set of vectors $X \in F^E$ which are orthogonal to all vectors in $A$. For $A,B \subseteq F^E$, we write $A \perp B$ if $X \perp Y$ for any $X \in A$ and $Y \in B$. 

Suppose that $M=[\mu]$ is an $F$-matroid of rank $r$ and define $\Xi_M\subseteq E_M^{r+1}$ to be the set of $\mathbf{y}=(y_0,\ldots,y_r)\in E_M^{r+1}$ for which $\#|\mathbf{y}| = r+1$ and such that there is some $i\in \{0,1,\dots, r\}$ for which $\mu(\mathbf{y}_{\hat{i}})\neq 0$. For any $\mathbf{y}\in \Xi_M$ define the \emph{fundamental circuit} of $\mathbf{y}$ with respect to $\mu$ to be the vector $X_\mathbf{y}\in F^{E_M}$ given by 
	\[X_\mathbf{y}(x) = \begin{cases} (-1)^k\cdot\mu(\mathbf{y}_{\hat{k}}) & x=y_k, \\ 0 & \text{else.} \end{cases}\]
Then the \emph{set of circuits} of $M$ is $\mathcal{C}_M = \{a\cdot X_\mathbf{y}\;|\;a\in F^\times,\;\mathbf{y}\in\Xi_M\}$ and the \emph{set of cocircuits} of $M$ is defined to be $\mathcal{C}^*_M := \mathcal{C}_{M^*}$. Correspondingly, define the \emph{set of vectors} of $M$ to be $\mathcal{V}_M = (\mathcal{C}^*_M)^\perp$ and the \emph{set of covectors} to be  $\mathcal{V}^*_M=\mathcal{C}_M^\perp$. An idyll $F$ is said to be \emph{perfect} when $\mathcal{V}_M\perp\mathcal{V}^*_M$ for all $F$-matroids $M$.

Given an $F$-matroid $M=[\mu]$, a morphism $\Phi:F\to G$ of idylls yields a $G$-matroid $\Phi_*M=[\Phi\circ\mu]$ on the same ground set, referred to as the push-forward of $M$ along $\Phi$. The push-forward of any $F$-matroid $M$ along the unique map $F\to \mathbb{K}$ is referred to as the \emph{underlying matroid} of $M$ and is denoted by $\underline{M}$. Note that $\underline{M}$ is a classical matroid. 

\subsubsection{Direct sum and pointed $F$-matroids}\label{subsubsec:sums and points}
For any idyll $F$ there are precisely two $F$-matroids defined on a one-element set. The first is $U_1^0$, whose associated Grassmann-Pl\"{u}cker function takes only the value $0_F$. The second is $U_1^1$, whose Grassmann-Pl\"{u}cker function takes any value in $F^\times$. Since $F$ is an idyll, the group $F^\times$ consists of all nonzero elements of $F$ and so the Grassmann-Pl\"{u}cker function of $U_1^1$ is unique up to nonzero scaling, hence $U_1^1$ is unique. 

Given two $F$-matroids $M=[\mu]$ and $N=[\nu]$ of ranks $r_M$ and $r_N$ on ground sets $E_M$ and $E_N$, the \emph{direct sum} of $M$ and $N$ is defined to be $F$-matroid given as the equivalence class $M\oplus N := [\mu\oplus\nu]$, where $\mu\oplus\nu:(E_M\sqcup E_N)^{r_M+r_N}\to F$ is a function defined as follows. For $\mathbf{x}\in(E_M\sqcup E_N)^{r_M+r_N}$, let $\mathbf{x}_M$ be the sub-tuple consisting of those entries of $\mathbf{x}$ which are in $E_M$, and define $\mathbf{x}_N$ similarly. Then define $\mu\oplus\nu$ by
\[(\mu\oplus\nu)(\mathbf{x}) := \begin{cases} \mu(\mathbf{x}_M)\cdot \nu(\mathbf{x}_N) & \#|\mathbf{x}_M| = r_M\text{ and }\#|\mathbf{x}_N|=r_N \\ 0 & \text{else} \end{cases}.\]

A \emph{pointed matroid} of rank $r$ over $F$ is a matroid $M=[\mu]$ in the usual sense which is defined on a pointed ground set $E_M = \tilde{E}_M\sqcup \{*_M\}$ for which $\mu(\mathbf{x})\in N_F$ whenever $\mathbf{x}\in E_M^r$ is such that $\mathbf{x}_i = *_M$ for some $i$. For the case that $F=\mathbb{K}$, this means that $*_M$ is a loop of the underlying matroid. In other words, a pointed matroid over an idyll $F$ is a matroid with the choice of a distinguished loop in the ground set. We refer to $\tilde{E}_M$ as the set of \emph{nonzero elements} of $E_M$. 

Note that from each non-pointed $F$-matroid $M$ we can obtain a pointed $F$-matroid by taking the direct sum with the matroid $U_1^0$, and in fact every pointed matroid arises this way. 

Given two pointed matroids $M$ and $N$, we define the pointed direct sum $M\oplus N$ to be an $F$-matroid on the ground set $\tilde{E}_M\sqcup \tilde{E}_N\sqcup \{*_{M\oplus N}\}$ and proceed to define the associated Grassmann-Pl\"{u}cker function in complete analogy to the above. Alternatively, one can form the pointed direct sum by taking the usual direct sum of pointed $F$-matroids and subsequently identifying the two distinguished loops of the summands. 

The definition of matroids as equivalence classes of Grassmann-Pl\"{u}cker functions is not standard in many classic matroid texts, and so it may seem unintuitive to some familiar with the classical theory. The following result illustrates a concrete relation to the classical theory of matroids. 

\begin{pro}[\cite{baker2017matroids}]\label{prop:classical_matroid_nonzero_GP}
	Let $M=[\mu]$ be an $F$-matroid of rank $r$ on $E_M$ and let $\underline{M}$ be its underlying matroid. Then for any $r$-tuple $\mathbf{e} =(e_1,\ldots,e_r)\in E_M^r$, we have that $\mu(\mathbf{e})\neq0_F$ if and only if $\{e_1,\ldots,e_r\}$ is a basis of $\underline{M}$. 
\end{pro}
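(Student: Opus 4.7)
The plan is to reduce the statement to the case $F = \mathbb{K}$, where it becomes a direct consequence of the correspondence between $\mathbb{K}$-matroids and classical matroids. First, I would unwind the definition of the underlying matroid: $\underline{M}$ is by construction the push-forward $\Phi_*M = [\Phi \circ \mu]$ along the unique morphism of idylls $\Phi : F \to \mathbb{K}$. Since $\mathbb{K} = \{0,1\}$ and $\Phi$ must send $0_F$ to $0_\mathbb{K}$ while being multiplicative on $F^\times$, the map $\Phi$ sends every nonzero element of $F$ to $1_\mathbb{K}$. Consequently, writing $\underline{\mu} := \Phi \circ \mu$, we have $\underline{\mu}(\mathbf{e}) = 1$ if and only if $\mu(\mathbf{e}) \neq 0_F$. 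This reduces the claim to the analogous statement for the $\mathbb{K}$-matroid $\underline{M} = [\underline{\mu}]$.

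Next, I would invoke the standard identification (recorded in \cite{baker2017matroids}) between $\mathbb{K}$-matroids on $E$ of rank $r$ and classical rank-$r$ matroids on $E$, under which the bases of the classical matroid are precisely those $r$-subsets $B \subseteq E$ for which some, equivalently any, ordering $\mathbf{b}$ of $B$ satisfies $\underline{\mu}(\mathbf{b}) = 1$. The ``equivalently any'' here rests on property (GP1): reordering $\mathbf{b}$ changes $\underline{\mu}(\mathbf{b})$ only by a sign, which in $\mathbb{K}$ is trivial, while any tuple with a repeated entry is sent to $0$. Hence the vanishing of $\underline{\mu}$ on an $r$-tuple with distinct entries depends only on the underlying set, and the basis prescription is well-defined.

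Combining the two observations yields the proposition: $\mu(\mathbf{e}) \neq 0_F$ iff $\underline{\mu}(\mathbf{e}) = 1$ iff $\{e_1, \ldots, e_r\}$ is a basis of $\underline{M}$. The only real content — and the place I would expect to spend the most care in a fully detailed write-up — is the identification of $\mathbb{K}$-matroids with classical matroids via this basis prescription. This requires checking that the basis exchange axiom for $\underline{M}$ is the specialization of the Plücker relation (GP2) to the Krasner null set $N_\mathbb{K} = \mathbb{N} \setminus \{1\}$, which forces an even number of nonzero summands in the defining identity. Since this correspondence is an established result, I would cite \cite{baker2017matroids} for it rather than reprove it, making the overall proof a brief definitional unwinding.
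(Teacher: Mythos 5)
Your proof is correct and takes essentially the same route as the paper, whose entire proof is a citation of Theorem 3.17 of \cite{baker2017matroids} and the discussion following it; your version merely makes explicit the reduction to $F=\mathbb{K}$ via the push-forward along the unique idyll morphism $F\to\mathbb{K}$ before invoking that same correspondence. One minor slip in your closing aside: the Krasner null set condition forces the number of nonzero summands in the (GP2) identity to be different from one (i.e.\ zero or at least two), not to be even, though this does not affect your argument since you defer that verification to the cited result.
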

\begin{proof}
	This follows from Theorem 3.17 of \cite{baker2017matroids} and the discussion following.
\end{proof}

\subsubsection{Restriction and contraction of $F$-matroids}\label{subsubsec:restriction contraction}
$F$-matroids also possess notions of restrictions and contractions which generalize those of classical matroid theory. Let $M=[\mu]$ be an $F$-matroid of rank $r$ on $E_M$ and $A\subseteq E_M$ an arbitrary subset. We define the \emph{deletion}, \emph{contraction}, and \emph{restriction} of $M$ with respect to $A$ as follows. For each of the following definitions, suppose that $A\subseteq E_M$ is a set with $\rk_{\underline{M}}(A) = r'$ and $\rk_{\underline{M}}(E_M\setminus A) = r''$.
\begin{enumerate}
	\item[(1)] (Contraction) Let $\mathbf{a}\in A^{r'}$ be a tuple of elements of $A$ for which $|\mathbf{a}|$ is a basis of the classical matroid $\underline{M}|A$. Then define the function $\mu/A:(E_M\setminus A)^{r-r'}\to F$ by
	\[(\mu/A)(\mathbf{x}):= \mu(\mathbf{x},\mathbf{a}).\]
	\item[(2)] (Deletion) Let $\mathbf{b}\in A^{r-r''}$ be a tuple of elements of $A$ for which $|\mathbf{b}|$ is a basis of the classical matroid $\underline{M}/(E_M\setminus A)$. Then define the function $\mu\setminus A:(E_M\setminus A)^{r''}\to F$ by
	\[(\mu\setminus A)(\mathbf{x}):= \mu(\mathbf{x},\mathbf{b}).\] 
	\item[(3)] (Restriction) With all notations as above, define $\mu|A:=\mu\setminus(E_M\setminus A):A^{r'}\to F$. More precisely, let $\mathbf{c}\in (E_M\setminus A)^{r-r'}$ be a tuple for which $|\mathbf{c}|$ is a basis for the classical matroid $\underline{M}/A$. Then $\mu|A$ is given by
	\[(\mu|A)(\mathbf{x}) = \mu(\mathbf{x},\mathbf{c}).\]
\end{enumerate} 
The \emph{contraction} of $M$ by $A$ is the $F$-matroid $M/A:=[\mu/A]$, the \emph{deletion} of $M$ by $A$ is $M\setminus A:=[\mu\setminus A]$, and the \emph{restriction} of $M$ to $A$ is $M|A:=[\mu|A]$. In each case, the resulting function is again a Grassmann-Pl\"{u}cker function and the matroids they define independent of any of the choices of $\mathbf{a}$, $\mathbf{b}$, and $\mathbf{c}$ above. With these notions, by \cite[ Theorem 3.29]{baker2017matroids}, we have the usual identities $(M/A)^*=M^*\setminus A$ and $(M\setminus A)^* = M^*/A$. In the sequel, for any $A\subseteq E_M$ we denote by $A^c$ the complement $E_M\setminus A$. 

\begin{rmk}\label{rmk:loops}
	Some care should be taken when performing restriction and contraction on pointed matroids. When restricting to a subset $S$ not containing the distinguished loop, we tacitly append a loop $*_{M|S}$ to the result. Likewise, when contracting a subset $T$ which contains the loop, we contract the subset $T\setminus *_M$. These adjustments do not alter the constructions or the resulting matroids, but free us from worrying about the loop.
\end{rmk}

As in the theory of classical matroids, the operations of contraction and deletion commute when applied to disjoint subsets of the ground set. 

\begin{pro}\label{prop:restriction-contraction}
	Let $M=[\mu]$ be a pointed $F$-matroid of rank $m$ on a set $E$ and let $S\subseteq T\subseteq E$ be subsets of the ground set. Then $(M|T)/S = (M/S)|T$.
\end{pro}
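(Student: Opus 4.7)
The plan is to unfold the definitions of restriction and contraction on both sides and verify that the resulting Grassmann--Pl\"ucker functions represent the same equivalence class. Set $r_T := \rk_{\underline{M}}(T)$ and $r' := \rk_{\underline{M}}(S)$. Because $S\subseteq T$, classical matroid identities give $\rk_{\underline{M|T}}(S) = r'$ and $\rk_{\underline{M/S}}(T\setminus S) = r_T - r'$, so both $(M|T)/S$ and $(M/S)|T$ end up with ground set $T\setminus S$ (after the loop adjustments of Remark \ref{rmk:loops}) and rank $r_T - r'$.

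The main computation is to choose a single pair of auxiliary tuples and run both compositions through it. Pick $\mathbf{a}\in S^{r'}$ so that $|\mathbf{a}|$ is a basis of $\underline{M}|S$, and pick $\mathbf{c}\in (E\setminus T)^{r-r_T}$ so that $|\mathbf{c}|$ is a basis of $\underline{M}/T$. For the left-hand side, Definition (3) gives $(\mu|T)(\mathbf{y}) = \mu(\mathbf{y},\mathbf{c})$ on $\mathbf{y}\in T^{r_T}$; since $\mathbf{a}$ still represents a basis of $\underline{M|T}|S = \underline{M}|S$, Definition (1) yields
\[
((\mu|T)/S)(\mathbf{x}) \;=\; (\mu|T)(\mathbf{x},\mathbf{a}) \;=\; \mu(\mathbf{x},\mathbf{a},\mathbf{c})
\]
for $\mathbf{x}\in (T\setminus S)^{r_T-r'}$. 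For the right-hand side, $(\mu/S)(\mathbf{y}) = \mu(\mathbf{y},\mathbf{a})$ on $\mathbf{y}\in(E\setminus S)^{r-r'}$; using the classical identity $\underline{(M/S)/(T\setminus S)} = \underline{M}/T$, the tuple $\mathbf{c}$ is still a basis of the required quotient, so
\[
((\mu/S)|T)(\mathbf{x}) \;=\; (\mu/S)(\mathbf{x},\mathbf{c}) \;=\; \mu(\mathbf{x},\mathbf{c},\mathbf{a}).
\]

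Finally, the alternating axiom (GP1) implies $\mu(\mathbf{x},\mathbf{a},\mathbf{c}) = \mathrm{sign}(\sigma)\,\mu(\mathbf{x},\mathbf{c},\mathbf{a})$, where $\sigma\in S_{r_T-r'+r-r_T+r'}$ is the block-swap permutation exchanging $\mathbf{a}$ and $\mathbf{c}$ in positions after $\mathbf{x}$. Because $\mathrm{sign}(\sigma)\in\{\pm 1\}\subseteq F^\times$ is a fixed scalar independent of $\mathbf{x}$, the two Grassmann--Pl\"ucker representatives differ by a global nonzero scalar, hence define the same equivalence class; thus $(M|T)/S = (M/S)|T$.

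I do not expect a major obstacle here: the proof is essentially bookkeeping. The only mild subtlety is checking that the chosen bases $\mathbf{a}$ and $\mathbf{c}$ remain valid through the intermediate matroids, which is exactly the classical fact that restriction and contraction with respect to disjoint subsets commute on the level of the underlying matroid---and by Proposition \ref{prop:classical_matroid_nonzero_GP}, this classical fact transfers cleanly to the $F$-matroid setting.
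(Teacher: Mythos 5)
Your proof is correct and follows essentially the same route as the paper: both arguments fix one auxiliary basis tuple for the contracted part and one for the complement of $T$, unwind both compositions to $\mu(\mathbf{x},\mathbf{a},\mathbf{c})$ versus $\mu(\mathbf{x},\mathbf{c},\mathbf{a})$, and conclude via the alternating axiom that the two Grassmann--Pl\"ucker representatives differ by a global sign (the paper merely rewrites the restriction as a deletion first, which is cosmetic). The only blemish is the notational slip $r$ versus $m$ for the rank of $M$.
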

\begin{proof}
We prove in terms of deletion and contraction, establishing instead that for any disjoint $A,B\subseteq E$, we have $(M\setminus A)/B = (M/B)\setminus A$. It is easily verified that this is equivalent to the statement as written. 
	
	The proof proceeds by direct calculation of the associated Grassmann-Pl\"{u}cker functions. Let $A,B\subseteq E$ be disjoint sets with $\ell = \text{rk}_{\underline{M}}(A)$ and $k = \text{rk}_{\underline{M/A}}(E\setminus B)$. 
	
	We first consider $(M/A)\setminus B$. Let $\mathbf{a}\in A^{\ell}$ be a basis of $\underline{M}|A$ and $\mathbf{b}\in B^{m-\ell-k}$ be a basis of $\underline{(M/A)}/(E\setminus B)$. Then $(M/A)\setminus B$ is given by the function $(\mu/A)\setminus B:(E\setminus (A\cup B))^{k}\to F$ which sends $\mathbf{y}\in(E\setminus(A\cup B))^k$ to
	\[((\mu/A)\setminus B)(\mathbf{y})= (\mu/A)(\mathbf{y},\mathbf{b}) = \mu(\mathbf{y},\mathbf{b},\mathbf{a})\]
	Note that the argument of $\mu$ above has $k+(m-\ell-k)+\ell = m$ entries.
	
	To approach $(M\setminus B)/A$, note first that the disjointness of $A$ and $B$ ensures that $\rk_{\underline{M}}(A) = \rk_{\underline{M\setminus B}}(A)$. Thus $\mathbf{a}$ in the previous paragraph serves as a basis for $(\underline{M\setminus B})|A$. Likewise, $\mathbf{b}$ serves as a basis for $\underline{M}/(E\setminus B)$ since it was assumed to be a basis of $(\underline{M/A})/(E\setminus B)$. We now see that $(M\setminus B)/A$ is given by $(\mu\setminus B)/A:(E\setminus(A\cup B))^k\to F$ which sends $\mathbf{y}\in(E\setminus(A\cup B))^k$ to
	\[((\mu\setminus B)/A)(\mathbf{y}) = (\mu\setminus B)(\mathbf{y},\mathbf{a}) = \mu(\mathbf{y},\mathbf{a},\mathbf{b})\]
	Hence we have that $((\mu/A)\setminus B) = \pm ((\mu\setminus B)/A)$. This implies in particular that the associated Grassmann-Pl\"{u}cker functions are scalar multiples of one another and so the matroids they define are identical. 
\end{proof} 

\subsubsection{Lattices and modularity}\label{subsubsec: modularity}

A fundamental tool in the study of classical matroids is the lattice of flats $\mathcal{L}(M)$ of a matroid $M$ (see \cite{oxley2006matroid} for definitions). Flats are certain subsets of the ground set of a matroid which generalize the linear spans of sets of vectors. Given a subset $A$ of the ground set of $M$, denote by $\angles{A}$ the smallest flat of $M$ containing $A$ as a subset. The lattice of flats of $M$ is a poset under the subset relation, and is a lattice with meet and join given by
	\begin{align*}
		F\wedge G &= F\cap G,\\
		F\vee G &= \angles{F\cup G}.
	\end{align*}

It is well-known that the lattice of flats of any matroid is a geometric lattice. Moreover, each geometric lattice corresponds uniquely to a simple matroid. This observation will be of use to us in Section \ref{sec: simple F matroids} where we introduce and study the category of simple $F$-matroids. For an $F$-matroid $M$, we write $\mathcal{L}(M)$ to denote the lattice of flats of the underlying (classical) matroid. 

In the sequel, we use the notion of \emph{modularity} of flats $F\in\mathcal{L}(M)$ in an essential way. For any classical matroid $M$, say that two flats $F,G\in\mathcal{L}(M)$ form a \emph{modular pair} precisely when we have an equality in the submodular inequality for $F$ and and $G$:
	\[\rk_M(F) + \rk(G) = \rk(F\vee G) + \rk(F\wedge G).\]
Say that $F\in\mathcal{L}(M)$ is \emph{modular} if $(F,G)$ is a modular pair for every $G\in\mathcal{L}(M)$. The maximum element $E_M$ and minimum element $\angles{\varnothing}$ of $\mathcal{L}(M)$ are always modular flats, as are all atoms of $M$. We have the following characterization of modular flats.

\begin{pro}[\cite{khan2024tropical}, Proposition 2.4] \label{prop: modular flats}
	For $F\in\mathcal{L}(M)$, the following are equivalent.
	\begin{enumerate}
		\item[(1)] $F$ is a modular flat
		\item[(2)] For all pairs of flats $G\leq H$ we have $(F \wedge H) \vee G = (F \vee G) \wedge H$.
		\item[(3)] For all $G\in\mathcal{L}(M)$ and all $H \leq F$, we have $(F \wedge G) \vee H = F \wedge (G \vee H)$. 
	\end{enumerate} 
\end{pro}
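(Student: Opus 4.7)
My plan is to establish the equivalences via $(1)\Leftrightarrow(2)$ and $(1)\Leftrightarrow(3)$, in each case with a rank computation for the forward direction and a chain argument for the reverse. The three key ingredients will be semimodularity of $\mathcal{L}(M)$ (the lattice of flats of a matroid is geometric), the modular rank identity $\rk(F)+\rk(X)=\rk(F\vee X)+\rk(F\wedge X)$ whenever $F$ is modular, and the basic fact that two comparable flats of equal rank coincide.

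For $(1)\Rightarrow(2)$ I would fix $F$ modular and flats $G\leq H$. The easy inclusion $(F\wedge H)\vee G\leq(F\vee G)\wedge H$ is immediate. I would then apply the modular rank identity to $X=(F\wedge H)\vee G$ and to $X=(F\vee G)\wedge H$; using $F\wedge H\leq F$ and $G\leq H$, a short computation shows that in both cases $F\vee X=F\vee G$ and $F\wedge X=F\wedge H$. Both flats therefore have rank $\rk(F\vee G)+\rk(F\wedge H)-\rk(F)$, and since one contains the other they must agree. The proof of $(1)\Rightarrow(3)$ is parallel: apply the modular rank identity to $F$ against $G$ and against $G\vee H$, using $H\leq F$ to collapse $F\vee(G\vee H)=F\vee G$.

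For $(2)\Rightarrow(1)$, given an arbitrary flat $G$, I would pick a saturated chain $F\wedge G=G_0\lessdot G_1\lessdot\cdots\lessdot G_s=G$ in $\mathcal{L}(M)$. Applying (2) to each pair $(G_i,G)$ (valid since $G_i\leq G$) yields $G_i=(F\vee G_i)\wedge G$. I would then transport this to the parallel chain $F=F\vee G_0\leq F\vee G_1\leq\cdots\leq F\vee G_s=F\vee G$. Semimodularity forces each step to be an equality or a cover, and the identity above rules out equality: $F\vee G_i=F\vee G_{i+1}$ would give $G_{i+1}=(F\vee G_{i+1})\wedge G=(F\vee G_i)\wedge G=G_i$, contradicting $G_i\lessdot G_{i+1}$. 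Hence the upper chain is saturated and $\rk(F\vee G)-\rk(F)=s=\rk(G)-\rk(F\wedge G)$, which is exactly the modular rank identity. The plan for $(3)\Rightarrow(1)$ is the mirror image using a saturated chain $F\wedge G=H_0\lessdot\cdots\lessdot H_r=F$ inside $[F\wedge G,F]$, extracting from (3) the identity $H_i=F\wedge(G\vee H_i)$, and running the same semimodularity-plus-contradiction argument on the chain $G=G\vee H_0\leq\cdots\leq G\vee H_r=F\vee G$.

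The main obstacle will be this chain step: one must invoke the precise form of semimodularity that "$a\lessdot b$ implies $a\vee c$ is either equal to or covered by $b\vee c$'', and then use the identities supplied by (2) or (3) to exclude the collapse case. Once this geometric input is in place, the rank-identity directions $(1)\Rightarrow(2)$ and $(1)\Rightarrow(3)$ reduce to routine manipulations of the modular rank equality.
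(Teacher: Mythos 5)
The paper itself gives no proof of this proposition: it is quoted from \cite{khan2024tropical} (Proposition 2.4) and used as a black box, so there is no internal argument to compare yours against. On its own terms, your strategy is the standard one for this classical fact about modular elements of geometric lattices, and most of it is sound. The two reverse implications are correct as sketched: from (2) (resp.\ (3)) you extract the identity $G_i=(F\vee G_i)\wedge G$ (resp.\ $H_i=F\wedge(G\vee H_i)$) along a saturated chain, semimodularity forces each step of the transported chain to be a cover or a collapse, and the extracted identity rules out collapse, yielding $\rk(F\vee G)-\rk(F)=\rk(G)-\rk(F\wedge G)$. The implication $(1)\Rightarrow(2)$ is also clean: for both candidate flats $X=(F\wedge H)\vee G$ and $Y=(F\vee G)\wedge H$ one checks $F\vee X=F\vee Y=F\vee G$ and $F\wedge X=F\wedge Y=F\wedge H$, so modularity of $F$ applied to $X$ and to $Y$ themselves forces $\rk(X)=\rk(Y)$, hence $X=Y$ since $X\leq Y$.

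The gap is in $(1)\Rightarrow(3)$, which is not in fact ``parallel.'' There the two candidate flats $X=(F\wedge G)\vee H$ and $Y=F\wedge(G\vee H)$ both lie below $F$ (because $H\leq F$), so applying the modular pair identity for $F$ to $X$ or to $Y$ yields only the tautology $\rk(F)+\rk(X)=\rk(F)+\rk(X)$; and applying it to $G$ and to $G\vee H$, as you propose, computes $\rk(Y)=\rk(F)+\rk(G\vee H)-\rk(F\vee G)$ but says nothing about $\rk(X)$. You need a separate lower bound on $\rk((F\wedge G)\vee H)$. It is supplied by the semimodular estimate: for flats $A\leq B$ and any flat $C$ one has $\rk(A\vee C)-\rk(A)\geq\rk(B\vee C)-\rk(B)$ (apply submodularity to the pair $A\vee C$ and $B$, using $(A\vee C)\vee B=B\vee C$ and $A\leq(A\vee C)\wedge B$). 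Taking $A=F\wedge G$, $B=G$, $C=H$ and combining with $\rk(F\wedge G)=\rk(F)+\rk(G)-\rk(F\vee G)$ gives $\rk(X)\geq\rk(F)+\rk(G\vee H)-\rk(F\vee G)=\rk(Y)$, which closes the argument; alternatively, run the same chain argument you use for $(3)\Rightarrow(1)$ for this direction as well. With that one step added, the proof is complete.
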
 

Say that the matroid $M$ is \emph{modular} if every flat $F\in\mathcal{L}(M)$ is modular. For an arbitrary $F$-matroid $M$, say that $M$ is modular precisely when the underlying matroid $\underline{M}$ is. Modular matroids are closely related to projective geometry, as witnessed by the following characterization:

\begin{pro} [\cite{oxley2006matroid}, Proposition 6.9.1]
	A (classical) matroid $M$ is modular if and only if the simplification of each of its connected components is either a free matroid or the matroid of some finite projective geometry. 
\end{pro}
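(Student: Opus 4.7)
The plan is to prove this by a standard reduction to connected simple matroids, followed by an appeal to the lattice-theoretic classification of modular geometric lattices. In practice one would cite Oxley directly, but the underlying argument proceeds in three stages.

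First, I would establish that $M$ is modular if and only if each connected component $M_i$ is modular. The lattice of flats decomposes as $\mathcal{L}(M) \cong \prod_i \mathcal{L}(M_i)$, with rank additive across the product, so both sides of the modularity equation in Proposition \ref{prop: modular flats} split along this factorization. Second, I would show that modularity descends to the simplification: loops lie in $\angles{\varnothing}$ and so are invisible to $\mathcal{L}(M)$, while parallel elements are identified under flat closure. Hence $\mathcal{L}(M) \cong \mathcal{L}(\operatorname{si}(M))$ as ranked geometric lattices, and modularity, being a purely order-theoretic condition on the rank function, is preserved.

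The hard part will be the third stage: classifying connected simple modular matroids as either the one-element free matroid or a projective geometry $PG(n-1,q)$ over a finite field. This is the coordinatization half of the fundamental theorem of projective geometry in its lattice-theoretic guise; one constructs coordinates on the atom incidence structure via a harmonic conjugate or Desargues-style argument, then invokes Wedderburn's little theorem to force the coordinate division ring to be a finite field. Connectedness of $M$ corresponds to indecomposability of $\mathcal{L}(M)$ with respect to direct products, which rules out all higher-rank Boolean factors and leaves only the projective geometries (and the trivial two-element chain). I would not reproduce this argument but appeal to Birkhoff's lattice theory text or directly to Oxley's Proposition 6.9.1.

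For the converse, the free matroid satisfies $\rk(A) = |A|$, making the submodular inequality an equality for every pair of subsets; projective geometries are modular by the standard fact that the lattice of subspaces of a projective space is modular; and modularity propagates from connected components to their direct sum by the product decomposition of the first step. Combined with the reduction to simplifications, these observations yield the stated characterization.
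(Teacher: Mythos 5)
The paper does not actually prove this proposition: it is quoted directly from Oxley (Proposition~6.9.1), and the surrounding text explicitly says it will not be used and is included only to indicate how restrictive modularity is. Your outline is therefore not competing with an argument in the paper but reconstructing the standard one, and its skeleton is sound: the lattice of flats of a direct sum is the product of the lattices of the summands with additive rank, so modularity is detected componentwise; $\mathcal{L}(M)\cong\mathcal{L}(\mathrm{si}(M))$, so modularity is insensitive to loops and parallel classes; and the remaining content is Birkhoff's decomposition of modular geometric lattices, which you reasonably defer to the literature, just as the paper defers the whole statement to Oxley.

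One caveat on your third stage, which as written is a genuine overreach. The classification of connected simple modular matroids does not land in the coordinatized geometries $PG(n-1,q)$: rank-$2$ components are arbitrary lines $U_{2,k}$ with $k\geq 3$ (degenerate projective geometries), and rank-$3$ components may be non-Desarguesian finite projective planes, which are modular but not of the form $PG(2,q)$. The Desargues-plus-Wedderburn coordinatization you invoke is available only in rank at least $4$, where Desargues's theorem is automatic. Oxley's phrase ``the matroid of some finite projective geometry'' is meant in the synthetic (lattice-theoretic) sense precisely to accommodate these cases; if you insist on $PG(n-1,q)$ over a finite field, the forward implication as you state it is false. With ``projective geometry'' read synthetically, your reduction and the converse (modularity of subspace lattices of projective spaces, equality in submodularity for free matroids, and the product decomposition) are correct.
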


This proposition will not be used explicitly, but instead is included primarily to emphasize that the class of modular matroids is much smaller than that of all matroids. 

The class of modular matroids is closed under the operation of restriction and contraction of flats. Indeed, given a modular matroid $M$ and a flat $F\in\mathcal{L}_M$, the restriction $M|F$ and contraction $M/F$ are again modular matroids. This follows from the well-known fact that for any $F\in\mathcal{L}(M)$ we have
	\begin{align*}
		\mathcal{L}(M|F) &\cong [\angles{\varnothing}, F] \\
		\mathcal{L}(M/F) &\cong [F, E_M]
	\end{align*} 
where the right-hand side of the above expressions are intervals in the lattice $\mathcal{L}(M)$.  These isomorphisms are canonical, with $\varphi: [\angles{\varnothing}, F] \to \mathcal{L}(M|F)$ given by the identity and $\psi: [F, E_M] \to \mathcal{L}(M/F)$ given by $G \mapsto G\setminus F$.

\subsubsection{Simple $F$-matroids}\label{subsubsec{simple F-matroids}}

For an $F$-matroid $M$, say that an element $e\in E_M$ is a \emph{loop} when it is contained in no basis of $\underline{M}$. Similarly, $e,e'\in E_M$ are \emph{parallel} precisely when $\{e,e'\}$ is dependent in $\underline{M}$ but neither are loops. In accordance with these notions, say that $M$ is \emph{simple} precisely when $\underline{M}$ is. That is, $M$ is simple if and only if $\underline{M}$ contains no loops or parallel elements. When working with pointed matroids, we do not consider the distinguished loop for the purpose of simplicity. That is, a pointed matroid $M$ is simple if and only if the matroid obtained by deleting the distinguished point is simple.

To each matroid, and so to each $F$-matroid $M$, we can associate a simple matroid known as the \emph{simplification} of $M$, and denoted by $\text{si}(M)$ or $\text{si}_\bullet(M)$ in the pointed case. The matroid $\text{si}(M)$ (resp. $\text{si}_\bullet(M)$) is obtained from $M$ by deleting all (non-distinguished) loops and all but one element from each class of parallel elements. This is well-defined up to isomorphism. 

It is a standard fact of matroid theory that simple (classical) matroids are uniquely determined by their lattice of flats, and that each geometric lattice is isomorphic to the lattice of flats of a unique simple matroid.

\subsection{Tropical toric vector bundles}\label{subsec: ttvb} 

The parallel works \cite{khan2024tropical} and \cite{kaveh2024tropical} established a notion of tropical vector bundles on tropical toric varieties. The paper \cite{khan2024tropical} in particular also introduced a notion of tropical reflexive sheaves on such varieties and established that a large class of these objects exhibit a Harder-Narasimhan filtration as in the context of vector bundles on curves. In this subsection we review the basics of this theory, phrased in terms of pointed $\mathbb{T}$-matroids.

Let $\Sigma$ be a simplicial fan in a lattice $L \cong \mathbb{Z}^n$ and $\Lambda=\Hom(L,\mathbb{Z})$ the dual lattice. Fix a field $k$ and denote by $X_\Sigma$ the toric $k$-variety with fan $\Sigma$. Let $\text{trop}(X_\Sigma)$ be the tropical toric variety with fan $\Sigma$ as in \cite{kajiwara2008tropical} or \cite{payne2009analytification}. Denote by $\Sigma(1)$ be the set of rays $\rho$ of $\Sigma$ and, in case $\text{trop}(X_\Sigma)$ is the tropicalization of a toric variety $X_\Sigma$, denote the corresponding torus-invariant divisors on $X_\Sigma$ by $D_\rho$. For each ray $\rho$, we let $v_\rho$ be the first lattice point on $\rho$. Fix a polarization $h =(h_\rho)_\rho\in\mathbb{R}^{\Sigma(1)}$. 

\begin{mydef}\cite{khan2024tropical}\label{definition: tropical toric reflexive sheaf}
	A \emph{pointed tropical toric reflexive sheaf} of rank $r$ on the tropical toric variety $\text{trop}(X_\Sigma)$ is a tuple $\mathcal{F}=(N,\{F^\rho_\bullet\}_{\rho\in\Sigma(1)})$, where 
	\begin{enumerate}
		\item 
		$N$ is a simple pointed $\mathbb{T}$-matroid of rank $r$ on the ground set $E_N=\widetilde{E}_N \sqcup \{*_N\}$, and
		\item 
		for each ray $\rho\in\Sigma(1)$, $F^\rho_\bullet =\{F^\rho_j\}_{j \in \mathbb{Z}}$ is a chain of flats of the underlying matroid $\underline{N}$ such that $F^\rho_j \geq F^\rho_{j+1}$, $F^\rho_j=*_N$ for $j \gg 0$, and $F^\rho_j=E_N$ for $j \ll 0$.  
	\end{enumerate}
	Throughout, we write $\{F^\rho_\bullet\}_\rho$ to denote the collection of all flags $F^\rho_\bullet$ as $\rho$ ranges in $\Sigma(1)$. 
\end{mydef}

\begin{mydef}\cite{khan2024tropical}\label{definition: tropical toric vector bundle}
	A \emph{pointed tropical toric vector bundle} is a pointed tropical toric reflexive sheaf $\mathcal{F}=(N,\{F^\rho_\bullet\}_\rho)$ satisfying the following additional condition: for any maximal cone $\sigma \in \Sigma$ there is a multiset $\textbf{u}(\sigma)$ in the dual lattice $\Lambda$, and a basis $B_\sigma=\{\textbf{w}_\textbf{u} \mid \textbf{u} \in \textbf{u}(\sigma)\}$ of $\underline{N}$ such that for every ray $\rho \leq \sigma$ and $j \in \mathbb{Z}$, we have
	\begin{equation}\label{eq: tropical toric vector bundle}
		F^\rho_j=\bigvee_{\mathbf{u} \in B_\sigma, \mathbf{u}\cdot \mathbf{v}_\rho \geq j} \mathbf{w}_\mathbf{u}.
	\end{equation}
	That is, $F^\rho_j$ is the smallest flat containing all the $\mathbf{w}_\mathbf{u}$ with $\mathbf{u}\cdot \mathbf{v}_\rho \geq j$.
\end{mydef}

\begin{rmk}
	As noted in \cite{khan2024tropical}, for smooth toric varieties $X_\Sigma$, \eqref{eq: tropical toric vector bundle} is equivalent to the following: For each cone $\sigma\in\Sigma$, there is a basis $B_\sigma$ of $\underline{M}$ such that whenever $\rho\leq \sigma$, each flat $F^\rho_j$ is a join of elements of $B_\sigma$.
\end{rmk}

In what follows, all tropical toric reflexive sheaves and tropical toric vector bundles are assumed to be pointed, and we will simply say tropical toric reflexive sheaves or tropical toric vector bundle if the context is clear. The \emph{rank} of a tropical toric reflexive sheaf is defined to be $\rk(\mathcal{F}) := \rk(N)$. 

A tropical toric reflexive sheaf of rank 1 is called a \emph{tropical line bundle}. Since a tropical line bundle has an underlying matroid of rank 1, all tropical line bundles are indeed tropical vector bundles. For a tropical line bundle $\mathcal{L} = (M, \{L^\rho_\bullet\}_\rho)$, the only information contained in the chains $L^\rho_\bullet$ is the indices $a_\rho = \min\{j\;:\;L^\rho_j\neq *_M\}$. Thus a tropical line bundle can be specified by a vector $(a_\rho)\in\mathbb{Z}^{\Sigma(1)}$. 

The paper \cite{khan2024tropical} also defines a way to tropicalize a toric vector bundle on $X_\Sigma$ to obtain a tropical toric vector bundle. In particular, the tropicalization of a line bundle $\mathcal{O}(\sum_{\rho} a_\rho\cdot D_\rho)$ is the tropical line bundle corresponding to the vector $(a_\rho)\in\mathbb{Z}^{\Sigma(1)}$ as above.

\begin{mydef}[\cite{khan2024tropical}, Definition 6.9]\label{def: iso of trs}
	Two tropical toric vector bundles $\mathcal{E}$ and $\mathcal{F}$ are \emph{isomorphic} if the associated valuated matroids are isomorphic and if there is some $u\in\Lambda$ for which we have
	\[F^\rho_j = G^\rho_{j+u\cdot v_\rho}\]
	for every $\rho \in\Sigma(1)$ and every $j\in\mathbb{Z}$, where $v_\rho$ is the first lattice point on $\rho$.
\end{mydef}

The Harder-Narasimhan filtration for tropical toric vector bundles is defined using a slope function. The approach taken mirrors the classical setting for toric vector bundles as established in \cite{klyachko1990equivariant}, expressing the degree of a tropical toric reflexive sheaf in terms of the filtration $F^\rho_\bullet$. Precisely, the \emph{degree} of a tropical toric reflexive sheaf $\mathcal{E} = (M,\{F^\rho_\bullet\}_\rho)$ is defined to be 
\begin{equation}
\deg(\mathcal{E}) = \sum_{\rho\in\Sigma(1)} h_\rho\sum_{j\in\mathbb{Z}} j\cdot(\rk_{\underline{M}}(F^\rho_j) - \rk_{\underline{M}}(F^\rho_{j+1})).
\end{equation}
For a tropical toric reflexive sheaf $\mathcal{E}$ as above and a tropical toric line bundle $\mathcal{L}$ corresponding to a vector $(a_\rho)\in \mathbb{Z}^{\Sigma(1)}$, define the \emph{tensor product} of $\mathcal{E}$ and $\mathcal{L}$ to be the tropical toric reflexive sheaf $\mathcal{E}\otimes\mathcal{L} = (M, \{E^\rho_{\bullet-a_\rho}\}_\rho)$. Here, $E^\rho_{\bullet - a_\rho}$ is the chain of flats whose $j$th term is $E^\rho_{j- a_\rho}$ (cf. \cite[Definition 6.7]{khan2024tropical}). 
	
\begin{rmk}\label{rmk: deg const on iso classes}
	Example 7.10 of \cite{khan2024tropical} establishes the identity
	\begin{equation}\label{eqn: trs deg tensor}
		\deg(\mathcal{E}\otimes \mathcal{L}) = \deg(\mathcal{E}) + \rk(\underline{M})\cdot\deg(\mathcal{L}),
	\end{equation}
	and Remark 7.6 of \cite{khan2024tropical} establishes that the degree of a toric reflexive sheaf is preserved under tropicalization. This establishes that for a complete toric variety $X_\Sigma$ with polarization $h = (c,c,\cdots,c)\in\mathbb{Z}^{\Sigma(1)}$, tensoring with the tropicalization of a principal divisor preserves the degree of a tropical reflexive sheaf.
\end{rmk}

The existence and uniqueness of Harder-Narasimhan filtrations for tropical toric reflexive sheaves rely on certain modularity conditions on flats of the associated $\mathbb{T}$-matroid. Say that a tropical toric reflexive sheaf is \emph{modular} when its underlying $\mathbb{T}$-matroid is. 

In Section \ref{sec: trs proto ex} we define a category $\MTRS$ of modular tropical toric reflexive sheaves and, by using categorical techniques developed in \cite{andre2009slope} and \cite{li2023categorification}, illustrate that the above rank and degree functions define a unique Harder-Narasimhan filtration.

\subsection{Proto-exact categories}\label{subsec: proto ex cat}

In this subsection, we recall the notion of proto-exact categories, introduced by Dyckerhoff and Kapranov \cite{dyckerhoff2019higher}.

\begin{mydef}\label{definition: proto_exact}
	A \emph{proto-exact} category is a category $\mc{C}$ with zero object equipped with two distinguished classes $\MM$ and $\EE$ of morphisms, called \emph{admissible monomorphisms} and \emph{admissible epimorphisms}, respectively. We denote morphisms in $\MM$  by $\rightarrowtail$ and morphisms in $\EE$ by $\twoheadrightarrow$. The classes $\MM$ and $\EE$ are required to satisfy the following conditions:
	\begin{enumerate}
		\item[(PE1)] Every morphism $0 \rightarrow M$ is in $\MM$, and every morphism $M \rightarrow 0$ is in $\EE$.
		\item[(PE2)] The classes $\MM$ and $\EE$ are closed under composition and contain all isomorphisms.
		\item[(PE3)] A commutative square of the following form is Cartesian if and only if it is co-Cartesian:
		\begin{equation}\label{eq: biCartesian_1}
			\begin{tikzcd}
				M\ar[r,tail]\ar[d,two heads,swap]& N
				\ar[d,two heads]\\
				M'\ar[r,tail]& N'
			\end{tikzcd}
		\end{equation}
		\item[(PE4)] Every diagram \(
		\begin{tikzcd}
			M'\ar[r,tail]&N'&N\ar[l, two heads,swap]
		\end{tikzcd}
		\) can be completed to a bi-Cartesian square \eqref{eq: biCartesian_1}.
		\item[(PE5)] Every diagram \(
		\begin{tikzcd}
			M'&M\ar[l,two heads,swap]\ar[r,tail]&N
		\end{tikzcd}
		\) can be completed to a bi-Cartesian square \eqref{eq: biCartesian_1}.
	\end{enumerate} 
\end{mydef}

In a proto-exact category, a commutative diagram of the following form is said to be an admissible short exact sequence:
	\begin{equation}\label{eq: biCartesian_2}
	\begin{tikzcd}
		A\ar[r,tail]\ar[d,two heads,swap]& B
		\ar[d,two heads]\\
		0\ar[r,tail]& C
	\end{tikzcd}
\end{equation}
We denote a short exact sequence by $A \rightarrowtail B \twoheadrightarrow C$. In this case, we also write $C=B/A$ and call $C$ a quotient object.  

Two admissible monomorphisms $f: A \rightarrowtail B$ and $g:A' \rightarrowtail B$ are isomorphic if there exists an isomorphism $\phi:A \to A'$ such that $g\circ \phi = f$. The isomorphism classes in $\mathfrak{M}$ are said to be \emph{admissible subobjects}. We denote admissible subobjects by $A\subseteq B$. 

\begin{myeg} $\;\;\;$
	
\begin{enumerate}
	\item[(1)] Any Quillen exact category is also proto-exact, with the same classes of admissible monomorphisms and epimorphisms. In particular, any abelian category is proto-exact, with $\MM$ consisting of all monomorphisms and $\EE$ consisting of all epimorphisms. 
	
	\item[(2)] The category $\textbf{Set}_\bullet$ of pointed sets and basepoint-preserving functions is proto-exact with $\MM$ consisting of all pointed injections and $\EE$ consisting of all pointed surjections. The full subcategory of finite pointed sets is also proto-exact, as is the subcategory $\textbf{Vect}_{\mathbb{F}_1}$ of $\mathbb{F}_1$-vector spaces, which finite pointed sets with $\mathbb{F}_1$-linear functions. A function $f$ is $\mathbb{F}_1$-linear if $f^{-1}(x)$ has cardinality 1 whenever $x$ is not the basepoint. In the latter two cases, $\MM$ and $\EE$ consist of the restrictions of admissibles in $\textbf{Set}_\bullet$ to these subcategories. 
	
	\item[(3)] The category $\Mat$ of pointed matroids is finitary and proto-exact, as established in \cite{eppolito2018proto}. 
\end{enumerate}
\end{myeg}  

\subsection{Duality and Direct Sum for Proto-Exact Categories}\label{direct_sum_duality}

The proto-exact categories which we encounter in the sequel possess additional structures which behave well with respect to exact sequences. In this section we describe these structures, namely direct sums and duality, in the context of proto-exact categories. For a more detailed account of these concepts, see \cite{eberhardt2022group}.

\begin{mydef}\label{def:pr_ex_dir_sum}
	Let $\mathcal{C}$ be a proto-exact category with classes $\MM$ and $\EE$ of admissible monomorphisms and epimorphisms, respectively. Say that $\mathcal{C}$ has \emph{exact direct sum} if there is a symmetric monoidal product $\oplus$ satisfying the following properties:
	\begin{enumerate}
		\item[(DS1)] The monoidal unit is the zero object 0.
		\item[(DS2)] $\oplus:\mathcal{C}\times\mathcal{C}\to\mathcal{C}$ is an exact functor.
	\end{enumerate}
	For $U,V\in\mathcal{C}$, denote by $\iota_U:U\to U\oplus V$ and $\pi_U:U\oplus V\to U$ the corresponding inclusion and projection, and similar for $V$. 
	\begin{enumerate}
		\item[(DS3)] The following maps are injections:
		\begin{align*}
			\text{Hom}(U\oplus V,W) &\longrightarrow \text{Hom}(U,W)\times\text{Hom}(V,W) \\
			f &\longmapsto (f\circ \iota_U,f\circ \iota_V) \\
			\text{Hom}(W,U\oplus V) &\longrightarrow \text{Hom}(W,U)\times \text{Hom}(W,V) \\
			f &\longrightarrow (\pi_U\circ f,\pi_V\circ f)
		\end{align*}
		\item[(DS4)] Let $U\xrightarrow{\iota} X \xrightarrow{\pi} V$ be a short exact sequence. Then for all sections $s$ of $\pi$, there is a unique isomorphism $\varphi$ for which  have the diagram
		\[
		\begin{tikzcd} 
			& X & \\
			U \ar[ur, tail, "\iota"] \ar[r, tail, "\iota_U", swap] & U\oplus V \ar[u, "\varphi"] & V \ar[ul, "s", swap] \ar[l, tail, "\iota_V"]
		\end{tikzcd}  
		\]
		Further, for all retracts $r$ of $\iota$, there is a unique isomorphism $\psi$ for which we have the diagram
		\[
		\begin{tikzcd}
			& X \ar[dl, "r", swap] \ar[dr, two heads, "\pi"] & \\
			U & U\oplus V \ar[l, two heads, "\pi_U"] \ar[u, "\psi"] \ar[r, two heads, "\pi_V", swap] & V
		\end{tikzcd}
		\]
	\end{enumerate} 
\end{mydef}

We say that a proto-exact category $\mathcal{C}$ with exact direct sum is \emph{combinatorial} if, for any admissible monomorphism $g:X\rightarrow U_1\oplus U_2$ we have admissible subobjects $\iota_k:X_k\rightarrowtail U_k$ for $k=1,2$ for which there is an isomorphism $f:X\rightarrow X_1\oplus X_2$ satisfying $(\iota_1\oplus \iota_2)\circ f = g$. Additionally, we say that $\mathcal{C}$ is \emph{uniquely split} if every short exact sequence splits in a unique way. 

\begin{mydef}\label{def:pr_ex_duality}
	Let $\mathcal{C}$ be a proto-exact category. Say that $\mathcal{C}$ is a proto-exact-category \emph{with duality} if there is a functor $P:\mathcal{C}\to\mathcal{C}^{\text{op}}$ and a natural isomorphism $\Theta:\text{id}_{\mathcal{C}}\to P\circ P^{\text{op}}$ such that for all $U\in\mathcal{C}$ we have $P(\Theta_U)\circ \Theta_{P(U)} = \text{id}_{P(U)}$ and for which $P$ and $\Theta$ satisfy the following properties.
	\begin{enumerate}
		\item[(D1)] $P(0)\cong 0$ 
		\item[(D2)] $f\in\MM$ if and only if $P(f)\in\EE$
		\item[(D3)] If either of the following squares is biCartesan, then the other is as well. 
		\[
		\begin{tikzcd}
			U \ar[r, tail] \ar[d, two heads] & X \ar[d, two heads] & & P(U) & P(X) \ar[l, two heads] \\
			W \ar[r, tail] & V & & P(W) \ar[u, tail] & P(V) \ar[l, two heads] \ar[u, tail]
		\end{tikzcd}
		\]
	\end{enumerate}
\end{mydef}

\subsection{Hall algebras of finitary proto-exact categories} 

Given an abelian category $\mathcal{A}$ satisfying suitable conditions, one can associate to it the Hall algebra $H_\mathcal{A}$. Whether $H_\mathcal{A}$ carries a natural Hopf algebra structure depends delicately on the properties of $\mathcal{A}$. In his celebrated work \cite{kapranov1997eisenstein}, Kapranov studied the case where $\mathcal{A}$ is the category of coherent sheaves $\mathrm{Coh}(X)$ on a smooth projective curve $X$ defined over a finite field $\mathbb{F}q$. He showed that when $X=\mathbb{P}^1$, a certain subalgebra of $H_{\mathrm{Coh}(X)}$ is isomorphic to the positive part of the quantum affine algebra $U_q(\widehat{\mathfrak{sl}_2})$.

The definition of $H_\mathcal{A}$ can be naturally generalized to the case of proto-exact categories $\mathcal{C}$, and one may study the proto-exact structure of $\mathcal{C}$ through the lens of the associated Hall algebra $H_\mathcal{C}$. See \cite{jun2020toric} for some backgrounds on Hall algebras of proto-exact categories along with a combinatorial approach to study the Hall algebra of the category of coherent sheaves on $\mathbb{P}^2$ whose structure is rarely known.  

Now, we recall the definition of $H_\mathcal{C}$. Say that a proto-exact category $\mathcal{C}$ is \emph{finitary} if for all objects $M$ and $N$ of $\mathcal{C}$ we have that the sets $\text{Hom}(M,N)$ and $\text{Ext}^1(M,N)$ are finite.\footnote{For a proto-exact cateogry, as in the classical setting, $\text{Ext}^1(M,N)$ is defined as the set of equivalence classes of admissible short exact sequences. } Given a finitary proto-exact category $\mathcal{C}$ and a field $k$ of characteristic zero, the \emph{Hall algebra} $H_\mc{C}$ of $\mathcal{C}$ over $k$ is defined to be 
	\[H_\mc{C} := \{f: \text{Iso}(\mc{C})\to k\;|\; f\text{ has finite support}\}.\]
Here, $\text{Iso}(\mc{C})$ is the set of isomorphism classes of objects of $\mc{C}$. The Hall algebra is an associative $k$-algebra with product given by
\begin{equation}
(f*g)([M]) = \sum_{N\subseteq M} f([M/N])\cdot g([N]).
\end{equation}
The Hall algebra $H_\mc{C}$ is equipped with a basis of functions $\{\delta_{[M]}\;|\;[M]\in\text{Iso}(\mc{C})\}$, defined in analogy with Dirac measures, given by
\begin{equation}
	\delta_{[M]}([N]) = \begin{cases} 1 & M\cong N, \\ 0 & \text{else.} \end{cases}
\end{equation}
The function $\delta_{[0]}$ is the unit of $H_\mc{C}$ and multiplication of basis elements is given by
\begin{equation}
	\delta_{[M]}*\delta_{[N]} = \sum_{[R]\in\text{Iso}(\mc{C})} g_{M,N}^R\cdot \delta_{[R]}.
\end{equation}
Here the constants $g_{M,N}^R\in \mathbb{N}$ are given by
\begin{equation}
	g_{M,N}^R = \#\{R'\;|\;R'\subseteq R,\;R'\cong N,\;R/R'\cong M\}.
\end{equation}
Note that finitarity ensures that the $g_{M,N}^R$ are natural numbers. When $\mc{C}$ is combinatorial and has a direct sum $\oplus$, the Hall algebra $H_\mc{C}$ can be promoted to a bialgebra by equipping it with \emph{Green's comultiplication}, which is given by
	\begin{align*}
		\Delta:H_\mc{C} &\longrightarrow H_\mc{C}\otimes H_\mc{C} \\
		\Delta(f)([M],[N]) &:= f([M\oplus N]).
	\end{align*} 

\subsection{Proto-abelian categories}\label{subsec: proto ab cat}

In this subsection, we recall the notion of proto-abelian categories, introduced by Andr\'e \cite{andre2009slope}. Given a category $\mathcal{C}$ with zero object, a \emph{kernel} of a morphism $f:M\to N$ is a morphism $k:K\to M$ for which the following square is Cartesian:
		\[
	\begin{tikzcd}
		K\ar[r,"k"]\ar[d]& M
		\ar[d,"f"]\\
		0\ar[r]& M
	\end{tikzcd}
		\]
Dually, a \emph{cokernel} of $f$ is a morphism $c:N\to C$ for which the following square is co-Cartesian:
		\[
	\begin{tikzcd}
		M\ar[r,"f"]\ar[d]& N
		\ar[d,"c"]\\
		0\ar[r]& C
	\end{tikzcd}
		\]
A kernel is necessarily a monomorphism and a cokernel is necessarily an epimorphism. Say that a morphism in $\mathcal{C}$ is a \emph{strict monomorphism} if it is the kernel of another morphism in $\mathcal{C}$. Likewise, a \emph{strict epimorphism} is by definition the cokernel of another morphism. If $M\to N$ is a strict monomorphism (resp. epimorphism), say that $M$ is a \emph{strict subobject} of $N$ (resp. $N$ is a \emph{strict quotient} of $M$). For the duration of this section, we denote strict monomorphisms by $\rightarrowtail$ and strict epimorphisms by $\twoheadrightarrow$. 

By abuse of terminology, we refer to the domain of a kernel of $f$ as a kernel of $f$ and refer to the codomain of a cokernel of $f$ as a cokernel of $f$. These objects are unique up to isomorphism, and it will always be clear from context whether we are speaking of a morphism or an object.

For a category $\mathcal{C}$ with kernels and cokernels, given an object $M$ and two strict subobjects $N_1$ and $N_2$ of $M$ we can construct a \emph{sum} $N_1 + N_2$ and \emph{intersection} $N_1\cap N_2$ of $N_1$ and $N_2$ under $M$. These objects are again strict subobjects of $M$ and are constructed as follows. 

The intersection $N_1\cap N_2$ is simply the pull-back of the diagram $N_1\rightarrowtail M \leftarrowtail N_2$, which exists in $\mathcal{C}$ by \cite[p.~9]{andre2009slope}. Let $M/N_1$ and $M/N_2$ be the cokernels of the inclusions $N_i\rightarrowtail M$. Then one forms the pushout $M'$ of $M/N_1\twoheadleftarrow M \twoheadrightarrow M/N_2$, which is again guaranteed to exist by \cite[p.~9]{andre2009slope}. The \emph{sum} of $N_1$ and $N_2$ is the kernel $N_1 + N_2$ of the resulting strict epimorphism $M\to M'$. Diagrammatically, this can be written as follows:
\begin{equation}\label{diag: sum of subobj}
	\begin{tikzcd} 
		N_1 + N_2 \ar[dr, tail] & N_1 \ar[l, tail] \ar[d, tail] & \\
		N_2 \ar[u, tail] \ar[r, tail] & M \ar[r, two heads] \ar[d, two heads] & M/N_2 \ar[d] \\
		& M/N_1 \ar[r] & M'
	\end{tikzcd}
\end{equation}

It is shown in \cite{li2023categorification} that $N_1$ and $N_2$ are strict subobjects of $N_1 + N_2$ and that $N_1 \cap N_2$ is a strict subobject of both $N_1$ and $N_2$, so that we can take quotients $(N_1 + N_2)/N_i$ and $N_i/(N_1 \cap N_2)$ for $i=1,2$. By \cite[Proposition 3.3]{li2023categorification}, the sum and intersection of subobjects are related by the following Cartesian square:
\begin{equation}\label{diag: Cart sum prod}
	\begin{tikzcd}
		N_1 \cap N_2 \ar[r, tail] \ar[d, tail] & N_1 \ar[d, tail]\\
		N_2 \ar[r, tail] & N_1 + N_2
	\end{tikzcd}
\end{equation} 

\begin{mydef}\label{definition: Andre's proto-abelian}
	A \emph{proto-abelian} category is a pointed category $\mathcal{C}$ which has all kernels and cokernels and additionally satisfies the following conditions:
	\begin{enumerate}
		\item[(PA1)] Any morphism with zero kernel (resp. cokernel) is a monomorphism (resp. epimorphism)
		\item[(PA2)] The pullback of a strict epi along a strict mono is a strict epi, and the pushout of a strict mono along a strict epi is a strict mono. 
	\end{enumerate}
\end{mydef}

Each proto-abelian category carries a natural proto-exact structure, with $\MM$ consisting of strict monomorphisms and $\EE$ of strict epimorphisms. In proto-abelian categories, a \emph{short exact sequence} is a diagram of the form (\ref{eq: biCartesian_2}), where vertical arrows are strict epimorphisms and horizontal arrows are strict monomorphisms. 

\begin{rmk}
	Proto-abelian categories are sometimes defined in a different but related way to the above (see \cite{dyckerhoff2019higher}, \cite{jun2020toric}). In those treatments, a category is said to be proto-abelian if it is proto-exact and the classes $\MM$ and $\EE$ consist of all monomorphisms and all epimorphisms, respectively. Such categories are not necessarily proto-abelian in our sense, and neither notion of proto-abelianness immediately implies the other. 
\end{rmk}

\begin{myeg} $\;\;\;$
	
	\begin{enumerate}
		\item[(1)] 
		Any abelian category is proto-abelian. Conversely, if a proto-abelian category has all finite products and coproducts, and if any morphism which is both monic and epic is an isomorphism, then the category is abelian (see \cite{andre2009slope}).
		
		\item[(2)] The categories of finite-dimensional Euclidean and Hermitian vector spaces with linear maps of norm at most 1 is proto-abelian. This category has all finite coproducts and pushouts, but does not have all products or pullbacks. 
		
		\item[(3)] The category of Euclidean lattices with additive maps of norm at most 1 is proto-abelian (see \cite{andre2009slope}, \cite{li2023categorification}). 
	\end{enumerate} 
\end{myeg} 

We can also promote a proto-exact category to a proto-abelian category, so long as it contains kernels and cokernels and satisfies the first axiom of proto-abelian categories above. 

\begin{pro}\label{prop:protoex_then_protoab}
	Let $\mathcal{C}$ be a category with kernels and cokernels satisfying axiom \emph{(PA1)}. Suppose that $\mathcal{C}$ is proto-exact with $\MM$ being compositions of isomorphisms with strict monomorphisms and $\EE$ being the collection of strict epimorphisms composed with isomorphisms in $\mathcal{C}$. Then $\mathcal{C}$ is proto-abelian. 
\end{pro}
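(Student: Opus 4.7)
The goal is to verify axiom (PA2), since (PA1) is included in the hypotheses and kernels and cokernels exist by assumption. The first step of the plan is to observe that the classes $\mathfrak{M}$ and $\mathfrak{E}$ defined in the statement coincide exactly with the classes of strict monomorphisms and strict epimorphisms, respectively. Indeed, if $e \colon A \to B$ is a cokernel of some $d \colon D \to A$ and $\phi$ is an isomorphism, then a direct universal-property check shows that both $\phi \circ e$ and $e \circ \psi$ (for an isomorphism $\psi$ on the source side) are again cokernels, of $d$ and of $\psi^{-1} \circ d$, respectively. Consequently every morphism in $\mathfrak{E}$ is a strict epimorphism and, trivially, every strict epimorphism lies in $\mathfrak{E}$; the dual argument establishes the same for $\mathfrak{M}$ and the class of strict monomorphisms.

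The second step is to verify (PA2) using (PE4) and (PE5). Given a strict mono $g \colon Z \rightarrowtail Y$ and a strict epi $f \colon X \twoheadrightarrow Y$, axiom (PE4) completes the cospan $Z \rightarrowtail Y \twoheadleftarrow X$ to a bi-Cartesian square
\[
\begin{tikzcd}
W \ar[r, tail] \ar[d, two heads] & X \ar[d, two heads, "f"] \\
Z \ar[r, tail, "g"'] & Y
\end{tikzcd}
\]
Because this square is Cartesian, the object $W$ together with its two projections realizes the pullback $X \times_Y Z$ in $\mathcal{C}$, so the left-hand edge $W \to Z$ is precisely the pullback of $f$ along $g$. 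This edge lies in $\mathfrak{E}$ and hence, by the first step, is a strict epimorphism. The pushout half of (PA2) follows symmetrically: given a strict mono $M \rightarrowtail N$ and a strict epi $M \twoheadrightarrow M'$, axiom (PE5) supplies a bi-Cartesian completion, which is in particular co-Cartesian and therefore realizes the pushout; the edge opposite the strict mono lies in $\mathfrak{M}$ and is thus a strict monomorphism.

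I expect the only genuinely delicate point to be the preliminary identification of $\mathfrak{M}$ and $\mathfrak{E}$ with the classes of strict monos and strict epis, since without it the admissibility information supplied by (PE4) and (PE5) would not translate into the strictness statements demanded by (PA2). Once that identification is secured, the rest of the argument reduces to the definition of Cartesian and co-Cartesian squares together with the proto-exact axioms (PE4) and (PE5), and no further calculation is needed.
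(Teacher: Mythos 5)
Your proof is correct and follows essentially the same route as the paper's (which simply notes that, by the definition of $\MM$ and $\EE$, axioms (PE4) and (PE5) immediately yield (PA2)). Your preliminary identification of $\MM$ and $\EE$ with the strict monomorphisms and strict epimorphisms, and the observation that the Cartesian (resp.\ co-Cartesian) half of the bi-Cartesian completion realizes the required pullback (resp.\ pushout), just makes explicit what the paper leaves implicit.
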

\begin{proof}
	By the definition of $\MM$ and $\EE$ we see that axioms (PE4) and (PE5) of a proto-exact category immediatly ensure that axiom (PA2) holds.  
\end{proof}

\subsection{Stability and slope in proto-abelian categories}\label{subsec: stability and slopes} 

Proto-abelian categories possess sufficient structure to define useful filtrations of their objects using slope functions, generalizing for example the Harder-Narasimhan filtration for vector bundles on curves. See \cite{andre2009slope}, \cite{rudakov1997stability}, and \cite{li2023categorification} for a broader account. We now introduce that part of this theory which will be of use to us in the sequel. 

Suppose we are given \emph{rank} and \emph{degree} functions which are constant on isomorphism classes and additive on short exact sequences, so that they give additive maps out of the Grothendieck group of $\mc{C}$:
	\begin{align*}
		\text{rk}&:K_0(\mathcal{C})\to\mathbb{N} \\
		\text{deg}&:K_0(\mathcal{C})\to\mathbb{R}
	\end{align*} 
Suppose also that $\rk(M)=0$ implies that $M$ is the zero object. We then define a \emph{slope} function $\mu:\text{Iso}(\mathcal{C})\setminus\{0\}\to\mathbb{R}$ on $\mathcal{C}$ by the formula
	\[\mu(M) = \frac{\deg(M)}{\text{rk}(M)}\] 
Such a function $\mu$ is said to satisfy the \emph{strong slope inequality} if for any object $M$ and any strict subobjects $N_1$ and $N_2$ of $M$ we have
\begin{equation} \label{eqn:strong slope ineq}
	\mu\left(\frac{N_1}{N_1\cap N_2}\right) \leq \mu\left(\frac{(N_1 + N_2)}{N_2}\right).
\end{equation} 

Say that an object $M$ of $\mathcal{C}$ is \emph{semistable} (resp.~\emph{stable}) if we have $\mu(N)\leq \mu(M)$ (resp.~$\mu(N)<\mu(M)$) for every strict subobject $N\rightarrowtail M$. 

\begin{mydef}\label{def:slope filtration}
	A \emph{slope filtration} of $M$ is a flag of strict subobjects
\begin{equation}
	0 = N_0\rightarrowtail N_1 \rightarrowtail \cdots \rightarrowtail N_r = M
\end{equation}
such that each quotient $N_{i+1}/N_i$ is semistable and for which $\mu(N_i/N_{i-1})>\mu(N_{i+1}/N_i)$. A \emph{slope filtration on} $\mathcal{C}$ is a functorial assignment of a slope filtration to every object of $\mathcal{C}$. 
\end{mydef}

\begin{rmk}
The paper \cite{andre2009slope} establishes a framework in which slope functions are used to define slope filtrations on proto-abelian categories. The slope functions there are required to be non-decreasing under morphisms which are both monic and epic. Later, this framework was generalized by \cite{li2023categorification} to include the broader class of slope functions satisfying the strong slope inequality (\ref{eqn:strong slope ineq}). The theory in \cite{li2023categorification} also departs from \cite{andre2009slope} in a handful of other subtle ways. However, since we do not need the level of generality attained there, we proceed with slope functions as defined above.
\end{rmk}

An object $M$ of $\mathcal{C}$ is said to be \emph{Noetherian} (resp.~\emph{Artinian}) if it satisfies the Noetherian (resp.~Artinian) property on chains of strict subobjects of $M$. In \cite{li2023categorification}, Li establishes that given a function $\mu: \text{Iso}(\mathcal{C})\setminus\{0\}\to\mathbb{R}$ which satisfies the strong slope inequality, one obtains for each nonzero Noetherian and Artinian object of $\mathcal{C}$ a unique slope filtration associated to $\mu$ as above (see \cite{li2023categorification} Theorems 5.2, 5.3, 5.4, and the discussion in Section 5.3). 

\begin{mythm}[\cite{li2023categorification}, Theorems 5.2, 5.3, 5.4]\label{thm:li HN filtr}
	Let $\mathcal{C}$ be a small proto-abelian category in which all objects are both Artinian and Noetherian and let $\mu:\text{Iso}(\mathcal{C})\setminus\{0\}\to \mathbb{R}$ be a function satisfying the strong slope inequality (\ref{eqn:strong slope ineq}). Then for each nonzero object $M$ of $\mathcal{C}$ there exists a filtration
		\[0 = N_0\rightarrowtail N_1 \rightarrowtail \cdots \rightarrowtail N_r = M\]
	by strict subobjects, unique up to isomorphism, such that for all $i=1,\ldots,r$ we have that $N_i/N_{i-1}$ is semistable and satisfy
		\[\mu(N_1/N_0)>\mu(N_2/N_1)>\cdots>\mu(N_r/N_{r-1}).\]
\end{mythm}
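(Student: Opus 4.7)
The plan is to adapt the classical Harder--Narasimhan argument to the proto-abelian setting, with the strong slope inequality (\ref{eqn:strong slope ineq}) playing the role that the usual slope see-saw for short exact sequences plays in the abelian case. The strategy has two parts: first, construct a canonical maximal destabilizing strict subobject $N_1 \rightarrowtail M$ and iterate on the quotient $M/N_1$; second, establish uniqueness by comparing any candidate filtration to the one built in the first part.

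The key lemma for existence is that strict subobjects of maximum slope are closed under sums. Precisely, given strict subobjects $L_1, L_2 \rightarrowtail M$ with $\mu(L_1) = \mu(L_2) = s$, I would apply the strong slope inequality to get $\mu(L_1/(L_1 \cap L_2)) \leq \mu((L_1 + L_2)/L_2)$; combining this with additivity of $\text{rk}$ and $\deg$ on the short exact sequences $L_1 \cap L_2 \rightarrowtail L_1 \twoheadrightarrow L_1/(L_1 \cap L_2)$ and $L_2 \rightarrowtail L_1 + L_2 \twoheadrightarrow (L_1+L_2)/L_2$ arising from the Cartesian square (\ref{diag: Cart sum prod}) and the pushout diagram (\ref{diag: sum of subobj}), one deduces $\mu(L_1 + L_2) \geq s$ and $\mu(L_1 \cap L_2) \leq s$. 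The Noetherian hypothesis then forces ascending joins of strict subobjects of slope $\geq s$ to stabilize, so that the set of strict subobjects of $M$ achieving a fixed slope $s$, when nonempty, contains a unique largest element. Because ranks of strict subobjects lie in the finite set $\{0,1,\ldots,\text{rk}(M)\}$, one can bound the supremum $s := \sup\{\mu(L) : 0 \neq L \rightarrowtail M\}$ rank by rank and show via the see-saw lemma and stabilization of ascending chains that it is attained; the largest subobject of slope $s$ is then the canonical $N_1$, which is automatically semistable by its maximality. To see that $\mu(N_1)$ strictly exceeds the slope of every nonzero strict subobject of $M/N_1$, I would lift any such $P \rightarrowtail M/N_1$ to a strict subobject $N' \rightarrowtail M$ with $N_1 \subseteq N'$ and $N'/N_1 \cong P$ using axiom (PA2); additivity then shows $\mu(P) \geq s$ would force $\mu(N') \geq s$, contradicting the maximality of $N_1$. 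Iterating the construction on $M/N_1$ and pulling back yields an ascending chain $0 = N_0 \rightarrowtail N_1 \rightarrowtail N_2 \rightarrowtail \cdots$ in $M$, which must terminate at $M$ in finitely many steps by the Noetherian assumption.

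For uniqueness, suppose $0 = M_0 \rightarrowtail M_1 \rightarrowtail \cdots \rightarrowtail M_r = M$ is any filtration satisfying the conclusion. I would first show $\mu(M_1) = s$: given a strict subobject $L \rightarrowtail M$ with $\mu(L) > \mu(M_1)$, the see-saw applied iteratively to the image of $L$ in each quotient $M/M_i$ would produce a subobject of $M_{i+1}/M_i$ of slope exceeding $\mu(M_{i+1}/M_i)$, contradicting semistability of $M_{i+1}/M_i$. Hence $\mu(M_1) = s$, and by maximality of $N_1$ we get $M_1 \subseteq N_1$. If $M_1 \neq N_1$, then $N_1/M_1$ is a nonzero strict subobject of $M/M_1$ with slope $s$ (again by additivity since $\mu(M_1) = \mu(N_1) = s$), contradicting the fact that every nonzero strict subobject of $M/M_1$ has slope at most $\mu(M_2/M_1) < s$. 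Thus $M_1 = N_1$, and uniqueness propagates by induction on the length $r$ applied to $M/M_1$.

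The main obstacle is the foundational step of showing that the supremum $s$ is attained and that there is a unique largest strict subobject realizing it. This is where one must orchestrate the Noetherian hypothesis, the finiteness of possible ranks, and the see-saw consequence of the strong slope inequality simultaneously, while manipulating sums and intersections of strict subobjects through the diagrams (\ref{diag: sum of subobj}) and (\ref{diag: Cart sum prod}) whose well-definedness relies on the proto-abelian axioms. Once this destabilizing subobject is in hand, the remainder of both the existence and uniqueness arguments is a formal adaptation of the classical proof, with the Noetherian property ensuring termination and additivity of $\text{rk}$ and $\deg$ on short exact sequences doing the arithmetic work.
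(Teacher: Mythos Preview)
The paper does not supply its own proof of this statement: it is quoted from \cite{li2023categorification} (Theorems 5.2, 5.3, 5.4) and invoked as a black box in Section~\ref{sec:semistab for mtrs}. Your outline is a reasonable sketch of the standard Harder--Narasimhan argument in the proto-abelian setting, and indeed tracks what one finds in Li (and, under slightly stronger hypotheses on $\mu$, in Andr\'e \cite{andre2009slope}), so there is nothing substantive to compare against.

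Two places would need tightening if written out in full. In the uniqueness argument you refer to ``the image of $L$ in each quotient $M/M_i$''; proto-abelian categories need not have images, so this step should instead be phrased via the intersections $L\cap M_i$ and the strong slope inequality applied to the pairs $(L,M_i)$. And the step you correctly flag as the main obstacle---showing that the supremum $s=\sup\{\mu(L):0\neq L\rightarrowtail M\}$ is actually attained---does require genuine work: the Noetherian hypothesis alone does not force a supremum of real numbers to be realized, and one must combine it with the see-saw consequence of~(\ref{eqn:strong slope ineq}) and the finiteness of the range of $\rk$ on strict subobjects of $M$ in the way you indicate. Beyond these points your outline is sound.
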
 

We call any filtration obtained in this way a \emph{categorical slope filtration}. This establishes in particular that functions satisfying the strong slope inequality are a suitable generalization of the slope functions of \cite{andre2009slope} for the purpose of constructing filtrations. 

In Section \ref{sec:semistab for mtrs}, we illustrate that the slope function of \cite{khan2024tropical}, when restricted to the category $\MTRS$ of modular tropical reflexive sheaves, satisfies the strong slope inequality. This establishes Corollary \ref{cor: HN filtr coincide}, illustrating in particular that the Harder-Narasimhan filtration of \cite{khan2024tropical} is a categorical slope filtration as in Theorem \ref{thm:li HN filtr}.

\section{The category $F$-$\Mat$}\label{sec:category_FMat}

\subsection{Morphisms of Pointed $F$-Matroids}\label{subsec:Fmat_morphisms}

Fix a perfect idyll $F$. Hereafter, by a \emph{matroid} we mean a pointed $F$-matroid unless otherwise indicated. We now recall morphisms of matroids over a perfect idyll $F$, following definitions introduced in \cite{jarra2024quiver}. 

Let $S=\tilde{S}\sqcup\{*_S\}$ and $T=\tilde{T}\sqcup\{*_T\}$ be pointed sets. A \emph{submonomial matrix} over $F$ is a matrix $A$ with entries in $F$ such that each row and column contains at most one nonzero entry. Given such a matrix with rows and columns indexed by $\tilde{T}$ and $\tilde{S}$, respectively, submonomiality ensures that each such $A$ defines a mapping $A:F^{\tilde{S}}\to F^{\tilde{T}}$ via matrix-vector multiplication. The \emph{underlying map} of a submonomial matrix $A$ is the function $\underline{A}:S\to T$ defined by 
\[\underline{A}(i) = \begin{cases} j & i\neq *_S\text{ and }A_{i,j} \neq 0_F \\ *_T & \text{else} \end{cases}.\]
In particular, $\underline{A}$ is an $\mathbb{F}_1$-linear map of pointed sets. It is clear from the definitions that the transpose $A^t$ of such a submonomial matrix $A$ is again submonomial and defines a map from $F^{\tilde{T}}$ to $F^{\tilde{S}}$ and an associated underlying map $\underline{A^t}:T\to S$. We refer to $A^t$ as the \emph{adjoint} or \emph{dual} of $A$.

\begin{mydef}\label{definition: morphism F-matroids}
	A \emph{morphism} of (nonzero) $F$-matroids $f:M\to N$ is a submonomial matrix $f$ over $F$ indexed by $\tilde{E}_N$ and $\tilde{E}_M$ such that $f\cdot\mathcal{V}_M\subseteq\mathcal{V}_N$, where $\mathcal{V}_M$ and $\mathcal{V}_N$ are the sets of vectors of $M$ and $N$, respectively. 
\end{mydef}

\begin{rmk}\label{rmk: zero morphisms}
	Recall that the matroid $U^0_1$ consisting of only a basepoint is indeed a pointed $F$-matroid. $F$-matroid morphisms $M\to N$ where either $N=U^0_1$ or $M=U^0_1$ do not make sense in the above definition, since they would be defined as empty matrices. Instead, note that in either of these cases we have natural notions of morphisms given by the only existing pointed maps on underlying pointed sets. It easily follows that $0:=U^0_1$ is the zero object of $\FMat$.
\end{rmk} 

$F$-matroid morphisms are preserved under duality. Explicitly, a submonomial matrix $f$ is a morphism from $M$ to $N$ if and only if $f^t$ is a morphism from $N^*$ to $M^*$ (\cite[Corollary 2.9]{jarra2024quiver}). In the sequel, we will use the following alternative characterizations of matroid morphisms.

\begin{pro}[\cite{jarra2024quiver}, Theorems 2.8]\label{prop:matroid morphisms}
	Let $M=[\mu]$ and $N=[\nu]$ be $F$-matroids of ranks $m$ and $n$, respectively, and let $f$ be a submonomial $F$-matrix indexed by $\tilde{E}_N\times \tilde{E}_M$. Then the following are equivalent:
	\begin{enumerate}
		\item $f$ is a morphisms from $M$ to $N$ 
		\item for all $\mathbf{x}=(x_0,\ldots,x_m)\in \tilde{E}_M^{m+1}$ and all $\mathbf{y}=(y_2,\ldots,y_n)\in \tilde{E}_N^{n-1}$, we have
		\[\sum_{k=0}^m (-1)^k\cdot\mu(\mathbf{x}_{\hat{k}})\cdot f_{\underline{f}(x_k),x_k}\cdot\nu(\underline{f}(x_k),\mathbf{y})\in N_F.\] 
	\end{enumerate}
\end{pro}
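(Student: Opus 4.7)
The strategy is to recognize the sum in (2) as the orthogonal pairing $(f\cdot X_{\mathbf{x}})\cdot Y_{\mathbf{y}}$ of the fundamental circuit $X_{\mathbf{x}}$ of $M$ with a suitable covector $Y_{\mathbf{y}}$ of $N$, and then to apply perfectness of $F$ in both directions. For each $\mathbf{y}=(y_2,\ldots,y_n)\in\tilde{E}_N^{n-1}$, define $Y_{\mathbf{y}}\in F^{\tilde{E}_N}$ by $Y_{\mathbf{y}}(z):=\nu(z,\mathbf{y})$. The first step is to verify that $Y_{\mathbf{y}}$ is a covector of $N$: for any fundamental circuit $X_{\mathbf{z}}$ of $N$ indexed by $\mathbf{z}\in \tilde{E}_N^{n+1}$, the Grassmann--Pl\"ucker relation (GP2) for $\nu$ gives
\[
X_{\mathbf{z}}\cdot Y_{\mathbf{y}} \;=\; \sum_{k=0}^n (-1)^k\,\nu(\mathbf{z}_{\hat{k}})\,\nu(z_k,\mathbf{y}) \;\in\; N_F,
\]
so $Y_{\mathbf{y}}\in \mathcal{C}_N^\perp = \mathcal{V}_N^*$. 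Conversely, every fundamental cocircuit of $N$ is an $F^\times$-scalar multiple of some $Y_{\mathbf{y}}$: unraveling the definition $\nu^*(\mathbf{u}) = \operatorname{sign}(\mathbf{u},\mathbf{u}')\nu(\mathbf{u}')$ for a fundamental circuit $X^*_{\mathbf{z}^*}$ of $N^*$ and tracking the sign of the permutation of $\tilde{E}_N$ that moves $z^*_k$ to the end of $\mathbf{z}^*$, one obtains $X^*_{\mathbf{z}^*} = \varepsilon\cdot Y_{\mathbf{y}}$ for some nonzero $\varepsilon\in F^\times$ and $\mathbf{y}$ an ordering of the complement of $\mathbf{z}^*$ in $\tilde{E}_N$.

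The submonomial structure of $f$ makes the transpose pairing routine: for $V\in F^{\tilde{E}_M}$ and $W\in F^{\tilde{E}_N}$, one has $(f\cdot V)\cdot W = V\cdot (f^tW)$, and in particular $(f^tY_{\mathbf{y}})(x)= f_{\underline{f}(x),x}\cdot\nu(\underline{f}(x),\mathbf{y})$. Combined with $X_{\mathbf{x}}(x_k)=(-1)^k\mu(\mathbf{x}_{\hat{k}})$, this yields the key identification
\[
\sum_{k=0}^m (-1)^k\,\mu(\mathbf{x}_{\hat{k}})\cdot f_{\underline{f}(x_k),x_k}\cdot \nu(\underline{f}(x_k),\mathbf{y}) \;=\; X_{\mathbf{x}}\cdot (f^tY_{\mathbf{y}}) \;=\; (f\cdot X_{\mathbf{x}})\cdot Y_{\mathbf{y}}.
\]
The direction $(1)\Rightarrow(2)$ is then immediate: if $f$ is a morphism, then $f\cdot X_{\mathbf{x}} \in f\cdot\mathcal{V}_M\subseteq \mathcal{V}_N$, and perfectness of $F$ gives $\mathcal{V}_N\perp\mathcal{V}_N^*$, placing $(f\cdot X_{\mathbf{x}})\cdot Y_{\mathbf{y}}$ in $N_F$, which is exactly (2).

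For the converse $(2)\Rightarrow(1)$, the displayed identity recasts (2) as $X_{\mathbf{x}}\cdot (f^tY_{\mathbf{y}}) \in N_F$ for every fundamental circuit $X_{\mathbf{x}}$ of $M$, whence $f^tY_{\mathbf{y}} \perp \mathcal{C}_M$ and so $f^tY_{\mathbf{y}} \in \mathcal{C}_M^\perp = \mathcal{V}_M^*$ is a covector of $M$. For any $V\in \mathcal{V}_M$, perfectness now gives $(f\cdot V)\cdot Y_{\mathbf{y}} = V\cdot (f^tY_{\mathbf{y}})\in N_F$; since every cocircuit of $N$ is an $F^\times$-scalar multiple of some $Y_{\mathbf{y}}$, we conclude $f\cdot V \in (\mathcal{C}_N^*)^\perp = \mathcal{V}_N$, establishing $f\cdot \mathcal{V}_M \subseteq \mathcal{V}_N$. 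The main obstacle is the cocircuit identification in the first paragraph: tracking the interplay between the duality $\nu\leftrightarrow\nu^*$ and the signs of the permutations of $\tilde{E}_N$ requires care, while degenerate inputs (repeated entries in $\mathbf{x}$ or $\mathbf{y}$, or noninjectivity of $\underline{f}$ on the entries of $\mathbf{x}$) are handled by the alternation properties of $\mu$ and $\nu$, which force the problematic summands to vanish or to cancel in pairs.
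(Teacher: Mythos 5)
The paper does not actually prove this proposition: it is imported from \cite{jarra2024quiver} (Theorem 2.8), with only the remark that the adaptation to the pointed case is immediate. So there is no in-paper argument to compare against; judged on its own, your proof is correct and follows what is essentially the natural route. The two load-bearing facts are exactly the ones you isolate: (i) the vectors $Y_{\mathbf{y}}(z)=\nu(z,\mathbf{y})$ lie in $\mathcal{V}_N^*=\mathcal{C}_N^\perp$ (immediate from (GP2)) and, up to $F^\times$-scaling, exhaust the cocircuits of $N$; and (ii) for a submonomial $f$ the adjunction $(f\cdot V)\cdot W=V\cdot(f^tW)$ holds on the nose in $F^+$ because each row and each column contributes at most one term. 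With these, $(1)\Rightarrow(2)$ is perfection of $F$ applied to $N$ --- plus the circuit--cocircuit orthogonality $\mathcal{C}_M\subseteq\mathcal{V}_M$, which you use silently when writing $f\cdot X_{\mathbf{x}}\in f\cdot\mathcal{V}_M$ and which deserves an explicit citation --- and $(2)\Rightarrow(1)$ is perfection applied to $M$; both uses are legitimate under the standing hypothesis that $F$ is perfect. The one step you assert rather than carry out is the sign bookkeeping showing $X^*_{\mathbf{z}^*}=\varepsilon\cdot Y_{\mathbf{y}}$ with $\varepsilon\in F^\times$ independent of $k$; this is routine but genuinely fiddly, and a complete write-up should either do it or cite the dual-pair description of cocircuits from \cite{baker2017matroids}. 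Your treatment of degenerate tuples is the right observation and is needed: condition (2) ranges over all of $\tilde{E}_M^{m+1}$, not just $\Xi_M$, and the cancellation $\mu(\mathbf{x}_{\hat{i}})=(-1)^{j-i-1}\mu(\mathbf{x}_{\hat{j}})$ together with $a+(-a)\in N_F$ is what closes that gap.
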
 

\begin{rmk}
	In \cite{jarra2024quiver}, Proposition \ref{prop:matroid morphisms} is stated and proven in terms of morphisms of non-pointed matroids. The adaptation of this result to the pointed case is immediate and has been omitted. 
\end{rmk}

For each perfect idyll $F$, the collection of (pointed) $F$-matroids together with the morphisms above forms a category $F$-$\Mat$. One can also define a category $F$-\textbf{Mat} of non-pointed $F$-matroids, but we do not study this category here.

\begin{rmk}\label{rmk:base_change_functor}
	Recall that to each $F$-matroid $M$ is associated an underlying matroid $\underline{M}$. With the above definition of matroid morphisms, the assignment $M\mapsto \underline{M}$ is a functor from $F$-matroids to $\mathbb{K}$-matroids. More generally, for any morphism of perfect idylls $\Phi:F\to G$ we have that the base-change map $M\to \Phi_*M$ is a functor from $F$-$\Mat$ to $G$-$\Mat$. On morphisms, these functors assign to an $F$-matroid morphism $f:M\to N$ a $G$-matroid morphism $\Phi_*f:\Phi_*M\to\Phi_*N$ given by applying $\Phi$ entrywise to the submonomial matrix $f$. 
\end{rmk}

We now illustrate how the contraction, deletion, and restriction of $F$-matroids described in Subsection \ref{subsec:F matroids} can be seen as $F$-matroid morphisms. Let $M$ be an $F$-matroid and let $S\subseteq E_M$.  
\begin{enumerate}
	\item[(1)] (Contraction) Associated to the contraction $M/S$ we have a \emph{contraction map} $c_S:M\to M/S$ given by the submonomial matrix indexed by $\tilde{E}_M\setminus S$ and $\tilde{E}$ with entries $(c_S)_{ij}:= \delta_{ij}$,	where $\delta_{ij}$ is the Kronecker delta. In other words, up to reordering of columns, $c_S$ is a matrix whose first $|\tilde{E}_M\setminus S|$ columns form an identity matrix and whose other columns are zero. That this is a matroid morphism is guaranteed for example by \cite[Proposition 4.4]{anderson2019vectors}.
	\item[(2)] (Restriction) Associated to the restriction $M|S$ we have a \emph{restriction map} $r_S:M|S\to M$ given by the submonomial matrix indexed by $\tilde{E}_M$ and $S$ with entries $(r_S)_{ij} = \delta_{ij}$. Thus in particular we have $r_S = c_{E_M\setminus S}^*$ and so $r_S$ is also a matroid morphism.
\end{enumerate}
These morphisms can be understood in terms of their underlying maps: the contraction $\underline{c_S}$ sends all elements of $S$ to the basepoint of $M\setminus S$ and preserves all other elements, while the restriction $\underline{r_S}$ embeds $S$ into $E_M$.  

Morphisms of $\mathbb{K}$-matroids induce $\mathbb{F}_1$-linear strong maps between the associated classical matroids, in the sense that the underlying map is an $\mathbb{F}_1$-linear strong map of matroids. Similarly, morphisms of $F$-matroids provide an appropriate notion of $\mathbb{F}_1$-linear strong maps for $F=\mathbb{S},\mathbb{T}$.

We now establish some useful facts about morphisms of matroids which will be of use to us in the sequel. In particular, we are able to characterize monomorphisms and epimorphisms completely.

\begin{pro}\label{prop:inj_surj}
	A morphism $f:M\to N$ of $F$-matroids is a monomorphism if and only if $\underline{f}$ is injective. Similarly, $f$ is an epimorphism if and only if $\underline{f}$ is surjective.
\end{pro}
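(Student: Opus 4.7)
The proof splits into the monomorphism and epimorphism statements. I handle the monomorphism case directly and derive the epimorphism case via duality.

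A key preliminary observation is that submonomiality forces $\underline{f}$ to fail injectivity only in a specific way: two distinct elements of $\tilde{E}_M$ cannot map to a common non-basepoint in $\tilde{E}_N$, as that would require two nonzero entries in a single row of $f$. Hence $\underline{f}$ fails to be pointedly injective precisely when some $x_0\in\tilde{E}_M$ satisfies $\underline{f}(x_0)=*_N$, i.e., the $x_0$-column of $f$ is identically zero.

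For the direction ``$\underline{f}$ injective $\Rightarrow f$ mono,'' suppose $g_1,g_2:L\to M$ satisfy $f\circ g_1=f\circ g_2$. Passing to underlying pointed maps gives $\underline{f}\circ\underline{g_1}=\underline{f}\circ\underline{g_2}$, and injectivity of $\underline{f}$ yields $\underline{g_1}=\underline{g_2}$. It remains to show that the nonzero entries agree. For each $e\in\tilde{E}_L$ with $\underline{g_i}(e)=x\in\tilde{E}_M$, the $e$-column of $fg_i$ has its unique nonzero entry at row $\underline{f}(x)$ with value $f_{\underline{f}(x),x}\cdot (g_i)_{x,e}$; since $f_{\underline{f}(x),x}\in F^\times$, cancellation yields $(g_1)_{x,e}=(g_2)_{x,e}$. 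Hence $g_1=g_2$. For the contrapositive ``$\underline{f}$ not injective $\Rightarrow f$ not mono,'' choose $x_0$ with $\underline{f}(x_0)=*_N$. The restriction morphism $r:M|\{x_0\}\to M$ from Subsection \ref{subsec:Fmat_morphisms} is nonzero, yet the product $fr$ vanishes because the $x_0$-column of $f$ is zero; this matches the composition of $f$ with the zero morphism $0:M|\{x_0\}\to M$, so $f$ is not left-cancellable.

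For the epimorphism case, I leverage the duality of Corollary 2.9 in \cite{jarra2024quiver}: $f\mapsto f^t$ is a morphism $N^*\to M^*$, and combined with $M\mapsto M^*$ this defines an anti-equivalence of $\FMat$ (using $(M^*)^*\cong M$ and $(f^t)^t=f$). Under this anti-equivalence, $f$ is epi in $\FMat$ iff $f^t$ is mono in $\FMat$, which by the monomorphism case is iff $\underline{f^t}$ is pointedly injective. A direct unpacking identifies $\underline{f^t}(y)$ as the unique $x\in\tilde{E}_M$ with $\underline{f}(x)=y$ when such $x$ exists (uniqueness following from submonomiality), and as $*_{M^*}$ otherwise. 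Thus $\underline{f^t}$ is pointedly injective iff every $y\in\tilde{E}_N$ lies in the image of $\underline{f}$, i.e., iff $\underline{f}$ is surjective.

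The most delicate step is the forward monomorphism direction: producing an explicit nontrivial morphism that $f$ annihilates. The restriction map $r:M|\{x_0\}\to M$ is the natural candidate, cleanly isolating the offending element $x_0$ and avoiding any explicit computation with vectors or circuits.
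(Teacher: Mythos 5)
Your proof is correct, but it takes a genuinely different route from the paper: the paper simply cites \cite[Corollary~3.8]{heunen2017category} (a characterization of monos and epis in the category of classical matroids with strong maps) and asserts that the pointed $F$-matroid analogue is immediate. Your argument is instead a self-contained computation at the level of submonomial matrices. The two key steps both check out. For the mono direction, the observation that submonomiality forces any failure of injectivity to come from a zero column is right, and the cancellation of the unit $f_{\underline{f}(x),x}\in F^\times$ (valid because $F$ is an idyll, so every nonzero element is invertible) correctly upgrades $\underline{g_1}=\underline{g_2}$ to $g_1=g_2$; the witness $r_{\{x_0\}}\neq 0$ with $f\circ r_{\{x_0\}}=f\circ 0$ is legitimate since the zero matrix is always a morphism (the zero vector lies in $\mathcal{V}_M$) and $r_{\{x_0\}}$ is a morphism by the paper's discussion of restriction maps. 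The deduction of the epi case via the contravariant involution $M\mapsto M^*$, $f\mapsto f^t$ is exactly the duality the paper itself exploits elsewhere (Propositions \ref{prop:adj_isos} and \ref{prop:pr_ex_duality}), and your identification of $\underline{f^t}$ is accurate. What your approach buys is rigor precisely where the paper is thin: the cited external result concerns a different category (classical matroids and strong maps rather than submonomial matrices over $F$), so the paper's ``is immediate'' conceals exactly the matrix-level verification you supply. The cost is length; the paper's citation is a one-liner.
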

\begin{proof}
	This is established by \cite[Corollary 3.8]{heunen2017category}. Its analog for pointed $F$-matroids is immediate.
\end{proof}

Note that this ensures that $\FMat$ satisfies axiom (PA1) of a proto-abelian category (see Definition \ref{definition: Andre's proto-abelian}). 

\begin{rmk}\label{rmk:epi-mono}
From this we see that any epi-monic $f:M\to N$ of $F$-matroids has underlying map a bijection, but the inverse map on sets is not necessarily a morphism of $F$-matroids. Instead, we see that $f:M\to N$ is an isomorphism of $F$-matroids if and only if the submonomial matrix associated to $f$ is invertible and the inverse matrix $f^{-1}$ defines an $F$-matroid morphism from $N$ to $M$.	
\end{rmk}

\begin{pro}\label{prop:adj_isos}
	A morphism $\alpha:M\to N$ is a matroid isomorphism if and only if its adjoint $\alpha^t:N^*\to M^*$ is also an isomorphism. 
\end{pro}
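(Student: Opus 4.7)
The plan is to leverage the duality on submonomial matrices together with the result, cited just before Proposition \ref{prop:matroid morphisms} from \cite[Corollary 2.9]{jarra2024quiver}, that a submonomial matrix $f$ is a morphism $M\to N$ of $F$-matroids if and only if its transpose $f^t$ is a morphism $N^{*}\to M^{*}$. Since the identity morphism $\id_{M}$ is realized by the identity submonomial matrix indexed by $\tilde{E}_{M}\times \tilde{E}_{M}$, and the ground set of $M^{*}$ agrees (up to the handling of the distinguished loop, which is accounted for in Subsection \ref{subsubsec:sums and points}) with that of $M$, we have the tautological identity $(\id_{M})^{t}=\id_{M^{*}}$ as submonomial matrices.

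First, suppose $\alpha:M\to N$ is an isomorphism in $F\text{-}\Mat$, with inverse $\beta:N\to M$ satisfying $\beta\circ\alpha=\id_{M}$ and $\alpha\circ\beta=\id_{N}$. Composition of morphisms in $F\text{-}\Mat$ is given by matrix multiplication of the underlying submonomial matrices, so that the transpose operation reverses the order of composition. Taking transposes of the two identities yields
\begin{equation*}
\alpha^{t}\circ\beta^{t}=(\beta\circ\alpha)^{t}=\id_{M^{*}}, \qquad \beta^{t}\circ\alpha^{t}=(\alpha\circ\beta)^{t}=\id_{N^{*}}.
\end{equation*}
By the duality principle above, $\alpha^{t}:N^{*}\to M^{*}$ and $\beta^{t}:M^{*}\to N^{*}$ are both morphisms of $F$-matroids, and the displayed equations exhibit $\beta^{t}$ as a two-sided inverse for $\alpha^{t}$. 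Hence $\alpha^{t}$ is an isomorphism in $F\text{-}\Mat$.

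For the converse, apply the same argument to $\alpha^{t}:N^{*}\to M^{*}$: if $\alpha^{t}$ is an isomorphism, then $(\alpha^{t})^{t}:M^{**}\to N^{**}$ is an isomorphism by the forward direction just established. Under the canonical natural identifications $M\cong M^{**}$ and $N\cong N^{**}$ from Subsection \ref{subsec:F matroids}, the matrix $(\alpha^{t})^{t}$ is literally $\alpha$, so $\alpha:M\to N$ is an isomorphism. The only mild subtlety to verify is the compatibility of the distinguished-loop convention for pointed matroids with the duality construction recalled in Subsection \ref{subsec:F matroids}, but this is a bookkeeping check which does not affect the matrix identities above; no deeper obstacle arises, since all the content lies in the already-cited fact that transpose intertwines morphisms with their duals.
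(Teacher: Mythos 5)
Your proof is correct and follows essentially the same route as the paper: both arguments reduce to the cited fact that transposition exchanges morphisms $M\to N$ with morphisms $N^*\to M^*$, and both verify that $(\alpha^{-1})^t=(\alpha^t)^{-1}$ — the paper by an entrywise computation of the submonomial matrices, you by the anti-multiplicativity of transpose applied to $\beta\circ\alpha=\id_M$ and $\alpha\circ\beta=\id_N$. Your explicit treatment of the converse via double duality $M\cong M^{**}$ is a small bonus the paper leaves implicit, but it does not change the substance of the argument.
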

\begin{proof}
	Considering the $F$-matrices associated to $\alpha$ and to $\alpha^{-1}$, note that 
	\[(\alpha^{-1})_{ij} = \begin{cases} \alpha_{ji}^{-1} & \alpha_{ji}\neq 0 \\ 0 & \text{else} \end{cases}.\]
	On the other hand, the morphism $\alpha^t:N^*\to M^*$ by definition has the $F$-matrix given by $(\alpha^t)_{ij} = \alpha_{ji}$ and the adjoint of $\alpha^{-1}$ has the $F$-matrix given by 
	\[((\alpha^{-1})^t)_{ij} = (\alpha^{-1})_{ji} = \begin{cases} \alpha_{ij}^{-1} & \alpha_{ij}\neq 0_F \\ 0_F & \text{else} \end{cases}.\] 
	Thus as $F$-matrices, $(\alpha^{-1})^t = (\alpha^t)^{-1}$. Finally the fact that $\alpha^{-1}:N\to M$ is a morphism implies immediately that $(\alpha^{-1})^t:M^*\to N^*$ is also a morphism, and since $(\alpha^{-1})^t = (\alpha^t)^{-1}$ the result follows.
\end{proof} 

\begin{pro}\label{proposition: restriction-contraction iso}
	Let $\alpha:M\to N$ be a matroid isomorphism, let $A\subseteq E_N$, and $S\subseteq E_M$. Then $\alpha$ induces isomorphisms $M|\alpha^{-1}(A)\cong N|A$ and $M/S\cong N/\alpha(S)$.
\end{pro}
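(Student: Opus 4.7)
The plan is to construct both isomorphisms explicitly as submatrices of the submonomial matrix representing $\alpha$, then verify they are well-defined $F$-matroid morphisms with inverse also a morphism. Since $\alpha$ is an isomorphism, its underlying map $\underline{\alpha}$ is a bijection of pointed ground sets (Remark \ref{rmk:epi-mono}), so the proposed index sets line up correctly.

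For the restriction statement, set $T := \underline{\alpha}^{-1}(A) \subseteq E_M$, so that $\underline{\alpha}$ restricts to a bijection $T \to A$. Define $\beta: M|T \to N|A$ by the submatrix of $\alpha$ consisting of rows indexed by $\tilde{A}$ and columns indexed by $\tilde{T}$; this remains submonomial since $\alpha$ is. To see that $\beta$ is a morphism, consider the composite $\alpha \circ r_T : M|T \to M \to N$, where $r_T$ is the restriction morphism of Subsection \ref{subsec:Fmat_morphisms}. This composite is a morphism, and because the columns of $\alpha$ indexed by $T$ have nonzero entries only in rows indexed by $A$, the image of each vector of $M|T$ lies in $F^A \hookrightarrow F^{E_N}$ (identifying vectors supported on $A$ with their restrictions). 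This produces the factorization $r_A \circ \beta = \alpha \circ r_T$, and matrix bookkeeping shows the factor matches the claimed submatrix. Running the identical construction with $\alpha^{-1}$ in place of $\alpha$ yields a two-sided matrix inverse whose entries again form a morphism of $F$-matroids, so $\beta$ is an isomorphism.

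For the contraction statement, the cleanest route is by duality. By Proposition \ref{prop:adj_isos}, the adjoint $\alpha^t : N^* \to M^*$ is also an $F$-matroid isomorphism, with underlying bijection $\underline{\alpha}^{-1}$. Apply the restriction case of this proposition to $\alpha^t$ with the subset $S' := E_M \setminus S \subseteq E_{M^*}$: since $(\underline{\alpha^t})^{-1}(S') = \underline{\alpha}(S') = E_N \setminus \underline{\alpha}(S)$, we obtain
\[ N^*|(E_N \setminus \underline{\alpha}(S)) \cong M^*|(E_M \setminus S), \quad\text{i.e.,}\quad N^* \setminus \underline{\alpha}(S) \cong M^* \setminus S. \]
Dualize both sides and invoke the identity $(L \setminus B)^* = L^*/B$ from \cite[Theorem 3.29]{baker2017matroids} together with $M^{**} \cong M$ and $N^{**} \cong N$ to conclude $M/S \cong N/\underline{\alpha}(S) = N/\alpha(S)$, as desired.

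The main technical obstacle sits in Step 1, namely the assertion that the submatrix $\beta$ actually sends vectors of $M|T$ into vectors of $N|A$ rather than merely into vectors of $N$ supported on $A$. In other words, one must know that the map $r_A$ identifies $\mathcal{V}_{N|A}$ with the vectors of $N$ whose support lies in $A$, so that the factorization through $N|A$ is well defined. This follows from the compatibility of vectors with restriction in the perfect idyll setting (via the cocircuit characterization $\mathcal{V}_M = (\mathcal{C}^*_M)^\perp$ and the standard description of cocircuits after deletion), but this identification is the only non-formal ingredient; everything else reduces to manipulation of submonomial matrices.
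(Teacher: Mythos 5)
Your proof is correct, but it runs in the mirror-image direction of the paper's. The paper proves the \emph{contraction} statement directly: it checks, via the Grassmann--Pl\"{u}cker criterion for morphisms (Proposition \ref{prop:matroid morphisms}), that the bijection induced by $\alpha$ on $E_M\setminus S$ satisfies the required identity, by padding with bases of $\underline{M}|S$ and $\underline{N}|\alpha(S)$ and observing that the extra summands vanish because the relevant GP-values are supported on dependent sets; the restriction statement is then deduced by duality through Proposition \ref{prop:adj_isos}. You instead prove the \emph{restriction} statement directly, at the level of vector sets, and deduce the contraction statement by the same duality mechanism together with $(L\setminus B)^*=L^*/B$; that half of your argument is exactly parallel to the paper's dual step and is fine. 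The difference in the direct half is that the paper's computation uses only tools already established in the text, whereas your factorization of $\alpha\circ r_T$ through $N|A$ hinges on the lemma you flag: that a vector of $N$ supported in $A$ restricts to a vector of $N|A$. This is true and provable from the paper's definitions --- every cocircuit of $N|A$ is the restriction to $A$ of a cocircuit of $N$ (compare fundamental circuits of $N^*/A^c$ computed from $\mu^*/A^c$ with fundamental circuits of $N^*$ for the padded tuple $(\mathbf{y},\mathbf{a})$), and then orthogonality of the zero-extension against $\mathcal{C}^*_N$ immediately gives orthogonality against $\mathcal{C}^*_{N|A}$ --- but it is nowhere established in the paper, so a complete write-up along your lines must include this short argument rather than cite it as standard. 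What your route buys is a more conceptual, coordinate-light proof (vectors and supports rather than alternating-sum bookkeeping); what the paper's route buys is self-containment within the morphism criterion it has already recorded.
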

\begin{proof}
	Let $M=[\mu]$ and $N=[\nu]$ be matroids with ranks $m$ and $n$. We begin with the contraction statement. Denote the associated contractions $M/S = [\mu'']$ and $N/\alpha(S) = [\nu'']$. Assume also that $M/S$ has rank $m''$ and $N/\alpha(S)$ has rank $n''$. Then by definition $\alpha$ induces a bijection on the ground sets of $M/S$ and $N/\alpha(S)$. We seek to show that this induced bijection is a matroid morphism. That is, we would like that for all $\mathbf{x}=(x_0,\ldots,x_{m''})\in (E_M\setminus S)^{m''+1}$ and all $\mathbf{y}\in (E_N\setminus \alpha(S))^{n''-1}$ we have
	\begin{equation}\label{eqn:res_con_isos_1}
		\sum_{k=0}^{m''} (-1)^k \mu''(\mathbf{x}_{\hat{k}})\alpha_{\underline{\alpha}(x_k),x_k} \nu''(\underline{\alpha}(x_k),\mathbf{y})\in N_F.
	\end{equation} 
	Since $\alpha$ is an isomorphism, we know that for all $\mathbf{z}\in E_M^{m+1}$ and all $\mathbf{w}\in E_N^{n-1}$ we have
	\begin{equation}\label{eqn:res_con_isos_2}
		\sum_{k=0}^{m} (-1)^k \mu(\mathbf{z}_{\hat{k}}) \alpha_{\underline{\alpha}(z_k),z_k} \nu(\underline{\alpha}(z_k),\mathbf{w}) \in N_F.
	\end{equation} 
	Fix (ordered) bases $\mathbf{a}$ of $\underline{M}|S$ and $\mathbf{d}$ of $\underline{N}|\alpha(S)$. Then taking $\mathbf{z} = (\mathbf{x},\mathbf{a})$ and $\mathbf{w} = (\mathbf{y},\mathbf{d})$, we see that Expressions \eqref{eqn:res_con_isos_1} and \eqref{eqn:res_con_isos_2} differ only by the following terms:
	\begin{equation}\label{eqn:res_con_isos_3}
		\sum_{k>m''+1} (-1)^k \mu(\mathbf{x},\mathbf{a}_{\hat{k}}) \alpha_{\underline{\alpha}(a_k),a_k} \nu(\underline{\alpha}(a_k),\mathbf{y},\mathbf{d}).
	\end{equation}
	Considering each summand in Expression \eqref{eqn:res_con_isos_3}, note that by Proposition \ref{prop:classical_matroid_nonzero_GP} we see that the each factor  $\nu(\underline{\alpha}(a_k),\mathbf{y},\mathbf{d})$ is nonzero only when $|\mathbf{y}|\cup|\mathbf{d}|\cup\{\underline{\alpha}(a_k)\}$ is a basis of $\underline{N}$. But since $\underline{\alpha}(a_k)\in \alpha(S)$ by definition, we see that necessarily $|\mathbf{d}|\cup\{\underline{\alpha}(a_k)\}$ is dependent in $\underline{N}|\alpha(S)$, as $\mathbf{d}$ was chosen to be a basis. This then implies that $|\mathbf{y}|\cup|\mathbf{d}|\cup\{\underline{\alpha}(a_k)\}$ cannot be a basis of $N$ and so $\nu(\underline{\alpha}(a_k),\mathbf{y},\mathbf{d})=0_F$. Thus Expressions \eqref{eqn:res_con_isos_1} and \eqref{eqn:res_con_isos_2} express the same element of $F$ and so $\alpha$ induces a morphism $M/S\to N/\alpha(S)$. Finally, because $\alpha$ is an isomorphism, the above argument applies also to $\alpha^{-1}$, so that $M/S\cong N/\alpha(S)$. 
	
	For the restriction statement, we appeal to duality. In particular note that by Proposition \ref{prop:adj_isos} $\alpha$ induces an isomorphism $\alpha^t:N^*\to M^*$. For $A\subseteq E_N$ the above argument establishes an isomorphism $N^*/A\cong M^*/\alpha^t(A)$, which is equivalent to the desired $N|A\cong M|\alpha^{-1}(A)$ since $\underline{\alpha^t} = \underline{\alpha^{-1}}$.
\end{proof} 

\subsection{(Co)kernels of $F$-matroid morphisms}\label{subsec:(co)kernels}

We now show that each morphism of $F$-matroids admits both a kernel and cokernel, each of which is again a morphism of $F$-matroids. 

\begin{pro}\label{prop:ker and coker FMat}
Let $M=[\mu]$ and $N=[\nu]$ be matroids of ranks $m$ and $n$, respectively, and let $f:M\to N$ be a morphism of $F$-matroids. Then the restriction morphism $M|\underline{f}^{-1}(*_N)\rightarrowtail M$ is a kernel of $f$ and the contraction morphism $N\twoheadrightarrow N/\underline{f}(E_M)$ is a cokernel of $f$. 
\end{pro}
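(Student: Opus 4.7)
The plan is to prove the kernel statement directly from the universal property, and then obtain the cokernel statement by invoking the duality $(-)^{*}$ on $F$-$\Mat$.

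For the kernel, write $S = \underline{f}^{-1}(*_N)$ and let $r = r_S : M|S \rightarrowtail M$ be the restriction map. The composite $f \circ r$ vanishes as a matrix product because, for any column index $j \in S \cap \tilde{E}_M$, submonomiality of $f$ together with $\underline{f}(j) = *_N$ forces every entry $f_{ij}$ with $i \in \tilde{E}_N$ to be zero. Conversely, given $g : X \to M$ with $fg = 0$, the same kind of submonomiality argument forces $\underline{g}(\tilde{E}_X) \subseteq S$, so the submonomial matrix of $g$ has nonzero entries only in rows indexed by $S \cap \tilde{E}_M$. I would then define $\bar{g} : X \to M|S$ by the submatrix obtained by deleting the vanishing rows, which gives $r\bar{g} = g$ automatically. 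Uniqueness of this factorization follows from Proposition \ref{prop:inj_surj}, since $r$ is a monomorphism.

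The main obstacle is to verify that $\bar{g}$ is actually an $F$-matroid morphism. Here I would apply the Pl\"ucker characterization of Proposition \ref{prop:matroid morphisms}. Choose a tuple $\mathbf{c} \in (E_M \setminus S)^{m - r'}$ whose entries form a basis of $\underline{M}/S$, where $m = \rk M$ and $r' = \rk(M|S)$. By the very definition of restriction, $(\mu|S)(\underline{\bar{g}}(x_k), \mathbf{y}_0) = \mu(\underline{g}(x_k), \mathbf{y}_0, \mathbf{c})$, so the Pl\"ucker identity expressing that $\bar{g}$ is a morphism reduces to the Pl\"ucker identity for $g$ evaluated at the extended tuple $(\mathbf{y}_0, \mathbf{c}) \in \tilde{E}_M^{m-1}$, which holds since $g$ is a morphism by hypothesis.

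For the cokernel I would invoke duality. The adjoint $f^t : N^* \to M^*$ is again an $F$-matroid morphism, and the kernel result applied to $f^t$ produces the restriction $N^*|T \rightarrowtail N^*$ with $T = \underline{f^t}^{-1}(*_{M^*}) = E_N \setminus \underline{f}(\tilde{E}_M)$, which can be read off directly from the definition of the adjoint submonomial matrix. Dualizing via the identity $(M|T)^* \cong M^*/(E_M \setminus T)$ recalled from \cite{baker2017matroids}, together with the observation already in Subsection \ref{subsec:Fmat_morphisms} that the adjoint of a restriction map is the corresponding contraction map, this kernel dualizes to the contraction $c : N \twoheadrightarrow N/\underline{f}(E_M)$. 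Since $(-)^{*}$ is a contravariant self-equivalence of $F$-$\Mat$, it sends kernels to cokernels, so $c$ is the cokernel of $f$.
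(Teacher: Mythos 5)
Your kernel argument is essentially the paper's: you restrict the submonomial matrix of $g$ to the rows indexed by $\underline{f}^{-1}(*_N)$, verify the Pl\"ucker condition of Proposition \ref{prop:matroid morphisms} by padding with a basis of $\underline{M}/S$ so that $(\mu|S)(-,\mathbf{y}_0)=\mu(-,\mathbf{y}_0,\mathbf{c})$, and get uniqueness from Proposition \ref{prop:inj_surj}; in fact you are slightly more explicit than the paper in noting that $fg=0$ forces $\underline{g}(\tilde{E}_X)\subseteq S$, which is exactly what makes $r\bar{g}=g$ and the term-by-term comparison of the two Pl\"ucker sums work. Where you diverge is the cokernel: the paper simply runs the ``completely analogous'' direct computation (omitted), whereas you deduce it from the kernel statement by transporting along $(-)^*$, using $T=\underline{f^t}^{-1}(*_{M^*})=E_N\setminus\underline{f}(\tilde{E}_M)$, the identity $(M|A)^*=M^*/A^c$, and $r_S=c_{S^c}^t$. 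That route is sound, but note it leans on duality being a contravariant involutive functor on $F$-$\Mat$ (transpose respects composition, $(f^t)^t=f$, $M^{**}=M$, including the pointed and involutive conventions), which the paper only packages later as Proposition \ref{prop:pr_ex_duality}; all the ingredients are, however, already available from Subsections \ref{subsec:F matroids}--\ref{subsec:Fmat_morphisms} and the paper itself uses exactly this kind of duality maneuver in Proposition \ref{prop:composition} and Lemma \ref{lem:aux}, so there is no circularity and no gap. The trade-off: your approach buys a second half for free at the cost of invoking double duality, while the paper's ``analogous'' computation stays elementary and, as Remark \ref{rmk: FSMat not simple} implicitly warns, has the advantage of surviving in subcategories such as $F$-$\textbf{SMat}_\bullet$ and $\TRS$ where duality is unavailable.
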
 
\begin{proof}
	Let $K:=\underline{f}^{-1}(*_N)\setminus \{*_M\}\subseteq E_M$ and $C:= \underline{f}(\tilde{E}_M)\subseteq E_N$ and suppose that $\rk_{\underline{M}}(K)=k$ and $\rk_{\underline{N}}(C)=c$. Note that we then have the following commutative squares in $F$-$\Mat$:
	\[
	\begin{tikzcd}
		M|K \ar[r, tail] \ar[d, two heads] & M \ar[d, "f"] & & M \ar[r,"f"] \ar[d, two heads]& N \ar[d, two heads] \\
		0 \ar[r, tail] & N & & 0 \ar[r, tail] & N/C
	\end{tikzcd}
	\]
	We seek to show that the first square is Cartesian and the second is co-Cartesian. To this end, suppose that $R=[\rho]$ is a matroid of rank $r$ for which the solid arrows of following diagram commute:
	\begin{equation}
	\begin{tikzcd}\label{diag:kernel}
		R \ar[ddr, bend right=30] \ar[drr, bend left=30, "g"] \ar[dr, "h", dashed]& & \\
		& M|K \ar[d, two heads] \ar[r, "r_K", tail] & M \ar[d, "f"] \\
		& 0 \ar[r, tail] & N  
	\end{tikzcd}
	\end{equation}
	We now construct the morphism $h$ which makes the full diagram commute. Noting that $g$ is given by an $|\tilde{E}_M|\times|\tilde{E}_R|$ submonomial matrix, let $h$ be the $|K|\times |\tilde{E}_R|$ submatrix obtained from $g$ by deleting all rows associated to elements of $\tilde{E}_M\setminus K$.
	
	To see that the resulting matrix $h$ is a matroid morphism, recall first that since $g$ is a matroid morphism, for all $\mathbf{y}\in E_R^{r+1}$ and all $\mathbf{x}\in E_M^{m-1}$ we have
		\[\sum_{i=0}^r (-1)^i\rho(\mathbf{y}_{\hat{i}})\cdot g_{\underline{g}(y_i),y_i}\cdot \mu(\underline{g}(y_i),\mathbf{x})\in N_F.\]
	In order for $h$ to be a morphism from $R$ to $M|K$, we thus need that for each $\mathbf{y}\in E_R^{r+1}$ and each $\mathbf{z}\in K^{\rk_{\underline{M}}(K)-1}$ we have
		\[\sum_{i=0}^r (-1)^i\rho(\mathbf{y}_{\hat{i}})\cdot h_{\underline{h}(y_i),y_i}\cdot (\mu|K)(\underline{h}(y_i),\mathbf{z})\in N_F.\]
	However, recalling the definition of $\mu|K$ we see that we can take $\mathbf{x} = (\mathbf{z},\mathbf{b})$ for any basis $\mathbf{b}$ of $\underline{M}/K$ to obtain 
		\[(\mu|K)(\underline{h}(y_i),\mathbf{z})=\mu(\underline{h}(y_i),\mathbf{x}).\] 
	Moreover, by the construction, for each $y_i\in K$  we have $g_{\underline{g}(y_i),y_i}=h_{\underline{h}(y_i),y_i}$.  In particular, we thus see that  
		\[\sum_{i=0}^r (-1)^i\rho(\mathbf{y}_{\hat{i}})\cdot h_{\underline{h}(y_i),y_i}\cdot (\mu|K)(\underline{h}(y_i),\mathbf{z}) = \sum_{i=0}^r (-1)^i\rho(\mathbf{y}_{\hat{i}})\cdot g_{\underline{g}(y_i),y_i}\cdot \mu(\underline{g}(y_i),\mathbf{x}) \in N_F.\]
	Observe further that any such morphism $h$ must be unique. To see this, note that for any two such morphisms $h$ and $h'$ making the diagram commute, the fact that $r_K$ is a monomorphism implies that $h=h'$. 
	
	The proof that $N/C$ is a cokernel of $f$ is completely analogous and so has been omitted. 
\end{proof}

The above thus illustrates that $F$-$\Mat$ is a pointed category with kernels and cokernels. We have already seen that it satisfies axiom (PA1) of Definition \ref{definition: Andre's proto-abelian}. 

\subsection{Proto-exactness of $F$-$\Mat$}\label{subsec:proto-ex of FMat}

We now prove the main structural result about the category of (pointed) matroids over a perfect idyll $F$. 

\begin{mythm}\label{thm:F mat proto ex}
	For each perfect idyll $F$, the category $F$-$\Mat$ is proto-exact.
\end{mythm}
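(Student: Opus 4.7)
The plan is to take $\MM$ to consist of all morphisms isomorphic to a restriction map $r_S : M|S \to M$, and $\EE$ to consist of all morphisms isomorphic to a contraction map $c_S : M \to M/S$. Axiom (PE1) is immediate because $0 = U_1^0$ coincides with the restriction $M|\{*_M\}$ and the contraction $M/\tilde{E}_M$, so the unique morphisms $0 \to M$ and $M \to 0$ lie in $\MM$ and $\EE$ respectively. Axiom (PE2) follows from the transitive identities $(M|S)|T = M|T$ for $T \subseteq S$ and $(M/S)/T = M/(S \cup T)$ for disjoint $S, T$, combined with Proposition \ref{proposition: restriction-contraction iso}, which transports restrictions and contractions along isomorphisms; identity maps lie in both classes since $M|E_M = M = M/\emptyset$.

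For (PE4), a cospan $M' \rightarrowtail N' \twoheadleftarrow N$ realises $N'$ as a contraction $N/B$ and $M'$ as a restriction $N'|A$, with $A$ and $B$ disjoint subsets of $\tilde{E}_N$. Setting $M := N|(A \cup B)$, Proposition \ref{prop:restriction-contraction} gives a canonical identification $M/B \cong (N/B)|A = M'$, producing a candidate completion
\[
\begin{tikzcd}
M \ar[r, tail] \ar[d, two heads] & N \ar[d, two heads] \\
M' \ar[r, tail] & N'
\end{tikzcd}
\]
whose edges all lie in $\MM \cup \EE$. Dually, for (PE5), a span $M' \twoheadleftarrow M \rightarrowtail N$ with $M = N|C$ and $M' = M/A$ (so $A \subseteq C$) is completed by $N' := N/A$, since Proposition \ref{prop:restriction-contraction} yields $(N|C)/A = (N/A)|C$, and hence $M' = N'|C$.

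The heart of the proof is verifying that these squares are both Cartesian and co-Cartesian. For the Cartesian property of the (PE4) square, given a matroid $X$ with morphisms $g : X \to N$ and $h : X \to M'$ satisfying $c_B \circ g = r_A \circ h$, the underlying pointed maps force $\underline{g}(\tilde{E}_X) \subseteq A \cup B \cup \{*_N\}$; the unique factoring morphism $\tilde{h} : X \to M$ is then obtained by deleting the rows of the submonomial matrix $g$ indexed by $\tilde{E}_N \setminus (A \cup B)$, and the Grassmann--Pl\"ucker identity of Proposition \ref{prop:matroid morphisms}, together with the bases-of-complement description of restriction, confirms that $\tilde{h}$ is a matroid morphism, exactly as in the kernel construction of Proposition \ref{prop:ker and coker FMat}. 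The co-Cartesian property and the analogous claims for the (PE5) square follow by duality via Proposition \ref{prop:adj_isos}, which exchanges admissible monos and epis and so reduces pushouts in $F$-$\Mat$ to the pullbacks already handled.

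Finally, (PE3) is obtained from the uniqueness of these constructions: any square of the specified type admits canonical bi-Cartesian completions of its cospan (via the (PE4) recipe) and of its span (via the (PE5) recipe), and both depend only on the same pair of subsets $A, B \subseteq \tilde{E}_N$. Hence the original square is Cartesian precisely when it coincides with this canonical completion, which also exhibits it as co-Cartesian. The main obstacle throughout is the bookkeeping with submonomial matrices and distinguished basepoints required to translate universal properties into concrete matrix-level statements; the overall structure mirrors the argument for $\mathbf{Mat}_\bullet$ in \cite{eppolito2018proto}, now carried out for arbitrary perfect idylls via the submonomial-matrix formalism of \cite{jarra2024quiver}.
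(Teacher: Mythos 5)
Your proposal is correct and follows essentially the same route as the paper: the same classes $\MM$ and $\EE$, the same key biCartesian square $(N|A)/B=(N/B)|A$ established by a row/column-deletion argument on submonomial matrices, duality to handle the other universal property, and (PE3) deduced from (PE4)/(PE5) plus uniqueness of (co)limits. The only cosmetic difference is that you verify the Cartesian property directly and obtain the coCartesian one by duality, whereas the paper does the reverse.
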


Combining this theorem with Propositions \ref{prop:protoex_then_protoab}, \ref{prop:inj_surj}, and \ref{prop:ker and coker FMat}, we obtain the following. 

\begin{cor}\label{cor:FMat proto ab}
	For each perfect idyll $F$, $F$-$\Mat$ is proto-abelian. 
\end{cor}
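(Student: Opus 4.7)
The plan is to obtain Corollary \ref{cor:FMat proto ab} as a direct application of Proposition \ref{prop:protoex_then_protoab}, verifying each of its hypotheses in turn for $\mathcal{C} = F\text{-}\Mat$.

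First, I would record that $F\text{-}\Mat$ is pointed with zero object $U_1^0 = 0$ (Remark \ref{rmk: zero morphisms}), admits all kernels and cokernels by Proposition \ref{prop:ker and coker FMat}, and is proto-exact by Theorem \ref{thm:F mat proto ex}. Next I would check axiom (PA1): given $f : M \to N$, Proposition \ref{prop:ker and coker FMat} identifies the kernel with the restriction $M | \underline{f}^{-1}(*_N) \rightarrowtail M$, so a zero kernel means $\underline{f}^{-1}(*_N) = \{*_M\}$, i.e. $\underline{f}$ is injective on $\tilde E_M$. By Proposition \ref{prop:inj_surj}, this forces $f$ to be a monomorphism; the cokernel/epimorphism case is dual using $N \twoheadrightarrow N / \underline{f}(\tilde E_M)$.

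The only remaining hypothesis of Proposition \ref{prop:protoex_then_protoab} is that the admissible monos $\mathfrak M$ of the proto-exact structure (matroid restrictions, per Theorem A) coincide with isomorphism-composites of strict monomorphisms, and dually for $\mathfrak E$. By Proposition \ref{prop:ker and coker FMat} every restriction $M|S \rightarrowtail M$ is already a kernel (of the contraction $M \twoheadrightarrow M/S$), hence strict; so $\mathfrak M$ lies inside the class of strict monomorphisms composed with isomorphisms. For the reverse inclusion, given any strict monomorphism $g : K \rightarrowtail M$ arising as $\ker(f)$ for some $f : M \to N$, Proposition \ref{prop:ker and coker FMat} exhibits a canonical restriction $r : M|\underline{f}^{-1}(*_N) \rightarrowtail M$ also serving as a kernel of $f$; uniqueness of kernels then yields an isomorphism $K \cong M|\underline{f}^{-1}(*_N)$ through which $g$ factors as an isomorphism followed by a restriction, i.e. an element of $\mathfrak M$. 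The epimorphism side is completely parallel, using that cokernels in $F\text{-}\Mat$ are contractions.

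With all hypotheses verified, Proposition \ref{prop:protoex_then_protoab} applies and yields that $F\text{-}\Mat$ is proto-abelian. The potentially subtle point is the matching of $\mathfrak M$ (resp.\ $\mathfrak E$) with strict monos (resp.\ epis) up to isomorphism; but this is essentially a bookkeeping consequence of Proposition \ref{prop:ker and coker FMat} together with the universal property of kernels and cokernels, so no substantial new work is required.
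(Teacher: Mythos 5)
Your proposal is correct and follows exactly the paper's route: the paper obtains this corollary by combining Theorem \ref{thm:F mat proto ex} with Propositions \ref{prop:protoex_then_protoab}, \ref{prop:inj_surj}, and \ref{prop:ker and coker FMat}, which is precisely the argument you spell out (including the identification of $\MM$ and $\EE$ with isomorphism-composites of kernels and cokernels, which the paper notes after Definition \ref{definition: FMat M and E}). No gaps.
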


We begin by introducing the classes $\MM$ and $\EE$ of admissible monomorphisms and epimorphisms. 

\begin{mydef}\label{definition: FMat M and E}
	Let $f:M\to N$ be a morphism in $\FMat$. 	
	\begin{enumerate}
		\item 
		$f$ is an admissible monomorphism if $f=r_A \circ \alpha$, where $A \subseteq E_N$ and $r_A:M|A \to M$ is an inclusion map, and $\alpha$ is an isomorphism of $F$-matroids: 
		\[\begin{tikzcd}
			M\ar[r,"\alpha","\sim"'] & N|A\ar[r,"r_A"] & N,
		\end{tikzcd}\]
		Denote by $\MM$ the class of admissible monomorphisms. 
		\item 
		$f$ is an admissible epimorphism if $f=\beta \circ c_B$, where $B \subseteq E_M$ and $c_B:M \to M/B$ is a contraction map and $\beta$ is an isomorphism of $F$-matroids:
		\[\begin{tikzcd}
			M\ar[r,"c_B"] & M/B\ar[r,"\beta","\sim"'] & N.
		\end{tikzcd}\]  
		Denote by $\EE$ the class of admissible epimorphisms. 
	\end{enumerate}
\end{mydef}

By the characterization in the previous section of submonomial matrices corresponding to restrictions and contractions, note that, up to reordering ground set elements, any $f\in \MM$ is represented by an $|\tilde{E}_N|\times|\tilde{E}_M|$ submonomial matrix whose topmost $|\tilde{E}_M|\times|\tilde{E}_M|$ submatrix is invertible (in fact this submatrix is the matrix of $\alpha$). Likewise, up to reordering, any $g\in\EE$ is represented by a submonomial matrix whose leftmost full submatrix is invertibe, and is the matrix of $\beta$. Note that from Proposition \ref{prop:adj_isos}, we have that $\MM^t = \EE$ and vice versa.

Further, by Proposition \ref{prop:ker and coker FMat} we see that $\MM$ consists of isomorphisms followed by kernels and $\EE$ consists of cokernels followed by isomorphisms. 

With the above characterizations of the maps in $\MM$ and $\EE$, we are ready to verify the axioms of a proto-exact category.

\begin{pro}\label{prop:contian iso}
	The classes $\MM$ and $\EE$ contain all isomorphisms. 
\end{pro}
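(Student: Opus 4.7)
The plan is to exhibit each isomorphism in $\FMat$ as a factorization of the prescribed form for $\MM$ and for $\EE$ by choosing the ``trivial'' subset in each case. Let $f : M \to N$ be an isomorphism.

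To show $f \in \MM$, I would take $A = E_N$. Under the conventions of Subsection 2.2.3, the restriction $N|E_N$ coincides with $N$: the auxiliary tuple $\mathbf{c} \in (E_N \setminus E_N)^{0}$ used in defining $\mu|A$ is the empty tuple (since $\underline{N}/E_N$ has rank zero), so $\mu|E_N = \mu$. Consequently the restriction map $r_{E_N}$ is represented by the identity submonomial matrix indexed by $\tilde{E}_N$, i.e.\ $r_{E_N} = \id_N$. The factorization
\[
f \;=\; r_{E_N} \circ f
\]
then exhibits $f$ as an admissible monomorphism with $\alpha = f$.

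To show $f \in \EE$, I would take $B = \emptyset$ (equivalently $B = \{*_M\}$, by the loop remark which discards the basepoint from contracted sets). The contraction $M/\emptyset$ coincides with $M$, since the auxiliary basis $\mathbf{a}$ in the definition of $\mu/A$ is the empty tuple, so $\mu/\emptyset = \mu$. The contraction map $c_\emptyset$ is represented by the identity submonomial matrix on $\tilde{E}_M$, hence $c_\emptyset = \id_M$. The factorization
\[
f \;=\; f \circ c_\emptyset
\]
exhibits $f$ as an admissible epimorphism with $\beta = f$.

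There is no real obstacle here: the proposition is essentially a sanity check that Definition \ref{definition: FMat M and E} recovers every isomorphism through the extremal choices of $A$ and $B$. The only minor point worth being careful about is the pointed-set bookkeeping, which is absorbed by the convention of the loop remark allowing $B = \emptyset$ and $B = \{*_M\}$ to yield the same contraction.
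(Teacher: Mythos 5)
Your proposal is correct and matches the paper's argument: the paper likewise factors an isomorphism $\alpha$ as $I\cdot\alpha=\alpha\cdot I$ with $I$ the identity submonomial matrix, which is exactly your choice of $A=E_N$ (so $r_{E_N}=\id$) and $B=\emptyset$ (so $c_\emptyset=\id$). Your version just makes the bookkeeping behind the identity factorization explicit.
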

\begin{proof}
	Let $\alpha$ be an isomorphism of matroids. Observe that we can write $\alpha = I\cdot\alpha = \alpha\cdot I$, where $I$ is the identity $F$-matrix. These factorizations and the fact that $\alpha$ is an isomorphism illustrate that $\alpha$ is in both $\MM$ and $\EE$. 
\end{proof}

\begin{pro}\label{prop:composition}
	The classes $\MM$ and $\EE$ are closed under composition. 
\end{pro}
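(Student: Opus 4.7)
The plan is to handle $\MM$ first and then to deduce the $\EE$ case via adjoints (Proposition~\ref{prop:adj_isos}), using the identity $\MM^t = \EE$ noted just after Definition~\ref{definition: FMat M and E}. So the core work will be to show that $\MM$ is closed under composition. I would start from admissible monos $f_i = r_{A_i} \circ \alpha_i$ with $\alpha_1 : M_1 \xrightarrow{\sim} M_2|A_1$ and $\alpha_2 : M_2 \xrightarrow{\sim} M_3|A_2$, and rewrite
$$f_2 \circ f_1 \;=\; r_{A_2} \circ \alpha_2 \circ r_{A_1} \circ \alpha_1.$$
The key move is to slide the isomorphism $\alpha_2$ past the restriction $r_{A_1}$. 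Setting $A' := \underline{\alpha_2}(A_1) \subseteq A_2$, Proposition~\ref{proposition: restriction-contraction iso} supplies an isomorphism $\alpha_2' : M_2|A_1 \xrightarrow{\sim} (M_3|A_2)|A'$. Since restrictions compose (so $(M_3|A_2)|A' = M_3|A'$), the corresponding restriction maps satisfy $r_{A_2}^{M_3} \circ r_{A'}^{M_3|A_2} = r_{A'}^{M_3}$. A direct inspection of submonomial matrices then yields $\alpha_2 \circ r_{A_1} = r_{A'}^{M_3|A_2} \circ \alpha_2'$, from which
$$f_2 \circ f_1 \;=\; r_{A'}^{M_3} \circ (\alpha_2' \circ \alpha_1) \;\in\; \MM,$$
as $\alpha_2' \circ \alpha_1 : M_1 \xrightarrow{\sim} M_3|A'$ is an isomorphism.

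For $\EE$, I would simply pass to adjoints: given $g_1, g_2 \in \EE$, Proposition~\ref{prop:adj_isos} gives $g_1^t, g_2^t \in \MM$, so $(g_2 \circ g_1)^t = g_1^t \circ g_2^t \in \MM$ by the previous paragraph, and taking adjoints once more shows $g_2 \circ g_1 \in \EE$. Alternatively, one could run the parallel argument directly, factoring $g_i = \beta_i \circ c_{B_i}$ and sliding $\beta_1$ past $c_{B_2}$ using the contraction half of Proposition~\ref{proposition: restriction-contraction iso}.

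The main obstacle will be the matrix identity $\alpha_2 \circ r_{A_1} = r_{A'}^{M_3|A_2} \circ \alpha_2'$. Heuristically, $r_{A_1}$ selects the columns of $\alpha_2$ indexed by $\tilde{A}_1$, whose nonzero entries lie exactly in the rows $\tilde{A}' = \underline{\alpha_2}(\tilde{A}_1)$, and reading this off as a morphism into $M_3|A_2$ via zero-padding on the remaining rows is precisely $r_{A'}^{M_3|A_2} \circ \alpha_2'$. Making this rigorous requires identifying the isomorphism produced by Proposition~\ref{proposition: restriction-contraction iso} with the appropriate submatrix of $\alpha_2$ on rows $\tilde{A}'$ and columns $\tilde{A}_1$, which is routine but demands careful bookkeeping of ground-set indices and the conventions for basepoints.
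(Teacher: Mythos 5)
Your proposal is correct and follows essentially the same route as the paper: the paper also factors $g\circ f$ as a restriction map followed by an isomorphism (writing the entries of that isomorphism explicitly and noting it is a composition of restrictions of isomorphisms, which is exactly what your appeal to Proposition~\ref{proposition: restriction-contraction iso} packages), and it likewise handles $\EE$ by passing to adjoints via Proposition~\ref{prop:adj_isos}.
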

\begin{proof}
	Let $f,g\in \MM$ be a pair of composable admissible monomorphisms, so that we have $f = r_A\circ \alpha:M\to N$ and $g = r_B\circ\beta:N\to R$. Thus we have
	\[\begin{tikzcd}
		M \ar[r,"\alpha","\sim"'] & N|A \ar[r,"r_A"] & N \ar[r,"\beta","\sim"'] & R|B \ar[r,"r_b"] & R.
	\end{tikzcd}\]
	Now define $h:=g\circ f = r_B\circ\beta\circ r_A\circ\alpha$. Then $h:M\to R$ is a $|\tilde{E}_R|\times|\tilde{E}_M|$ $F$-matrix with entries
	\[h_{ij} = \begin{cases} \beta_{\underline{\beta}(\underline{\alpha}(j)),\underline{\alpha}(j)}\cdot \alpha_{\underline{\alpha}(j),j} & i\in \underline{g\circ f}(\tilde{E}_M)\text{ and }i=\underline{g\circ f}(j) \\ 0_F & i\notin \underline{g\circ f}(\tilde{E}_M)\end{cases}.\]
	We can now illustrate the appropriate factorization of $h$ by constructing an isomorphism $\gamma$ by applying $\beta\circ\alpha$ to $\underline{\alpha}^{-1}(A)=E_M$. Define an $|\tilde{E}_M|\times|\tilde{E}_M|$ $F$-matrix $\gamma$ with entries
	\[\gamma_{ij} := \begin{cases} \beta_{\underline{\beta}(\underline{\alpha}(j)),\underline{\alpha}(j)}\cdot \alpha_{\underline{\alpha}(j),j} & i = \underline{g\circ f}(j) \\ 0 & \text{else}\end{cases}.\]
	Thus by construction we have the factorization $h = r_{\underline{g\circ f}(\tilde{E}_M)}\circ\gamma$ as $F$-matrices. Finally, note that $\gamma$ is an isomorphism of matroids since it is a composition of (restrictions of) isomorphisms.
	
	We now show the corresponding result for $\EE$ by appealing to adjoint morphisms. Supposing now that $f,g\in\EE$ are composable admissible epimorphisms with associated factorizations $f = \alpha\circ c_A:M\to N$ and $g = \beta\circ c_B:N\to R$, we can write 
	\[\begin{tikzcd}
		M \ar[r,"c_A"] & M/A \ar[r,"\alpha", "\sim"'] & N \ar[r,"c_B"] & N/B \ar[r,"\beta","\sim"'] & R
	\end{tikzcd}.\]
	Let $h:=g\circ f$. Taking duals of everything in sight, we now have that $g^t:R^*\to N^*$ and $f^t:N^*\to M^*$ are composable morphisms in $\MM$. Thus by the above argument we have that the composition $h^t = f^t\circ g^t$ is also in $\MM$, so that $h\in \EE$. Alternatively, one can easily reproduce an analog of the above argument for $\EE$.
\end{proof}

We now prove an auxiliary result regarding biCartesian squares in $F$-$\Mat$, following \cite[Lemma 4.9]{eppolito2018proto}. 

\begin{lem}\label{lem:aux}
	Let $N$ be a pointed $F$-matroid and let $B\subseteq A\subseteq \tilde{E}_N$. The following square is biCartesian in $F$-$\Mat$:
	\begin{equation}
	\begin{tikzcd}\label{diag:aux_square}
		N|A \ar[r, tail, "f"] \ar[d, two heads, "h"] & N \ar[d, two heads, "k"] \\
		(N|A)/B \ar[r, tail, "g"] & N/B
	\end{tikzcd}
	\end{equation}
where $f = r_A$ and $g = r_{(A\setminus B)}$.
\end{lem}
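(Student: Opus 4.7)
The plan is to verify the two universal properties directly, working at the level of submonomial matrices and appealing to the Grassmann--Pl\"ucker characterization of morphisms in Proposition~\ref{prop:matroid morphisms}. Commutativity of the square is an instance of Proposition~\ref{prop:restriction-contraction}: with $S=B$ and $T=A$ the identity $(N|A)/B = (N/B)|A$ says that $k\circ f$ and $g\circ h$ both realize the canonical restriction-contraction $N|A \to N/B$, and the corresponding submonomial matrices agree (both consist of an identity block on rows and columns indexed by $A\setminus B$, with all other entries equal to zero).

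For the pullback property, suppose we are given $p:R\to N$ and $q:R\to (N|A)/B$ with $kp=gq$. Inspecting the underlying pointed maps, the relation $\underline{kp} = \underline{gq}$ forces $\underline{p}(\tilde E_R)\subseteq A\cup\{*_N\}$, and moreover $\underline{p}(r) = \underline{q}(r)$ whenever $\underline{p}(r)\in A\setminus B$. Consequently every row of $p$'s submonomial matrix indexed by an element of $\tilde E_N\setminus A$ is zero, and the candidate $\phi:R\to N|A$ is defined by taking the submatrix of $p$ consisting of the rows indexed by $A$. To show that $\phi$ is a morphism of $F$-matroids, fix a basis $\mathbf{c}$ of $\underline{N}/A$ as in the definition of $\mu|A$; then for any $\mathbf{y}\in \tilde E_R^{r+1}$ and any $\mathbf{x}\in A^{\rk_{\underline{N}}(A)-1}$, substituting $\mathbf{x}' = (\mathbf{x},\mathbf{c})$ into the Grassmann--Pl\"ucker identity for $p$ yields, termwise, the analogous identity for $\phi$. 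Here one uses the equality $\nu(\underline{p}(y_k),\mathbf{x},\mathbf{c})=(\mu|A)(\underline{\phi}(y_k),\mathbf{x})$ together with the fact that the terms with $\underline{p}(y_k)\notin A$ contribute zero because $\phi_{\underline{\phi}(y_k),y_k}=0$. The equalities $f\phi = p$ and $h\phi = q$ are direct matrix computations (the latter using $p_{i,r}= q_{i,r}$ for $i\in A\setminus B$, which follows from $kp=gq$), and uniqueness of $\phi$ is immediate because $f$ is a monomorphism by Proposition~\ref{prop:inj_surj}.

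The pushout property is handled by a parallel argument. Given $p:N\to R$ and $q:(N|A)/B\to R$ with $pf = qh$, the underlying maps force $\underline{p}(B)\subseteq \{*_R\}$, so the columns of $p$'s matrix indexed by elements of $B$ vanish. Take $\psi:N/B\to R$ to be the matrix obtained from $p$ by deleting those columns. Choosing a basis $\mathbf{a}$ of $\underline{N}|B$ and using the definition of $\mu/B$, the Grassmann--Pl\"ucker identity of Proposition~\ref{prop:matroid morphisms} for $\psi$ reduces to that for $p$ in the same way as above; the identities $\psi k = p$ and $\psi g = q$ follow by matrix computation, and uniqueness follows from $k$ being an epimorphism. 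The main subtlety throughout is not the matrix bookkeeping but the Grassmann--Pl\"ucker reduction: one must choose bases of the restricted and contracted pieces so that the vanishing summands in the identity for $p$ (or $q$) correspond precisely to the vanishing needed for $\phi$ (or $\psi$). This is in the same spirit as, and can be largely imported from, the kernel/cokernel constructions carried out in Proposition~\ref{prop:ker and coker FMat}.
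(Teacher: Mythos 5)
Your proof is correct. The pushout half coincides with the paper's argument essentially verbatim: delete the columns of $p$ indexed by $B$, observe that commutativity of the outer diagram forces those columns to vanish, and verify the Grassmann--Pl\"ucker criterion of Proposition~\ref{prop:matroid morphisms} by padding with a basis $\mathbf{a}$ of $\underline{N}|B$ so that the extra summands die. The only divergence is in the pullback half: the paper deduces it from the pushout half by dualizing the square (via $(M|S)^*=M^*/S$ and Proposition~\ref{prop:adj_isos}), whereas you run the direct argument --- restrict $p$ to the rows indexed by $A$ and pad with a basis $\mathbf{c}$ of $\underline{N}/A$, exactly as in the kernel construction of Proposition~\ref{prop:ker and coker FMat}. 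The paper itself flags your route as the alternative worth having, since it circumvents duality and therefore transfers to $F$-$\mathbf{SMat}_\bullet$ and $\TRS$, where duals of simple matroids need not be simple (Remark~\ref{rmk: FSMat not simple}); so your version is in fact the one the later sections implicitly rely on. The duality route buys brevity here; the direct route buys portability.
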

\begin{proof}
	Note first that this diagram is commutative since by Proposition \ref{prop:restriction-contraction} we have that $(N|A)/B = (N/B)|A$. We proceed by illustrating that the square \eqref{diag:aux_square} is coCartesian. Suppose that we have the following diagram in $F$-$\Mat$:
	\begin{equation}\label{diag:aux_with_R}
		\begin{tikzcd}
			N|A \ar[r,tail,"f"] \ar[d,two heads, "h"] & N \ar[d,two heads, "k"] \ar[ddr,"\beta"] & \\
			(N|A)/B \ar[r,tail,"g"] \ar[drr,"\alpha",swap] & N/B & \\
			& & R
		\end{tikzcd}
	\end{equation}
	Now define an $F$-matrix $\gamma$ given by removing from $\beta$ all columns associated to elements of $B$. Doing so preserves submonomiality and therefore defines a map $\underline{\gamma}:E_{N/B}\to E_R$ and gives a factorization $\gamma k = \beta$ as $F$-matrices. It remains now to show that $\gamma$ is an $F$-matroid map, which we check using Proposition \ref{prop:matroid morphisms}.
	
	Let $M=[\mu]$, $N=[\nu]$, and $R = [\rho]$, and suppose that $\text{rk}_{\underline{N}}(B)=\ell$. In order to show that $\gamma$ is a morphism, we must establish that for all choices of $\mathbf{x}=(x_0,\ldots,x_{r_N-\ell})\in(\tilde{E}_N\setminus B)^{r_N-\ell+1}$ and all choices of $\mathbf{y}=(y_2,\ldots,y_{r_R})\in(\tilde{E}_R)^{r_R-1}$ we have
	\begin{equation}\label{eq: gp}
	\sum_{j=0}^{r_N-\ell}(-1)^j(\nu/B)(\mathbf{x}_{\hat{j}})\gamma_{\underline{\gamma}(x_j),x_j}\rho(\underline{\gamma}(x_j),\mathbf{y})\in N_F,
	\end{equation}
	where $r_N$ is the rank of $N$. We can re-express \eqref{eq: gp} by using the definition of the Grassmann-Pl\"{u}cker function $\nu/B$. Let $\mathbf{b}=(b_1,\ldots,b_\ell)\in (\tilde{E}_N)^\ell$ be a vector where $b_1,\ldots,b_\ell$ is a maximal independent set of $\underline{N}|B$. Then we are interested in the sum
	\begin{equation} \label{eq:gamma_morphism}
		\sum_{j=0}^{r_N-\ell}(-1)^j\nu(\mathbf{x}_{\hat{j}},\mathbf{b})\gamma_{\underline{\gamma}(x_j),x_j}\rho(\underline{\gamma}(x_j),\mathbf{y}).
	\end{equation} 
	However, recalling that $\beta:N\to R$ is a morphism, we are guaranteed that 
	\begin{equation}\label{eq: second sum}
	\sum_{j=0}^{r_N-\ell}(-1)^j \nu(\mathbf{x}_{\hat{j}},\mathbf{b})\beta_{\underline{\beta}(x_j),x_j}\rho(\underline{\beta}(x_j),\mathbf{y}) + \sum_{j=r_N-\ell+1}^{r_N}(-1)^j \nu(\mathbf{x},\mathbf{b}_{\hat{j}})\beta_{\underline{\beta}(b_j),b_j}\rho(\underline{\beta}(b_j),\mathbf{y})\in N_F.
	\end{equation} 
	Noting that $f$ acts as the identity on elements of $B$, we see that by the commutativity of Diagram (\ref{diag:aux_with_R}) we obtain 
	\begin{equation}
	\beta(B) = \beta(f(B)) = \alpha(h(B))=*_R,
	\end{equation}
in particular for all $b_j$ we have $\beta_{\underline{\beta}(b_j),b_j}=0_F$. This illustrates that the second sum in \eqref{eq: second sum} collapses to $0_F$, so that the first sum must be in $N_F$. But by the construction of $\gamma$, for all $x\notin B$ we have $\gamma_{\underline{\gamma}(x),x}=\beta_{\underline{\beta}(x),x}$, hence the first sum is equal to the sum \eqref{eq:gamma_morphism}. This establishes that $\gamma:N/B\to R$ is indeed a morphism of $F$-matroids. 
	
	For the uniqueness of $\gamma$, suppose that $\delta:N/B\to R$ is another morphism making Diagram \eqref{diag:aux_with_R} commute, so that $\beta = \delta k = \gamma k$. By Proposition \ref{prop:inj_surj}, $k$ is an epimorphism, hence $\delta = \gamma$. This establishes that Diagram \eqref{diag:aux_square} is coCartesian.
	
	To see that our diagram is also Cartesian, we could reproduce a similar argument to the above, which involves changing little other than the morphisms involved and the order of some compositions. Instead, we apply duality for a swifter proof. Consider the following pair of dual diagrams:
	\begin{equation}
	\begin{tikzcd}\label{diag:aux_with_P}
		P \ar[ddr,"\alpha"] \ar[drr,"\beta"] & & & (N/B)^* \ar[d,"g^t"] \ar[r,"k^t"] & N^* \ar[d,"f^t",swap] \ar[ddr,"\beta^t"] & \\ 
		& N|A \ar[r,tail,"f",swap] \ar[d,two heads, "h"] & N \ar[d,two heads, "k"] & ((N|A)/B))^* \ar[r,"h^t"] \ar[drr,"\alpha^t"] & (N|A)^* & \\ 
		& (N|A)/B \ar[r,tail,"g"] & N/B & & & P^* \\ 
	\end{tikzcd}
	\end{equation}
	We would like to produce a morphism $P\to N|A$ making the left-hand diagram commute and then prove its uniqueness. In view of duality, this is equivalent to producing such a morphism $(N|A)^*\to P^*$ which commutes with the right-hand diagram. By Proposition \ref{prop:restriction-contraction}, the right-hand diagram above can be re-expressed as follows:
	\[
	\begin{tikzcd}
		N^*|(E_N\setminus B) \ar[r,"k^t"] \ar[d,"g^t"] & N^* \ar[d,"f^t",swap] \ar[ddr,"\beta^t"] & \\
		(N^*|(E_N\setminus B))/(E_N\setminus A) \ar[r,"h^t"] \ar[drr,"\alpha^t"] & N^*/(E_N\setminus A) & \\
		& & P^*	
	\end{tikzcd}
	\]
	But we have just seen in the above that this diagram admits a unique morphism $N^*/(E_N\setminus A)\to P^*$ making the diagram commute. Thus taking duals again we see that Diagram \eqref{diag:aux_square} is Cartesian. 
	
	Alternatively, one can establish directly that Diagram \eqref{diag:aux_square} is Cartesian using a proof totally analogous to the coCartesian case. This approach has the benefit of circumventing duality, and so applies also in the context of simple $F$-matroids. 
\end{proof}

\begin{pro}\label{prop:LR bicart}
	Any diagram in $F$-$\Mat$ of the form $M\rightarrowtail R \twoheadleftarrow N$ can be completed to a bi-Cartesian square.
\end{pro}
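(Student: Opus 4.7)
The plan is to complete the diagram by taking a suitable restriction of $N$ and to identify the resulting square as the image of the bi-Cartesian square of Lemma \ref{lem:aux} under isomorphism. Factor $f = r_A \circ \alpha$, with $\alpha : M \xrightarrow{\sim} R|A$ and $A \subseteq E_R$, and factor $g = \beta \circ c_B$, with $\beta : N/B \xrightarrow{\sim} R$ and $B \subseteq \tilde{E}_N$. Since $\beta$ induces a bijection $\underline{\beta}$ between $E_N \setminus B$ and $E_R$, the preimage $\underline{\beta}^{-1}(A)$ is a well-defined subset of $\tilde{E}_N \setminus B$. Define
\[A' := B \cup \underline{\beta}^{-1}(A) \subseteq \tilde{E}_N,\]
so that $B \subseteq A'$ and $A' \setminus B = \underline{\beta}^{-1}(A)$, and set $L := N|A'$.

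By Proposition \ref{prop:restriction-contraction}, we have the identification $(N|A')/B = (N/B)|\underline{\beta}^{-1}(A)$, and by Proposition \ref{proposition: restriction-contraction iso}, $\beta$ restricts to an isomorphism $(N/B)|\underline{\beta}^{-1}(A) \xrightarrow{\sim} R|A$. Composing this isomorphism with $\alpha^{-1}$ yields an isomorphism $\gamma : (N|A')/B \xrightarrow{\sim} M$. Define the two completing maps by $h_1 := r_{A'} : L \rightarrowtail N$, which lies in $\MM$, and $h_2 := \gamma \circ c_B : L \twoheadrightarrow M$, which lies in $\EE$ by Definition \ref{definition: FMat M and E}. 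Commutativity $f \circ h_2 = g \circ h_1$ reduces to the identity $\beta \circ r_{A' \setminus B} \circ \gamma^{-1} = r_A \circ \alpha = f$, which holds by the construction of $\gamma$ together with Proposition \ref{proposition: restriction-contraction iso}; substituting the factorizations $h_2 = \gamma \circ c_B$ and $g = \beta \circ c_B$ and using the commutativity of the Lemma \ref{lem:aux} square then gives the claim.

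Finally, Lemma \ref{lem:aux} applied to $B \subseteq A' \subseteq \tilde{E}_N$ produces the bi-Cartesian square
\[\begin{tikzcd}
N|A' \ar[r, tail, "r_{A'}"] \ar[d, two heads, "c_B"'] & N \ar[d, two heads, "c_B"] \\
(N|A')/B \ar[r, tail, "r_{A'\setminus B}"'] & N/B
\end{tikzcd}\]
Replacing the two bottom corners $(N|A')/B$ and $N/B$ by $M$ and $R$ along the isomorphisms $\gamma$ and $\beta$ yields precisely the completed square built above, and since replacement of corners by isomorphic objects preserves both the Cartesian and coCartesian properties, the resulting square is bi-Cartesian, as required. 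The main technical obstacle here is purely bookkeeping: one must verify that the isomorphism replacement produces exactly the morphism $f$ on the bottom (rather than a merely equivalent morphism), which is what Proposition \ref{proposition: restriction-contraction iso} guarantees via the compatibility of $\beta$ with restriction.
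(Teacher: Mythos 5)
Your proof is correct and follows essentially the same route as the paper: both factor the two admissible maps, restrict $N$ to the preimage (under the admissible epi) of the subset defining $M$ inside $R$, invoke Lemma \ref{lem:aux} together with Propositions \ref{prop:restriction-contraction} and \ref{proposition: restriction-contraction iso}, and transfer the bi-Cartesian property along the resulting isomorphisms. The only cosmetic difference is that the paper verifies the universal properties explicitly via diagrams rather than appealing to the general principle that replacing corners by isomorphic objects preserves (co)Cartesianness.
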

\begin{proof}
	Let $i:M \rightarrowtail R$ and $j:N \twoheadrightarrow R$. 
	Using the definition of the class $\EE$, we can write $j=\beta\circ c_A$, where $\beta$ is an isomorphism and $c_A:N \to N/A$ is a contraction for some $A \subseteq E_N$. Likewise, we can write $i=r_B\circ \alpha$, where $\alpha$ is an isomorphism and $r_B:R|B \to R$ is a restriction for some $B \subseteq E_R$. Letting $C:= \underline{j}^{-1}(B)\subseteq E_{N}$, one can see that $A\subseteq C$. Hence, we obtain the following diagram:
	\begin{equation}
		\begin{tikzcd}
			& N|C \ar[ddl] \ar[r, tail] \ar[d, two heads] & N \ar[d,"c_A", two heads] \\
			& (N/A)|C \ar[d,"\beta|_C"] \ar[r, tail] & N/A \ar[d,"\beta"] \\
			M \ar[r,"\alpha"] & R|B \ar[r,"r_B"] & R	
		\end{tikzcd}
	\end{equation}
	Note that Proposition \ref{proposition: restriction-contraction iso} gives that $\beta|_C$ is an isomorphism. Since $\alpha$ is an isomorphism, we have the following commutative square:
	\begin{equation}\label{eq: biCartesian}
		\begin{tikzcd}
			N|C \ar[r, tail] \ar[d, two heads] & N \ar[d, two heads] \\
			M \ar[r,tail] & R 
		\end{tikzcd}
	\end{equation}
	We claim that \eqref{eq: biCartesian} is the bi-Cartesian completion of $M\rightarrowtail R \twoheadleftarrow N$. By Lemma \ref{lem:aux} the square in the following diagram is biCartesian:
	\begin{equation}
		\begin{tikzcd}\label{diag:aux2}
			& N|C \ar[r, tail] \ar[d, two heads] & N \ar[d, two heads] \\ 
			(N|C)/A \ar[r,equal] & (N/A)|C \ar[r,tail] & N/A
		\end{tikzcd}
	\end{equation}
	Here the equality in the bottom left is given by Proposition \ref{prop:restriction-contraction}. Suppose now that we have diagrams 
	\begin{equation}
		\begin{tikzcd}
			P \ar[ddr,"f",swap] \ar[drr] & & & N|C \ar[d,two heads] \ar[r,tail] & N \ar[d,two heads] \ar[ddr] & \\ 
			& N|C \ar[r,tail] \ar[d,two heads] & N \ar[d,two heads] & M \ar[r, tail] \ar[drr,"g",swap] & R & \\ 
			& M \ar[r,tail] & R & & & Q \\ 
		\end{tikzcd}
	\end{equation}
	Then appealing to the fact that $\beta|_C$ is an isomorphism, we now have
	\begin{equation}
		\begin{tikzcd}
			P \ar[ddr,"(\beta|_C)^{-1}\alpha f",swap] \ar[drr] & & & N|C \ar[d,two heads] \ar[r,tail] & N \ar[d,two heads] \ar[ddr] & \\ 
			& N|C \ar[r,tail] \ar[d,two heads] & N \ar[d,two heads] & (N/A)|C \ar[r, tail] \ar[drr,"g\alpha^{-1}(\beta|_C)",swap] & N/A & \\ 
			& (N/A)|C \ar[r,tail] & N/A & & & Q \\ 
		\end{tikzcd}
	\end{equation}
	In particular, the bi-Cartesian property of \eqref{diag:aux2} now guarantees the existence of unique maps $P\to N|C$ and $R\cong N/A \to Q$. This establishes that the completion is biCartesian.
\end{proof}

\begin{pro}\label{prop:UL bicart}
	Any diagram in $F$-$\Mat$ of the form $M\twoheadleftarrow R \rightarrowtail N$ can be completed to a biCartesian square. 
\end{pro}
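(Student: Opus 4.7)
Plan: I would prove this by duality, reducing directly to Proposition \ref{prop:LR bicart}. Given $p:R\twoheadrightarrow M$ in $\EE$ and $i:R\rightarrowtail N$ in $\MM$, the transpose operation on submonomial matrices produces morphisms $p^t:M^*\rightarrowtail R^*$ in $\MM$ and $i^t:N^*\twoheadrightarrow R^*$ in $\EE$, using the identity $\MM^t = \EE$ recorded immediately after Definition \ref{definition: FMat M and E}. Thus we obtain a diagram $M^*\rightarrowtail R^* \twoheadleftarrow N^*$ of the form handled by Proposition \ref{prop:LR bicart}, and that proposition yields a biCartesian completion
\[
\begin{tikzcd}
Q \ar[r, tail, "a"] \ar[d, two heads, "b"] & N^* \ar[d, two heads, "i^t"] \\
M^* \ar[r, tail, "p^t"] & R^*
\end{tikzcd}
\]
in $\FMat$.

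Transposing this square once more produces the commutative diagram
\[
\begin{tikzcd}
R \ar[r, tail, "i"] \ar[d, two heads, "p"] & N \ar[d, two heads, "a^t"] \\
M \ar[r, tail, "b^t"] & Q^*
\end{tikzcd}
\]
which completes the original diagram $M \twoheadleftarrow R \rightarrowtail N$. It remains to check that the resulting square is biCartesian. Matroid duality is a contravariant involutive self-equivalence of $\FMat$: involutivity follows from $(f^t)^t = f$ on submonomial matrices together with $(M^*)^* = M$, while Proposition \ref{prop:adj_isos} and the identity $\MM^t = \EE$ show that it interchanges the admissible classes $\MM$ and $\EE$. Any contravariant equivalence of categories exchanges Cartesian with coCartesian squares, and so in particular preserves the property of being biCartesian; the square above therefore inherits biCartesianness from its dual.

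The main obstacle here is mostly bookkeeping, namely verifying that matroid duality is a genuine self-equivalence on $\FMat$ which behaves correctly with respect to universal constructions. An alternative, duality-free route is also available, obtained by mimicking the proof of Proposition \ref{prop:LR bicart} line-by-line: one factors $p = \beta \circ c_A$ and $i = r_B \circ \alpha$, sets $A' := \underline{\alpha}(A) \subseteq E_N$, and then checks that the pair $(N/A', c_{A'})$ realises the biCartesian completion via an analogue of Lemma \ref{lem:aux}. This second approach would be indispensable in any setting where duality is unavailable, as suggested in the closing remark of Lemma \ref{lem:aux}.
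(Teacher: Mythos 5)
Your proof is correct and follows essentially the same route as the paper, which likewise reduces to Proposition \ref{prop:LR bicart} by applying matroid duality (as in the Cartesian half of Lemma \ref{lem:aux}) and notes the duality-free alternative of replaying that proof in the opposite category. You have simply spelled out the transposition and the exchange of $\MM$ and $\EE$ in more detail than the paper does.
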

\begin{proof}
	This proof follows immediately from Proposition \ref{prop:LR bicart} by applying duality as in the proof of Lemma \ref{lem:aux}. However, alternatively one can follow analogous steps to the proof of Proposition \ref{prop:LR bicart} as would be played out in the opposite category. 
\end{proof}

\begin{pro}\label{prop:cart iff cocart}
	Any square of the following form in $F$-$\Mat$ is Cartesian if and only if it is coCartesian:
	\[
	\begin{tikzcd}
		M \ar[r, tail] \ar[d, two heads] & N \ar[d, two heads] \\
		P \ar[r, tail] & R
	\end{tikzcd}
	\]
\end{pro}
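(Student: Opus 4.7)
The plan is to combine the two completion results just established (Propositions \ref{prop:LR bicart} and \ref{prop:UL bicart}) with the fact that pullbacks and pushouts, when they exist, are unique up to unique isomorphism. Neither direction requires new combinatorics; everything reduces to transporting a universal property across an iso.

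For the forward direction, suppose the given square is Cartesian. I would apply Proposition \ref{prop:LR bicart} to the lower-right corner $P \rightarrowtail R \twoheadleftarrow N$ to produce some biCartesian completion
\[
\begin{tikzcd}
M' \ar[r, tail] \ar[d, two heads] & N \ar[d, two heads] \\
P \ar[r, tail] & R
\end{tikzcd}
\]
with the same $P$, $N$, $R$ and the same horizontal and vertical maps out of $M'$ factoring the given ones. Since both this new square and the original are Cartesian completions of the same cospan, the universal property of pullbacks yields a unique isomorphism $\phi: M \xrightarrow{\sim} M'$ commuting with the two projections. The constructed square is in particular coCartesian, and precomposing any test cocone on $P \twoheadleftarrow M \rightarrowtail N$ with $\phi^{-1}$ and then invoking the pushout property of the $M'$-square produces the required unique factorization through $R$. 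Hence the original square is coCartesian.

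The converse is entirely symmetric: if the given square is coCartesian, apply Proposition \ref{prop:UL bicart} to the upper-left corner $P \twoheadleftarrow M \rightarrowtail N$ to obtain a biCartesian completion with some $R'$ in the lower-right, and uniqueness of pushouts gives an isomorphism $R \cong R'$ through which the Cartesian property of the completion is transferred to the original square.

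No serious obstacle is expected. The only thing to keep an eye on is that the transported universal maps remain morphisms of the correct admissibility class: this is automatic since by Propositions \ref{prop:contian iso} and \ref{prop:composition} the classes $\MM$ and $\EE$ contain all isomorphisms and are closed under composition, so composing the pullback/pushout comparison iso with the admissible maps from the biCartesian completion produces admissible maps. Thus the equivalence is purely a formal consequence of the two completion lemmas together with uniqueness of universal objects.
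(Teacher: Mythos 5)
Your argument is correct and is essentially the paper's own proof: complete the relevant cospan (resp.\ span) to a biCartesian square via Proposition \ref{prop:LR bicart} (resp.\ \ref{prop:UL bicart}), identify the completion with the given square by uniqueness of pullbacks (resp.\ pushouts), and transport the other universal property across the comparison isomorphism. Your extra remark on admissibility of the transported maps is a harmless elaboration of a point the paper leaves implicit.
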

\begin{proof}
	Suppose first that the square is Cartesian. Then by Proposition \ref{prop:LR bicart}, $P \rightarrowtail R \twoheadleftarrow N$  can be extended to the following commutative square:
	\[
	\begin{tikzcd}
		M' \ar[d, two heads] \ar[r,tail] & N \ar[d, two heads] \\
		P \ar[r, tail] & R
	\end{tikzcd}
	\]
	Since our original square was assumed Cartesian, we obtain that $M\cong M'$ by the uniqueness of pullbacks up to isomorphism. In particular, this means that our square is coCartesian as well. The other direction is analogous, instead considering the biCartesian completion of $P \twoheadleftarrow M \rightarrowtail N$.  
\end{proof}

Putting together Propositions \ref{prop:contian iso}, \ref{prop:composition},\ref{prop:LR bicart}, \ref{prop:UL bicart} and \ref{prop:cart iff cocart} we obtain Theorem \ref{thm:F mat proto ex} and therefore also Corollary \ref{cor:FMat proto ab}. Observe also that the functor $\Phi_*:\FMat\to G$-$\Mat$ of Remark \ref{rmk:base_change_functor} given by a morphism $\Phi:F\to G$ of perfect idylls is an exact functor, owing to the definition of admissibles. 

We now characterize the sum and intersection of strict subobjects in $F$-$\Mat$ as described in Subsection \ref{subsec: stability and slopes}. Note that from Proposition \ref{prop:ker and coker FMat}, the notion of admissible subobjects in $F$-$\Mat$ (viewed as a proto-exact category) agree with the notion of strict subobjects in $F$-$\Mat$ (viewed as a proto-abelian category). Hence, we use the same symbol $\rightarrowtail$ to denote strict subobjects.

Let $M$ be an $F$-matroid with strict subobjects $M_1,M_2\rightarrowtail M$. Without loss of generality, we write $M_1 = M|A_1$ and $M_2 = M|A_2$ for some $A_1,A_2\subseteq E_M$. In analogy with Diagram \eqref{diag: sum of subobj} we then draw the diagram 
\[
\begin{tikzcd}
	(M|A_1) + (M|A_2) \ar[dr, tail] & M|A_1 \ar[d, tail] \ar[l, tail]  & \\
	M|A_2 \ar[r, tail] \ar[u, tail] & M \ar[d, two heads] \ar[r, two heads] & M/A_2 \ar[d, two heads] \\
	& M/A_1 \ar[r, two heads] & M' 
\end{tikzcd}
\]

where as in Diagram \ref{diag: sum of subobj} the bottom right square is a pushout and the map $(M|A_1) + (M|A_2) \rightarrowtail M$ is the kernel of the composition $M\to M'$. By a straightforward argument similar to the proof of Lemma \ref{lem:aux}, it follows that $M'$ can be identified with the matroid $M/(A_1\cup A_2)$, so that Proposition \ref{prop:ker and coker FMat} yields $(M|A_1)+(M|A_2) = M|(A_1\cup A_2)$. Likewise, one verifies analogously that $(M|A_1)\cap(M|A_2) = M|(A_1\cap A_2)$. This establishes the following proposition. 

\begin{pro}\label{prop: sum prod of subobj FMat}
	Let $M_1,M_2\rightarrowtail M$ be strict subobjects of $M$ in $\FMat$ given as restrictions $M_1= M|A_1$ and $M_2= M|A_2$. Then the sum and intersection of the subobjects $M_1$ and $M_2$ are given by
		\begin{align*}
			(M|A_1)+(M|A_2) &= M|(A_1\cup A_2) \\
			(M|A_1)\cap(M|A_2) &= M|(A_1\cap A_2).
		\end{align*}
\end{pro}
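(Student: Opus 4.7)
The plan is to verify both identities by analyzing the relevant universal constructions directly in $\FMat$, using the explicit description of kernels and cokernels given in Proposition \ref{prop:ker and coker FMat}.

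I begin with the intersection. Recall that $(M|A_1)\cap(M|A_2)$ is by definition the pullback of the diagram $M|A_1\rightarrowtail M \leftarrowtail M|A_2$. I claim that the square
\begin{equation*}
\begin{tikzcd}
M|(A_1\cap A_2) \ar[r,tail]\ar[d,tail] & M|A_1 \ar[d,tail] \\
M|A_2 \ar[r,tail] & M
\end{tikzcd}
\end{equation*}
with all arrows the natural restriction inclusions, is Cartesian. Given any pointed $F$-matroid $P$ equipped with morphisms $f_i:P\to M|A_i$ whose compositions with the inclusions into $M$ agree, Proposition \ref{prop:inj_surj} and the submonomial description of matroid morphisms imply that the underlying map and nonzero columns of $f_i$ land in $\tilde{A}_i$. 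Since the compositions into $M$ agree, the underlying maps and entries both factor through $\tilde{A}_1\cap \tilde{A}_2$, producing a unique submonomial $F$-matrix $P\to M|(A_1\cap A_2)$; the verification that this is an $F$-matroid morphism is immediate from the fact that $f_1$ already is.

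For the sum, I first compute the pushout $M'$ of $M/A_1\twoheadleftarrow M \twoheadrightarrow M/A_2$ and then apply Proposition \ref{prop:ker and coker FMat} to identify its kernel. I claim $M'=M/(A_1\cup A_2)$, with the structure morphisms $M/A_i \twoheadrightarrow M/(A_1\cup A_2)$ being the natural contractions. Given any target $Q$ with morphisms $g_i:M/A_i\to Q$ whose pullbacks to $M$ agree, the common composition $M\to Q$ sends every element of $A_1\cup A_2$ to the basepoint of $Q$, so its underlying submonomial matrix has zero columns in the positions indexed by $\tilde{A}_1\cup\tilde{A}_2$. Deleting those columns produces the unique candidate morphism $M/(A_1\cup A_2)\to Q$, and a Grassmann--Pl\"{u}cker computation identical in spirit to the one in the proof of Lemma \ref{lem:aux} (choose bases of $\underline{M}|(A_1\cup A_2)$ and $\underline{M}/(A_1\cup A_2)$ and observe that all offending summands vanish for rank reasons) confirms it is an $F$-matroid morphism. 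Consequently, by the definition of the sum in a proto-abelian category, $(M|A_1)+(M|A_2)$ is the kernel of the composition $M\twoheadrightarrow M/(A_1\cup A_2)$, which by Proposition \ref{prop:ker and coker FMat} is precisely the restriction $M|(A_1\cup A_2)$.

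The main obstacle, though not a serious one, is showing that the induced maps produced by the universal properties are actual $F$-matroid morphisms rather than merely maps of the underlying pointed sets. Both cases are resolved by the same Grassmann--Pl\"{u}cker argument that appears in Lemma \ref{lem:aux}: contributions involving elements that get sent to the basepoint vanish because inserting them into the relevant tuple forces dependency, so the Grassmann--Pl\"{u}cker identity for the induced morphism reduces to the one that is known to hold for the given data.
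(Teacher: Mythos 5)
Your proposal is correct and follows essentially the same route as the paper: the paper likewise identifies the pushout of $M/A_1\twoheadleftarrow M\twoheadrightarrow M/A_2$ with $M/(A_1\cup A_2)$ by an argument modeled on Lemma \ref{lem:aux}, then invokes Proposition \ref{prop:ker and coker FMat} to identify the sum as the kernel $M|(A_1\cup A_2)$, and handles the intersection analogously as the pullback $M|(A_1\cap A_2)$. You have simply filled in the Grassmann--Pl\"{u}cker details that the paper leaves as ``a straightforward argument similar to the proof of Lemma \ref{lem:aux}.''
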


We close this subsection with a verification of Artinian and Noetherian conditions in $\FMat$. 

\begin{pro}\label{pro: art noeth FMat}
	Let $M$ be an $F$-matroid and let $\cdots \rightarrowtail M_{i-1} \rightarrowtail M_i \rightarrowtail \cdots$ be an infinite chain of strict subobjects of $M$. Then for some $n_-,n_+\in\mathbb{Z}$ we have that $M_{i-1}\cong M_i$ for $i\leq n_-$ and $M_j \cong M_{j+1}$ for $j\geq n_+$. 
\end{pro}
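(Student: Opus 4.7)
My plan is to reduce stabilization of the chain $(M_i)_{i\in\mathbb{Z}}$ to stabilization of a chain of subsets of the finite set $\tilde{E}_M$. For each $i$, I denote by $\iota_i: M_i \rightarrowtail M$ the composite strict monomorphism encoding $M_i$ as a subobject of $M$, and by $f_i: M_{i-1} \rightarrowtail M_i$ the connecting maps, so that $\iota_{i-1} = \iota_i \circ f_i$. Using Definition \ref{definition: FMat M and E}, I would factor each $\iota_i = r_{A_i} \circ \alpha_i$ for some subset $A_i \subseteq \tilde{E}_M$ and some isomorphism $\alpha_i: M_i \xrightarrow{\sim} M|A_i$. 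The first point to verify is that $A_i$ does not depend on the choice of factorization, which follows from the intrinsic description $A_i = \underline{\iota_i}(\tilde{E}_{M_i})$ together with the fact that $\underline{\iota_i}$ is an injection by Proposition \ref{prop:inj_surj}.

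Next, I would pass to underlying pointed maps in the identity $\iota_{i-1} = \iota_i \circ f_i$ and compare images to deduce the containment $A_{i-1} \subseteq A_i$. Thus $(A_i)_{i\in\mathbb{Z}}$ forms a non-decreasing chain of subsets of the finite set $\tilde{E}_M$, so a standard pigeonhole argument yields indices $n_-, n_+ \in \mathbb{Z}$ such that $A_{i-1} = A_i$ for all $i \leq n_-$ and $A_j = A_{j+1}$ for all $j \geq n_+$. For such indices, the composite $\alpha_i^{-1} \circ \alpha_{i-1}$ (using the equality $M|A_{i-1} = M|A_i$) provides an isomorphism $M_{i-1} \cong M_i$, which is precisely what the proposition demands.

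I do not anticipate a serious obstacle: the content is simply that admissible monomorphisms into an $F$-matroid $M$ with finite ground set are controlled by subsets of $\tilde{E}_M$, and any monotone family of subsets of a finite set stabilizes in both directions. The only point requiring genuine care is the well-definedness of $A_i$ as an invariant of $\iota_i$, which as noted above is handled by its intrinsic description as the image of $\underline{\iota_i}$ on non-basepoint elements.
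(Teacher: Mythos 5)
Your proof is correct and follows essentially the same route as the paper's: identify each strict subobject with a restriction $M|A_i$ for some $A_i\subseteq E_M$, observe that the chain forces $A_{i-1}\subseteq A_i$, and conclude by finiteness of $E_M$. You simply supply more detail than the paper does (well-definedness of $A_i$ via the image of $\underline{\iota_i}$, and the explicit isomorphisms $\alpha_i^{-1}\circ\alpha_{i-1}$), which is a reasonable elaboration rather than a different argument.
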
 
\begin{proof}
	Since each $M_i$ is a strict subobject of $M$, we may write $M_i = M|A_i$ for some $A_i\subseteq E_M$. In particular, since $E_M$ is a finite set and $M_{i-1}\rightarrowtail M_i$ implies $A_{i-1}\subseteq A_i$, we must have that the sets $A_i$ stabilize as $i\to\infty$ or $i\to -\infty$. This completes the proof.  
\end{proof}

\subsection{Further Structure on $F$-$\Mat$}\label{subsec:comb dir sum and duality FMat}

In this subsection we establish directly that the categories $F$-$\Mat$ are combinatorial and have direct sum and duality in the sense of Subsection \ref{direct_sum_duality}. We continue to work with a perfect idyll $F$. 

\begin{mythm}\label{theorem: further structure}
	The category $F$-$\Mat$ is a combinatorial proto-exact category with duality and with exact direct sum. 
\end{mythm}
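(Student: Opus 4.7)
The plan is to assemble the three structural features (exact direct sum, duality, and the combinatorial property) one at a time, in each case pushing the burden of verification onto the submonomial-matrix description of morphisms. On objects, let $\oplus$ be the pointed direct sum of $F$-matroids from Subsection \ref{subsubsec:sums and points}, and on morphisms $f\colon M\to N$, $g\colon P\to Q$ let $f\oplus g$ be the block-diagonal submonomial matrix $\bigl(\begin{smallmatrix} f & 0 \\ 0 & g \end{smallmatrix}\bigr)$ indexed by $\tilde{E}_N\sqcup\tilde{E}_Q$ and $\tilde{E}_M\sqcup\tilde{E}_P$. That this is a morphism of $F$-matroids reduces to Proposition \ref{prop:matroid morphisms}: by construction of $\mu_M\oplus\mu_P$, the Grassmann-Pl\"ucker sum appearing there splits into two sums of the form already guaranteed to lie in $N_F$ by the hypotheses on $f$ and $g$. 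Symmetric monoidality and unitality with $0=U_1^0$ (axiom (DS1)) are immediate from the definition of the pointed direct sum.

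For (DS3), any morphism $f\colon M\to N\oplus N'$ is determined as an $F$-matrix by its rows, and the rows indexed by $\tilde{E}_N$ and by $\tilde{E}_{N'}$ respectively are precisely the matrices of $\pi_N f$ and $\pi_{N'}f$, so $f\mapsto(\pi_Nf,\pi_{N'}f)$ is injective; the other injectivity is dual. For (DS4) it suffices to treat the prototypical exact sequence $M|A\rightarrowtail M\twoheadrightarrow M/A$: a section of $c_A$ is necessarily represented by a submonomial matrix whose underlying map embeds $E_{M/A}\setminus\{*\}$ as a complement of $A$ in $\tilde{E}_M$, and identifying $M/A$ with $M|(E_M\setminus A)$ via this section exhibits the induced isomorphism $(M|A)\oplus (M/A)\cong M$ as block-diagonal; uniqueness again follows because a morphism into a direct sum is determined by its projections. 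The exactness of $\oplus$ (axiom (DS2)) then follows from Proposition \ref{prop:restriction-contraction} together with the observation that for $A=A_1\sqcup A_2\subseteq \tilde E_{M_1}\sqcup\tilde E_{M_2}$ one has $(M_1\oplus M_2)|A=(M_1|A_1)\oplus(M_2|A_2)$ and $(M_1\oplus M_2)/A=(M_1/A_1)\oplus(M_2/A_2)$, so the direct sum of admissible monos (resp.\ epis) is again admissible and bi-Cartesian squares are produced componentwise.

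For duality, let $P(M):=M^*$ on objects and $P(f):=f^t$ on morphisms; by Corollary~2.9 of \cite{jarra2024quiver} (recalled after Definition \ref{definition: morphism F-matroids}) this is a functor $F\text{-}\Mat\to (F\text{-}\Mat)^{\mathrm{op}}$, and the double-dual isomorphism $\Theta_M\colon M\xrightarrow{\sim}M^{**}$ is given by the identity matrix, which is natural and satisfies $P(\Theta_M)\circ\Theta_{P(M)}=\mathrm{id}_{P(M)}$ since transposing twice is the identity on matrices. Axiom (D1) is the observation that $(U_1^0)^*=U_1^0$ (deleting the basepoint of $U_1^0$ leaves the empty matroid, whose dual is empty, after which a loop is reappended). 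Axiom (D2) follows from the identities $(M|A)^*=M^*/A$ and $(M/A)^*=M^*|A$ from Subsection \ref{subsubsec:restriction contraction}, combined with Proposition \ref{prop:adj_isos}: up to isomorphisms, duality swaps restrictions and contractions, and hence interchanges $\MM$ and $\EE$. Axiom (D3) then reduces to showing that $(\ )^t$ applied to a bi-Cartesian square in $F$-$\Mat$ produces another one; but bi-Cartesian squares were constructed in Propositions \ref{prop:LR bicart} and \ref{prop:UL bicart} from restriction/contraction data, and the dual identities above convert such a completion of $M\rightarrowtail R\twoheadleftarrow N$ into a completion of $N^*\rightarrowtail R^*\twoheadleftarrow M^*$.

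Finally, for the combinatorial property, let $g\colon X\rightarrowtail U_1\oplus U_2$ be an admissible monomorphism and write $g=r_A\circ\alpha$ as in Definition \ref{definition: FMat M and E} with $A\subseteq \tilde E_{U_1}\sqcup\tilde E_{U_2}$. Setting $A_k:=A\cap\tilde E_{U_k}$, the identity $(U_1\oplus U_2)|A=(U_1|A_1)\oplus(U_2|A_2)$ used already in (DS2) lets us define $X_k:=U_k|A_k$ and the composite isomorphism $f\colon X\xrightarrow{\alpha}(U_1|A_1)\oplus(U_2|A_2)=X_1\oplus X_2$; a direct check of the block-diagonal submonomial matrices shows $(\iota_1\oplus\iota_2)\circ f=g$. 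The main obstacle I anticipate is the verification of (DS4) — extracting an honest isomorphism $U\oplus V\cong X$ from a mere set-theoretic section requires the recognition that the submonomial matrix of any such section must realize $V$ as a restriction complementary to $U$ inside $X$; once this is made precise using Proposition \ref{prop:inj_surj}, the remaining axioms are routine consequences of the description of $\MM$ and $\EE$ via restrictions and contractions.
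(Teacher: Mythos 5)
Your proposal is correct and follows essentially the same route as the paper: the same key identity $(M_1\oplus M_2)|(A_1\sqcup A_2)=(M_1|A_1)\oplus(M_2|A_2)$ (and its contraction analogue) drives both (DS2) and the combinatorial property, the block-matrix computations handle (DS3)--(DS4), and duality is verified via $(M|A)^*=M^*/A$, $(M/A)^*=M^*|A$ together with the restriction/contraction form of biCartesian squares. The only quibble is a notational slip in the duality paragraph --- the dual of a completion problem $M\rightarrowtail R\twoheadleftarrow N$ is one of the form $M^*\twoheadleftarrow R^*\rightarrowtail N^*$, with the arrows out of $R^*$ --- but this does not affect the argument.
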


We prove this theorem in three parts as Propositions \ref{prop:pr_ex_dir_sum}, \ref{prop:pr_ex_duality}, and \ref{prop:comb_dir_sum} which we separate for convenience. The reader may wish to refer back to Definitions \ref{def:pr_ex_dir_sum} and \ref{def:pr_ex_duality}. 

\begin{lem}\label{lem:ctr_del_dir_sum}
	Let $M_1=[\mu_1]$ and $M_2=[\mu_2]$ be pointed $F$-matroids of ranks $r_1$ and $r_2$ on ground sets $E_{M_1}$ and $E_{M_2}$ and let $A_i\subseteq E_{M_i}$ for $i=1,2$. Then we have the following equalities:
	\begin{align*}
		(M_1/A_1)\oplus (M_2/A_2) &= (M_1\oplus M_2)/(A_1\sqcup A_2) \\
		(M_1|A_1)\oplus (M_2|A_2) &= (M_1\oplus M_2)|(A_1\sqcup A_2)
	\end{align*}
\end{lem}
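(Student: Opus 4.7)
Both identities follow from a direct computation on representative Grassmann--Pl\"{u}cker functions; the key is that the subtuple decomposition $\mathbf{x} = (\mathbf{x}_1, \mathbf{x}_2)$ used to define the direct sum of Grassmann--Pl\"{u}cker functions is compatible with appending a basis of the form $(\mathbf{a}_1, \mathbf{a}_2)$ or $(\mathbf{c}_1, \mathbf{c}_2)$ that lives entirely in the two factors. No reorderings of tuples are required, so no sign issues arise.

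Set $s_i := \rk_{\underline{M_i}}(A_i)$. Since $\underline{M_1 \oplus M_2} = \underline{M_1} \oplus \underline{M_2}$ as classical matroids, rank additivity gives $\rk_{\underline{M_1 \oplus M_2}}(A_1 \sqcup A_2) = s_1 + s_2$. Moreover, for any ordered bases $\mathbf{a}_i \in A_i^{s_i}$ of $\underline{M_i}|A_i$, the concatenation $(\mathbf{a}_1, \mathbf{a}_2)$ is an ordered basis of $\underline{M_1 \oplus M_2}|(A_1 \sqcup A_2)$; analogously, ordered bases $\mathbf{c}_i \in (E_{M_i} \setminus A_i)^{r_i - s_i}$ of $\underline{M_i}/A_i$ concatenate to an ordered basis of $\underline{M_1 \oplus M_2}/(A_1 \sqcup A_2)$. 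These are standard facts about direct sums of classical matroids and allow each side of either identity to be represented by an explicit Grassmann--Pl\"{u}cker function.

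For the contraction identity, I would take a tuple $\mathbf{x}$ of length $r_1 + r_2 - s_1 - s_2$ with entries in $(E_{M_1} \setminus A_1) \sqcup (E_{M_2} \setminus A_2)$, and write $\mathbf{x}_i$ for its $M_i$-subtuple. Unwinding the definitions of contraction and of the direct sum of Grassmann--Pl\"{u}cker functions,
\[
((\mu_1 \oplus \mu_2)/(A_1 \sqcup A_2))(\mathbf{x}) \;=\; (\mu_1 \oplus \mu_2)(\mathbf{x}, \mathbf{a}_1, \mathbf{a}_2).
\]
The $M_i$-subtuple of $(\mathbf{x}, \mathbf{a}_1, \mathbf{a}_2)$ is the concatenation $(\mathbf{x}_i, \mathbf{a}_i)$, whose support has size $\#|\mathbf{x}_i| + s_i$ since $\mathbf{x}_i$ and $\mathbf{a}_i$ have disjoint supports. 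Hence, when $\#|\mathbf{x}_i| = r_i - s_i$ for both $i$, the expression factors as $\mu_1(\mathbf{x}_1, \mathbf{a}_1) \cdot \mu_2(\mathbf{x}_2, \mathbf{a}_2) = (\mu_1/A_1)(\mathbf{x}_1) \cdot (\mu_2/A_2)(\mathbf{x}_2) = ((\mu_1/A_1) \oplus (\mu_2/A_2))(\mathbf{x})$, while in the remaining cases both sides vanish for the same support-size reason. Thus the two Grassmann--Pl\"{u}cker functions coincide pointwise, so the corresponding $F$-matroids are equal.

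The restriction identity then follows by the same template, using $\mathbf{c}_i$ in place of $\mathbf{a}_i$ and tuples $\mathbf{x} \in (A_1 \sqcup A_2)^{s_1 + s_2}$ whose $M_i$-subtuple is now what $M_i|A_i$ takes as input. The one point that warrants care, and which I regard as the main bookkeeping obstacle, is ensuring that the support-size condition built into the definition of $\mu_1 \oplus \mu_2$ aligns with the support-size conditions governing nonvanishing of $\mu_i/A_i$ and $\mu_i|A_i$, so that the ``zero cases'' match on both sides; the disjointness of $E_{M_1}$ and $E_{M_2}$ makes this alignment entirely routine.
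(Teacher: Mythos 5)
Your proposal is correct and follows essentially the same route as the paper: both sides are evaluated as Grassmann--Pl\"{u}cker functions by appending concatenated bases $(\mathbf{a}_1,\mathbf{a}_2)$ (resp.\ $(\mathbf{c}_1,\mathbf{c}_2)$) and using that the $M_i$-subtuple of the appended tuple is $(\mathbf{x}_i,\mathbf{a}_i)$. You are somewhat more careful than the paper's proof in treating arbitrary interleaved tuples and in checking that the vanishing cases on the two sides agree, but this is a refinement of the same computation rather than a different argument.
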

\begin{proof}
	We verify only the first equality above, as the other follows analogously. For $i=1,2$, suppose that $A_i$ satisfies $\rk_{\underline{M_i}}(A_i) = s_i$ and let $\mathbf{a}^{(i)}$ be a basis of the classical matroid $\underline{M_i}|A_i$. Then for $\mathbf{x}\in (E_{M_1}\setminus A_1)^{r_1-s_1}$ and $\mathbf{y}\in (E_{M_2}\setminus A_2)^{r_2-s_2}$ we see the following. 
	\begin{align*}
		((\mu_1/A_1)\oplus(\mu_2/A_2))(\mathbf{x},\mathbf{y}) &= \mu_1(\mathbf{x},\mathbf{a}^{(1)})\cdot\mu(\mathbf{y},\mathbf{a}^{(2)}) \\
		((\mu_1\oplus \mu_2)/(A_1\sqcup A_2))(\mathbf{x},\mathbf{y}) &= (\mu_1\oplus\mu_2)(\mathbf{x},\mathbf{y},\mathbf{a}^{(1)},\mathbf{a}^{(2)}) \\
		&= \mu_1(\mathbf{x},\mathbf{a}^{(1)})\cdot\mu_2(\mathbf{y},\mathbf{a}^{(2)}).
	\end{align*} 
	We thus see that the associated Grassmann-Pl\"{u}cker functions are equal and so must be the associated matroids. 
\end{proof} 

\begin{pro}\label{prop:pr_ex_dir_sum}
	The operation $\oplus$ is an exact direct sum on the category $F$-$\Mat$.
\end{pro}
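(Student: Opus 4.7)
The plan is to verify the four axioms (DS1)--(DS4) of Definition \ref{def:pr_ex_dir_sum}. The symmetric monoidal structure on $\oplus$ is manifest from its construction via disjoint unions of non-basepoint ground sets and products of Grassmann--Pl\"{u}cker functions, so associativity and commutativity are inherited from those operations. The two principal tools throughout are Lemma \ref{lem:ctr_del_dir_sum}, which commutes $\oplus$ past restriction and contraction, and the factorizations of admissibles from Definition \ref{definition: FMat M and E}.

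For (DS1), since $U_1^0$ has no non-basepoint elements, the pointed direct sum $M \oplus U_1^0$ has ground set $E_M$ and the same Grassmann--Pl\"{u}cker function as $M$, producing the canonical isomorphism $M \oplus 0 \cong M$. For (DS2), given admissible monomorphisms $f_i = r_{A_i} \circ \alpha_i \colon M_i \to N_i$ for $i = 1, 2$, Lemma \ref{lem:ctr_del_dir_sum} provides
\[
(N_1|A_1) \oplus (N_2|A_2) \;=\; (N_1 \oplus N_2)|(A_1 \sqcup A_2),
\]
so $f_1 \oplus f_2$ factors as the isomorphism $\alpha_1 \oplus \alpha_2$ followed by $r_{A_1 \sqcup A_2}$ and thus lies in $\MM$. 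The admissible-epimorphism case uses the contraction identity in the same lemma.

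For (DS3), the inclusion $\iota_U \colon U \to U \oplus V$ is identified (via Lemma \ref{lem:ctr_del_dir_sum}) with a restriction of $U \oplus V$, hence lies in $\MM$; dually, $\pi_U$ is a contraction of $U \oplus V$ and lies in $\EE$. A morphism $f \colon U \oplus V \to W$ is represented by a submonomial matrix whose columns are indexed by $\tilde{E}_U \sqcup \tilde{E}_V$, and partitioning the columns along this disjoint union yields precisely the matrices of $f \circ \iota_U$ and $f \circ \iota_V$; thus $f$ is determined by the pair. The dual statement for $\Hom(W, U \oplus V)$ follows by an analogous row decomposition.

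The main obstacle is (DS4). After composing with isomorphisms, one may assume the short exact sequence has the form $X|A \xrightarrow{r_A} X \xrightarrow{c_A} X/A$ for some $A \subseteq \tilde{E}_X$. Given a section $s \colon X/A \to X$ of $c_A$, the equation $c_A \circ s = \id_{X/A}$ forces the $(\tilde{E}_X \setminus A) \times (\tilde{E}_X \setminus A)$ block of the matrix of $s$ to be the identity; column-submonomiality then annihilates every remaining entry, so $s$ equals the identity inclusion $F^{\tilde{E}_X \setminus A} \hookrightarrow F^{\tilde{E}_X}$. That this $s$ is a matroid morphism is precisely the statement that every vector of $X/A$, zero-padded on $A$, is a vector of $X$; combining this with the analogous property for $r_A$ and with the closure of vector sets in an idyll-matroid under disjoint-support sums, one obtains the identification $\mathcal{V}_X = \mathcal{V}_{(X|A) \oplus (X/A)}$. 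Hence the identity matrix on $\tilde{E}_X$ realizes an $F$-matroid isomorphism $\varphi \colon U \oplus V \to X$, and by construction the triangles $\varphi \circ \iota_U = r_A$ and $\varphi \circ \iota_V = s$ commute. Uniqueness of $\varphi$ is immediate from (DS3). The retraction case is symmetric, with any retract of $r_A$ forced to be the identity projection $F^{\tilde{E}_X} \twoheadrightarrow F^A$.
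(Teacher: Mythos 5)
Your proposal is correct and follows essentially the same route as the paper's proof: the same appeal to Lemma~\ref{lem:ctr_del_dir_sum} for (DS1)--(DS2), the same block-submonomial-matrix computations for (DS3), and the same reduction in (DS4) forcing the section to be the identity inclusion $r_{E_X\setminus A}$ and the mediating isomorphism to be the identity matrix. Your added justification that the identity matrix really is an isomorphism $(X|A)\oplus(X/A)\to X$ (via closure of $\mathcal{V}_X$ under disjoint-support sums) actually goes slightly beyond the paper, which simply asserts this step.
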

\begin{proof}
	By the definition of the operation $\oplus$ for pointed $F$-matroids, the axiom (DS1) is immediate. To see that $\oplus:F\text{-}\Mat\times F\text{-}\Mat \to F\text{-}\Mat$ is exact, suppose that we have a short exact sequence
	\[\begin{tikzcd}
		(M_1,M_2)\ar[r,tail] & (X_1,X_2)\ar[r,two heads] & (N_1,N_2).
	\end{tikzcd}\]
	This means that $M_i\rightarrowtail X_i \twoheadrightarrow N_i$ is a short exact sequence for $i=1,2$, and so we may assume without loss of generality that there are subsets $A_i\subseteq E_{X_i}$ for which $M_i = X_i|A_i$ and $N_i = X_i/A_i$. We would like to show that the image of this sequence under $\oplus$ is a short exact sequence in $F\text{-}\Mat$. That is, we would like to see that $M_1\oplus M_2 \cong (X_1\oplus X_2)|(A_1\sqcup A_2)$ and $N_1\oplus N_2 \cong (X_1\oplus X_2)/(A_1\sqcup A_2)$. These isomorphisms in fact hold with equality by Lemma \ref{lem:ctr_del_dir_sum}. This establishes that (DS2) holds.
	
	To verify axiom (DS3), let $M,N,R$ be $F$-matroids on ground sets $E_M$, $E_N$, and $E_R$, respectively. Denote by $\iota_M:M\rightarrowtail M\oplus N$ and $\pi_M:M\oplus N \twoheadrightarrow M$ the associated inclusion and projection, and similar for $\iota_N,\iota_R,\pi_N,$ and $\pi_R$. We illustrate that the following map is an inclusion:
	\begin{align*}
		\text{Hom}(M\oplus N,R) &\longrightarrow \text{Hom}(M,R)\times\text{Hom}(N,R) \\
		f &\longmapsto (f\circ \iota_M,f\circ \iota_N)
	\end{align*}
	The second property regarding $\text{Hom}(R,M\oplus N)$ follows analogously and so we do not include the proof here. Suppose that $f,g:M\oplus N\to R$ are morphisms of $F$-matroids for which $f\circ \iota_M = g\circ \iota_M$ and $f\circ \iota_N = g\circ \iota_N$. Noting that $\iota_M$ is in fact the map $r_{E_M}:(M\oplus N)|E_M\rightarrowtail M\oplus N$, we note that it is represented by a block matrix $\begin{bmatrix} I & 0 \end{bmatrix}^T$, where $I$ is the $\tilde{E}_M\times\tilde{E}_M$ identity matrix and 0 is the $\tilde{E}_M\times\tilde{E}_N$ zero matrix. Likewise, $\iota_N=r_{E_N}$ can be represented by the block matrix $\begin{bmatrix} 0 & I \end{bmatrix}^T$ with blocks of the appropriate size. 
	
	The morphisms $f$ and $g$ are both represented by $\tilde{E}_R\times (\tilde{E}_M\sqcup\tilde{E}_N)$ matrices. Say $f$ has matrix $\begin{bmatrix} F_M & F_N \end{bmatrix}$ and $g$ has matrix $\begin{bmatrix} G_M & G_N \end{bmatrix}$, where $F_M$ and $G_M$ are of size $\tilde{E}_R\times\tilde{E}_M$ while $F_N$ and $G_N$ are $\tilde{E}_R\times \tilde{E}_N$. Then $f\circ \iota_M=g\circ\iota_M$ has matrix
	\[\begin{bmatrix} F_M & F_N \end{bmatrix}\begin{bmatrix} I \\ 0 \end{bmatrix} = \begin{bmatrix} F_M & 0 \end{bmatrix} = \begin{bmatrix} G_M & G_N \end{bmatrix}\begin{bmatrix} I \\ 0 \end{bmatrix}.\]
	Similarly, we conclude that $f\circ \iota_N=g\circ\iota_N$ has matrix 
	\[\begin{bmatrix} F_M & F_N \end{bmatrix}\begin{bmatrix} 0 \\ I \end{bmatrix} = \begin{bmatrix} 0 & F_N \end{bmatrix} = \begin{bmatrix} G_M & G_N \end{bmatrix}\begin{bmatrix} 0 \\ I \end{bmatrix}.\]
	Thus we see that $F_M=G_M$ and $F_N=G_N$, so that $f=g$. This shows that the above map of Hom-sets is an injection. Together with the analogous property for $\text{Hom}(R,M\oplus N)\to\text{Hom}(R,M)\times\text{Hom}(R,N)$, this establishes axiom (DS3). 
	
	For (DS4), suppose that $M\rightarrowtail X \twoheadrightarrow N$ is a short exact sequence in $F\text{-}\Mat$ with injection $\iota$ and projection $\pi$, for which we have a section $s$ of $\pi$ giving the following diagram in $F\text{-}\Mat$. 
	\[
	\begin{tikzcd} 
		& X & \\
		M \ar[ur, tail, "\iota"] \ar[r, tail, "\iota_M", swap] & M\oplus N \ar[u, "f", dashed]& N \ar[ul, "s", swap] \ar[l, tail, "\iota_N"]
	\end{tikzcd}  
	\] 
	We seek to find an isomorphism $f:M\oplus N\to X$ making the diagram above commute and to show that it is unique. As earlier, we assume without loss of generality that $M = X|A$ and $N = X/A$ for some $A\subseteq E_X$. Then $\pi = c_A$ is represented by the $(\tilde{E}_X\setminus A)\times \tilde{E}_X$ matrix $\begin{bmatrix} 0 & I \end{bmatrix}$, where $I$ is an $(\tilde{E}_X\setminus A)\times (\tilde{E}_X\setminus A)$ identity matrix. Similarly, $\iota = r_A$ has $\tilde{E}_X\times A$ matrix $\begin{bmatrix} I & 0 \end{bmatrix}^T$ with first block an $A\times A$ identity matrix. 
	
	The section $s:X/A\to X$ is represented by an $\tilde{E}_X\times (\tilde{E}_X\setminus A)$ matrix $\begin{bmatrix} 0 & S \end{bmatrix}^T$ where $S$ is an $(\tilde{E}_M\setminus A)\times (\tilde{E}_M\setminus A)$ matrix. This means that $\pi\circ s$ has matrix 
	\[\begin{bmatrix} 0 & I_{E_X\setminus A} \end{bmatrix}\begin{bmatrix} 0 \\ S \end{bmatrix} = \begin{bmatrix} S \end{bmatrix}.\]
	Since $\pi\circ s = \text{id}_N$, we see that $s$ has matrix $\begin{bmatrix} 0 & I_{E_X\setminus A} \end{bmatrix}^T$, hence $s=r_{E_X\setminus A}$ is an injection. We can now restate our diagram as follows:
	\[
	\begin{tikzcd} 
		& X & \\
		X|A \ar[ur, tail, "r_A"] \ar[r, tail, "r_A", swap] & (X|A)\oplus (X/A) \ar[u, "f", dashed]& X/A \ar[ul, "r_{E_X\setminus A}", swap] \ar[l, tail, "r_{E_X\setminus A}"]
	\end{tikzcd}  
	\] 
	Clearly one can choose $f$ to have the $\tilde{E}_X\times \tilde{E}_X$ identity matrix. This is in fact the only choice. To see this, suppose that $f$ is represented by the block matrix 
	\[\begin{bmatrix} F_{00} & F_{01} \\ F_{10} & F_{11} \end{bmatrix},\]
	where $F_{00}$ is $A\times A$ and $F_{11}$ is $(\tilde{E}_X\setminus A)\times (\tilde{E}_X\setminus A)$. Then the two equations 
	\[\begin{bmatrix} I_A \\ 0 \end{bmatrix} = \begin{bmatrix} F_{00} & F_{01} \\ F_{10} & F_{11} \end{bmatrix}\begin{bmatrix} I_A \\ 0 \end{bmatrix}\quad \text{and} \quad \begin{bmatrix} 0 \\ I_{E_X\setminus A} \end{bmatrix} = \begin{bmatrix} F_{00} & F_{01} \\ F_{10} & F_{11} \end{bmatrix}\begin{bmatrix} 0 \\ I_{\tilde{E}_X\setminus A} \end{bmatrix}\]
	imply that the only choice is $F_{00}=I_A$ and $F_{11} = I_{\tilde{E}_X\setminus A}$ with all other entries being zero. The argument regarding retractions follows similarly. 
\end{proof}

\begin{pro}\label{prop:comb_dir_sum} 
	$F\text{-}\Mat$ is a combinatorial proto-exact category with respect to the exact direct sum $\oplus$. 
\end{pro}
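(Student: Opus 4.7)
The plan is to reduce the statement to the previously-established identity $(U_1\oplus U_2)|(A_1\sqcup A_2) = (U_1|A_1)\oplus(U_2|A_2)$ from Lemma \ref{lem:ctr_del_dir_sum}. Let $g\colon X\rightarrowtail U_1\oplus U_2$ be an admissible monomorphism. By Definition \ref{definition: FMat M and E}, we may write $g = r_A\circ \alpha$ where $\alpha\colon X\to (U_1\oplus U_2)|A$ is an isomorphism of $F$-matroids and $r_A\colon (U_1\oplus U_2)|A\rightarrowtail U_1\oplus U_2$ is the restriction inclusion for some $A\subseteq \tilde{E}_{U_1\oplus U_2} = \tilde{E}_{U_1}\sqcup \tilde{E}_{U_2}$.

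The first step is to split $A$ along the disjoint union: set $A_k := A\cap \tilde{E}_{U_k}$ for $k=1,2$, so that (as pointed subsets of the ground sets) $A = A_1\sqcup A_2$. By Lemma \ref{lem:ctr_del_dir_sum}, we then have the equality of $F$-matroids
\[
(U_1\oplus U_2)|(A_1\sqcup A_2) \;=\; (U_1|A_1)\oplus (U_2|A_2).
\]
Let $X_k := U_k|A_k$ with admissible monomorphisms $\iota_k := r_{A_k}\colon X_k\rightarrowtail U_k$; these are admissible subobjects of $U_k$ by definition.

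Next, I would verify that $r_A$ and $r_{A_1}\oplus r_{A_2}$ are represented by the same submonomial matrix. Up to reordering of rows and columns induced by the splitting $\tilde{E}_{U_1\oplus U_2} = \tilde{E}_{U_1}\sqcup\tilde{E}_{U_2}$, the matrix of $r_A$ is block-diagonal with identity blocks indexed by $A_1$ and $A_2$ respectively, followed by zero rows indexed by $\tilde{E}_{U_k}\setminus A_k$; this is exactly the matrix of $r_{A_1}\oplus r_{A_2}$ (using the definition of $\oplus$ on morphisms arising from (DS2) in Proposition \ref{prop:pr_ex_dir_sum}). Consequently, setting $f := \alpha\colon X \xrightarrow{\sim} X_1\oplus X_2$ we obtain
\[
(\iota_1\oplus \iota_2)\circ f \;=\; (r_{A_1}\oplus r_{A_2})\circ \alpha \;=\; r_A\circ \alpha \;=\; g,
\]
which is precisely the required factorization witnessing that $F$-$\Mat$ is combinatorial.

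I expect no substantial obstacle: the only care needed is in the bookkeeping for pointed matroids, namely that the basepoint of $U_1\oplus U_2$ is the identified basepoint of $U_1$ and $U_2$, so the partition $A=A_1\sqcup A_2$ really is a partition of the (non-basepoint) elements and matches the decomposition underlying Lemma \ref{lem:ctr_del_dir_sum}. All the structural content is already contained in that lemma together with the matrix description of admissible monomorphisms.
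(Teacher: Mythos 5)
Your proposal is correct and follows essentially the same route as the paper: both split $A$ along the disjoint union of ground sets and reduce to Lemma \ref{lem:ctr_del_dir_sum}. The only difference is cosmetic — you carry the isomorphism $\alpha$ explicitly and spell out the matrix identity $r_A = r_{A_1}\oplus r_{A_2}$, whereas the paper absorbs $\alpha$ by a without-loss-of-generality reduction and asserts the final equation follows immediately.
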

\begin{proof}
	Let $M_1,M_2$, and $N$ be $F$-matroids on ground sets $E_{M_1}$, $E_{M_2}$, and $E_N$. Given an  admissible mono $g:N\rightarrowtail M_1\oplus M_2$, we would like to exhibit admissible monos $\iota_k:N_k\rightarrowtail M_k$ for $k=1,2$ for which there is an isomorphism $f:N\to N_1\oplus N_2$ with $(\iota_1\oplus \iota_2)\circ f = g$. 
	
	As before, assume without loss of generality that $N = (M_1\oplus M_2)|(A_1\sqcup A_2)$ for some subsets $A_k\subseteq E_{M_k}$. We define $N_k:= N|A_k = (M_1\oplus M_2)|A_k = M_k|A_k$. Then by Lemma \ref{lem:ctr_del_dir_sum} we see
	\begin{align*}
		N_1\oplus N_2 &= (M_1|A_1)\oplus (M_2|A_2) \\
		&\cong (M_1\oplus M_2)|(A_1\sqcup A_2) = N
	\end{align*}
	and the equation $(\iota_1\oplus \iota_2)\circ f = g$ immediately follows.
\end{proof}

\begin{pro}\label{prop:pr_ex_duality}
	$F$-matroid duality $M\mapsto M^*$ endows $F$-$\Mat$ with the structure of a proto-exact category with duality.
\end{pro}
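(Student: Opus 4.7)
The plan is to take $P$ to be $F$-matroid duality: on objects $P(M) := M^*$, and on morphisms $P(f) := f^t$ (the transpose of the submonomial matrix $f$). That $P$ sends morphisms to morphisms is precisely \cite[Corollary 2.9]{jarra2024quiver}, and contravariant functoriality is immediate from the matrix identities $(g\circ f)^t = f^t\circ g^t$ and $I^t = I$. The natural isomorphism $\Theta : \text{id}_{F\text{-}\Mat} \to P\circ P^{\text{op}}$ comes from the fact that $[\mu^{**}] = [\mu]$ as equivalence classes of Grassmann-Pl\"ucker functions (they differ at worst by an overall scalar in $F^\times$), so $\Theta_M$ may be represented by the identity submonomial matrix on $\tilde{E}_M$. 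Naturality of $\Theta$ and the coherence condition $P(\Theta_U)\circ \Theta_{P(U)} = \text{id}_{P(U)}$ then both reduce to the tautology $(f^t)^t = f$ and identity-matrix algebra; in the pointed setting, the deletion/append-loop convention (from Subsection \ref{subsec:F matroids}) is clearly self-inverse under two applications of $P$.

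For axiom (D1), the zero object is $0 = U_1^0$, the one-point pointed matroid consisting solely of the distinguished loop. Its pointed dual (delete the basepoint, dualize the empty matroid on the empty set, append a new basepoint loop) returns $U_1^0$, so $P(0) = 0$. For axiom (D2), I combine Definition \ref{definition: FMat M and E} with Proposition \ref{prop:adj_isos}. Any $f\in\MM$ factors as $f = r_A\circ \alpha$ with $\alpha$ an isomorphism and $r_A$ a restriction, so $f^t = \alpha^t\circ r_A^t$. Then $\alpha^t$ is an isomorphism by Proposition \ref{prop:adj_isos}, and $r_A^t$ is the contraction $c_{A^c}$ on the dual matroid (this was noted in Subsubsection \ref{subsubsec:restriction contraction}, where $r_A = c_{A^c}^*$). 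Hence $f^t\in\EE$. The reverse implication is verified identically, and the involution $(f^t)^t = f$ closes the biconditional.

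Axiom (D3) follows from combining Proposition \ref{prop:cart iff cocart} with the general principle that contravariant equivalences exchange limits and colimits. Concretely, suppose the square
\[
\begin{tikzcd}
U \ar[r, tail, "f"] \ar[d, two heads, "g"] & X \ar[d, two heads, "h"] \\
W \ar[r, tail, "k"] & V
\end{tikzcd}
\]
is biCartesian in $F$-$\Mat$. Since $P$ is a contravariant equivalence (with quasi-inverse given by $P$ itself via $\Theta$), it carries the pullback of $X \twoheadrightarrow V \hookleftarrow W$ to the pushout of $X^* \hookrightarrow V^* \twoheadleftarrow W^*$ in $F$-$\Mat$. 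Combined with (D2), this says that the dual square
\[
\begin{tikzcd}
V^* \ar[r, tail, "k^t"] \ar[d, two heads, "h^t"] & W^* \ar[d, two heads, "g^t"] \\
X^* \ar[r, tail, "f^t"] & U^*
\end{tikzcd}
\]
is coCartesian. Proposition \ref{prop:cart iff cocart} then upgrades this to biCartesian, and the converse direction is obtained by running the same argument through $\Theta$.

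The main obstacle, and the one point that needs care, is confirming that the pointed-duality bookkeeping (deleting and reappending the distinguished loop) is fully compatible with the identity-matrix choice of $\Theta_M$ and with the factorizations used in (D2); once that routine check is in hand, everything else is matrix algebra plus the already-established Proposition \ref{prop:cart iff cocart}.
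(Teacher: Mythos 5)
Your argument is correct in substance, and it reaches axiom (D3) by a genuinely different route than the paper. The paper dispatches (D1) and (D2) as immediate (self-duality of $U_1^0$, the shape of the submonomial matrices representing admissibles, and Proposition \ref{prop:adj_isos}, which already gives $\MM^t=\EE$), and then proves (D3) by explicit computation: it reduces without loss of generality to the standard biCartesian square of Lemma \ref{lem:aux} built from $M|A$, $M$, $(M|A)/B$, $M/B$, applies the duality identities exchanging restriction and contraction together with Proposition \ref{prop:restriction-contraction}, and recognizes the dual square as another square of the same standard form, hence biCartesian by Lemma \ref{lem:aux} again. You instead make the functor $P$ and the natural isomorphism $\Theta$ explicit (details the paper leaves implicit) and argue abstractly: a contravariant self-equivalence exchanges Cartesian and coCartesian squares, (D2) guarantees the dual square again consists of admissibles in the required arrangement, and Proposition \ref{prop:cart iff cocart} (or simply transporting both halves of biCartesianness through $P$ and $\Theta$) finishes. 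This avoids the WLOG-reduction and the restriction/contraction bookkeeping; its only extra input is that $\Theta$ really is a natural isomorphism, i.e.\ that $\mu^{**}$ agrees with $\mu$ up to a global unit, which you assert correctly (the sign is $(-1)^{r(n-r)}$, independent of the input tuple, so $[\mu^{**}]=[\mu]$).

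One slip to correct: in your displayed dual square, and in the phrase about the pushout of ``$X^* \hookrightarrow V^* \twoheadleftarrow W^*$'', the mono/epi decorations are transposed (and the span's arrows should point out of $V^*$). By your own (D2), transposing swaps $\MM$ and $\EE$, so the dual square has horizontal admissible monomorphisms $h^t\colon V^*\rightarrowtail X^*$ and $g^t\colon W^*\rightarrowtail U^*$ and vertical admissible epimorphisms $k^t\colon V^*\twoheadrightarrow W^*$ and $f^t\colon X^*\twoheadrightarrow U^*$; i.e.\ $P$ sends the cospan $X\twoheadrightarrow V\leftarrowtail W$ to the span $X^*\leftarrowtail V^*\twoheadrightarrow W^*$. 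With the decorations corrected the square has exactly the shape required by (D3) and by Proposition \ref{prop:cart iff cocart}, so the argument goes through unchanged, but as written the labels contradict (D2).
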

\begin{proof}
	Verification of the axioms (D1) and (D2) are immediate. Indeed, the zero element of $F\text{-}\Mat$ is the matroid $U_1^0$, whose self-duality establishes (D1). Further, (D2) follows from the fact that dual morphisms are given by transposing the corresponding matrices and the form of matrices representing admissible monos and epis. 
	
	To verify (D3), suppose that we have a biCartesian square in $F\text{-}\Mat$. Without loss of generality we may assume that it is of the following form for some matroid $M$ and disjoint subsets $A,B\subseteq E_M$.
	\begin{equation}
	\begin{tikzcd}
		M|A \ar[r, tail] \ar[d, two heads] & M \ar[d, two heads] \\
		(M|A)/B \ar[r, tail] & M/B
	\end{tikzcd}
	\end{equation}
	Recall that for any $S\subseteq E_M$ we have $(M|S)^*=M^*/S$ and $(M/S)^*=M^*|S$. Together with Proposition \ref{prop:restriction-contraction} we then see
	\[((M|A)/B)^* = (M|A)^*|B = (M^*/A)|B \cong (M^*|B)/A.\]
	So that applying duality to the above square yields
	\begin{equation}
	\begin{tikzcd}
		M^*/A & M^* \ar[l, two heads] \\
		(M^*|B)/A \ar[u, tail] & M^*|B \ar[u, tail] \ar[l, two heads]
	\end{tikzcd}
	\end{equation}
	and we know this to be biCartesian by Lemma \ref{lem:aux}.	
\end{proof}

\begin{pro}
	The category $F$-$\Mat$ is not split. In particular, for each perfect idyll $F$ there exist short exact sequences $M \rightarrowtail N \twoheadrightarrow P$ for which $N$ is not isomorphic to $M\oplus P$. 
\end{pro}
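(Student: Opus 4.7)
The plan is to exhibit an explicit non-split short exact sequence using the pointed uniform matroid $U_{1,2}$, which is available over every perfect idyll $F$. I take the pointed ground set $E_N = \{e_1, e_2, *_N\}$ and define a Grassmann--Pl\"{u}cker function $\nu: E_N \to F$ by $\nu(e_1) = \nu(e_2) = 1_F$ and $\nu(*_N) = 0_F$. The only nontrivial instance of (GP2) reduces to $\nu(e_2)\nu(e_1) - \nu(e_1)\nu(e_2) = 0_F \in N_F$, and the pointedness condition is immediate from $\nu(*_N) = 0_F \in N_F$. Hence $N := [\nu]$ is a well-defined pointed $F$-matroid for \emph{any} perfect idyll, whose underlying classical matroid is the pointed $U_{1,2}$.

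Setting $A = \{e_1\}$, I will invoke Proposition \ref{prop:ker and coker FMat} together with Definition \ref{definition: FMat M and E} to see that
\[
M := N|A \xrightarrow{r_A} N \xrightarrow{c_A} N/A =: P
\]
is an admissible short exact sequence. A direct computation of the associated Grassmann--Pl\"{u}cker functions identifies $M$ with the pointed $U_1^1$ (rank $1$ on the single non-basepoint element $e_1$), while $P$ is the pointed rank-$0$ matroid on $\{e_2, *_P\}$, in which $e_2$ becomes a loop.

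The key step is then to compute $M \oplus P$ directly from the piecewise formula defining the pointed direct sum in Subsection \ref{subsubsec:sums and points}. This yields $(\mu_M \oplus \mu_P)(e_1) = 1_F$ and $(\mu_M \oplus \mu_P)(e_2) = 0_F$, so in $\underline{M \oplus P}$ the element $e_1$ is a basis element while $e_2$ is a loop. By contrast, in the underlying matroid $\underline{N} = U_{1,2}$ both $e_1$ and $e_2$ are non-loops forming a parallel pair. Applying the base-change functor $\underline{(-)}: F\text{-}\Mat \to \mathbb{K}\text{-}\Mat$ of Remark \ref{rmk:base_change_functor}, any isomorphism $N \cong M \oplus P$ in $F$-$\Mat$ would descend to a classical matroid isomorphism $\underline{N} \cong \underline{M \oplus P}$, which is impossible because the two underlying matroids disagree on which elements are loops.

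There is no real obstacle in this argument; the conceptual content is the observation that contracting a non-loop element of $U_{1,2}$ converts its parallel partner into a loop, so that the restriction-contraction pair never recombines via direct sum to the original matroid. The uniformity of the construction in $F$ is ensured by the fact that the defining GP relation for $U_{1,2}$ reduces to $0_F \in N_F$, which holds in every idyll.
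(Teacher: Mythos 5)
Your proposal is correct, but it takes a genuinely different route from the paper. The paper works over $\mathbb{F}_1^{\pm}$ with the graphic matroid of the cycle $C_4$, restricts to a single edge and contracts it to get the sequence (path)$\rightarrowtail C_4 \twoheadrightarrow C_3$, distinguishes the middle term from the direct sum via the correspondence between direct sums of graphic matroids and disjoint unions of graphs, and then transports the example to an arbitrary perfect idyll $F$ by pushing forward along the unique morphism $\mathbb{F}_1^{\pm}\to F$. You instead build the pointed $U_{1,2}$ directly over every $F$, take the sequence $N|\{e_1\}\rightarrowtail N \twoheadrightarrow N/\{e_1\}$, and distinguish $N$ from $M\oplus P$ by counting loops in the underlying matroids; this avoids any appeal to initiality and pushforward and uses the most elementary possible invariant. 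What the paper's choice buys is that all three matroids in its sequence are \emph{simple}, so the same example also witnesses non-splitness inside the subcategory $F$-$\mathbf{SMat}_\bullet$; your $U_{1,2}$ example cannot serve that purpose, since $N$ has a parallel pair and $P$ has a loop. Two small points to tidy: in your check of (GP2) the relevant sum is $1+(-1)$, which lies in $N_F$ by axiom (B1) but need not literally equal $0_F$ in the ambient semiring, so you should assert membership in $N_F$ rather than an equality; and when you pass to underlying matroids you should note explicitly that an isomorphism in $\FMat$ descends (via the functor of Remark~\ref{rmk:base_change_functor}, or via Proposition~\ref{proposition: restriction-contraction iso} applied to singletons) to a bijection of ground sets carrying loops to loops, which is the step that makes the loop-count comparison conclusive --- the same implicit step the paper uses when it compares graphic matroids.
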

\begin{proof}
	Let $C_4$ be the cycle graph with edges $a,b,c,d$ and let $S = \{a\}$. Then, denoting by $M$ the (simple) graphic matroid of $C_4$, we have that $M$ is an $\mathbb{F}_1^{\pm}$-matroid (i.e. a regular matroid) and we have the following short exact sequence in $\mathbb{F}_1^\pm$-$\Mat$:
		\[M|S \rightarrowtail M \twoheadrightarrow M/S.\]
	Here $M|S$ is the graphic matroid of the path of length 1 and $M/S$ is the graphic matroid of the cycle $C_3$. Again, both matroids are simple. Since direct sums of graphic matroids correspond to disjoint union of the associated graphs, it is clear that $M$ is not isomorphic to $(M|S)\oplus (M/S)$. 
	
	For an arbitrary perfect idyll $F$, we can push the above example along the unique idyll morphism $\mathbb{F}_1^\pm\to F$ to realize this example in $\FMat$. 
\end{proof}

This establishes that $F$-$\Mat$ is neither split nor uniquely split for any perfect idyll $F$. 

\section{The category $F$-$\mathbf{SMat}_\bullet$}\label{sec: simple F matroids} 

Recall that an $F$-matroid $M$ is said to be simple precisely when its underlying matroid $\underline{M}$ is. The collection of all simple matroids with strong maps forms a reflective subcategory of the category of matroids with strong maps. Indeed, the assignment $M\mapsto \text{si}(M)$ is a functor from matroids to simple matroids, and this functor is left adjoint to the inclusion.

These statements also hold for pointed matroids by using the pointed simplification $\text{si}_\bullet$ of a pointed matroid $M$. In short, one forgets the distinguished loop of $M$, takes the non-pointed simplification, and takes the direct sum with $U^0_1$ to recover a pointed matroid. See \cite[Section 7]{heunen2017category} for more details on pointed and non-pointed simplification.

In our setting, the analogous statements are immediately verified: the collection of all pointed simple $F$-matroids together with $F$-matroid morphisms forms a reflective subcategory of $\FMat$. This follows because simplicity is defined purely in terms of the underlying matroid and because $\mathbb{K}$-$\Mat$ can be identified with the subcategory of $\Mat$ consisting of  $\mathbb{F}_1$-linear morphisms. 

Observe that all restrictions of simple matroids are necessarily again simple. In light of Proposition \ref{prop:ker and coker FMat} this observation establishes that $F$-$\textbf{SMat}_\bullet$ has all kernels. Since the simplification functor $\text{si}_\bullet$ is a left adjoint, we also see that it preserves colimits and so $F$-$\textbf{SMat}_\bullet$ also has cokernels.

\begin{rmk}[Kernels and cokernels]\label{rmk: kernels for simples} 
In fact, we can say more regarding kernels and cokernels in $F$-$\textbf{SMat}_\bullet$. Recall that the kernel of a morphism $f:M\to N$ in $\FMat$ is $M|\underline{f}^{-1}(*_N)$. If both $M$ and $N$ are simple, then by definition this means that $*_N$ is a flat of $\underline{N}$. Thus, by the fact that underlying map $\underline{f}:\underline{M}\to\underline{N}$ is strong, we see that $\underline{f}^{-1}(*_N)$ must be a flat of $\underline{M}$. This implies that any kernel in $F$-$\textbf{SMat}_\bullet$ is the restriction morphism associated to some flat $F\in\mathcal{L}(M)$. 

With regards to cokernels, observe that given a simple $F$-matroid $M$ and contraction morphism $c_A:M\to M/A$ in $\FMat$, applying $\text{si}_\bullet$ yields $\text{si}_\bullet(M)\to \text{si}_\bullet(M/A)$. A routine exercise in matroid theory yields that the underlying matroid of $\text{si}_\bullet(M/A)$ is $\underline{M}/\angles{A}$, where $\angles{A}\in\mathcal{L}(M)$ is the flat of $\underline{M}$ obtained as the closure of the subset $A$. Thus all contractions (and therefore all cokernels) in $F$-$\textbf{SMat}_\bullet$ are contractions at flats.
\end{rmk} 

As a reflective subcategory of $\FMat$, it is also immediate that $F$-$\textbf{SMat}_\bullet$ satisfies axiom (PA1) of proto-abelian categories (Subsection \ref{subsec: proto ab cat}).

\begin{pro}\label{prop:ker and coker FSMat}
	The category $F$-$\textbf{SMat}_\bullet$ admits all kernels and cokernels of morphisms. Moreover, it satisfies axiom (PA1) of a proto-abelian category.
\end{pro}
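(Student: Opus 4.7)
The plan is to exploit the reflective adjunction $\text{si}_\bullet \dashv \iota$ between $F$-$\mathbf{SMat}_\bullet$ and $\FMat$ noted just before the statement of the proposition. Since $\iota$ is a right adjoint it preserves limits, so any kernel computed in $\FMat$ whose result happens to be simple is automatically the kernel in $F$-$\mathbf{SMat}_\bullet$. Dually, $\text{si}_\bullet$ is a left adjoint and hence preserves colimits, so the cokernel of a morphism $f$ in $F$-$\mathbf{SMat}_\bullet$ is obtained by applying $\text{si}_\bullet$ to the cokernel in $\FMat$ furnished by Proposition~\ref{prop:ker and coker FMat}.

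Concretely, for $f:M\to N$ in $F$-$\mathbf{SMat}_\bullet$, the kernel in $\FMat$ is the restriction map $M|\underline{f}^{-1}(*_N) \to M$. Restriction cannot create loops or parallel pairs in a simple matroid, so this restriction is simple and therefore provides the kernel in $F$-$\mathbf{SMat}_\bullet$. For the cokernel, the contraction $N \to N/\underline{f}(E_M)$ computed in $\FMat$ need not be simple, but $\text{si}_\bullet$ produces a morphism $N \to \text{si}_\bullet(N/\underline{f}(E_M))$ whose underlying classical matroid is $\underline{N}/\langle\underline{f}(E_M)\rangle$, i.e. the contraction at the flat $\langle\underline{f}(E_M)\rangle$. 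These identifications are precisely the descriptions recorded in Remark~\ref{rmk: kernels for simples}, and they establish existence of both kernels and cokernels in the subcategory.

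For axiom (PA1), the monomorphism half is immediate: if $\ker(f) = 0$, then $M|\underline{f}^{-1}(*_N) \cong U_1^0$, forcing $\underline{f}^{-1}(*_N) = \{*_M\}$; coupled with submonomiality of the matrix representing $f$ (each row has at most one nonzero entry, hence each element of $\tilde{E}_N$ has at most one preimage in $\tilde{E}_M$), this implies $\underline{f}$ is injective, so Proposition~\ref{prop:inj_surj} gives that $f$ is monic in $\FMat$ and therefore also in the full subcategory $F$-$\mathbf{SMat}_\bullet$. The epimorphism half is the main obstacle: zero cokernel translates to the combinatorial condition $\langle\underline{f}(E_M)\rangle = E_N$, and from this one must deduce that $f$ is categorically epi in $F$-$\mathbf{SMat}_\bullet$. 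The strategy I would pursue is to show that any two morphisms $g,h:N\to P$ in $F$-$\mathbf{SMat}_\bullet$ that agree on the columns indexed by $\underline{f}(\tilde{E}_M)$ must in fact agree on all columns, leveraging the perfectness of $F$ together with the vector-preservation axiom applied to fundamental circuit vectors of $\underline{N}$ through the spanning set $\underline{f}(\tilde{E}_M)$, and using simplicity of $P$ (no loops or parallels) to rule out ambiguity in the remaining columns. I expect this final compatibility check to absorb most of the technical effort of the proof.
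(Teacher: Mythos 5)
Your treatment of existence of kernels and cokernels is exactly the paper's: kernels are inherited from $\FMat$ because restrictions of simple matroids are simple, and cokernels are obtained by applying the reflector $\text{si}_\bullet$ to the $\FMat$-cokernel, yielding contraction at the flat $\angles{\underline{f}(E_M)}$. Your direct argument for the monomorphism half of (PA1) is also correct and is, if anything, more explicit than what the paper records.

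The genuine gap is the epimorphism half of (PA1), which you yourself flag as ``the main obstacle'' and then only describe as ``the strategy I would pursue.'' That is not a proof. The required statement is nontrivial and is strictly stronger than what Proposition \ref{prop:inj_surj} gives: if $\operatorname{coker}(f)=0$ in $F$-$\mathbf{SMat}_\bullet$, then $\angles{\underline{f}(E_M)}=E_N$, i.e.\ the image merely \emph{spans} $N$; the underlying map $\underline{f}$ need not be surjective, so $f$ is in general \emph{not} an epimorphism in the ambient category $\FMat$, and epi-ness must be established against simple test objects only. Your reduction to ``$g$ and $h$ agree on the columns indexed by $\underline{f}(\tilde{E}_M)$'' is fine (it follows from submonomiality of $f$ and invertibility of its nonzero entries), but the remaining step --- propagating this agreement to the columns outside the image via fundamental circuits through the spanning set and the vector-preservation axiom --- is precisely where all the content lies, and it is absent. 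The paper instead disposes of (PA1) by appealing to the fact that $F$-$\mathbf{SMat}_\bullet$ is a full reflective subcategory of $\FMat$; if you prefer your hands-on route, you must actually carry out the circuit argument (and verify it over an arbitrary perfect idyll, not just $\mathbb{K}$), or else invoke the reflectivity as the paper does. As written, the proposal establishes existence of kernels and cokernels and half of (PA1), but not the proposition in full.
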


As a departure from the proto-exact structure on $\FMat$, we define the classes of admissible monomorphisms and epimorphisms yielding the proto-exact structure on $F$-$\textbf{SMat}_\bullet$ as follows. 

\begin{mydef}\label{definition: FSMat M and E}
	Let $f:M\to N$ be a morphism in $F$-$\textbf{SMat}_\bullet$. 	
	\begin{enumerate}
		\item 
		$f$ is an admissible monomorphism if $f=r_F \circ \alpha$, where $F \in\mathcal{L}(N)$ and $r_F:M|F \to M$ is an inclusion map, and $\alpha$ is an isomorphism of $F$-matroids: 
		\[\begin{tikzcd}
			M\ar[r,"\alpha", "\sim"'] & N|F\ar[r,"r_F"] & N,
		\end{tikzcd}\]
		Denote by $\MM_s$ the class of admissible monomorphisms in $F$-$\textbf{SMat}_\bullet$. 
		\item 
		$f$ is an admissible epimorphism if $f=\beta \circ c_G$, where $G \in\mathcal{L}(M)$ and $c_G:\mathcal{E} \to \mathcal{E}/G$ is a contraction map and $\beta$ is an isomorphism of $F$-matroids:
		\[\begin{tikzcd}
			M\ar[r,"c_G"] & M/G\ar[r,"\beta", "\sim"'] & N.
		\end{tikzcd}\]  
		Denote by $\EE_s$ the class of admissible epimorphisms in $F$-$\textbf{SMat}_\bullet.$ 
	\end{enumerate}
\end{mydef}

\begin{rmk}[Admissible morphisms]\label{rmk: FSMat M and E}
Observe that $\MM_s$ and $\EE_s$ are strictly smaller than the intersection of the classes $\MM$ and $\EE$ with the morphisms of the subcategory $F$-$\textbf{SMat}_\bullet$. For example, if $M$ is a simple matroid and there is some subset $A\subseteq E_M$ for which $M|A$ is simple but $A$ is not a flat of $M$, then the inclusion map $r_A:M|A\to M$ is in $\MM$ but is not in $\MM_s$. 

Despite this difference, however, note that $\MM$ and $\MM_s$ both consist of isomorphisms followed by kernels of morphisms in their respective categories. Likewise, $\EE$ and $\EE_s$ consist of cokernels of morphisms followed by isomorphisms. A similar characterization of admissibles will be used for the proto-exact structure on tropical reflexive sheaves in Section \ref{sec: trs proto ex}.  
\end{rmk} 

Observe that Propositions \ref{prop:contian iso} and \ref{prop:composition}, as well as Lemma \ref{lem:aux}, hold in $F$-$\textbf{SMat}_\bullet$. The latter then gives that Propositions  \ref{prop:LR bicart}, and \ref{prop:cart iff cocart} hold as well. Lemma \ref{lem:aux} is true on the nose with arrows coming from $\MM_s$ and $\EE_s$ (so that $A$ and $B$ should be flats), while the other propositions follow essentially from the observation that if $N$, $N|A$, and $N/B$ are simple matroids, so must be $(N|A)/B$. Together with Proposition \ref{prop:protoex_then_protoab} and Proposition \ref{prop:ker and coker FSMat}, we obtain the following corollary. 

\begin{cor}\label{cor: FSMat proto ab}
	For each perfect idyll $F$, the category $F$-$\textbf{SMat}_\bullet$ is proto-abelian. 
\end{cor}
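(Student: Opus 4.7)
The plan is to invoke Proposition \ref{prop:protoex_then_protoab}, which produces a proto-abelian structure out of a suitably presented proto-exact one. The hypotheses of that proposition split cleanly into two parts for $F$-$\textbf{SMat}_\bullet$: existence of kernels and cokernels together with axiom (PA1), and a proto-exact structure whose admissible monomorphisms (resp. epimorphisms) are precisely isomorphisms composed with strict monomorphisms (resp. cokernels composed with isomorphisms). The first part is exactly the content of Proposition \ref{prop:ker and coker FSMat}. For the second, Remark \ref{rmk: kernels for simples} identifies strict monomorphisms in $F$-$\textbf{SMat}_\bullet$ with restriction morphisms $r_F: M|F \rightarrowtail M$ for $F \in \mathcal{L}(M)$ and strict epimorphisms with contractions at flats. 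Comparing this with Definition \ref{definition: FSMat M and E}, the classes $\mathfrak{M}_s$ and $\mathfrak{E}_s$ are precisely the form demanded by Proposition \ref{prop:protoex_then_protoab}.

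It remains to check that $(F\text{-}\mathbf{SMat}_\bullet, \mathfrak{M}_s, \mathfrak{E}_s)$ satisfies the axioms (PE1)–(PE5) of Definition \ref{definition: proto_exact}. Axiom (PE1) is immediate, and (PE2) is verified exactly as in Propositions \ref{prop:contian iso} and \ref{prop:composition}, because a composition of restrictions (resp. contractions) by flats is again a restriction (resp. contraction) by a flat. For (PE3) and (PE4), the key input is Lemma \ref{lem:aux}, which holds verbatim in $F$-$\textbf{SMat}_\bullet$ once $A$ and $B$ are taken to be flats: the matroids $M|A$, $M/B$, and $(M|A)/B$ are all simple in that case, and the proof of the lemma does not interact with simplicity beyond this. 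Proposition \ref{prop:LR bicart} then transcribes, since the sets $C := \underline{j}^{-1}(B)$ appearing there remain flats (strong maps pull back flats to flats), and Proposition \ref{prop:cart iff cocart} follows formally from the biCartesian completions.

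The one step that does not transfer mechanically is the analog of Proposition \ref{prop:UL bicart}, i.e. the completion of $M \twoheadleftarrow R \rightarrowtail N$ to a biCartesian square — this is what I expect to be the main obstacle. In $F$-$\Mat$ it was dispatched by applying matroid duality to Proposition \ref{prop:LR bicart}, but this is unavailable in $F$-$\textbf{SMat}_\bullet$ because the dual of a simple matroid need not be simple. The approach I would take is to mimic the argument of Proposition \ref{prop:LR bicart} directly in the opposite direction: given $r_B \circ \alpha : R \rightarrowtail N$ and $\beta \circ c_A : R \twoheadrightarrow M$ with $A \in \mathcal{L}(R)$ and $B \in \mathcal{L}(N)$, transport $A$ along $\alpha$ and across $r_B$ to obtain a flat of $\underline{N}$ (using that inclusions of flats $B \hookrightarrow E_N$ and the isomorphism $\alpha$ both respect the lattice of flats by Proposition \ref{proposition: restriction-contraction iso}), then form the bottom-left corner as $N/\alpha(A)$ and the completion via Lemma \ref{lem:aux}. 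The verification that the resulting square has the universal property is analogous to the proof of Proposition \ref{prop:LR bicart}, with flat preservation replacing the role previously played by duality.

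With (PE1)–(PE5) established, Proposition \ref{prop:protoex_then_protoab} applies and yields that $F$-$\textbf{SMat}_\bullet$ is proto-abelian in the sense of André (Definition \ref{definition: Andre's proto-abelian}), as desired.
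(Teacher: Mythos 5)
Your proposal is correct and follows essentially the same route as the paper: verify that Propositions \ref{prop:contian iso}, \ref{prop:composition}, Lemma \ref{lem:aux} and the biCartesian completions transcribe to $F$-$\textbf{SMat}_\bullet$ with flats in place of arbitrary subsets (replacing the duality-based steps, unavailable since duals of simple matroids need not be simple, by the direct arguments the paper itself indicates), and then combine the resulting proto-exact structure with Propositions \ref{prop:ker and coker FSMat} and \ref{prop:protoex_then_protoab}. Your explicit construction of the pushout corner $N/\underline{\alpha}(A)$ for the analogue of Proposition \ref{prop:UL bicart} is exactly the ``opposite-category'' argument the paper alludes to, so there is no substantive difference.
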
 

Sums and intersections of subobjects in $F$-$\textbf{SMat}_\bullet$ depart slightly from the characterization of those in $\FMat$ as given in Proposition \ref{prop: sum prod of subobj FMat}. Instead, we have the following:

\begin{pro}\label{prop: sum prod of subobj FSMat}
	Let $M_1,M_2\rightarrowtail M$ be strict subobjects of $M$ in $F$-$\textbf{SMat}_\bullet$ given as restrictions $M_1= M|F_1$ and $M_2= M|F_2$ at flats $F_1,F_2\in\mathcal{L}(M)$. Then the sum and intersection of the subobjects $M_1$ and $M_2$ are given by
	\begin{align*}
		(M|F_1)+(M|F_2) &= M|(F_1 \vee F_2) \\
		(M|F_1)\cap(M|F_2) &= M|(F_1 \wedge F_2).
	\end{align*}
\end{pro}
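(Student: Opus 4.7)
My approach exploits the fact that $F$-$\textbf{SMat}_\bullet$ is a reflective subcategory of $\FMat$, with reflector the simplification functor $\text{si}_\bullet$. Consequently the inclusion preserves all limits, while $\text{si}_\bullet$ preserves all colimits. The plan is to transfer both identities from the analogous statements in $\FMat$ given by Proposition \ref{prop: sum prod of subobj FMat}, by tracking carefully how pullbacks and pushouts behave across the two categories.

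For the intersection, I would first invoke Proposition \ref{prop: sum prod of subobj FMat} to identify the pullback in $\FMat$ of $M|F_1 \rightarrowtail M$ and $M|F_2 \rightarrowtail M$ as $M|(F_1 \cap F_2)$. Because flats are closed under intersection, $F_1 \cap F_2 = F_1 \wedge F_2$ is again a flat of $\underline{M}$, and $M|(F_1 \wedge F_2)$ is a simple $F$-matroid whose inclusion into $M$ belongs to $\MM_s$. Since the inclusion $F$-$\textbf{SMat}_\bullet \hookrightarrow \FMat$ preserves limits and the object in question already lies in the subcategory, this pullback coincides with the intersection $(M|F_1) \cap (M|F_2)$ computed in $F$-$\textbf{SMat}_\bullet$.

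For the sum, I would apply the construction recalled in Diagram \eqref{diag: sum of subobj}: $(M|F_1) + (M|F_2)$ is the kernel of the canonical map $M \twoheadrightarrow M'$, where $M'$ is the pushout of the two cokernels. The same recipe in $\FMat$ yields $M|(F_1 \cup F_2)$ for the sum by Proposition \ref{prop: sum prod of subobj FMat}, so the corresponding pushout in $\FMat$ is $M/(F_1 \cup F_2)$. Since $\text{si}_\bullet$ preserves colimits, the pushout $M'$ in $F$-$\textbf{SMat}_\bullet$ equals $\text{si}_\bullet(M/(F_1 \cup F_2))$, whose underlying matroid, by Remark \ref{rmk: kernels for simples}, is $\underline{M}/\angles{F_1 \cup F_2} = \underline{M}/(F_1 \vee F_2)$. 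This identifies $M'$ with the cokernel of $M|(F_1 \vee F_2) \rightarrowtail M$ in $F$-$\textbf{SMat}_\bullet$, whose kernel is precisely $M|(F_1 \vee F_2)$ by Proposition \ref{prop:ker and coker FSMat}. The main obstacle I anticipate is bookkeeping the discrepancy between cokernels in $\FMat$ and those in $F$-$\textbf{SMat}_\bullet$: contracting a flat of a simple matroid generally introduces parallel classes, so cokernels in the subcategory are genuine simplifications rather than bare contractions. The reflective structure together with Remark \ref{rmk: kernels for simples} are exactly what is needed to neatly package this compatibility and reduce the problem to the already-established results in $\FMat$.
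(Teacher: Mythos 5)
Your proof is correct and follows essentially the same route as the paper: the intersection is inherited verbatim from Proposition \ref{prop: sum prod of subobj FMat} since $F_1\cap F_2=F_1\wedge F_2$ is again a flat, and the sum is obtained by observing that cokernels and pushouts in $F$-$\textbf{SMat}_\bullet$ contract the closure $\angles{F_1\cup F_2}=F_1\vee F_2$ rather than $F_1\cup F_2$ itself (Remark \ref{rmk: kernels for simples}). Your appeal to the reflective adjunction and the preservation of colimits by $\text{si}_\bullet$ simply makes explicit the transfer from $\FMat$ that the paper's brief justification leaves implicit.
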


The characterization of the intersection is identical to $\FMat$, as the intersection of $F_1$ and $F_2$ is precisely $F_1\wedge F_2$. The characterization of the sum follows by reasoning identical to that regarding contractions of simple matroids. In particular, when contracting $F_1\cup F_2$ one must instead contract $\angles{F_1\cup F_2} = F_1\vee F_2$ in order to obtain a simple $F$-matroid.

\begin{rmk}[No duality] \label{rmk: FSMat not simple}
Since the dual of a simple matroid is not necessarily simple, we see that $F$-$\textbf{SMat}_\bullet$ is not proto-exact with duality as $\FMat$ is. Indeed, one easily finds simple planar graphs whose planar duals are not simple. Taking the associated graphic matroids then yields two dual $\mathbb{F}_1$-matroids (i.e. regular matroids), one of which is simple and the other of which is not. Since $\mathbb{F}_1$ is initial in the category of idylls, this counterexample can be realized in all categories $F$-$\textbf{SMat}_\bullet$. 
\end{rmk}

\begin{rmk}[Artinian and Noetherian conditions]\label{rmk: FSMat art noeth}
Observe that $F$-$\textbf{SMat}_\bullet$ inherits the Artinian and Noetherian properties of $\FMat$ described by Proposition \ref{pro: art noeth FMat}. This is an immediate consequence of the fact that $F$-$\textbf{SMat}_\bullet$ is a subcategory of $\FMat$. 	
\end{rmk}

\section{Hall algebras and $K$-theory of $F$-matroids}\label{section: Hall algebra and K-theory} 

\begin{pro}
	For a finite perfect idyll $F$, the category $F$-$\Mat$ is finitary. In particular, $\mathbb{K}$-$\Mat$, $\mathbb{S}$-$\Mat$, and $\mathbb{F}_1^\pm$-$\Mat$ are finitary categories.
\end{pro}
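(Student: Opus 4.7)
The plan is to verify finitarity directly by bounding both $\Hom(M,N)$ and $\Ext^1(M,N)$ using the finiteness of $F$ and of the ground sets, leveraging the submonomial matrix description of morphisms.

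First I would handle $\Hom(M,N)$. By Definition \ref{definition: morphism F-matroids}, every morphism $M \to N$ is a submonomial matrix over $F$ indexed by $\tilde{E}_N \times \tilde{E}_M$. Since $F$ is finite and the ground sets $\tilde{E}_M$ and $\tilde{E}_N$ are finite, there are only finitely many $F$-valued matrices of this size, and in particular only finitely many submonomial ones. Hence $\Hom(M,N)$ is finite.

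Next, for $\Ext^1(M,N)$, the key observation is that every admissible short exact sequence $N \rightarrowtail X \twoheadrightarrow M$ forces tight control on $X$. By Definition \ref{definition: FMat M and E} together with Proposition \ref{prop:ker and coker FMat}, we may assume (up to isomorphism) that $N \cong X|A$ and $M \cong X/A$ for some subset $A \subseteq E_X$; in particular $|\tilde{E}_X| = |\tilde{E}_N| + |\tilde{E}_M|$. Thus the ground set of any extension ranges over a finite collection of pointed sets of bounded size. For each such ground set, an $F$-matroid structure is (an equivalence class of) a function $\mu : E^r \to F$ defined on a finite domain with values in the finite set $F$, so there are only finitely many $F$-matroids on any fixed finite ground set. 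With the middle object $X$ confined to a finite collection, and the admissible mono and epi determined by submonomial matrices over $F$ as in the first step, only finitely many isomorphism classes of short exact sequences $N \rightarrowtail X \twoheadrightarrow M$ can arise. Passing to equivalence classes can only shrink this finite collection further, so $\Ext^1(M,N)$ is finite.

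The particular cases $\mathbb{K}$-$\Mat$, $\mathbb{S}$-$\Mat$, and $\mathbb{F}_1^\pm$-$\Mat$ then follow immediately, since $\mathbb{K}$, $\mathbb{S}$, and $\mathbb{F}_1^\pm$ are finite perfect idylls. The only mild subtlety is to be careful that the equivalence relation on extensions (two short exact sequences are equivalent when there is an isomorphism of the middle terms commuting with the obvious maps to $M$ and from $N$) is a genuine equivalence relation on a finite underlying set, which is immediate from the bound on $X$ above; no calculation with the Grassmann–Pl\"ucker axioms is required beyond knowing they cut out a subset of a finite function space.
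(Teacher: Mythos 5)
Your proposal is correct and follows essentially the same route as the paper: finiteness of $\Hom(M,N)$ from counting submonomial matrices over the finite idyll $F$, and finiteness of $\Ext^1(M,N)$ by bounding the ground set of the middle term and counting Grassmann--Pl\"ucker functions on a finite domain with values in $F$. Your extra remark that the admissible mono and epi in each sequence also range over a finite set is a slightly more careful accounting (the paper only counts the possible middle objects), but it is the same argument in substance.
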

\begin{proof}
	We show this directly, by illustrating that for any two $F$-matroids $M$ and $N$ that the sets $\text{Hom}(M,N)$ and $\text{Ext}^1(M,N)$ are finite. 
	Let $M$ and $N$ be $F$-matroids on ground sets $E_M$ and $E_N$, and let $f:M\to N$ be a morphism. Then $f$ is given by a submonomial matrix with entries in $F$, hence $\text{Hom}(M,N)$ is necessarily finite by the finiteness of $F$. 
	
	Suppose now that $P$ is an extension of $N$ by $M$ of rank $r_P$. Then that we have a short exact sequence
	\[
	M \rightarrowtail P \twoheadrightarrow N.
	\]
	By the exactness of the above sequence, we see that $P$ is necessarily a matroid on a ground set $E_P$ of cardinality $|\tilde{E}_M\sqcup \tilde{E}_N\sqcup\{*_P\}|$, so that we can obtain the desired finiteness by illustrating that there are only finitely many such $F$-matroids. But this is necessarily the case since the number of such matroids $P = [\pi]$ is bounded by the number of Grassmann-Pl\"{u}cker maps $\pi:E_P^{r_P}\to F$. The common domain and codomain of these maps is finite, hence there are only finitely many such maps and so finitely many such $F$-matroids $P$. 	 
\end{proof}

The above proposition now establishes that $F\text{-}\Mat$ is finitary for $F=\mathbb{K}, \mathbb{S}$, and $\mathbb{F}_1^\pm$. However for $F=\mathbb{T}$ the situation is slightly more delicate. In particular, $\mathbb{T}$ is not finite, and so the above argument does not apply.  

From Theorem \ref{thm:F mat proto ex}, one obtains the Hall algebra $H_{\FMat}$ for the category $\FMat$.

\begin{myeg}
	One can easily check that $H_{\mathbb{K}\text{-}\Mat}$ is the Hopf dual of matroid-minor Hopf algebra. In fact, in \cite{eppolito2018proto}, it is shown that the matroid-minor Hopf algebra is the Hopf dual of the Hall algebra $H_{\Mat}$. 
\end{myeg}

\subsection{Hall algebra of $F$-$\Mat$ and the $F$-matroid minor Hopf algebra} 

Let $F$ be a perfect idyll, and $\mathcal{M}$ be a set of $F$-matroids which is closed under taking minors and direct sums. Let $\FMat(\mathcal{M})$ be the subcategory of $\FMat$ whose objects are $F$-matroids in $\mathcal{M}$. As in \cite[Section 7.2]{eppolito2018proto} (or \cite[Theorem 5.11]{eppolito2018proto}), one can easily see that $\FMat(\mathcal{M})$ is a full subcategory of $\FMat$ and the proto-exact structure of $\FMat$ descends to $\FMat(\mathcal{M})$. In particular, if $F$ is finite then we can obtain the Hall algebra $H_{\FMat(\mathcal{M})}$ associated to  $\FMat(\mathcal{M})$.

In \cite{eppolito2020hopf}, the classical construction of matroid-minor Hopf algebras was generalized to the case of matroids over hyperfields, and hence to the case of matroids over idylls. Let $\mathcal{M}_{\text{iso}}$ be the set of isomorphism classes of $F$-matroids in $\mathcal{M}$ and $k[\mathcal{M}_{\text{iso}}]$ be the associated matroid-minor Hopf algebra as in \cite{eppolito2020hopf}, where $k$ is a field of characteristic zero. We can generalize \cite[Theorem 7.3]{eppolito2018proto} as follows. 

\begin{cor}
	Let $F$ be a finite perfect idyll. Let $\mathcal{M}$ be a collection of $F$-matroids closed under taking minors and direct sums. Then, we have
	\[
	H_{\FMat(\mathcal{M})} \cong k[\mathcal{M}_{\text{iso}}]^*,
	\]
	where $k[\mathcal{M}_{\text{iso}}]^*$ is the graded Hopf dual of $k[\mathcal{M}_{\text{iso}}]$. Furthermore, we have
	\[
	H_{\FMat(\mathcal{M})} \cong \mathbf{U}(\delta_{[M]}), \quad [M] \in \mathcal{M}^{\text{ind}}_{\text{iso}},
	\]
	where $\mathcal{M}^{\text{ind}}_{\text{iso}}$ denotes the isomorphism classes of connected $F$-matroids in $\mathcal{M}$. 
\end{cor}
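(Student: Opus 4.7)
The plan is to closely follow the strategy of \cite[Theorem~7.3]{eppolito2018proto}, replacing $\Mat$ by $\FMat(\mc{M})$ and the classical matroid-minor Hopf algebra by its $F$-matroid analogue from \cite{eppolito2020hopf}. Closure of $\mc{M}$ under minors and direct sums guarantees that $\FMat(\mc{M})$ is a full subcategory of $\FMat$ to which the proto-exact structure of Theorem~\ref{thm:F mat proto ex} restricts, and by Theorem~\ref{theorem: further structure} it is in addition combinatorial with exact direct sum. Consequently, $H_{\FMat(\mc{M})}$ inherits both the Hall product and Green's comultiplication and, because it is graded (by ground-set cardinality) and connected (with unit $\delta_{[U_1^0]}$), becomes a graded connected Hopf algebra with basis $\{\delta_{[M]}\}_{[M]\in \mc{M}_{\text{iso}}}$.

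The first isomorphism will be obtained by identifying structure constants on both sides. By Definition~\ref{definition: FMat M and E}, every admissible monomorphism into $R$ factors as an isomorphism followed by a restriction $r_A\colon R|A \rightarrowtail R$, and every admissible epimorphism out of $R$ as a contraction $c_A\colon R \twoheadrightarrow R/A$ followed by an isomorphism. Hence an admissible short exact sequence $M \rightarrowtail R \twoheadrightarrow N$ is determined, up to equivalence, by a subset $A \subseteq E_R$ together with isomorphism classes $R|A \cong M$ and $R/A \cong N$; enumerating such $A$ recovers the Hall constant $g^R_{M,N}$. On the other side, the coproduct in $k[\mc{M}_{\text{iso}}]$ from \cite{eppolito2020hopf} is given by $\Delta([R]) = \sum_{A \subseteq E_R} [R|A] \otimes [R/A]$, and the product by $[M]\cdot[N] = [M \oplus N]$. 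Dualizing via the basis $\{\delta_{[M]}\}$, the Hall product transposes to the matroid-minor coproduct, and, using Lemma~\ref{lem:ctr_del_dir_sum}, Green's comultiplication transposes to the direct-sum product, yielding $H_{\FMat(\mc{M})} \cong k[\mc{M}_{\text{iso}}]^*$ as graded Hopf algebras.

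For the second isomorphism I would apply a Milnor--Moore type theorem. Green's comultiplication is cocommutative because $(\FMat(\mc{M}), \oplus)$ is symmetric monoidal, and $H_{\FMat(\mc{M})}$ is a graded connected Hopf algebra over a field of characteristic zero, so it is isomorphic to the universal enveloping algebra of its Lie subspace of primitive elements. By Proposition~\ref{prop:comb_dir_sum} and the explicit form of Green's $\Delta$, $\delta_{[M]}$ is primitive precisely when $M$ admits no nontrivial direct-sum decomposition, i.e.\ when $M$ is connected. This produces the generating set $\{\delta_{[M]} : [M] \in \mc{M}^{\text{ind}}_{\text{iso}}\}$ and gives $H_{\FMat(\mc{M})} \cong \mathbf{U}(\delta_{[M]} \mid [M]\in \mc{M}^{\text{ind}}_{\text{iso}})$.

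The main obstacle I anticipate is the careful bookkeeping required when $F^\times$ is nontrivial: a single subset $A \subseteq E_R$ can give rise to several equivalence classes of admissible monomorphisms $M \rightarrowtail R$ arising from different choices of isomorphism $M \cong R|A$, and the resulting multiplicities must agree precisely with those appearing in the matroid-minor coproduct of \cite{eppolito2020hopf}. The proto-exact structure developed in Section~\ref{sec:category_FMat}, and in particular the characterization of admissibles together with Proposition~\ref{proposition: restriction-contraction iso}, should reduce this to a direct comparison once the conventions of \cite{eppolito2020hopf} are unpacked.
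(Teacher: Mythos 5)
Your outline of the second assertion is fine (and is essentially a re-proof, via Milnor--Moore/Cartier applied to the cocommutative, graded connected bialgebra given by the Hall product and Green's coproduct, of the result the paper simply cites from \cite[Theorem 7.1]{eppolito2018proto}); the identification of primitives with the span of $\delta_{[M]}$ for connected $M$ works exactly as you indicate. The worry you flag at the end about multiplicities coming from $F^\times$ is also not the real issue: the Hall constant $g^R_{M,N}$ counts admissible \emph{subobjects}, and since every admissible mono factors as an isomorphism followed by a restriction $r_A$, subobjects of $R$ correspond bijectively to subsets $A\subseteq \tilde{E}_R$, so the structure constants on both sides do count the same subsets.

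The genuine gap is in your first isomorphism. With the conventions you state, the Hall product is $(f*g)([R])=\sum_{A} f([R/A])\,g([R|A])$, whereas the product on $k[\mathcal{M}_{\text{iso}}]^*$ dual to the minor coproduct $\Delta([R])=\sum_A [R|A]\otimes[R/A]$ is $(f\cdot g)([R])=\sum_A f([R|A])\,g([R/A])$. These differ by an order reversal: the identity map on the basis $\{\delta_{[M]}\}$ is a coalgebra isomorphism but only an algebra \emph{anti}-isomorphism. Since the Hall algebra is not commutative and the minor coproduct is not cocommutative in general, this cannot be waved away, so your claim that ``the Hall product transposes to the matroid-minor coproduct'' does not by itself yield $H_{\FMat(\mathcal{M})}\cong k[\mathcal{M}_{\text{iso}}]^*$ as Hopf algebras. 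The paper closes exactly this gap by deducing the first assertion \emph{from} the second: once $H_{\FMat(\mathcal{M})}$ is known to be an enveloping algebra, it admits an algebra anti-automorphism fixing the unit, counit and coproduct, and composing with it converts the anti-isomorphism into the desired isomorphism. Your Milnor--Moore step supplies the needed enveloping-algebra structure, so the repair is available to you, but as written the first part is incomplete; you should also record the (easy but necessary) check, which the paper makes explicit, that the minors and direct sums of \cite{eppolito2020hopf} for matroids over hyperfields agree with those of \cite{jarra2024quiver} used to define $\FMat(\mathcal{M})$.
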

\begin{proof} 
	We first note that the second assertion directly follows from \cite[Theorem 7.1]{eppolito2018proto}.
	
	For the first assertion, note that the notion of minors and direct sums for matroids over hyperfields in \cite[Section 3]{eppolito2020hopf} agrees with the notion of minors and direct sums for matroids over idylls in \cite[Section 1]{jarra2024quiver}. 
	
	Now, with the identification of $H_{\FMat(\mathcal{M})}$ and $k[\mathcal{M}_{\text{iso}}]^*$ with the following
	\[
	\{f:\mathcal{M}_{\text{iso}} \to k \mid |\text{supp}(f)|<\infty\}, 
	\]
	one can easily see that the unit, co-unit, and coproduct agree. For multiplication, in $H_{\FMat(\mathcal{M})}$, one has
	\[
	(f\cdot g)([M])=\sum_{N \subseteq M}f([M/N])g([N]). 
	\]
	On the other hand, in $k[\mathcal{M}_{\text{iso}}]^*$, one has 
	\[
	(f\cdot g)([M])=\sum_{N \subseteq M}f([N])g([M/N]). 
	\]
	But, every enveloping algebra possesses an algebra anti-automorphism fixing the unit, co-unit, and coproduct. Hence, the first assertion follows from the second assertion. 
\end{proof} 

\subsection{$K$-Theory of $F$-$\Mat$}

Fix a perfect idyll $F$. In this subsection we compute the Grothendieck group $K_0(F$-$\Mat)$ and show that it coincides with the Grothendieck group $K_0(\mathbf{Mat}_\bullet)$ of pointed (classical) matroids, as computed in \cite{eppolito2018proto}. This illustrates in particular that the Grothendieck group is not sensitive to the coefficients used to define the associated category of matroids. 

\begin{pro}\label{proposition: $K_0$ computation}
	We have $K_0(F$-$\Mat)\cong \mathbb{Z}\oplus\mathbb{Z}$.
\end{pro}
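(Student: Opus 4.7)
The plan is to explicitly construct an inverse pair of homomorphisms between $K_0(F\text{-}\Mat)$ and $\mathbb{Z}\oplus\mathbb{Z}$ using the rank and ground set size of $F$-matroids as invariants, and then use the proto-exact structure established in Theorem \ref{thm:F mat proto ex} to reduce every class to a $\mathbb{Z}$-linear combination of two distinguished generators, following the strategy of \cite{eppolito2018proto}.

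First, I would define a map $\kappa:K_0(F\text{-}\Mat)\to \mathbb{Z}\oplus \mathbb{Z}$ by sending the class $[M]$ of an $F$-matroid $M$ to the pair $(\rk(\underline{M}),|\tilde{E}_M|)$, where $\tilde{E}_M = E_M\setminus\{*_M\}$. To verify that this descends to $K_0$, I would take an admissible short exact sequence, which by the classification of $\MM$ and $\EE$ (Definition \ref{definition: FMat M and E}) may be written up to isomorphism as $M|A \rightarrowtail M \twoheadrightarrow M/A$ for some $A\subseteq E_M$ containing $*_M$. Additivity of the rank function then follows from the classical identity $\rk_{\underline{M}}(A) + \rk_{\underline{M}/A}(E_M\setminus A) = \rk(\underline{M})$, while additivity of the ground set count follows from a direct bookkeeping argument accounting for the basepoint conventions described in Remark \ref{rmk:loops}.

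Next, I would identify two distinguished $F$-matroids: $L := U_1^0\oplus U_1^0$, a pointed $F$-matroid with a single non-distinguished loop, and $I := U_1^0\oplus U_1^1$, a pointed $F$-matroid with a single non-loop. These satisfy $\kappa([L]) = (0,1)$ and $\kappa([I]) = (1,1)$, so $\kappa$ is surjective. To establish that $[L]$ and $[I]$ generate $K_0(F\text{-}\Mat)$, I would argue by induction on $|\tilde{E}_M|$. The base case $|\tilde{E}_M|=0$ forces $M = U_1^0$, whose class vanishes in $K_0$. For the inductive step, I would pick any $e\in\tilde{E}_M$ and apply the admissible short exact sequence
\[
M|\{e,*_M\}\rightarrowtail M \twoheadrightarrow M/\{e\},
\]
which yields $[M] = [M|\{e,*_M\}] + [M/\{e\}]$ in $K_0$. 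The restriction $M|\{e,*_M\}$ is isomorphic to $L$ or $I$ depending on whether $e$ is a loop of $\underline{M}$, and the contraction $M/\{e\}$ has strictly smaller non-basepoint ground set, so the inductive hypothesis applies.

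Finally, to complete the isomorphism, I would note that injectivity of $\kappa$ on the span of $\{[L],[I]\}$ is immediate: any relation $a[I] + b[L] = 0$ in $K_0$ yields $(a,a+b) = (0,0)$ under $\kappa$, forcing $a=b=0$. Combined with the generation statement, this shows $K_0(F\text{-}\Mat)$ is a free abelian group of rank two with basis $\{[I],[L]\}$, and $\kappa$ realizes the isomorphism with $\mathbb{Z}\oplus\mathbb{Z}$. The only subtlety in the entire argument is the careful handling of the distinguished basepoint loop when passing between restriction and contraction; beyond this, the argument is essentially a templated invocation of the computation from \cite{eppolito2018proto}, confirming the authors' remark that $K_0$ does not detect the idyll $F$.
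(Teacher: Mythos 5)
Your proposal is correct and follows essentially the same route as the paper: both decompose an arbitrary $F$-matroid via restriction--contraction short exact sequences into copies of the one-non-loop matroid and the one-loop matroid. Your version is in fact slightly more complete, since you explicitly construct the invariant $\kappa([M]) = (\rk(\underline{M}), |\tilde{E}_M|)$ and check it is additive on admissible exact sequences, whereas the paper's proof only establishes that the two distinguished classes generate $K_0$ and leaves the freeness (absence of further relations) implicit.
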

\begin{proof}
	Given $M=[\mu]\in F$-$\Mat$, fix some non-loop element $e\in E_M$. Then we have the following short exact sequence:
	\[
	M|e \rightarrowtail M \twoheadrightarrow M/e.
	\]
	Thus in $K_0(F$-$\Mat)$ we have the equation $[M] = [M|e] + [M/e]$. Note that $M|e$ is now an $F$-matroid of rank 1 on a ground set consisting of a non-loop and the basepoint. Evaluating the associated Grassmann-Pl\"{u}cker function, we then see
	\[(\mu|e)(e') = \begin{cases} \alpha & e'=e \\ 0_F & e'=*_M \end{cases}\]
	for some $\alpha\in F^\times$, so that in particular we have an isomorphism $M|e \cong C$, where $C$ is the matroid consisting of one non-loop and the basepoint. We would like to perform this process $\rk(\underline{M})$ times. To this end, let $B\subseteq \tilde{E}_M$ be a basis of $\underline{M}$ and perform the above process for each $e\in B$. This yields the following decomposition: 
	\[[M] = \rk(\underline{M})[C] + [M/B].\]
	On the other hand, when we contract all elements of $B$, we see that the resulting matroid consists only of loops and the basepoint $*_M$. However, appealing to the definition of the direct sum on $F$-$\Mat$, we then see that 
	\[M/B\cong L \oplus \cdots \oplus L,\]
	where $L$ is the matroid with two loops, one distinguished, and the number of summands is equal to the corank (or nullity) of $M$. Denoting this number by $\ns(M)$, we then see that
	\[[M] = \rk(\underline{M})[C] + \ns(M)(\underline{M})[L].\] 
	Since an $F$-matroid can have arbitrary rank and an arbitrary number of loops, this establishes the desired result.
\end{proof}

\begin{pro}
	Let $\Phi:F \to G$ be a morphism of idylls. Then, for each $n \in \mathbb{N}$, $\Phi$ induces a morphism 
	\[
	\Phi_*:K_n(\FMat) \to K_n(\text{G-}\Mat).
	\]
\end{pro}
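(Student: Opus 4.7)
The plan is to factor the proof into two conceptual pieces: (i) upgrade the base change assignment $\Phi_*$ to an exact functor of proto-exact categories, and (ii) invoke the functoriality of Waldhausen's $S_\bullet$-construction on the class of proto-exact categories to obtain induced maps in all degrees.

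For step (i), recall from Remark \ref{rmk:base_change_functor} that $\Phi_*$ is defined on objects by postcomposing the Grassmann--Pl\"ucker function with $\Phi$ and on morphisms by applying $\Phi$ entrywise to the underlying submonomial matrix. Functoriality is immediate since matrix multiplication and $\Phi$ commute. The key point already observed in the excerpt (following Proposition \ref{prop:pr_ex_duality}) is that $\Phi_*$ is exact; I would make this explicit by noting three facts. First, for any $A\subseteq E_M$, one has $\Phi_*(M|A)=(\Phi_*M)|A$ and $\Phi_*(M/A)=(\Phi_*M)/A$, because the restriction and contraction formulas are purely substitutional in the values of the Grassmann--Pl\"ucker function, and these values are transported verbatim by $\Phi$. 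Second, $\Phi_*$ preserves isomorphisms, as it sends the matrix inverse of a matroid isomorphism to an inverse in $G$-$\Mat$ (using that $\Phi$ restricts to a group homomorphism $F^\times\to G^\times$). Combining these, $\Phi_*$ sends $\MM$ into $\MM$ and $\EE$ into $\EE$ as described in Definition \ref{definition: FMat M and E}. Third, $\Phi_*$ takes the zero object $U_1^0$ to $U_1^0$ and preserves bi-Cartesian squares: using Propositions \ref{prop:LR bicart} and \ref{prop:UL bicart}, every bi-Cartesian square in $\FMat$ is, up to isomorphism, of the form appearing in Lemma \ref{lem:aux} with data $(M,A,B)$, and $\Phi_*$ carries it to the analogous square for $(\Phi_*M, A, B)$ in $G$-$\Mat$, which is again bi-Cartesian by the same lemma.

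For step (ii), apply the Waldhausen $S_\bullet$-construction for proto-exact categories (as in \cite{dyckerhoff2019higher}, recalled in the introduction), which assigns to any proto-exact category $\mathcal{C}$ a $2$-Segal simplicial groupoid $S_\bullet\mathcal{C}$ and hence a $K$-theory spectrum $K(\mathcal{C})$ with homotopy groups $K_n(\mathcal{C})$. This construction is natural with respect to exact functors: an exact functor $\Phi_*\colon\FMat\to G\text{-}\Mat$ induces a simplicial map $S_\bullet\FMat\to S_\bullet G\text{-}\Mat$, hence a map of $K$-theory spectra, and therefore a morphism $\Phi_*\colon K_n(\FMat)\to K_n(G\text{-}\Mat)$ on homotopy groups for every $n\in\NN$.

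The main potential obstacle is the bookkeeping in step (i): verifying that the factorizations $f=r_A\circ\alpha$ and $g=\beta\circ c_B$ witnessing admissibility are preserved on the nose (up to the canonical identifications $\Phi_*(M|A)=(\Phi_*M)|A$ and $\Phi_*(M/B)=(\Phi_*M)/B$). Once these identifications are nailed down, exactness is formal. Step (ii) is then a direct citation of the $S_\bullet$-machinery, so no genuine difficulty arises there.
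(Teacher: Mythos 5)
Your proposal is correct and follows essentially the same route as the paper: the paper's proof likewise observes that the base-change functor of Remark \ref{rmk:base_change_functor} preserves short exact sequences (because it commutes with restriction and contraction) and then concludes that it induces morphisms on $K_n$ for all $n$ via the $K$-theory machinery for proto-exact categories. Your version simply spells out the exactness check and the appeal to the functoriality of the $S_\bullet$-construction in more detail.
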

\begin{proof}
	By Remark \ref{rmk:base_change_functor} we have that $\Phi$ induces a functor from $\FMat$ to $G$-$\Mat$. By the definition of restrictions and contractions we then immediately see that this functor preserves short exact sequences, hence induces a morphism on $K_n$ for each $n$.
\end{proof}

\begin{rmk}
	Proposition \ref{proposition: $K_0$ computation} suggests that to see any difference between $K$-theory of $\FMat$, one may have to go higher $K$-theory. In fact, from the adjunction between $\Mat$ and $\textbf{FinSet}_{\bullet}$, one obtains the following:
	\[
	\pi_n^s(\mathbf{S}) \cong K_n(\textbf{FinSet}_{\bullet}) \hookrightarrow K_n(\Mat) \text{ for all } n \in \mathbb{N},
	\] 
	where $\pi_n^s(\mathbf{S})$ denotes the stable homotopy groups of the sphere spectrum (see \cite{eppolito2018proto} for further details). So, it would be interesting to compute even $K_1(\Mat)$ to see whether or not the above inclusion is proper.	
\end{rmk} 

\section{The category $\TRS$}\label{sec: trs proto ex}

We now examine the categorical structure of tropical toric reflexive sheaves. In this section, all tropical toric reflexive sheaves, $\mathbb{T}$-matroids, and morphisms thereof are assumed to be pointed. Let $L$ be a free abelian group of finite rank and fix a complete rational fan $\Sigma$ in a vector space $L_\mathbb{R}$ and let $\Lambda = \Hom(L,\mathbb{Z})$ be the dual lattice. Denote by $X_\Sigma$ the associated complete toric variety and $\text{trop}(X_\Sigma)$ the tropical toric varierty with fan $\Sigma$ as in \cite{kajiwara2008tropical} or \cite{payne2009analytification}.

\begin{mydef}\label{definition: morphism of trs}
	Let $\mathcal{E}=(M,\{F^\rho_\bullet\}_\rho)$ and $\mathcal{F}=(N,\{G^\rho_\bullet\}_\rho)$ be tropical toric reflexive sheaves on $\text{trop}(X_\Sigma)$ (Section \ref{subsec: ttvb}). A \emph{morphism} $f:\mathcal{E}\to\mathcal{F}$ is a pair $f=(f_\mathbb{T},u)$, where $f_\mathbb{T}$ is a morphism of simple $\mathbb{T}$-matroids $f_\mathbb{T}:M\to N$ and $u\in \Lambda$ is a vector such that for each ray $\rho\in\Sigma(1)$ and each $j\in\mathbb{Z}$, we have 
	\begin{equation}\label{eq: toric ref sheav}
		f(F^\rho_j) \subseteq G^\rho_{j+u\cdot v_\rho} 
	\end{equation}
	Recall that $v_\rho\in L$ is the primitive lattice vector associated to the ray $\rho$.
\end{mydef}

It is clear that the tropical toric reflexive sheaves form a category, and for morphisms $f = (f_\mathbb{T},u):\mathcal{E}\to\mathcal{F}$ and $g = (g_\mathbb{T},w):\mathcal{F}\to\mathcal{G}$ the composition is given by $g\circ f = (g_\mathbb{T}\circ f_\mathbb{T}, u+w)$. We denote the resulting category by $\TRS$. Note that the definition of morphisms in $\TRS$ yields a forgetful functor $\TRS\to\mathbb{T}\text{-}\Mat$ which simply forgets the filtrations associated to objects and the translations $u$ associated to morphisms. 

Under this definition of morphism, observe that two tropical toric reflexive sheaves are isomorphic precisely when they are isomorphic in the sense of \cite{khan2024tropical}. 

\begin{lem}\label{lem: trs isos}
	Let $\mathcal{E}=(M,\{F^\rho_\bullet\}_\rho)$ and $\mathcal{F}=(N,\{G^\rho_\bullet\}_\rho)$ be tropical toric reflexive sheaves on $\text{trop}(X_\Sigma)$
	Suppose that $f=(f_\mathbb{T},u):\mathcal{E}\to\mathcal{F}$ and $g=(f_\mathbb{T}^{-1},w):\mathcal{F}\to\mathcal{E}$ are morphisms in $\TRS$ for which $f_\mathbb{T}^{-1}=(f_\mathbb{T})^{-1}$ in $\mathbb{T}$-$\Mat$. Then $w=-u$.
\end{lem}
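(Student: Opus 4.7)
The plan is to chain the two morphism conditions into a self-containment $F^\rho_j\subseteq F^\rho_{j+(u+w)\cdot v_\rho}$ on the flag attached to $\mathcal{E}$, use the strict monotonicity and eventual constancy of this flag to force $(u+w)\cdot v_\rho\leq 0$ for each ray $\rho$, and then invoke completeness of $\Sigma$ to conclude that $u+w=0$.

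First I would unpack Definition \ref{definition: morphism of trs}. The hypothesis $f=(f_\mathbb{T},u)\in\Hom_{\TRS}(\mathcal{E},\mathcal{F})$ gives $f_\mathbb{T}(F^\rho_j)\subseteq G^\rho_{j+u\cdot v_\rho}$ for every $j\in\mathbb{Z}$ and $\rho\in\Sigma(1)$, while $g=(f_\mathbb{T}^{-1},w)\in\Hom_{\TRS}(\mathcal{F},\mathcal{E})$ gives $f_\mathbb{T}^{-1}(G^\rho_k)\subseteq F^\rho_{k+w\cdot v_\rho}$ for every $k\in\mathbb{Z}$. Since $f_\mathbb{T}$ is an isomorphism of $\mathbb{T}$-matroids (so induces an order bijection of lattices of flats), applying $f_\mathbb{T}^{-1}$ to the first containment and then feeding the result into the second at $k=j+u\cdot v_\rho$ yields
\[F^\rho_j\;\subseteq\;f_\mathbb{T}^{-1}(G^\rho_{j+u\cdot v_\rho})\;\subseteq\;F^\rho_{j+(u+w)\cdot v_\rho}.\]

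Next I would extract a scalar inequality from this containment. Setting $t_\rho:=(u+w)\cdot v_\rho$, the decreasing property $F^\rho_{j+t_\rho}\subseteq F^\rho_j$ would, whenever $t_\rho>0$, combine with the displayed inclusion to force $F^\rho_j=F^\rho_{j+t_\rho}$ for every $j$; iterating this equality and using that $F^\rho_j$ stabilizes to $*_M$ for $j\gg 0$ and to $E_M$ for $j\ll 0$ would contradict positivity of the rank of $M$. (The rank-zero case reduces to a triviality since then all filtrations are constant and all morphism conditions are vacuous.) Hence $t_\rho\leq 0$ for every ray $\rho\in\Sigma(1)$. The main delicacy of the argument lives here: the one-sided scalar inequality $t_\rho\leq 0$ must be extracted from a purely order-theoretic containment of flats, and the key is to exploit the eventual constancy of the flag at both ends of $\mathbb{Z}$ rather than a single step.

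Completeness of $\Sigma$ then closes the argument. The primitive generators $\{v_\rho\}_{\rho\in\Sigma(1)}$ positively span $L_\mathbb{R}$, so if $u+w\neq 0$ in $\Lambda$ I can choose $x\in L_\mathbb{R}$ with $(u+w)(x)>0$ and write $x=\sum_\rho\mu_\rho v_\rho$ with $\mu_\rho\geq 0$, giving
\[0<(u+w)(x)=\sum_\rho\mu_\rho\,t_\rho\leq 0,\]
a contradiction. Therefore $u+w=0$, i.e., $w=-u$.
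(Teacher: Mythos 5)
Your proof is correct and follows essentially the same route as the paper's: chain the two morphism conditions into $F^\rho_j\subseteq F^\rho_{j+(u+w)\cdot v_\rho}$, use the decreasing and eventually constant structure of the flag to rule out $(u+w)\cdot v_\rho>0$, and invoke completeness of $\Sigma$. The only cosmetic difference is the order of the last two steps: the paper applies completeness first to produce a single ray $\rho_+$ with $(u+w)\cdot v_{\rho_+}>0$ and derives the contradiction on that ray's flag, whereas you first establish $(u+w)\cdot v_\rho\leq 0$ for every ray and then use completeness to conclude $u+w=0$.
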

\begin{proof}
	Note that for all $\rho\in\Sigma(1)$ and all $j\in\mathbb{Z}$ we have
		\begin{align*}
			f_\mathbb{T}(F^\rho_j) &\subseteq G^\rho_{j+u\cdot v_\rho} \\
			f^{-1}_\mathbb{T}(G^\rho_j) &\subseteq F^\rho_{j+w\cdot v_\rho}.
		\end{align*}
	Then we may write
		\begin{align*}
			F^\rho_j = f_\mathbb{T}^{-1}(f_\mathbb{T}(F^\rho_j))&\subseteq f^{-1}_\mathbb{T}(G_{j+u\cdot v_\rho}) \subseteq F^\rho_{j+(u+w)\cdot v_\rho} \\
			G^\rho_j = f_\mathbb{T}(f^{-1}_\mathbb{T}(G^\rho_j)) &\subseteq G^\rho_{j+(u+w)\cdot v_\rho}.
		\end{align*}
	Now suppose for the sake of contradiction that $u+w\neq 0$. Then by the assumption that $\Sigma$ is a complete fan, there is some ray $\rho_+\in\Sigma(1)$ with primitive generator $v_+$ for which $(u+w)\cdot v_+ > 0$. Then since $F^{\rho_+}_\bullet$ is a decreasing chain of flats we see that $F^{\rho_+}_{j+(u+w)\cdot v_+}\subseteq F^{\rho_+}_j$ as well, hence $F^{\rho_+}_j = F^{\rho_+}_{j+(u+w)\cdot v_+}$ for each $j\in\mathbb{Z}$. But then the chain $F^{\rho_+}_\bullet$ is constant, violating the definition of a tropical toric reflexive sheaf. Thus it cannot be that $u+w\neq 0$.  
\end{proof}

\begin{pro}\label{pro: trs isos}
	Two tropical reflexive sheaves $\mathcal{E}=(M,\{F^\rho_\bullet\}_\rho)$ and $\mathcal{F}=(N,\{G^\rho_\bullet\}_\rho)$ are isomorphic in the sense of Definition \ref{def: iso of trs} if and only if there is an isomorphism $f:\mathcal{E}\to \mathcal{F}$ in $\TRS$. 
\end{pro}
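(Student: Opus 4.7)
The plan is to prove both implications by directly translating between the data of an isomorphism in $\TRS$ (a matroid isomorphism together with a translation vector $u$ satisfying the flag inclusion condition) and the data appearing in Definition \ref{def: iso of trs} (a matroid isomorphism together with a shift vector $u$ satisfying the flag equality condition). Lemma \ref{lem: trs isos} is essentially the engine that drives the argument, so most of the work has already been done.

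For the backward direction, suppose $\mathcal{E}$ and $\mathcal{F}$ are isomorphic in the sense of Definition \ref{def: iso of trs}. I would take the underlying $\mathbb{T}$-matroid isomorphism $\phi: M \to N$ and a vector $u \in \Lambda$ satisfying $\phi(F^\rho_j) = G^\rho_{j + u \cdot v_\rho}$ for every $\rho \in \Sigma(1)$ and $j \in \mathbb{Z}$. Then I would set $f := (\phi, u)$ and observe that it is a morphism in $\TRS$ since the required containment (\ref{eq: toric ref sheav}) holds with equality. For the candidate inverse, I would take $g := (\phi^{-1}, -u)$ and verify that applying $\phi^{-1}$ to $\phi(F^\rho_{j - u \cdot v_\rho}) = G^\rho_j$ yields $\phi^{-1}(G^\rho_j) = F^\rho_{j + (-u)\cdot v_\rho}$, so that $g$ is also a morphism in $\TRS$. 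Composition in both orders is then $(\mathrm{id}_M, 0)$ and $(\mathrm{id}_N, 0)$, so $f$ and $g$ are mutually inverse.

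For the forward direction, suppose $f = (f_\mathbb{T}, u) : \mathcal{E} \to \mathcal{F}$ is an isomorphism in $\TRS$ with inverse $g = (g_\mathbb{T}, w)$. From the composition rule $g \circ f = (g_\mathbb{T} \circ f_\mathbb{T}, u + w) = (\mathrm{id}_M, 0)$, I conclude $g_\mathbb{T} = f_\mathbb{T}^{-1}$ in $\mathbb{T}$-$\Mat$, and then Lemma \ref{lem: trs isos} gives $w = -u$. Now I have the two containments
\[
f_\mathbb{T}(F^\rho_j) \subseteq G^\rho_{j + u \cdot v_\rho}, \qquad f_\mathbb{T}^{-1}(G^\rho_j) \subseteq F^\rho_{j - u \cdot v_\rho},
\]
and applying $f_\mathbb{T}$ to the second and reindexing pins both containments into equalities:
\[
G^\rho_j = f_\mathbb{T}(f_\mathbb{T}^{-1}(G^\rho_j)) \subseteq f_\mathbb{T}(F^\rho_{j - u \cdot v_\rho}) \subseteq G^\rho_j.
\]
This yields $f_\mathbb{T}(F^\rho_j) = G^\rho_{j + u \cdot v_\rho}$, which is precisely the condition of Definition \ref{def: iso of trs} witnessed by the matroid isomorphism $f_\mathbb{T}$ and the vector $u$.

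There is no real obstacle here, since Lemma \ref{lem: trs isos} handles the one subtle point, namely that the translation vector in the inverse morphism is forced to be $-u$; without that lemma, one might worry that a $\TRS$-isomorphism could carry more flexibility in the choice of translations than Definition \ref{def: iso of trs} permits. The remaining content is the routine manipulation of nested containments into equalities, which is the step I would emphasize in the write-up.
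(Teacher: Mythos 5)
Your proposal is correct and follows essentially the same route as the paper: one direction builds the mutually inverse pair $(f_\mathbb{T},u)$ and $((f_\mathbb{T})^{-1},-u)$ directly from the flag equalities, and the other invokes Lemma \ref{lem: trs isos} to force the inverse's translation to be $-u$ and then squeezes the two containments into the equality $f_\mathbb{T}(F^\rho_j) = G^\rho_{j+u\cdot v_\rho}$. The only cosmetic difference is that you also note $u+w=0$ already follows from the composition rule, which makes the appeal to the lemma slightly redundant but harmless.
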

\begin{proof}
	If $\mathcal{E}$ and $\mathcal{F}$ are isomorphic in the sense of Definition \ref{def: iso of trs}, then there is a $\mathbb{T}$-matroid isomorphism $f_\mathbb{T}:M\to N$ and a vector $u\in \Lambda$ for which
		\[f_\mathbb{T}(F^\rho_j) = G^\rho_{j+u\cdot v_\rho}\]
	for every $\rho\in\Sigma(1)$ and every $j\in\mathbb{Z}$. Denote by $(f_\mathbb{T})^{-1}$ the inverse of $f_\mathbb{T}$ in $\mathbb{T}$-$\Mat$. Then by definition $f:=(f_\mathbb{T},u)$ and $f^{-1}:=((f_\mathbb{T})^{-1},-u)$ are morphisms in $\TRS$ which are mutually inverse. 
	
	Conversely, let $f=(f_\mathbb{T},u)$ be an isomorphism from $\mathcal{E}$ to $\mathcal{F}$ in $\TRS$. Then by definition there is an inverse morphism in $\TRS$ which by Lemma \ref{lem: trs isos} we can write as $f^{-1} = (f_\mathbb{T}^{-1},-u)$. That is, we have
		\begin{align*}
			f_\mathbb{T}(F_j^\rho)&\subseteq G^\rho_{j+u\cdot v_\rho} \\
			f_\mathbb{T}^{-1}(G^\rho_{j+u\cdot v_\rho}) &\subseteq F^\rho_{j+(-u+u)\cdot v_\rho} = F^\rho_j. 
		\end{align*} 
	Thus by applying $f_\mathbb{T}$ to both sides of the second equality we see that in fact $f_\mathbb{T}(F^\rho_j) = G^\rho_{j+u\cdot v_\rho}$, establishing that $\mathcal{E}$ and $\mathcal{F}$ are isomorphic in the sense of Definition \ref{def: iso of trs}.
\end{proof}

This illustrates that isomorphism in $\TRS$ and isomorphism in the sense of Definition \ref{def: iso of trs} are the same.  

\begin{mydef}\cite[Definitions 7.2 and 7.17]{khan2024tropical}\label{definition: rest and contrac}
	Let $\mathcal{E}=(M,\{E^\rho_\bullet\}_\rho)$ be a tropical toric reflexive sheaf and take $F \in \mathcal{L}(M)$. 
	\begin{enumerate}
		\item 
		The restriction of $\mathcal{E}$ to $F$ is the tropical reflexive sheaf $\mathcal{E}|F = (M|F, \{F^\rho_\bullet\}_\rho)$, where 
		\[
		F^\rho_j:=F \wedge E^\rho_j \in \mathcal{L}(M).
		\]
		\item 
		The contraction (or quotient) of $\mathcal{E}$ by $F$ is the tropical reflexive sheaf $\mathcal{E}/F = (M/F, \{G^\rho_\bullet\}_\rho)$, where $G^\rho_j\in\mathcal{L}(M/F)$ is the flat such that under the lattice isomorphism $\varphi:\mathcal{L}(M/F)\to [F,E_M]$ we have
		\[
		\varphi(G^\rho_j)=F\vee E^\rho_j \in \mathcal{L}(M).
		\]
	\end{enumerate}
\end{mydef}

\begin{mydef}\label{definition: admissible}
	With the same notation as above, for $A \subseteq E_M$, we let $\mathcal{E}|A$ (resp.~$\mathcal{E}/A$) be $\mathcal{E}|{\angles{A}}$ (resp.~$\mathcal{E}/{\angles{A}}$), where $\angles{A}$ is the flat generated by $A$.\footnote{This definition makes sense since the intersection of two flats is again a flat.} 
\end{mydef}

\begin{lem}\label{lemma: morphism}
	Let $\mathcal{E}=(M,\{E^\rho_\bullet\}_\rho)$ be a tropical toric reflexive sheaf. Let $F \in \mathcal{L}(M)$. Then the following hold:
	\begin{enumerate}
		\item 
		The restriction map $r_F:M|F\to M$ induces a morphism $\iota = (r_F,0):\mathcal{E}|F \to \mathcal{E}$ of tropical toric reflexive sheaves.
		\item
		The contraction map $c_F:M\to M/F$ induces a morphism $\pi = (c_F, 0):\mathcal{E} \to \mathcal{E}/F$ of tropical toric reflexive sheaves. 
	\end{enumerate} 
\end{lem}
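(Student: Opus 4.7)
The plan is to unpack Definition \ref{definition: morphism of trs} for each case. In both parts, we already know from Section \ref{subsec:Fmat_morphisms} that $r_F$ and $c_F$ are bona fide $\mathbb{T}$-matroid morphisms, so what remains is to verify the containment condition \eqref{eq: toric ref sheav} with translation vector $u = 0$: for every $\rho \in \Sigma(1)$ and every $j \in \mathbb{Z}$, the image of the source's flat in degree $j$ under the underlying pointed map must sit inside the target's flat in degree $j$.

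For part (1), the source $\mathcal{E}|F$ has filtration $F^\rho_j = F \wedge E^\rho_j$, regarded as a flat of $M|F$ under the canonical identification $\mathcal{L}(M|F) \cong [\angles{\varnothing}, F]$, and the target is $\mathcal{E}$ with filtration $E^\rho_j$. Since $\underline{r_F}$ is the inclusion $F \hookrightarrow E_M$ (sending $*_{M|F}$ to $*_M$), one has $\underline{r_F}(F^\rho_j) = F \cap E^\rho_j \subseteq E^\rho_j$, which is exactly the required containment. For part (2), the target filtration $G^\rho_j$ is defined via the lattice isomorphism $\psi: [F, E_M] \to \mathcal{L}(M/F)$, $G \mapsto G \setminus F$, so that as a subset of $E_{M/F}$ it equals $(F \vee E^\rho_j) \setminus F$ (together with the basepoint). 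The underlying map $\underline{c_F}$ collapses $F$ to the basepoint and acts as identity on $E_M \setminus F$, so $\underline{c_F}(E^\rho_j) = (E^\rho_j \setminus F) \cup \{*_{M/F}\}$. The trivial lattice inequality $E^\rho_j \leq F \vee E^\rho_j$ then yields $\underline{c_F}(E^\rho_j) \subseteq G^\rho_j$.

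The only nontrivial aspect is the bookkeeping for the lattice correspondences $\varphi: \mathcal{L}(M/F) \to [F, E_M]$ and $\psi$ used to define the filtration on $\mathcal{E}/F$, together with the basepoint conventions of Remark \ref{rmk:loops}; once these are lined up, both verifications collapse to the elementary lattice inequalities $F \wedge E^\rho_j \leq E^\rho_j$ and $E^\rho_j \leq F \vee E^\rho_j$. No serious obstacle is anticipated.
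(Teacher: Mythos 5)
Your proposal is correct and follows essentially the same route as the paper: unpack Definition \ref{definition: morphism of trs} with $u=0$ and reduce both parts to the lattice inequalities $F\wedge E^\rho_j\leq E^\rho_j$ and $E^\rho_j\leq F\vee E^\rho_j$ under the identifications of Definition \ref{definition: rest and contrac}. If anything, your handling of part (2) is slightly more careful than the paper's, which asserts the equality $\underline{c_F}(E^\rho_j)=G^\rho_j$ where only the containment you prove is needed (and true in general).
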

\begin{proof} 
	(1): Let $\mathcal{E}|F=(M|F,\{F^\rho_\bullet\})$. Note that for $F_1,F_2 \in \mathcal{L}(M)$, we have $F_1 \cap F_2 = F_1 \wedge F_2$. It follows that
	\[
	\underline{r_F}(E^\rho_j)=F^\rho_j=F\wedge E^\rho_j \subseteq E^\rho_j,
	\]	
	showing that $\iota=(r_F,0)$ is a morphism.
	
	(2): Recall that for $F \subseteq E_M$, with $\widetilde{F}:=F\backslash \{*_M\}$, one has
	\[
	\mathcal{L}(M/F)=\{A \backslash \widetilde{F} \mid \widetilde{F} \subseteq A \in \mathcal{L}(M)\} \cong \{A\vee F \mid A\in\mathcal{L}(M)\}.
	\]
	Let $\mathcal{E}/F=(M/F,\{G^\rho_\bullet\})$. From Definition \ref{definition: rest and contrac}, we have that $G^\rho_j$ can be identified with $E^\rho_j\vee F$. We thus see 
	\[
	\underline{c_F}(E^\rho_j) = G^\rho_j, 
	\]	
	showing that $\pi=(c_F,0)$ is a morphism of tropical toric reflexive sheaves.  
\end{proof} 

As in the category $\mathbb{T}$-$\textbf{SMat}_\bullet$, restrictions and contractions of tropical reflexive sheaves at flats furnish $\TRS$ with kernels and cokernels. 

\begin{pro}\label{pro: ker coker in trs}
	Let $\mathcal{E} = (M, \{E^\rho_\bullet\}_\rho)$ and $\mathcal{F} = (N, \{F^\rho_\bullet\}_\rho)$ be tropical reflexive sheaves in $\TRS$ and let $f:\mathcal{E}\to\mathcal{F}$ be a morphism. Then the inclusion $\mathcal{E}|\underline{f_\mathbb{T}}^{-1}(*_N)\to \mathcal{E}$ is a kernel of $f$ and the contraction $\mathcal{F}\to \mathcal{F}/\angles{\underline{f_\mathbb{T}}(E_M)}$ is a cokernel of $f$.
\end{pro}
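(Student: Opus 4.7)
The plan is to reduce the statement to the analogous result for $\mathbb{T}$-$\textbf{SMat}_\bullet$ established in Proposition \ref{prop:ker and coker FSMat} and Remark \ref{rmk: kernels for simples}, and then lift the universal properties to $\TRS$ by carefully tracking the shift vector $u \in \Lambda$ attached to each morphism. Write $f = (f_\mathbb{T}, u)$, and set $K := \underline{f_\mathbb{T}}^{-1}(*_N)$ and $C := \angles{\underline{f_\mathbb{T}}(E_M)}$. Because $M$ and $N$ are simple, Remark \ref{rmk: kernels for simples} guarantees that $K \in \mathcal{L}(M)$ and $C \in \mathcal{L}(N)$, so $\mathcal{E}|K$ and $\mathcal{F}/C$ are genuine objects of $\TRS$ in the sense of Definition \ref{definition: rest and contrac}. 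Lemma \ref{lemma: morphism} promotes the restriction and contraction maps to morphisms $\iota := (r_K, 0) : \mathcal{E}|K \to \mathcal{E}$ and $\pi := (c_C, 0) : \mathcal{F} \to \mathcal{F}/C$ in $\TRS$, and direct inspection of composites on underlying matroids shows $f \circ \iota$ and $\pi \circ f$ factor through the zero object.

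For the kernel universal property, suppose $g = (g_\mathbb{T}, w) : \mathcal{G} \to \mathcal{E}$ satisfies $f \circ g = 0$. Since composition in $\TRS$ acts as $f \circ g = (f_\mathbb{T} \circ g_\mathbb{T},\, u + w)$, the underlying $\mathbb{T}$-matroid morphism $f_\mathbb{T} \circ g_\mathbb{T}$ must vanish; the universal property of the kernel in $\mathbb{T}$-$\textbf{SMat}_\bullet$ then provides a unique $h_\mathbb{T}$ with $r_K \circ h_\mathbb{T} = g_\mathbb{T}$. Setting $h := (h_\mathbb{T}, w)$ immediately yields $\iota \circ h = (r_K \circ h_\mathbb{T},\, 0 + w) = g$, and comparison of second coordinates rules out any other choice of shift. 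The cokernel case is dual: the assumption $g \circ f = 0$ forces $g_\mathbb{T}$ to vanish on $\underline{f_\mathbb{T}}(E_M)$, hence (by the strong map property) on the closure $C$, so the universal property of $c_C$ in $\mathbb{T}$-$\textbf{SMat}_\bullet$ extracts a unique $h_\mathbb{T}$ with $h_\mathbb{T} \circ c_C = g_\mathbb{T}$; the shift $w$ on the lifted morphism is again forced.

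The principal obstacle is verifying that the candidate $h$ actually satisfies the flag condition of Definition \ref{definition: morphism of trs}. For the kernel, the flag of $\mathcal{E}|K$ at level $j$ is $K \wedge E^\rho_j$ by Definition \ref{definition: rest and contrac}, and one must check $\underline{h_\mathbb{T}}(G^\rho_j) \subseteq K \wedge E^\rho_{j + w \cdot v_\rho}$. Since $\underline{h_\mathbb{T}}$ lands in $K$ by construction while $\underline{g_\mathbb{T}}(G^\rho_j) \subseteq E^\rho_{j + w \cdot v_\rho}$ is built into the assumption that $g$ is a morphism, the two inclusions combine to the required meet. For the cokernel, the flag of $\mathcal{F}/C$ at level $j$ corresponds via the lattice isomorphism $\varphi$ to $C \vee F^\rho_j$, and this flat is precisely the closure in $N/C$ of $\underline{c_C}(F^\rho_j)$. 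Invoking the fact that $\underline{h_\mathbb{T}}$ is a strong map (so images of closures sit inside closures of images) together with $\underline{g_\mathbb{T}}(F^\rho_j) \subseteq H^\rho_{j + w \cdot v_\rho}$ (the flag condition on $g$), we obtain the desired inclusion. Once this flag-tracking is pinned down, the uniqueness of $h$ in $\TRS$ is inherited directly from the underlying simple $\mathbb{T}$-matroid category.
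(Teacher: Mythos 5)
Your proposal is correct and follows essentially the same route as the paper: the paper's proof also reduces to the kernel and cokernel in $\mathbb{T}$-$\textbf{SMat}_\bullet$, endows the restriction/contraction with the tropical reflexive sheaf structure of Definition \ref{definition: rest and contrac}, and declares the universal property "immediately verified." You have simply written out the flag-condition and shift-vector bookkeeping that the paper leaves implicit, and that bookkeeping is carried out correctly.
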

\begin{proof}
	Applying the forgetful functor to $f:\mathcal{E}\to\mathcal{F}$ we obtain the map $f_\mathbb{T}:M\to N$ in $\mathbb{T}$-$\textbf{SMat}_\bullet$, where we find a kernel $M|\underline{f_\mathbb{T}}^{-1}(*_N)$ of $f_\mathbb{T}$. By virtue of Definition \ref{definition: rest and contrac}, the restriction has a natural tropical reflexive sheaf structure which is immediately verified to satisfy the universal property of kernels. The proof for cokernels is similar and has been omitted.
\end{proof}

Appealing to Proposition \ref{prop:ker and coker FSMat} with $F=\mathbb{T}$, we see that $\TRS$ also satisfies axiom (PA1) from Subsection \ref{subsec: proto ab cat}.

\begin{pro}\label{pro: ker coker pa1 trs}
	$\TRS$ has all kernels and cokernels and satisfies axiom (PA1) of a proto-abelian category.
\end{pro}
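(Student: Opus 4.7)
The plan is to leverage Proposition \ref{pro: ker coker in trs} together with the fact that morphisms in $\TRS$ are pairs $(f_\mathbb{T},u)$ whose tropical matroid component sits in a category already known to satisfy (PA1). The first half of the statement is essentially restated from Proposition \ref{pro: ker coker in trs}: for any morphism $f=(f_\mathbb{T},u):\mathcal{E}\to\mathcal{F}$ the explicit descriptions $\ker(f)=\mathcal{E}|\underline{f_\mathbb{T}}^{-1}(*_N)$ and $\mathrm{coker}(f)=\mathcal{F}/\angles{\underline{f_\mathbb{T}}(E_M)}$ already give concrete objects of $\TRS$, and I will simply note that these exhaust all kernels and cokernels since the universal properties were verified there.

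For axiom (PA1), I would begin with the kernel case. Suppose $f=(f_\mathbb{T},u):\mathcal{E}\to\mathcal{F}$ has zero kernel. By Proposition \ref{pro: ker coker in trs} this forces $\underline{f_\mathbb{T}}^{-1}(*_N)=\{*_M\}$, i.e., $\underline{f_\mathbb{T}}$ is injective, so by Proposition \ref{prop:inj_surj} the underlying map $f_\mathbb{T}$ is a monomorphism in $\mathbb{T}$-$\textbf{SMat}_\bullet$ (which satisfies (PA1) by Proposition \ref{prop:ker and coker FSMat}). Now, given a parallel pair $g=(g_\mathbb{T},u_g),\,h=(h_\mathbb{T},u_h):\mathcal{G}\to\mathcal{E}$ with $f\circ g = f\circ h$, the composition law in $\TRS$ gives $(f_\mathbb{T}\circ g_\mathbb{T},\,u+u_g)=(f_\mathbb{T}\circ h_\mathbb{T},\,u+u_h)$. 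The first coordinate yields $g_\mathbb{T}=h_\mathbb{T}$ because $f_\mathbb{T}$ is mono, while the second coordinate yields $u_g=u_h$ because $\Lambda$ is a (torsion-free) free abelian group. Hence $g=h$ and $f$ is a monomorphism.

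The epimorphism case is entirely dual. If $f$ has zero cokernel then $\angles{\underline{f_\mathbb{T}}(E_M)}=E_N$, which in particular forces $\underline{f_\mathbb{T}}$ to be surjective, so that $f_\mathbb{T}$ is an epimorphism in $\mathbb{T}$-$\textbf{SMat}_\bullet$ by Proposition \ref{prop:inj_surj}. Then for any $g,h:\mathcal{F}\to\mathcal{G}$ with $g\circ f = h\circ f$, the same two-coordinate analysis (using that $f_\mathbb{T}$ is epi on tropical matroids and that $\Lambda$ is cancellative) gives $g=h$.

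The only potential obstacle I anticipate is checking that the concrete kernel and cokernel objects of Proposition \ref{pro: ker coker in trs} really are initial/terminal among all $\TRS$-morphisms into/out of $\mathcal{E}$ and $\mathcal{F}$; however this was already established there, so the present proposition is essentially a bookkeeping consequence, the only genuinely new input being that the translation part of a morphism in $\TRS$ is controlled by an abelian group and therefore contributes no obstruction to (PA1).
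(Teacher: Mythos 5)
Your first half (existence of kernels and cokernels) and your monomorphism argument are fine: a zero kernel forces $\underline{f_\mathbb{T}}^{-1}(*_N)=\{*_M\}$, which by submonomiality and simplicity gives injectivity of $\underline{f_\mathbb{T}}$, and the two-coordinate cancellation (the matroid component by Proposition \ref{prop:inj_surj}, the $\Lambda$-component because $\Lambda$ is cancellative) makes $f$ monic in $\TRS$. The epimorphism half, however, has a genuine gap: you assert that a zero cokernel forces $\underline{f_\mathbb{T}}$ to be surjective. It does not. By Proposition \ref{pro: ker coker in trs} the cokernel of $f$ is the contraction $\mathcal{F}/\angles{\underline{f_\mathbb{T}}(E_M)}$ at the \emph{flat generated by} the image (cokernels in the simple/pointed setting are contractions at flats, cf.\ Remark \ref{rmk: kernels for simples}), so it vanishes exactly when the image is spanning, which is strictly weaker than surjectivity. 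For instance, take $N$ the trivially valuated uniform matroid of rank $2$ on three elements, $A$ a two-element independent subset (not a flat), and equip both $N|A$ and $N$ with the chains jumping from the whole ground set to the basepoint at $j=0$; then $(r_A,0)$ is a morphism in $\TRS$ with cokernel $\mathcal{F}/\angles{A}=\mathcal{F}/E_N=0$, yet $\underline{r_A}$ misses an element. Hence Proposition \ref{prop:inj_surj} cannot be invoked, and your argument that $f_\mathbb{T}$ is epic does not go through as written.

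The repair is the route the paper takes: instead of the surjectivity criterion, appeal to Proposition \ref{prop:ker and coker FSMat} with $F=\mathbb{T}$, i.e.\ to axiom (PA1) already established for $\mathbb{T}$-$\textbf{SMat}_\bullet$. Since the cokernel in $\TRS$ is computed on the underlying $\mathbb{T}$-matroid, a zero cokernel of $f$ in $\TRS$ gives a zero cokernel of $f_\mathbb{T}$ in $\mathbb{T}$-$\textbf{SMat}_\bullet$, whence $f_\mathbb{T}$ is an epimorphism \emph{in that category}; and because every object of $\TRS$ has simple underlying matroid, any test pair $g,h$ in $\TRS$ has matroid components lying in $\mathbb{T}$-$\textbf{SMat}_\bullet$, so your two-coordinate cancellation then yields $g=h$ and $f$ is epic in $\TRS$. (The mono half could be handled symmetrically by the same appeal, which is exactly how the paper phrases it, but your direct injectivity argument is an acceptable alternative there.)
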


We now define admissible monomorphisms $\mathfrak{M}_t$ and admissible epimorphisms $\mathfrak{E}_t$, which we later see furnish $\TRS$ with a proto-exact structure. 

\begin{mydef}\label{definition: TRS M and E}
	Let $f:\mathcal{E}=(M,\{E^\rho_\bullet\}_\rho) \to \mathcal{F}=(N,\{F^\rho_\bullet\}_\rho)$ be a morphism in $\TRS$. 	
	\begin{enumerate}
		\item 
		$f$ is an admissible monomorphism if $f=r_F \circ \alpha$, where $F\in\mathcal{L}(N)$ and $r_F:\mathcal{F}|F \to \mathcal{F}$ is an inclusion map, and $\alpha$ is an isomorphism of tropical toric reflexive sheaves. Denote by $\MM_t$ the class of admissible monomorphisms. 
		\item 
		$f$ is an admissible epimorphism if $f=\beta \circ c_G$, where $G \in\mathcal{L}(M)$ and $c_G:\mathcal{E} \to \mathcal{E}/G$ is a contraction map and $\beta$ is an isomorphism of tropical toric reflexive sheaves. Denote by $\EE_t$ the class of admissible epimorphisms. 
	\end{enumerate}
\end{mydef}

\begin{rmk}\label{rmk: TRS M and E}
	As in $\FMat$ and $F$-$\textbf{SMat}_\bullet$, the classes of admissible monomorphisms and admissible epimorphisms consist of kernels and cokernels of $\TRS$ composed with isomorphisms. However, note that as with $F$-$\textbf{SMat}_\bullet$ these classes are only defined in terms of the flats of the underlying matroid, rather than with arbitrary subsets.
\end{rmk} 

The following are straightforward. 

\begin{lem}
	The following hold for $(\TRS,\mathfrak{M}_t,\mathfrak{E}_t)$.
	\begin{enumerate}
		\item 
		$\TRS$ is pointed.
		\item 
		The classes $\mathfrak{M}_t$ and $\mathfrak{E}_t$ contain all isomorphisms. 
		\item 
		The classes $\mathfrak{M}_t$ and $\mathfrak{E}_t$ are closed under composition. 
	\end{enumerate}
\end{lem}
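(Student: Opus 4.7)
The plan is to handle the three assertions in parallel with the corresponding results established in Section \ref{sec: simple F matroids} for $F$-$\textbf{SMat}_\bullet$, lifted to $\TRS$ by tracking the additional datum of a translation vector $u\in\Lambda$. Throughout I will use the identifications $\mathcal{F}|E_N=\mathcal{F}$ and $\mathcal{E}/\angles{\varnothing}=\mathcal{E}$, which follow directly from Definition \ref{definition: rest and contrac} applied to the top and bottom flats.

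For part (1), I would take the zero object to be $\mathcal{Z}=(U_1^0,\{Z^\rho_\bullet\}_\rho)$ with the trivial flag $Z^\rho_j=\{*\}$ for all $j$ and $\rho$; this satisfies Definition \ref{definition: tropical toric reflexive sheaf} vacuously, since the underlying matroid has only a basepoint. For any object $\mathcal{E}=(M,\{E^\rho_\bullet\}_\rho)$, the unique basepoint-preserving maps of pointed $\mathbb{T}$-matroids $U_1^0\to M$ and $M\to U_1^0$ satisfy the containment \eqref{eq: toric ref sheav} for any $u\in\Lambda$, since $*_M$ lies in every flat of $M$; one takes $u=0$ as the canonical choice, yielding a unique zero morphism and establishing that $\mathcal{Z}$ is both initial and terminal. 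For part (2), any isomorphism $\alpha:\mathcal{E}\to\mathcal{F}$ factors as $\alpha=r_{E_N}\circ\alpha$ and as $\alpha\circ c_{\angles{\varnothing}}$, placing it simultaneously in $\mathfrak{M}_t$ and $\mathfrak{E}_t$.

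For part (3), consider composable admissible monos $f=r_F\circ\alpha:\mathcal{E}\to\mathcal{F}$ and $g=r_G\circ\beta:\mathcal{F}\to\mathcal{G}$, where $F$ is a flat of the underlying matroid of $\mathcal{F}$ and $G$ a flat of the underlying matroid of $\mathcal{G}$. The key step is a $\TRS$-analog of Proposition \ref{proposition: restriction-contraction iso}: the isomorphism $\beta:\mathcal{F}\xrightarrow{\sim}\mathcal{G}|G$ carries $F$ to a flat $\beta_\mathbb{T}(F)$ (contained in $G$) of the underlying matroid of $\mathcal{G}$, and induces an isomorphism $\beta|_F:\mathcal{F}|F\xrightarrow{\sim}\mathcal{G}|\beta_\mathbb{T}(F)$. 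One then rewrites
\[
g\circ f \;=\; r_G\circ\beta\circ r_F\circ\alpha \;=\; r_{\beta_\mathbb{T}(F)}\circ(\beta|_F\circ\alpha),
\]
exhibiting $g\circ f$ as a restriction morphism at $\beta_\mathbb{T}(F)$ composed with an isomorphism $\mathcal{E}\to\mathcal{G}|\beta_\mathbb{T}(F)$; hence $g\circ f\in\mathfrak{M}_t$. Closure of $\mathfrak{E}_t$ under composition is dual, using the contraction half of the transport lemma. Throughout, the translation vectors add naturally: composing $(r_F,0)$ with $(\alpha_\mathbb{T},u)$ gives $(r_F\circ\alpha_\mathbb{T},u)$, so no nontrivial bookkeeping is required for the $\Lambda$-component.

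The main obstacle is the $\TRS$-analog of Proposition \ref{proposition: restriction-contraction iso} used above. I would reduce it to the underlying $\mathbb{T}$-matroid statement and verify that the defining formulas $F^\rho_j=F\wedge E^\rho_j$ and $\varphi(G^\rho_j)=F\vee E^\rho_j$ of Definition \ref{definition: rest and contrac} are preserved under the lattice map induced by $\beta_\mathbb{T}$ after shifting indices by $u\cdot v_\rho$, using that matroid isomorphisms preserve meets and joins of flats. With that in hand, the remaining verifications are routine.
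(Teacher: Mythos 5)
Your proposal is correct and follows essentially the same route as the paper: the zero object is $(U_1^0,\text{trivial flag})$, isomorphisms are placed in $\mathfrak{M}_t$ and $\mathfrak{E}_t$ via the factorizations $\alpha=r_{E_N}\circ\alpha$ and $\alpha=\alpha\circ c_{\angles{\varnothing}}$, and closure under composition reduces to the $F$-matroid statement (Proposition \ref{prop:composition}) plus bookkeeping of flags and translation vectors. Your part (3) is in fact slightly more careful than the paper's one-line citation, since you explicitly transport the flat along the isomorphism and check that the flag data and the $\Lambda$-component behave correctly, which is exactly the content left implicit in the paper's appeal to ``the definition of compositions in $\TRS$.''
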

\begin{proof}
	(1): One readily check that the zero object is the tropical toric reflexive sheaf whose $\mathbb{T}$-matroid is $U^0_1$ with the trivial filtration. 
	
	(2): Let $f:\mathcal{E}=(M,\{E^\rho_\bullet\}) \to \mathcal{F}=(N,\{F^\rho_\bullet\})$ be an isomorphism. For $\mathfrak{M}_t$, one can easily see that $f=r_{E_N}\circ f$. For $\mathfrak{E}_t$, likewise, one can see that $f=f\circ c_{\emptyset}$. 
	
	(3): This directly follows from Proposition \ref{prop:composition} and the definition of compositions in $\TRS$.
\end{proof}

\begin{lem}\label{lemma: TRS bi-Cartesian lemma}
	Let $\mathcal{E}=(M,\{E^\rho_\bullet\})$ be a tropical toric reflexive sheaf and let $G\leq F\in\mathcal{L}(M)$. The following square is biCartesian in $\TRS$:
	\begin{equation}\label{eq: diagram}
		\begin{tikzcd}
			\mathcal{E}|F \ar[r, tail, "f"] \ar[d, two heads, "h"] & \mathcal{E} \ar[d, two heads, "k"] \\
			(\mathcal{E}|F)/G \ar[r, tail, "g"] & \mathcal{E}/G
		\end{tikzcd}
	\end{equation}
	where $f = r_F$ and $g = r_{(F\setminus G)}$.
\end{lem}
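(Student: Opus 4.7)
The plan is to reduce both universal properties to the underlying simple $\mathbb{T}$-matroid square, which is biCartesian by the flat-version of Lemma \ref{lem:aux} noted in Section \ref{sec: simple F matroids} to hold on the nose in $F$-$\mathbf{SMat}_\bullet$ when the restriction and contraction subsets are flats, and then lift the resulting universal morphisms to $\TRS$ by verifying the filtration condition. Commutativity of \eqref{eq: diagram} is immediate: at the matroid level it is Proposition \ref{prop:restriction-contraction}, and all four edge morphisms carry translation vector $u=0$ by Lemma \ref{lemma: morphism}. Note that since $F$-$\mathbf{SMat}_\bullet$ is not closed under duality (Remark \ref{rmk: FSMat not simple}), both Cartesian and coCartesian directions must be handled directly.

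For the coCartesian property, given $\beta=(\beta_\mathbb{T},u)\colon \mathcal{E}\to\mathcal{R}$ and $\alpha=(\alpha_\mathbb{T},w)\colon (\mathcal{E}|F)/G\to\mathcal{R}$ with $\beta\circ f=\alpha\circ h$, the fact that $f$ and $h$ have trivial translation forces $u=w$. The coCartesian property at the $\mathbb{T}$-matroid level produces a unique morphism $\gamma_\mathbb{T}\colon M/G\to R$ with $\gamma_\mathbb{T}\circ k_\mathbb{T}=\beta_\mathbb{T}$ and $\gamma_\mathbb{T}\circ g_\mathbb{T}=\alpha_\mathbb{T}$. I would then set $\gamma:=(\gamma_\mathbb{T},u)$; granting that $\gamma$ is a bona fide morphism in $\TRS$, the identities $\gamma\circ k=\beta$ and $\gamma\circ g=\alpha$ follow at once, and uniqueness of $\gamma$ in $\TRS$ follows from the uniqueness of $\gamma_\mathbb{T}$ together with the forced value of $u$.

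The main obstacle, and the only substantive new content beyond the matroid argument, is verifying that $\gamma$ satisfies the filtration condition $\gamma_\mathbb{T}(G^\rho_j)\subseteq R^\rho_{j+u\cdot v_\rho}$, where $\{G^\rho_\bullet\}$ denotes the filtration of $\mathcal{E}/G$ as in Definition \ref{definition: rest and contrac}. By the definition of the contraction together with the lattice isomorphism $\mathcal{L}(M/G)\cong [G,E_M]$, the flat $G^\rho_j$ is the closure in $M/G$ of $k_\mathbb{T}(E^\rho_j)$. Since $R^\rho_{j+u\cdot v_\rho}$ is a flat of $R$ and the underlying map of $\gamma_\mathbb{T}$ is a strong map of classical matroids, the preimage of $R^\rho_{j+u\cdot v_\rho}$ under $\gamma_\mathbb{T}$ is a flat of $M/G$. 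This preimage contains $k_\mathbb{T}(E^\rho_j)$ because $\beta_\mathbb{T}(E^\rho_j)\subseteq R^\rho_{j+u\cdot v_\rho}$ and $\beta_\mathbb{T}=\gamma_\mathbb{T}\circ k_\mathbb{T}$, and hence contains its closure $G^\rho_j$, yielding the desired inclusion.

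Finally, the Cartesian direction is more direct. Given $a=(a_\mathbb{T},u)\colon \mathcal{P}\to\mathcal{E}$ and $b=(b_\mathbb{T},w)\colon \mathcal{P}\to(\mathcal{E}|F)/G$ with $k\circ a=g\circ b$, the zero translations of $k$ and $g$ again force $u=w$, and the Cartesian property at the matroid level yields a unique $\delta_\mathbb{T}\colon P\to M|F$ with $f_\mathbb{T}\circ\delta_\mathbb{T}=a_\mathbb{T}$ and $h_\mathbb{T}\circ\delta_\mathbb{T}=b_\mathbb{T}$, where $P$ denotes the underlying $\mathbb{T}$-matroid of $\mathcal{P}$. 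The filtration check for $\delta:=(\delta_\mathbb{T},u)$ is then immediate: the image of $\delta_\mathbb{T}$ lies in $F$, and $a_\mathbb{T}(P^\rho_j)\subseteq E^\rho_{j+u\cdot v_\rho}$ combined with $a_\mathbb{T}=r_F\circ \delta_\mathbb{T}$ gives $\delta_\mathbb{T}(P^\rho_j)\subseteq F\wedge E^\rho_{j+u\cdot v_\rho}=F^\rho_{j+u\cdot v_\rho}$, so $\delta$ is a morphism in $\TRS$ completing the Cartesian verification.
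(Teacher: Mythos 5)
Your proposal is correct and follows essentially the same route as the paper's proof: reduce both universal properties to the underlying $\mathbb{T}$-matroid square via Lemma \ref{lem:aux}, then verify that the induced matroid morphism respects the filtrations, using $\beta_\mathbb{T}=\gamma_\mathbb{T}\circ k_\mathbb{T}$ for the coCartesian direction and pulling back along $r_F$ (so that $\underline{f_\mathbb{T}}^{-1}(E^\rho_{j+u\cdot v_\rho})=F\wedge E^\rho_{j+u\cdot v_\rho}$) for the Cartesian one. Your explicit remarks that the zero translations of the edge maps force the two translation vectors to agree, and that $G^\rho_j$ is the closure of $k_\mathbb{T}(E^\rho_j)$ so that strong-map preimages of flats suffice, are slightly more careful than the paper's wording but amount to the same argument.
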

\begin{proof}
	We first prove that \eqref{eq: diagram} is co-Cartesian. Suppose that we have the following diagram in $\TRS$ with $\mc{F} = (N,\{F^\rho_\bullet\}_\rho)$.
	\begin{equation}\label{eq: cocart}
		\begin{tikzcd}
			\mathcal{E}|F \ar[r,"f"] \ar[d,"h"] & \mathcal{E} \ar[d,"k"] \ar[ddr,"\beta"] & \\
			(\mathcal{E}|F)/G \ar[r,"g"] \ar[drr,"\alpha",swap] & \mathcal{E}/G & \\
			& & \mathcal{F}
		\end{tikzcd}
	\end{equation}
	Since all morphisms in \eqref{eq: cocart} are morphisms in $\mathbb{T}$-$\textbf{SMat}_\bullet$, from Lemma \ref{lem:aux}, we have a unique morphism $\gamma_\mathbb{T}:M/G \to N$ which makes \eqref{eq: cocart} commute at the level of $\mathbb{T}$-matroids. We only have to check that $\gamma_\mathbb{T}$ defines a morphism in $\TRS$. In other words, we need to show that there is some $w\in\Lambda$ with
\[
\underline{\gamma_\mathbb{T}}(E^\rho_j \vee G) \subseteq F^\rho_{j+w\cdot v_\rho}
\]
But, since $\beta=(\beta_\mathbb{T},u)$ is a morphism of tropical toric reflexive sheaves, we have 
\[
\underline{\gamma_\mathbb{T}}(E^\rho_j\vee G) = \underline{\gamma_\mathbb{T}}(k_\mathbb{T}(E^\rho_j)) = \underline{\beta_\mathbb{T}}(E^\rho_j) \subseteq F^\rho_{j+u\cdot v_\rho},
\]	
showing that $\gamma=(\gamma_{\mathbb{T}},u)$ is a morphism in $\TRS$. 
	
The case for Cartesian is similar, but we just include it here for the completeness. Suppose that we have the following diagram in $\TRS$ with $\mc{G} = (R, \{G^\rho_\bullet\}_\rho)$. 
	\begin{equation}\label{eq: cart}
		\begin{tikzcd}
			\mathcal{G} \ar[ddr,"\alpha"] \ar[drr,"\beta"] & &  \\ 
			& \mathcal{E}|F \ar[r,"f",swap] \ar[d,"h"] & \mathcal{E} \ar[d,"k"]  \\ 
			& (\mathcal{E}|F)/G \ar[r,"g"] & \mathcal{E}/G \\ 
		\end{tikzcd}
	\end{equation}
	Again, from Lemma \ref{lem:aux}, we have a unique morphism $\gamma_\mathbb{T}:R \to M|F$ which makes \eqref{eq: cart} commute at the level of $\mathbb{T}$-matroids. We only have to check that $\gamma_{\mathbb{T}}$ defines a morphism in $\TRS$. We need to show that there is some $w\in\Lambda$ with
\[
\gamma(G^\rho_j) \subseteq F^\rho_{j+w\cdot v_\rho} = E^\rho_{j+w\cdot v_\rho} \wedge F.
\]
But, since $r_F\circ \gamma_\mathbb{T} = \beta_\mathbb{T}$ and $\beta=(\beta_\mathbb{T},u)$ is a morphism in $\TRS$, we have
\[
\underline{f_\mathbb{T}}(\underline{\gamma_\mathbb{T}}(G^\rho_j)) = \underline{\beta_\mathbb{T}}(G^\rho_j) \subseteq E^\rho_{j+u\cdot v_\rho}.
\]	
It follows that
\[
\underline{\gamma_\mathbb{T}}(G^\rho_j) \subseteq \underline{f_\mathbb{T}}^{-1}(E^\rho_{j+u\cdot v_\rho})=E^\rho_{j+u\cdot v_\rho}\wedge F=F^\rho_{j+u\cdot v_\rho},
\]	
showing that $\gamma=(\gamma_\mathbb{T},u)$ is a morphism in $\TRS$.	 
\end{proof}

\begin{pro}\label{proposition: completing square}
	Any diagram in $\TRS$ of the form $\mathcal{E}\rightarrowtail \mathcal{G} \twoheadleftarrow \mathcal{F}$ can be completed to a biCartesian square.
	
\end{pro}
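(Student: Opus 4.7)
The plan is to mirror the proof of Proposition \ref{prop:LR bicart} for $\FMat$, accounting for the additional filtration data carried by tropical reflexive sheaves. Write $\mathcal{E} = (M, \{E^\rho_\bullet\}_\rho)$, $\mathcal{F} = (P, \{F^\rho_\bullet\}_\rho)$, and $\mathcal{G} = (N, \{G^\rho_\bullet\}_\rho)$. Using Definition \ref{definition: TRS M and E}, first I would factor the admissible mono as $i = r_F \circ \alpha$ with $F \in \mathcal{L}(N)$ and $\alpha \colon \mathcal{E} \xrightarrow{\sim} \mathcal{G}|F$, and the admissible epi as $j = \beta \circ c_H$ with $H \in \mathcal{L}(P)$ and $\beta \colon \mathcal{F}/H \xrightarrow{\sim} \mathcal{G}$.

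Next I would set $C := \underline{j_\mathbb{T}}^{-1}(F) \subseteq E_P$. Because $\beta_\mathbb{T}$ is an isomorphism of matroids and the lattice isomorphism $\mathcal{L}(\underline{P/H}) \cong [H, E_P]$ identifies flats of $\underline{P/H}$ with flats of $\underline{P}$ containing $H$, the set $C$ is a flat of $\underline{P}$ with $H \leq C$. Appealing to the tropical reflexive sheaf analog of Proposition \ref{proposition: restriction-contraction iso}, the isomorphism $\beta$ restricts to an isomorphism $(\mathcal{F}/H)|(C\setminus H) \xrightarrow{\sim} \mathcal{G}|F$; composing with $\alpha^{-1}$ and using the identification $(\mathcal{F}|C)/H = (\mathcal{F}/H)|(C\setminus H)$ (the tropical analog of Proposition \ref{prop:restriction-contraction}) yields an isomorphism $\gamma \colon (\mathcal{F}|C)/H \xrightarrow{\sim} \mathcal{E}$.

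I would then stack two squares vertically:
\[
\begin{tikzcd}
\mathcal{F}|C \arrow[r, tail] \arrow[d, two heads] & \mathcal{F} \arrow[d, two heads, "c_H"] \\
(\mathcal{F}|C)/H \arrow[r, tail] \arrow[d, "\gamma"] & \mathcal{F}/H \arrow[d, "\beta"] \\
\mathcal{E} \arrow[r, tail, "i"] & \mathcal{G}
\end{tikzcd}
\]
The upper square is biCartesian by Lemma \ref{lemma: TRS bi-Cartesian lemma}, while the lower square has isomorphisms along both vertical sides and so is trivially biCartesian. Since biCartesian squares compose, the outer rectangle provides the required completion of $\mathcal{E} \rightarrowtail \mathcal{G} \twoheadleftarrow \mathcal{F}$; to finish one verifies, exactly as in the proof of Proposition \ref{prop:LR bicart}, that the universal property of the upper biCartesian square transfers along the isomorphisms $\gamma$ and $\beta$ to the outer square.

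The main obstacle is establishing the tropical reflexive sheaf analog of Proposition \ref{proposition: restriction-contraction iso} needed to produce $\gamma$: one must confirm that isomorphisms in $\TRS$ interact correctly with the restrictions and contractions of Definition \ref{definition: rest and contrac}, which are built from meets and joins with the flats $E^\rho_j$. Since a morphism in $\TRS$ consists of an underlying $\mathbb{T}$-matroid morphism together with a character $u \in \Lambda$, this reduces to (i) the fact that the lattice isomorphism underlying a $\mathbb{T}$-matroid isomorphism preserves meets and joins, and (ii) the routine check that the character $u$ transports correctly through the chain shifts $j \mapsto j + u\cdot v_\rho$ in the restricted or contracted filtrations; both are immediate from the definitions.
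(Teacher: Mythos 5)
Your proof is correct and takes essentially the same route as the paper, which simply invokes the argument of Proposition \ref{prop:LR bicart} together with Lemma \ref{lemma: TRS bi-Cartesian lemma}. You have in fact supplied more detail than the paper records, notably the check that $C=\underline{j_\mathbb{T}}^{-1}(F)$ is a flat of $\underline{P}$ containing $H$ and that isomorphisms in $\TRS$ restrict compatibly with the filtration data.
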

\begin{proof}
	The same argument as in the proof of Proposition \ref{prop:LR bicart} can be used to prove this by using Lemma \ref{lemma: TRS bi-Cartesian lemma}. 
\end{proof}

\begin{pro}\label{proposition: completing the square}
	Any diagram in $\TRS$ of the form $\mathcal{E}\twoheadleftarrow \mathcal{G} \rightarrowtail \mathcal{F}$ can be completed to a biCartesian square. 
\end{pro}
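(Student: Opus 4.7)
The plan is to mirror the proof of Proposition \ref{proposition: completing square}, again invoking Lemma \ref{lemma: TRS bi-Cartesian lemma}. Since $\TRS$ does not admit a duality functor (its objects are built on simple $\mathbb{T}$-matroids, for which duality fails, cf.\ Remark \ref{rmk: FSMat not simple}), we cannot formally dualize the previous proposition as was done for $\FMat$ in Proposition \ref{prop:UL bicart}. Instead we verify the pushout--pullback property directly.

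First, decompose the admissibles using Definition \ref{definition: TRS M and E}. Write the given $\pi\colon \mathcal{G}\twoheadrightarrow \mathcal{E}$ as $\pi = \alpha\circ c_A$ for some flat $A\in\mathcal{L}(M_\mathcal{G})$ and an isomorphism $\alpha\colon \mathcal{G}/A \xrightarrow{\sim}\mathcal{E}$, and write the given $\iota\colon \mathcal{G}\rightarrowtail \mathcal{F}$ as $\iota = r_B\circ\beta$ for some flat $B\in\mathcal{L}(M_\mathcal{F})$ and an isomorphism $\beta\colon\mathcal{G}\xrightarrow{\sim}\mathcal{F}|B$. Set $C := \underline{\beta}(A)$. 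Because $\beta$ is an isomorphism and $A$ is a flat of $M_\mathcal{G}$, the image $C$ is a flat of $M_\mathcal{F}|B$; combined with $B\in\mathcal{L}(M_\mathcal{F})$ and the identification $\mathcal{L}(M_\mathcal{F}|B)\cong [\langle\varnothing\rangle, B]$ inside $\mathcal{L}(M_\mathcal{F})$, this gives $C\in\mathcal{L}(M_\mathcal{F})$ with $C\leq B$.

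Lemma \ref{lemma: TRS bi-Cartesian lemma} applied to the pair $C\leq B$ in $\mathcal{F}$ now yields a biCartesian square
\[
\begin{tikzcd}
\mathcal{F}|B \ar[r, tail] \ar[d, two heads] & \mathcal{F} \ar[d, two heads] \\
(\mathcal{F}|B)/C \ar[r, tail] & \mathcal{F}/C
\end{tikzcd}
\]
Next, observe that $\beta$ induces a canonical isomorphism $\bar{\beta}\colon \mathcal{G}/A \xrightarrow{\sim} (\mathcal{F}|B)/C$ in $\TRS$: on underlying $\mathbb{T}$-matroids this is Proposition \ref{proposition: restriction-contraction iso}, and the filtrations agree because the quotient filtration from Definition \ref{definition: rest and contrac} depends only on the lattice of flats and is manifestly functorial under $\mathbb{T}$-matroid isomorphisms, with the translation vector $u$ of $\beta = (\beta_\mathbb{T},u)$ unchanged. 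Pre-composing the displayed square with $\beta$ along the top-left arrow and post-composing with $\alpha\circ\bar{\beta}^{-1}$ along the bottom-left arrow produces
\[
\begin{tikzcd}
\mathcal{G} \ar[r, tail, "\iota"] \ar[d, two heads, "\pi"'] & \mathcal{F} \ar[d, two heads] \\
\mathcal{E} \ar[r, tail] & \mathcal{F}/C
\end{tikzcd}
\]
which remains biCartesian, since replacing corners of a biCartesian square by isomorphic objects along isomorphisms preserves the biCartesian property.

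The main obstacle is the verification that $\beta$ induces the isomorphism $\bar{\beta}$ in $\TRS$ rather than only in $\mathbb{T}$-$\mathbf{SMat}_\bullet$. This is a routine but slightly bureaucratic check: one unwinds Definition \ref{definition: rest and contrac} to confirm that the filtration on $\mathcal{G}/A$ is transported by $\beta$ exactly to the filtration on $(\mathcal{F}|B)/C$, with the translation vector preserved. Every other step is structural and directly parallels the proof of Proposition \ref{proposition: completing square}, only with the roles of restriction and contraction exchanged.
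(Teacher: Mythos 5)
Your proof is correct and takes essentially the same route the paper intends: the paper's own proof is a one-line appeal to "follow steps in analogy with the proof of Proposition \ref{prop:LR bicart}" without duality, and your argument is precisely that analogy carried out in detail (decompose the admissibles, transport the contracted flat $A$ to $C=\underline{\beta}(A)\leq B$, apply Lemma \ref{lemma: TRS bi-Cartesian lemma}, and adjust by isomorphisms). The only point the paper leaves implicit that you rightly flag and verify is that $\beta$ induces an isomorphism $\mathcal{G}/A\cong(\mathcal{F}|B)/C$ in $\TRS$ and not merely in $\mathbb{T}$-$\mathbf{SMat}_\bullet$.
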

\begin{proof}
	As with $F$-$\textbf{SMat}_\bullet$, we cannot appeal to duality here as we did in $\FMat$, but as described after Proposition \ref{prop:UL bicart} one readily follows steps in analogy with the proof of Proposition \ref{prop:LR bicart}. 
\end{proof}

\begin{pro}
	Any square of the following form in $\TRS$ is Cartesian if and only if it is coCartesian:
	\[
	\begin{tikzcd}
		\mathcal{E} \ar[r, tail] \ar[d, two heads] & \mathcal{F} \ar[d, two heads] \\
		\mathcal{G} \ar[r, tail] & \mathcal{H}
	\end{tikzcd}
	\]
\end{pro}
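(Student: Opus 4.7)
The plan is to mimic the proof of Proposition \ref{prop:cart iff cocart} in $\FMat$, using Propositions \ref{proposition: completing square} and \ref{proposition: completing the square} as the replacements for Propositions \ref{prop:LR bicart} and \ref{prop:UL bicart}. The argument is entirely formal once the biCartesian completions are available, and only relies on the uniqueness of pullbacks and pushouts up to isomorphism.

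First, suppose that the given square is Cartesian. Applying Proposition \ref{proposition: completing square} to the diagram $\mathcal{G} \rightarrowtail \mathcal{H} \twoheadleftarrow \mathcal{F}$, I obtain a biCartesian completion
\[
\begin{tikzcd}
\mathcal{E}' \ar[r, tail] \ar[d, two heads] & \mathcal{F} \ar[d, two heads] \\
\mathcal{G} \ar[r, tail] & \mathcal{H}
\end{tikzcd}
\]
for some admissible subobject $\mathcal{E}'$. Since both this new square and the original are Cartesian with the same lower-right corner $\mathcal{G} \rightarrowtail \mathcal{H} \twoheadleftarrow \mathcal{F}$, the uniqueness of pullbacks forces an isomorphism $\mathcal{E} \cong \mathcal{E}'$ compatible with the admissible monomorphism and epimorphism from $\mathcal{E}$. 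Hence the original square is also coCartesian.

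For the converse, I would argue symmetrically: assuming the square is coCartesian, apply Proposition \ref{proposition: completing the square} to $\mathcal{G} \twoheadleftarrow \mathcal{E} \rightarrowtail \mathcal{F}$ to obtain a biCartesian completion with some upper-right corner $\mathcal{H}'$. The uniqueness of pushouts gives $\mathcal{H} \cong \mathcal{H}'$ compatibly with the maps from $\mathcal{F}$ and $\mathcal{G}$, so the square is Cartesian.

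There is no real obstacle here: both directions reduce to combining a biCartesian completion with the standard universal-property uniqueness, so the content of the proposition is already encoded in Propositions \ref{proposition: completing square} and \ref{proposition: completing the square}. Together with the preceding lemmas on admissible classes containing isomorphisms and being closed under composition, this completes the verification of the proto-exact axioms for $(\TRS, \mathfrak{M}_t, \mathfrak{E}_t)$.
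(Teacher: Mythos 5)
Your argument is exactly the paper's: the proof of this proposition in $\TRS$ is stated as a direct consequence of Propositions \ref{proposition: completing square} and \ref{proposition: completing the square} together with uniqueness of pullbacks and pushouts, mirroring the proof of Proposition \ref{prop:cart iff cocart} in $\FMat$. The proposal is correct and takes essentially the same approach.
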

\begin{proof}
	As in the setting $F$-matroids, this statement directly follows from Propositions \ref{proposition: completing square} and \ref{proposition: completing the square} along with the uniqueness of pushout and pullback. 
\end{proof}

By combining the above propositions, we obtain the following. 

\begin{mytheorem}\label{theorem: proto toric}
	The category $\TRS$ is proto-exact with $\mathfrak{M}_t$ and $\mathfrak{E}_t$ as in Definition \ref{definition: TRS M and E}.
\end{mytheorem}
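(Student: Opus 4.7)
The plan is to verify each of the five axioms (PE1)--(PE5) of a proto-exact category (Definition \ref{definition: proto_exact}) by assembling the preceding lemmas and propositions in this section. Since the substantive work has already been carried out in those auxiliary results, the proof of the theorem reduces to a careful bookkeeping exercise, paralleling the corresponding verification for $\mathbb{T}$-$\textbf{SMat}_\bullet$.

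For axiom (PE1), I would identify the zero object in $\TRS$ as the tropical toric reflexive sheaf whose underlying $\mathbb{T}$-matroid is $U^0_1$, equipped with the trivial filtration, as recorded in the lemma immediately preceding the theorem. For any $\mathcal{E} = (M, \{E^\rho_\bullet\}_\rho)$, the unique morphism $0 \to \mathcal{E}$ factors through an isomorphism followed by the restriction $r_{\angles{\varnothing}} \colon \mathcal{E}|\angles{\varnothing} \to \mathcal{E}$ at the minimum flat of $\mathcal{L}(M)$, placing it in $\mathfrak{M}_t$. Dually, $\mathcal{E} \to 0$ is the contraction at the maximum flat $E_M$ composed with an isomorphism, hence lies in $\mathfrak{E}_t$. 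Axiom (PE2) is precisely the content of the lemma asserting that $\mathfrak{M}_t$ and $\mathfrak{E}_t$ contain all isomorphisms and are closed under composition.

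Axioms (PE4) and (PE5) are the content of Propositions \ref{proposition: completing square} and \ref{proposition: completing the square}, respectively, while axiom (PE3) is the content of the proposition immediately preceding the theorem, which asserts that squares of the appropriate shape are Cartesian if and only if coCartesian. Assembling these verifications in turn gives the theorem.

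The main technical obstacle has already been absorbed into Lemma \ref{lemma: TRS bi-Cartesian lemma}, where one must verify that the canonical morphism between the relevant $\mathbb{T}$-matroids, produced by the analogous lemma for $\mathbb{T}$-$\textbf{SMat}_\bullet$ (namely Lemma \ref{lem:aux} with $F = \mathbb{T}$), can be promoted to a morphism in $\TRS$ by threading the translation vector $u \in \Lambda$ through the universal properties of kernel and cokernel. Once this diagrammatic fact is in hand, the proto-exact structure on $\mathbb{T}$-$\textbf{SMat}_\bullet$ established via Corollary \ref{cor: FSMat proto ab} (with $F = \mathbb{T}$) lifts to $\TRS$ without further incident, and the statement follows formally.
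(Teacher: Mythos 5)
Your proposal is correct and follows essentially the same route as the paper, whose proof of this theorem is simply the assembly of the preceding lemma (pointedness, isomorphisms, composition), Lemma \ref{lemma: TRS bi-Cartesian lemma}, and Propositions \ref{proposition: completing square}, \ref{proposition: completing the square}, and the Cartesian-iff-coCartesian proposition. Your explicit factorizations verifying (PE1) via restriction at $\angles{\varnothing}$ and contraction at $E_M$ are a small addition the paper leaves implicit, but this is bookkeeping rather than a different argument.
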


Alongside Proposition \ref{pro: ker coker pa1 trs} and Proposition \ref{prop:protoex_then_protoab}, we have the following corollary. 

\begin{cor}\label{cor: trs proto ab}
	The category $\TRS$ is proto-abelian.
\end{cor}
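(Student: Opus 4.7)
The plan is to obtain Corollary \ref{cor: trs proto ab} as a direct application of three results already in hand: Theorem \ref{theorem: proto toric} (proto-exactness of $\TRS$ with the classes $\mathfrak{M}_t, \mathfrak{E}_t$), Proposition \ref{pro: ker coker pa1 trs} (existence of kernels and cokernels, plus axiom (PA1)), and the promotion principle Proposition \ref{prop:protoex_then_protoab} (any proto-exact category satisfying the above, whose admissible classes are given by composing isomorphisms with strict monos and strict epis respectively, is proto-abelian). So the task reduces to aligning the definitions.

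First I would check that $\mathfrak{M}_t$ and $\mathfrak{E}_t$ are the right shape for Proposition \ref{prop:protoex_then_protoab}, i.e.\ that they consist precisely of strict monomorphisms composed with isomorphisms, and strict epimorphisms composed with isomorphisms. For this, I invoke Proposition \ref{pro: ker coker in trs}: the kernel of $f: \mathcal{E} \to \mathcal{F}$ in $\TRS$ is the restriction $\mathcal{E}|\underline{f_\mathbb{T}}^{-1}(*_N) \rightarrowtail \mathcal{E}$. Since $\{*_N\}$ is a flat of the simple $\mathbb{T}$-matroid $N$ and since the underlying maps of morphisms in $\TRS$ are strong maps of matroids (preimages of flats are flats), every strict monomorphism is, up to isomorphism, exactly a restriction $r_F: \mathcal{E}|F \to \mathcal{E}$ at some $F \in \mathcal{L}(M)$. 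This matches $\mathfrak{M}_t$ from Definition \ref{definition: TRS M and E}. Dually, cokernels are given by contractions $\mathcal{F} \to \mathcal{F}/\langle \underline{f_\mathbb{T}}(E_M)\rangle$ at flats of $\underline{N}$, which matches $\mathfrak{E}_t$.

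With this identification in place, all hypotheses of Proposition \ref{prop:protoex_then_protoab} are met: $\TRS$ has all kernels and cokernels and satisfies (PA1) by Proposition \ref{pro: ker coker pa1 trs}, while Theorem \ref{theorem: proto toric} supplies proto-exactness with $\mathfrak{M}_t$ and $\mathfrak{E}_t$ of the required form. Applying Proposition \ref{prop:protoex_then_protoab} then yields axiom (PA2) for free, and the corollary is complete. There is essentially no obstacle, since the structural work is already done; the only thing to be careful about is the bookkeeping between ``admissible'' (in the proto-exact sense of Definition \ref{definition: TRS M and E}) and ``strict'' (in the proto-abelian sense of Subsection \ref{subsec: proto ab cat}), which is transparent from Proposition \ref{pro: ker coker in trs}.
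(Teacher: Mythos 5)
Your proposal is correct and follows exactly the paper's route: the corollary is obtained by combining Theorem \ref{theorem: proto toric}, Proposition \ref{pro: ker coker pa1 trs}, and the promotion principle Proposition \ref{prop:protoex_then_protoab}. The extra bookkeeping you carry out—identifying $\mathfrak{M}_t$ and $\mathfrak{E}_t$ with isomorphisms composed with kernels and cokernels via Proposition \ref{pro: ker coker in trs}—is precisely the point the paper records in Remark \ref{rmk: TRS M and E}, so nothing is missing.
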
 

Say that a tropical toric reflexive sheaf $\mathcal{E} = (M,\{F^\rho_\bullet\}_\rho)$ is \emph{modular} if the underlying matroid $\underline{M}$ is itself modular, and denote the full subcategory of $\TRS$ consisting of modular tropical toric reflexive sheaves by $\MTRS$. Since restrictions and contractions of modular matroids at flats are again modular, and since $U^0_1$ is modular, the proto-exact and proto-abelian structures on $\TRS$ descends to $\MTRS$. 

\begin{cor}\label{corollary: MTRS proto-abelian}
	The category $\MTRS$ is proto-abelian, with underlying proto-exact structure coming from the restriction of the classes $\MM_t$ and $\EE_t$ to $\MTRS$. 
\end{cor}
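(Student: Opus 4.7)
The plan is to verify that the proto-exact structure on $\TRS$ established in Theorem \ref{theorem: proto toric} descends to the full subcategory $\MTRS$, and then conclude proto-abelianness via Proposition \ref{prop:protoex_then_protoab}. The key input is the closure property recalled in Subsubsection \ref{subsubsec: modularity}: if $\underline{M}$ is a modular matroid and $F\in\mathcal{L}(\underline{M})$, then both $\underline{M}|F$ and $\underline{M}/F$ are modular. Since the underlying matroid of the zero object $U^0_1$ is trivially modular, $\MTRS$ contains a zero object.

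First I would check that $\MM_t$ and $\EE_t$ restrict to well-defined classes on $\MTRS$. By Definition \ref{definition: TRS M and E}, an admissible monomorphism has the form $r_F\circ\alpha$ with $F\in\mathcal{L}(N)$ and $\alpha$ an isomorphism, and dually for admissible epimorphisms. Combined with the closure of modularity under flat restrictions and contractions, the source of an admissible mono into a modular sheaf is modular, and the target of an admissible epi from a modular sheaf is modular. Axioms (PE1) and (PE2) are inherited immediately from $\TRS$.

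The main step is axioms (PE3)--(PE5). For (PE3), biCartesian squares in $\MTRS$ are exactly those biCartesian in $\TRS$ with all four vertices modular. For (PE4) and (PE5), I would inspect the biCartesian completions produced by Propositions \ref{proposition: completing square} and \ref{proposition: completing the square}: each appeals to Lemma \ref{lemma: TRS bi-Cartesian lemma}, and every newly introduced vertex is obtained from one of the given modular sheaves by successive restriction and contraction \emph{at flats} (specifically of the form $\mathcal{G}|F$, $(\mathcal{G}|F)/H$, etc.). By the closure property, each such vertex lies in $\MTRS$, so the biCartesian completion exists inside the subcategory. This establishes that $\MTRS$ is proto-exact with the restricted classes.

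Finally, to conclude proto-abelianness in the sense of Andr\'e, I would apply Proposition \ref{prop:protoex_then_protoab}. By Proposition \ref{pro: ker coker in trs}, kernels and cokernels in $\TRS$ are computed as $\mathcal{E}|\underline{f_\mathbb{T}}^{-1}(*_N)$ and $\mathcal{F}/\angles{\underline{f_\mathbb{T}}(E_M)}$; as noted in Remark \ref{rmk: kernels for simples}, both involve restriction or contraction at a flat, so these constructions land in $\MTRS$ whenever the inputs do. Hence $\MTRS$ has all kernels and cokernels, and axiom (PA1) is inherited from $\TRS$ (Proposition \ref{pro: ker coker pa1 trs}) because a morphism between modular objects has the same kernel and cokernel whether computed in $\MTRS$ or in $\TRS$. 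The only genuine content of the argument is the preservation of modularity under the operations producing admissibles and biCartesian completions; no new obstacle arises beyond this stability check.
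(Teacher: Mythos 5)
Your proposal is correct and follows the same route as the paper: the paper dispatches this corollary in one sentence by observing that restrictions and contractions of modular matroids at flats are again modular and that $U^0_1$ is modular, so the proto-exact and proto-abelian structures on $\TRS$ descend to the full subcategory $\MTRS$. Your write-up simply makes explicit the verifications (admissible classes, biCartesian completions, kernels and cokernels all being built from flat restrictions and contractions) that the paper leaves implicit, and these checks are all accurate.
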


Before moving to the question of stability in $\MTRS$, we give a characterization of sums and intersections of strict subobjects in $\TRS$. As expected, the characterization follows immediately from Proposition \ref{prop: sum prod of subobj FSMat} with $F=\mathbb{T}$ since restrictions and contractions of tropical reflexive sheaves are determined by the restrictions and contractions of their underlying $\mathbb{T}$-matroids. 

\begin{pro}\label{prop: sum prod of subobj TRS}
	Let $\mathcal{E}_1,\mathcal{E}_2\rightarrowtail \mathcal{E}$ be strict subobjects of $\mathcal{E} = (M,\{E^\rho_j\}_\rho)$ in $\TRS$  (resp. $\MTRS$) given as restrictions $\mathcal{E}_1= \mathcal{E}|F_1$ and $\mathcal{E}_2= \mathcal{E}|F_2$ at flats $F_1,F_2\in\mathcal{L}(M)$. Then the sum and intersection of the subobjects $\mathcal{E}_1$ and $\mathcal{E}_2$ are given by
	\begin{align*}
		(\mathcal{E}|F_1)+(\mathcal{E}|F_2) &= \mathcal{E}|(F_1 \vee F_2) \\
		(\mathcal{E}|F_1)\cap(\mathcal{E}|F_2) &= \mathcal{E}|(F_1 \wedge F_2).
	\end{align*}
\end{pro}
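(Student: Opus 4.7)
The plan is to lift the matroid-level identities of Proposition \ref{prop: sum prod of subobj FSMat} (specialized to $F=\mathbb{T}$) to $\TRS$ via the forgetful functor $\mathcal{U}\colon \TRS \to \mathbb{T}$-$\textbf{SMat}_\bullet$ sending a tropical toric reflexive sheaf to its underlying simple $\mathbb{T}$-matroid and a morphism $(f_\mathbb{T},u)$ to $f_\mathbb{T}$. Proposition \ref{pro: ker coker in trs} already shows that kernels and cokernels in $\TRS$ live over the corresponding kernels and cokernels in $\mathbb{T}$-$\textbf{SMat}_\bullet$, equipped with the natural filtrations prescribed by Definition \ref{definition: rest and contrac}, so the same will be true of the pullback/pushout constructions that define $\cap$ and $+$ in Subsection \ref{subsec: proto ab cat}.

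For the intersection I would form the pullback in $\TRS$ of the cospan $\mathcal{E}|F_1 \rightarrowtail \mathcal{E} \leftarrowtail \mathcal{E}|F_2$. Its underlying $\mathbb{T}$-matroid is $M|(F_1 \wedge F_2)$ by Proposition \ref{prop: sum prod of subobj FSMat}, and I would propose $\mathcal{E}|(F_1\wedge F_2)$, endowed with its inherited filtration $(E^\rho_j \wedge (F_1\wedge F_2))_j$, as the pullback object, with the two projections being the canonical restrictions (zero shift) from Lemma \ref{lemma: morphism}. To verify universality, given a cone $(g_i, u_i)\colon \mathcal{G} \to \mathcal{E}|F_i$, compatibility with the inclusions into $\mathcal{E}$ forces $u_1=u_2=:u$; Proposition \ref{prop: sum prod of subobj FSMat} then supplies a unique underlying matroid factorization $g_\mathbb{T}\colon \underline{\mathcal{G}} \to M|(F_1\wedge F_2)$, and the required filtration containment $g_\mathbb{T}(G^\rho_j) \subseteq (F_1\wedge F_2) \wedge E^\rho_{j+u\cdot v_\rho}$ is simply the meet of the two containments already supplied by $(g_1,u)$ and $(g_2,u)$.

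For the sum, I would follow the pushout--kernel recipe of Diagram \eqref{diag: sum of subobj}: form $\mathcal{E}/F_1$ and $\mathcal{E}/F_2$, take the pushout $\mathcal{E}'$ of $\mathcal{E}/F_1 \twoheadleftarrow \mathcal{E} \twoheadrightarrow \mathcal{E}/F_2$, and define the sum as the kernel of the resulting strict epimorphism $\mathcal{E} \twoheadrightarrow \mathcal{E}'$. An argument parallel to that of Lemma \ref{lemma: TRS bi-Cartesian lemma} identifies $\mathcal{E}'$ with $\mathcal{E}/(F_1 \vee F_2)$: at the matroid level this is Proposition \ref{prop: sum prod of subobj FSMat}, while the filtration prescribed by Definition \ref{definition: rest and contrac} is $(E^\rho_j \vee (F_1\vee F_2))_j$, which matches the iterated contractions via the canonical identifications $\mathcal{L}(M/F_i) \cong [F_i,E_M]$ together with associativity of the join, $(E^\rho_j \vee F_1) \vee F_2 = E^\rho_j \vee (F_1 \vee F_2)$. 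Proposition \ref{pro: ker coker in trs} then identifies the kernel of $\mathcal{E} \twoheadrightarrow \mathcal{E}/(F_1\vee F_2)$ with $\mathcal{E}|(F_1\vee F_2)$, giving the sum identity.

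The main obstacle is purely bookkeeping: tracking the single shift vector and checking the filtration containments after each universal construction; no new categorical input is required beyond what Lemma \ref{lemma: TRS bi-Cartesian lemma} already supplies. The case of $\MTRS$ is automatic, since by Subsection \ref{subsubsec: modularity} modular matroids are closed under restriction and contraction at flats, so $\mathcal{E}|(F_1\wedge F_2)$ and $\mathcal{E}|(F_1\vee F_2)$ remain in $\MTRS$ whenever $\mathcal{E}$ does.
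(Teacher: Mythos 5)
Your proof is correct and takes essentially the same approach as the paper, which simply asserts that the identities follow from Proposition \ref{prop: sum prod of subobj FSMat} with $F=\mathbb{T}$ because restrictions and contractions in $\TRS$ are determined by those of the underlying $\mathbb{T}$-matroids. Your write-up merely supplies the shift-vector and filtration bookkeeping that the paper leaves implicit, and your handling of $\MTRS$ via closure of modular matroids under restriction and contraction at flats matches the paper's reasoning as well.
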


\subsection{Stability for tropical toric reflexive sheaves}\label{sec:semistab for mtrs}

The proto-abelian structure on the category $\MTRS$ of modular tropical toric reflexive sheaves on $\text{trop}(X_\Sigma)$ can be used to study slope stability phenomena in the context of tropical geometry. Following \cite{khan2024tropical}, we define the slope of a modular tropical toric reflexive sheaf as the ratio of the degree and rank functions (see Subsection \ref{subsec: ttvb}). We then illustrate that the existence and uniqueness of the Harder-Narasimhan filtration of \cite{khan2024tropical} in the category $\MTRS$ follows from the results of \cite{andre2009slope} and \cite{li2023categorification}.

Let $X_\Sigma$ be a smooth toric variety equipped with a constant polarization $h = (c,c,\cdots,c) \in\mathbb{Z}^{\Sigma(1)}$. For a tropical toric reflexive sheaf $\mathcal{E}=(M,\{F^\rho_\bullet\}_\rho)$, recall that its rank $\rk(\mathcal{E})$ is defined to be the rank of the $\mathbb{T}$-matroid $M$ and that the degree of $\mathcal{E}$ is defined to be the following quantity
\begin{equation}
\deg(\mathcal{E}) = \sum_\rho h_\rho\sum_{j\in\mathbb{Z}} j\cdot(\rk_{\underline{M}}(F^\rho_j) - \rk_{\underline{M}}(F^\rho_{j+1})).
\end{equation}

We now define the \emph{slope} of $\mathcal{E}$ to be $\mu(\mathcal{E}) = \rk(\mathcal{E})/\deg(\mathcal{E})$. Say that $\mathcal{E}$ is \emph{stable} (resp.~\emph{semistable}) with respect to $\mu$ if for every nonzero strict subobject $\mathcal{F}$ of $\mathcal{E}$, we have $\mu(\mathcal{F})<\mu(\mathcal{E})$ (resp.~$\mu(\mathcal{F})\leq\mu(\mathcal{E})$). 

Using the above machinery, \cite{khan2024tropical} constructs a Harder-Narasimhan filtration for tropical toric reflexive sheaves in analogy with the usual case of vector bundles on curves. However, in their construction they heavily employ the notion of modularity of various involved flats. They establish the following using methods from tropical geometry and direct investigation of the structure of tropical reflexive sheaves. 

\begin{mythm}[\cite{khan2024tropical}, Theorem 7.25]\label{thm:khmac HN filtr}
	Let $\mathcal{E} = (M, \{F^\rho_\bullet\}_\rho)$ be a tropical toric reflexive sheaf on the tropical toric variety $\text{trop}(X_\Sigma)$. Assume that all $F^\rho_j$ are modular flats of $M$. Then there is an increasing filtration in $\TRS$ as follows:
		\[0 = \mathcal{F}_0\rightarrowtail \mathcal{F}_1 \rightarrowtail \cdots \rightarrowtail \mathcal{F}_k = \mathcal{E}.\]
	The successive quotients $\mathcal{F}_i/\mathcal{F}_{i-1}$ are semistable and satisfy
		\[\mu(\mathcal{F}_1/\mathcal{F}_0) > \mu(\mathcal{F}_2/\mathcal{F}_1) > \cdots > \mu(\mathcal{F}_k/\mathcal{F}_{k-1}).\]
	Under the assumption that each $\mathcal{F}_i$ is of the form $\mathcal{E}|F_i$ for some modular flat $F_i$ of $M$, this filtration is unique.
\end{mythm}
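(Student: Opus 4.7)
The plan is to derive Theorem \ref{thm:khmac HN filtr} by applying Li's categorification of slope filtrations (Theorem \ref{thm:li HN filtr}) to an appropriate proto-abelian subcategory of $\TRS$ constructed from $\mathcal{E}$. Since the underlying matroid $\underline{M}$ is not assumed modular, we cannot simply pass to $\MTRS$; instead I would work with the full subcategory $\mathcal{C}_\mathcal{E}$ of $\TRS$ generated by $\mathcal{E}$ under iterated restriction and contraction at modular flats of the relevant underlying matroid. Using the lattice isomorphisms $\mathcal{L}(M|F) \cong [\varnothing, F]$ and $\mathcal{L}(M/F) \cong [F, E_M]$ from Subsection \ref{subsubsec: modularity}, together with Proposition \ref{prop: modular flats}, modular flats restrict and contract to modular flats, so $\mathcal{C}_\mathcal{E}$ is closed under these operations. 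The proto-exact structure of Theorem \ref{theorem: proto toric} restricts to $\mathcal{C}_\mathcal{E}$, and axiom (PA1) together with Proposition \ref{prop:protoex_then_protoab} yields proto-abelianness; finiteness of $\mathcal{L}(\underline{M})$ supplies the Artinian and Noetherian hypotheses required by Li's theorem.

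Next, I would verify that the rank and degree of Subsection \ref{subsec: ttvb} descend to additive functions on short exact sequences in $\mathcal{C}_\mathcal{E}$. Rank additivity across $\mathcal{E}|F \rightarrowtail \mathcal{E} \twoheadrightarrow \mathcal{E}/F$ is immediate from the lattice isomorphisms above. Degree additivity reduces to the modular rank identity
\[
\rk(F \wedge F^\rho_j) + \rk(F \vee F^\rho_j) = \rk(F) + \rk(F^\rho_j),
\]
which holds because $F$ and $F^\rho_j$ form a modular pair (both are modular flats of $\underline{M}$ by construction of $\mathcal{C}_\mathcal{E}$ and by hypothesis, respectively).

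The heart of the argument is verifying the strong slope inequality \eqref{eqn:strong slope ineq} on $\mathcal{C}_\mathcal{E}$. For strict subobjects $\mathcal{E}|F_1$ and $\mathcal{E}|F_2$ at modular flats, Proposition \ref{prop: sum prod of subobj TRS} identifies
\begin{align*}
(\mathcal{E}|F_1) \cap (\mathcal{E}|F_2) &= \mathcal{E}|(F_1 \wedge F_2), \\
(\mathcal{E}|F_1) + (\mathcal{E}|F_2) &= \mathcal{E}|(F_1 \vee F_2).
\end{align*}
The two quotients appearing in \eqref{eqn:strong slope ineq} have equal rank by the modular identity $\rk(F_1) - \rk(F_1 \wedge F_2) = \rk(F_1 \vee F_2) - \rk(F_2)$, and a careful computation using Proposition \ref{prop: modular flats}(2)--(3) shows they also have equal degree, so the inequality in fact holds as an equality. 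Li's Theorem \ref{thm:li HN filtr} applied to $\mathcal{E} \in \mathcal{C}_\mathcal{E}$ then produces a unique filtration by strict subobjects $\mathcal{F}_i = \mathcal{E}|F_i$ at modular $F_i$, with semistable successive quotients of strictly decreasing slopes — precisely the conclusion of the theorem.

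The main obstacle will be the degree computation underpinning the strong slope inequality: one must verify
\[
\deg(\mathcal{E}|F_1) - \deg(\mathcal{E}|(F_1 \wedge F_2)) = \deg(\mathcal{E}|(F_1 \vee F_2)) - \deg(\mathcal{E}|F_2),
\]
which, term-by-term in the defining sum from Subsection \ref{subsec: ttvb}, reduces to the lattice identity $(F_1 \wedge F^\rho_j) \vee (F_2 \wedge F^\rho_j) = (F_1 \vee F_2) \wedge F^\rho_j$ for every ray $\rho$ and every $j \in \mathbb{Z}$. It is here that the modularity of each $F^\rho_j$ is indispensable: I expect to establish this identity by iterated application of Proposition \ref{prop: modular flats}(2)--(3), exploiting that both $F^\rho_j$ and the $F_i$ are modular and so commute (in the appropriate lattice-theoretic sense) with the meet and join operations on $\mathcal{L}(\underline{M})$.
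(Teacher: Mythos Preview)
This theorem is not proved in the paper: it is quoted verbatim from \cite{khan2024tropical} as an external input, and the paper's own contribution (Corollary~\ref{cor: HN filtr coincide}) is the narrower claim that \emph{when the entire matroid $\underline{M}$ is modular}, the Khan--Maclagan filtration coincides with Li's categorical one on $\MTRS$. Even there, the paper does not verify the strong slope inequality directly; Proposition~\ref{pro: stron slope ineq trs} simply invokes \cite[Lemma~7.24]{khan2024tropical}. So you are attempting something strictly more ambitious than the paper does.

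Your approach has two genuine gaps. First, the auxiliary category $\mathcal{C}_\mathcal{E}$ need not be closed under sums of strict subobjects. Modular flats of a matroid are closed under meet but \emph{not} under join: atoms are always modular, yet in a rank-$3$ matroid with two disjoint $3$-point lines the join of two atoms from different lines is a non-modular $2$-flat. Hence $(\mathcal{E}|F_1)+(\mathcal{E}|F_2)=\mathcal{E}|(F_1\vee F_2)$ may fall outside $\mathcal{C}_\mathcal{E}$, and the pushouts required for proto-abelianness (and for Li's construction of the maximal destabilizing subobject) are not available.

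Second, even setting that aside, your claimed degree \emph{equality} is false. The lattice identity $(F_1\wedge F^\rho_j)\vee(F_2\wedge F^\rho_j)=(F_1\vee F_2)\wedge F^\rho_j$ is the distributive law, which fails already in $\mathcal{L}(U_{2,3})\cong M_3$ with $F_1,F_2,F^\rho_j$ the three atoms. Concretely, for $\underline{M}=U_{2,3}$ with a single ray whose filtration jumps at $\{3\}$, and $F_1=\{1\}$, $F_2=\{2\}$, one computes $\deg\bigl((\mathcal{E}|F_1)/(F_1\wedge F_2)\bigr)=0$ while $\deg\bigl((\mathcal{E}|(F_1\vee F_2))/F_2\bigr)=1$. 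What actually holds (and what \cite[Lemma~7.24]{khan2024tropical} proves) is the \emph{inequality}, and Proposition~\ref{prop: modular flats} does not suffice to upgrade it to equality. To salvage the argument you would need to reproduce Khan--Maclagan's inequality directly, at which point the categorical reduction buys little beyond repackaging their proof.
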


\begin{rmk}\label{rmk: non-uniqueness khmac HN}
Note that in case not all involved flats are modular, the filtration of Theorem \ref{thm:khmac HN filtr} may fail to be unique. This is in contrast to the classical situation, where the Harder-Narasimhan filtration of a vector bundle is unique up to isomorphism. However, in case the underlying matroid $M$ of a tropical toric reflexive sheaf is modular, uniqueness is guaranteed. In particular, the Harder-Narasimhan filtrations of objects in $\MTRS$ are unique.
\end{rmk}

We now illustrate that the slope function $\mu$ on $\MTRS$ defines a categorical slope filtration on $\MTRS$, which immediately yields that the Harder-Narasimhan filtrations of Theorems \ref{thm:li HN filtr} and \ref{thm:khmac HN filtr} coincide.

Observe that by virtue of the additivity of the rank of $\mathbb{T}$-matroids on short exact sequences $M\rightarrowtail N \twoheadrightarrow R$, we immediately see that for every a short exact sequence $\mathcal{E}\rightarrowtail\mathcal{F}\twoheadrightarrow\mathcal{G}$ in $\MTRS$ we have 
	\[\rk(\mathcal{E}) + \rk(\mathcal{G}) = \rk(\mathcal{F}).\]
Additionally, if $\rk(\mathcal{E})=0$ for some tropical reflexive sheaf $\mathcal{E}$ then necessarily the underlying (pointed simple) matroid of $\mathcal{E}$ must have rank zero, so that the underlying matroid is the zero object of $\mathbb{T}$-$\textbf{SMat}_\bullet$. In particular, we must have that $\mathcal{E}$ is the zero object of $\MTRS$. 

The additivity of the degree function on short exact sequences is guaranteed by the following result.

\begin{pro}[\cite{khan2024tropical}, Remark 7.19 and Lemma 7.20]
	Let $\mathcal{E}=(M,\{E^\rho_\bullet\}_\rho)$ be a modular tropical reflexive sheaf and let $F\in\mathcal{L}(M)$ be a flat. Then 
		\[\deg(\mathcal{E}|F) + \deg(\mathcal{E}/F) = \deg(\mathcal{E}).\]
\end{pro}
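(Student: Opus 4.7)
The plan is to prove the identity ray by ray and use the modularity of $\underline{M}$ to split the rank function additively across the restriction and contraction. Writing
\[
\deg(\mathcal{E}) = \sum_{\rho\in\Sigma(1)} h_\rho\, D_\rho(\mathcal{E}), \qquad D_\rho(\mathcal{E}) := \sum_{j\in\mathbb{Z}} j\cdot\big(\rk_{\underline{M}}(E^\rho_j) - \rk_{\underline{M}}(E^\rho_{j+1})\big),
\]
linearity in the rays reduces the claim to proving $D_\rho(\mathcal{E}) = D_\rho(\mathcal{E}|F) + D_\rho(\mathcal{E}/F)$ for a fixed ray $\rho$.

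Next I would unpack the filtrations produced by Definition \ref{definition: rest and contrac}. At the ray $\rho$, the chain on $\mathcal{E}|F$ is $F^\rho_j := F \wedge E^\rho_j$, while the chain $G^\rho_\bullet$ on $\mathcal{E}/F$ corresponds, under the canonical lattice isomorphism $\varphi:\mathcal{L}(M/F) \xrightarrow{\sim} [F,E_M]$, to $F \vee E^\rho_j$. The rank formulas associated to these isomorphisms give
\[
\rk_{\underline{M|F}}(F^\rho_j) = \rk_{\underline{M}}(F\wedge E^\rho_j) \quad \text{and} \quad \rk_{\underline{M/F}}(G^\rho_j) = \rk_{\underline{M}}(F\vee E^\rho_j) - \rk_{\underline{M}}(F).
\]
Now I invoke modularity: since $\underline{M}$ is modular, the pair $(F, E^\rho_j)$ realizes equality in the submodular inequality, so
\[
\rk_{\underline{M}}(E^\rho_j) = \rk_{\underline{M}}(F\wedge E^\rho_j) + \rk_{\underline{M}}(F\vee E^\rho_j) - \rk_{\underline{M}}(F) = \rk_{\underline{M|F}}(F^\rho_j) + \rk_{\underline{M/F}}(G^\rho_j).
\]

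Finally, substituting this decomposition into the telescoping difference $\rk_{\underline{M}}(E^\rho_j) - \rk_{\underline{M}}(E^\rho_{j+1})$ splits $D_\rho(\mathcal{E})$ into two pieces, one matching $D_\rho(\mathcal{E}|F)$ verbatim and the other $D_\rho(\mathcal{E}/F)$. Summing over $\rho$ with weights $h_\rho$ yields the stated additivity. The main subtlety I expect is confirming that the two chains $F^\rho_\bullet$ and $G^\rho_\bullet$ have the correct boundary behaviour in their respective lattices of flats, i.e.\ that they reach the minimum element for $j \gg 0$ and the maximum element for $j \ll 0$, so that both telescoping sums are well-defined with the expected endpoints. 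This is a routine check from the extremes $E^\rho_j = *_M$ and $E^\rho_j = E_M$ combined with the fact that $\varphi$ sends the minimum and maximum of $\mathcal{L}(M/F)$ to $F$ and $E_M$, respectively. No genuinely hard step arises beyond the modular rank identity, which is where the hypothesis $\mathcal{E}\in\MTRS$ is used crucially.
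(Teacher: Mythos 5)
Your argument is correct and self-contained: the paper itself does not reproduce a proof but simply defers to the first half of the proof of Lemma 7.20 in \cite{khan2024tropical}, remarking that modularity of $M$ guarantees that proof applies. Your ray-by-ray reduction, the rank formulas $\rk_{\underline{M|F}}(F\wedge E^\rho_j)=\rk_{\underline{M}}(F\wedge E^\rho_j)$ and $\rk_{\underline{M/F}}(G^\rho_j)=\rk_{\underline{M}}(F\vee E^\rho_j)-\rk_{\underline{M}}(F)$, and the modular-pair identity splitting $\rk_{\underline{M}}(E^\rho_j)$ are exactly the computation the citation points to, and your closing check that both induced chains terminate correctly at the extremes of their lattices of flats is the right routine verification. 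In short, you have supplied in full the argument the paper outsources, with the modularity hypothesis entering precisely where the paper says it must.
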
 
\begin{proof}
	This statement is established by the first half of the proof of Lemma 7.20 in \cite{khan2024tropical}. In our context, this proof is guaranteed to hold as written by the modularity assumption on $M$.
\end{proof}

\begin{lem}
	If $\mathcal{E}$ and $\mathcal{F}$ are isomorphic tropical reflexive sheaves then we have $\deg(\mathcal{E}) = \deg(\mathcal{F})$. 
\end{lem}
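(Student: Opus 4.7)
The plan is to reduce the claim to the invariance principle recorded in Remark \ref{rmk: deg const on iso classes}. By Proposition \ref{pro: trs isos}, an isomorphism between $\mathcal{E} = (M, \{F^\rho_\bullet\}_\rho)$ and $\mathcal{F} = (N, \{G^\rho_\bullet\}_\rho)$ packages the data of a $\mathbb{T}$-matroid isomorphism $f_\mathbb{T}: M \to N$ together with a vector $u \in \Lambda$ satisfying $f_\mathbb{T}(F^\rho_j) = G^\rho_{j + u \cdot v_\rho}$ for all $\rho \in \Sigma(1)$ and all $j \in \mathbb{Z}$. Since matroid isomorphisms preserve ranks of flats, this translates into the pointwise identity $\rk_{\underline{N}}(G^\rho_j) = \rk_{\underline{M}}(F^\rho_{j - u \cdot v_\rho})$ for every $\rho$ and $j$.

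I would then substitute this identity into the defining expression for $\deg(\mathcal{F})$ and reindex each inner sum via $k = j - u \cdot v_\rho$. The result splits into two pieces: the piece involving $k$ reproduces $\deg(\mathcal{E})$, and the piece involving $u \cdot v_\rho$ is, for each ray $\rho$, a telescoping sum of rank differences of the flag $F^\rho_\bullet$. Using the stabilization conditions $F^\rho_k = E_M$ for $k \ll 0$ and $F^\rho_k = *_M$ for $k \gg 0$, each such telescope evaluates to $\rk(\underline{M})$. Assembling, one obtains
$$\deg(\mathcal{F}) = \deg(\mathcal{E}) + \rk(\underline{M}) \cdot \deg(\mathcal{L}_u),$$
where $\mathcal{L}_u$ denotes the tropical line bundle associated to the vector $(u \cdot v_\rho)_{\rho \in \Sigma(1)}$.

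To finish, note that $\mathcal{L}_u$ is by construction the tropicalization of the principal toric divisor $\operatorname{div}(\chi^u) = \sum_\rho (u \cdot v_\rho) D_\rho$. Under the completeness and constant-polarization hypotheses on $X_\Sigma$, Remark \ref{rmk: deg const on iso classes} together with the formula \eqref{eqn: trs deg tensor} forces $\deg(\mathcal{L}_u) = 0$, so $\deg(\mathcal{F}) = \deg(\mathcal{E})$. The only substantive calculation is the telescoping reindexing; the conceptual content of the lemma is absorbed into Proposition \ref{pro: trs isos} and the vanishing of the degree of a tropicalized principal divisor. Equivalently, one can repackage the argument by observing that $f_\mathbb{T}$ exhibits $\mathcal{F}$ as (an isomorph of) $\mathcal{E} \otimes \mathcal{L}_u$, so the lemma follows at once from \eqref{eqn: trs deg tensor} applied to the principal line bundle $\mathcal{L}_u$. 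The only potential obstacle is bookkeeping in the telescoping step, which is routine once the stabilization properties of the flags are invoked.
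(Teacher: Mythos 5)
Your proposal is correct and follows essentially the same route as the paper: both invoke Proposition \ref{pro: trs isos} to realize $\mathcal{F}$ as $\mathcal{E}\otimes\mathcal{L}(\chi^u)$ for a principal tropical line bundle, then apply the tensor-degree relation \eqref{eqn: trs deg tensor} and the vanishing $\sum_\rho a_\rho = 0$ under the completeness and constant-polarization hypotheses. The only difference is presentational: you re-derive the tensor-degree relation by reindexing and telescoping the rank differences, whereas the paper simply cites it from Khan--Maclagan (Example 7.10), as your closing ``equivalently'' remark already observes.
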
 
\begin{proof}
	By Proposition \ref{pro: trs isos}, we know that $\mathcal{F} = \mathcal{E}\otimes \mathcal{L}(\chi^u)$, where $\mathcal{L}(\chi^u)$ is the tropical line bundle corresponding to the principal divisor of some character $\chi^u$ of $X_\Sigma$. We know that $\mathcal{L}(\chi^u)$ corresponds to a vector $(a_\rho)\in\mathbb{Z}^{\Sigma(1)}$ which by principality of the associated divisor must satisfy $\sum_{\rho\in\Sigma(1)} a_\rho = 0$. We then compute the degree of $\mathcal{F}$ using the relation (\ref{eqn: trs deg tensor}) as in Remark \ref{rmk: deg const on iso classes}:
	\begin{align*}
		\deg(\mathcal{F}) &= \deg(\mathcal{E}\otimes\mathcal{L}(\chi^u)) \\
		&= \deg(\mathcal{E}) + \rk(\underline{M})\cdot\deg(\mathcal{L}(\chi^u)) \\
		&= \deg(\mathcal{E}) + \rk(\underline{M})\cdot\sum_{\rho} h_\rho\cdot a_\rho \\
		&= \deg(\mathcal{E}) + \rk(\underline{M})\cdot c\cdot\sum_{\rho} a_\rho  \\
		&= \deg(\mathcal{E}).
	\end{align*}
	Note that here we have used that the polarization $h$ on $X_\Sigma$ is of the form $h=(c,c,\cdots,c)$.
\end{proof}

From the above two results it immediately follows that $\deg$ is additive on short exact sequences in $\MTRS$. Thus the functions $\deg$ and $\rk$ on $\MTRS$ satisfy the necessary properties of degree and rank functions as in Subsection \ref{subsec: stability and slopes}. In particular, in order to establish that the slope function $\mu$ of \cite{khan2024tropical} is a slope function in the sense of \cite{li2023categorification} we need only to verify that $\mu$ satisfies the strong slope inequality \eqref{eqn:strong slope ineq} in $\MTRS$. This follows by using the characterization of sums and intersections of subobjects as given in Proposition \ref{prop: sum prod of subobj TRS}.

\begin{pro}\label{pro: stron slope ineq trs}
	The slope function $\mu$ satisfies the strong slope inequality on $\MTRS$. In particular, given a modular tropical reflexive sheaf $\mathcal{E}$ with strict subobjects $\mathcal{F},\mathcal{G}$ given as restrictions $\mathcal{F} = \mathcal{E}|F$ and $\mathcal{G} = \mathcal{E}|G$ at flats $F,G\in\mathcal{L}(M)$, we have
	\[\mu\left(\frac{\mathcal{F}}{\mathcal{F}\cap \mathcal{G}}\right) \leq \mu\left(\frac{\mathcal{F} + \mathcal{G}}{\mathcal{G}}\right).\]
\end{pro}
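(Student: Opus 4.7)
The plan is to reduce the slope inequality to a comparison of degrees (the ranks will coincide by modularity), and then to compare the filtrations of the two quotients at each ray by a direct application of the modular law inside $\mathcal{L}(M)$.

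First I would unpack both quotients. By Proposition \ref{prop: sum prod of subobj TRS} we have $\mathcal{F} \cap \mathcal{G} = \mathcal{E}|(F\wedge G)$ and $\mathcal{F} + \mathcal{G} = \mathcal{E}|(F\vee G)$, so the two quotients in question are $(\mathcal{E}|F)/(F\wedge G)$ and $(\mathcal{E}|(F\vee G))/G$. Applying Definition \ref{definition: rest and contrac} (and the lattice identifications $\mathcal{L}((M|F)/(F\wedge G)) \cong [F\wedge G, F]$ and $\mathcal{L}((M|(F\vee G))/G) \cong [G, F\vee G]$), the filtrations at a ray $\rho$ are described, inside $\mathcal{L}(M)$, by
\[
P^\rho_j := (F\wedge G)\vee(F\wedge E^\rho_j), \qquad Q^\rho_j := G\vee((F\vee G)\wedge E^\rho_j).
\]
Using the modular identity $\rk(F)+\rk(G)=\rk(F\vee G)+\rk(F\wedge G)$, the ranks of the two quotients are both equal to $r := \rk(F)-\rk(F\wedge G) = \rk(F\vee G)-\rk(G)$, so the slope inequality reduces to the degree inequality $\deg((\mathcal{E}|F)/(F\wedge G)) \leq \deg((\mathcal{E}|(F\vee G))/G)$.

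Next I would rewrite the pointwise ranks of these filtrations using modularity. Let
\[
\tilde\alpha_j := \rk_M(P^\rho_j) - \rk_M(F\wedge G), \qquad \tilde\beta_j := \rk_M(Q^\rho_j) - \rk_M(G).
\]
Since $M$ is modular, the submodular inequality is an equality for every pair of flats. Applying this to $(F\wedge E^\rho_j, F\wedge G)$ and to $((F\vee G)\wedge E^\rho_j, G)$ (using $G \leq F\vee G$ so that the meet with $G$ simplifies), I get
\[
\tilde\alpha_j = \rk(F\wedge E^\rho_j)-\rk(F\wedge G\wedge E^\rho_j), \qquad \tilde\beta_j = \rk((F\vee G)\wedge E^\rho_j) - \rk(G\wedge E^\rho_j).
\]
Modularity applied once more to the pair $(F\wedge E^\rho_j, G\wedge E^\rho_j)$ yields
\[
\rk(F\wedge E^\rho_j) + \rk(G\wedge E^\rho_j) = \rk\bigl((F\wedge E^\rho_j)\vee(G\wedge E^\rho_j)\bigr) + \rk(F\wedge G\wedge E^\rho_j).
\]
Combining these with the obvious inclusion $(F\wedge E^\rho_j)\vee(G\wedge E^\rho_j) \leq (F\vee G)\wedge E^\rho_j$ and the monotonicity of $\rk$ gives the pointwise inequality $\tilde\alpha_j \leq \tilde\beta_j$ for every $j$ and every ray $\rho$.

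Finally, I would upgrade this pointwise inequality of rank functions to the required degree inequality by a super-level-set decomposition. Since $\tilde\alpha$ and $\tilde\beta$ are non-increasing $\mathbb{Z}$-valued functions with values $r$ at $-\infty$ and $0$ at $+\infty$, we can write
\[
\sum_{j\in\mathbb{Z}} j(\tilde\alpha_j - \tilde\alpha_{j+1}) = \sum_{k=0}^{r-1} \max\{j : \tilde\alpha_j > k\},
\]
and similarly for $\tilde\beta$. The pointwise inequality $\tilde\alpha_j\leq \tilde\beta_j$ yields $\{j:\tilde\alpha_j>k\}\subseteq \{j:\tilde\beta_j>k\}$, hence a termwise inequality between the two sums above. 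Multiplying by $h_\rho = c > 0$, summing over $\rho\in\Sigma(1)$, and dividing by the common rank $r$ yields the strong slope inequality. The main obstacle is really bookkeeping rather than conceptual: matching the two filtrations in $\mathcal{L}(M)$ so that modularity applies cleanly, and then being careful that the telescoping constants $\rk(F\wedge G)$ and $\rk(G)$ drop out of the degree sums.
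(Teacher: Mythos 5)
Your proposal is correct, but it takes a genuinely different route from the paper. The paper's proof is a two-line reduction: it rewrites the quotients as $\mathcal{F}/(\mathcal{F}\cap\mathcal{G}) = (\mathcal{E}|F)/(F\wedge G)$ and $(\mathcal{F}+\mathcal{G})/\mathcal{G} = (\mathcal{E}|(F\vee G))/G \cong (\mathcal{E}/G)|(F\vee G)$ using Proposition \ref{prop: sum prod of subobj TRS}, and then outsources the actual inequality to Lemma 7.24 of \cite{khan2024tropical}, whose hypotheses are met because $M$ is modular. You instead re-derive that combinatorial lemma from scratch: the identifications $P^\rho_j = (F\wedge G)\vee(F\wedge E^\rho_j)$ and $Q^\rho_j = G\vee((F\vee G)\wedge E^\rho_j)$ are the correct filtrations of the two quotients (joins in $[\angles{\varnothing},F]$ and $[\angles{\varnothing},F\vee G]$ agree with joins in $\mathcal{L}(M)$), the three applications of the modular equality are all legitimate since every pair of flats in a modular matroid is a modular pair, and the inclusion $(F\wedge E^\rho_j)\vee(G\wedge E^\rho_j)\leq (F\vee G)\wedge E^\rho_j$ does give the pointwise inequality $\tilde\alpha_j\leq\tilde\beta_j$; your Abel-summation/layer-cake identity converting $\sum_j j(\tilde\alpha_j-\tilde\alpha_{j+1})$ into $\sum_{k=0}^{r-1}\max\{j:\tilde\alpha_j>k\}$ is also valid for non-increasing step functions with the stated boundary values. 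What your approach buys is a self-contained proof of the strong slope inequality that makes transparent exactly where modularity is used (three modular equalities plus rank monotonicity), at the cost of redoing work already available in Khan--Maclagan; the paper's approach is shorter but opaque about the combinatorics. Two minor points you gloss over, which the paper also leaves implicit: the inequality only makes sense when the common rank $r=\rk(F)-\rk(F\wedge G)$ is positive (otherwise both quotients are zero and the slopes are undefined), and the final summation over rays uses positivity of the constant polarization $h_\rho=c>0$, which is part of the standing hypotheses on $X_\Sigma$.
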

\begin{proof}
	We have
		\begin{align*}
			\frac{\mathcal{F}}{\mathcal{F}\cap \mathcal{G}} &= \frac{\mathcal{E}|F}{\mathcal{E}|(F\wedge G)} = (\mathcal{E}|F)/(F\wedge G) \\
			\frac{(\mathcal{F} + \mathcal{G})}{\mathcal{G}} &= \frac{\mathcal{E}|(F\vee G)}{\mathcal{E}|F} = (\mathcal{E}|(F\vee G))/F = (\mathcal{E}/F)|(F\vee G)
		\end{align*} 
	The desired inequality then follows from \cite[Lemma 7.24]{khan2024tropical}.
\end{proof}

This establishes that $\mu$ is a categorical slope function on $\MTRS$ in the sense of \cite{li2023categorification}. In particular, since $\TRS$ and $\MTRS$ inherit the Artinian and Noetherian conditions from $\mathbb{T}$-$\textbf{SMat}_\bullet$ guaranteed by Remark \ref{rmk: FSMat art noeth}, Theorem \ref{thm:li HN filtr} guarantees that to $\mu$ there exists a categorical slope filtration on $\MTRS$ which satisfies all properties of the slope filtration given in Theorem \ref{thm:khmac HN filtr}. The uniqueness of such a filtration as guaranteed by both theorems then yields the following. 

\begin{cor}\label{cor: HN filtr coincide}
	The Harder-Narasimhan filtrations of Theorem \ref{thm:khmac HN filtr} associated to modular tropical reflexive sheaves are given by the categorical slope filtration on $\MTRS$ given by Theorem \ref{thm:li HN filtr}. Stated differently, the Harder-Narasimhan filtration of \cite{khan2024tropical} for modular tropical reflexive sheaves is a categorical slope filtration in the sense of \cite{li2023categorification}.
\end{cor}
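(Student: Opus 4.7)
The plan is to apply Theorem \ref{thm:li HN filtr} to the proto-abelian category $\MTRS$ equipped with the slope function $\mu=\deg/\rk$, and then argue by uniqueness that the resulting categorical slope filtration agrees with the Khan--Maclagan filtration of Theorem \ref{thm:khmac HN filtr}.

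First I would verify that all hypotheses of Theorem \ref{thm:li HN filtr} are in place for $(\MTRS,\mu)$. The category $\MTRS$ is proto-abelian by Corollary \ref{corollary: MTRS proto-abelian}, and every object is both Artinian and Noetherian: indeed, strict subobjects in $\MTRS$ correspond (via Proposition \ref{pro: ker coker in trs} together with Remark \ref{rmk: kernels for simples}) to restrictions at flats of the underlying finite matroid, so Remark \ref{rmk: FSMat art noeth} applies. The functions $\rk$ and $\deg$ are additive on short exact sequences in $\MTRS$ by the two lemmas preceding Proposition \ref{pro: stron slope ineq trs}, are constant on isomorphism classes, and satisfy $\rk(\mathcal{E})=0\Rightarrow \mathcal{E}=0$. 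Finally, Proposition \ref{pro: stron slope ineq trs} supplies the strong slope inequality \eqref{eqn:strong slope ineq} for $\mu$.

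With the hypotheses verified, Theorem \ref{thm:li HN filtr} produces, for each nonzero $\mathcal{E}\in\MTRS$, a filtration
\[0=\mathcal{N}_0\rightarrowtail \mathcal{N}_1 \rightarrowtail\cdots\rightarrowtail \mathcal{N}_r=\mathcal{E}\]
by strict subobjects, unique up to isomorphism, with semistable successive quotients and strictly decreasing slopes. Next I would observe that for $\mathcal{E}\in\MTRS$, Theorem \ref{thm:khmac HN filtr} likewise yields a filtration $0=\mathcal{F}_0\rightarrowtail\cdots\rightarrowtail\mathcal{F}_k=\mathcal{E}$ with exactly these properties, and that the auxiliary modularity hypothesis in Theorem \ref{thm:khmac HN filtr} (each $\mathcal{F}_i=\mathcal{E}|F_i$ for some modular flat $F_i$) is automatic in $\MTRS$, since every flat of the underlying matroid is modular by definition.

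It remains to match the two filtrations. Both are filtrations by strict subobjects in $\MTRS$ with semistable quotients and strictly decreasing slopes with respect to the same function $\mu$. As noted in Remark \ref{rmk: non-uniqueness khmac HN}, the Khan--Maclagan filtration is unique in $\MTRS$, and the categorical slope filtration of Theorem \ref{thm:li HN filtr} is unique by construction, so the two must coincide up to isomorphism. The main point to check carefully is that a filtration by strict subobjects in the proto-abelian sense really is a filtration by restrictions at flats of the underlying $\mathbb{T}$-matroid (so that the two notions of ``subobject'' agree); this is the content of Proposition \ref{pro: ker coker in trs} combined with the characterization of kernels via flats in Remark \ref{rmk: kernels for simples}, and once it is recorded the corollary follows immediately.
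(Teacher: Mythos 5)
Your proposal is correct and follows essentially the same route as the paper: verify the hypotheses of Theorem \ref{thm:li HN filtr} for $(\MTRS,\mu)$ (proto-abelianness, Artinian/Noetherian conditions inherited from $\mathbb{T}$-$\textbf{SMat}_\bullet$, additivity of $\rk$ and $\deg$, and the strong slope inequality from Proposition \ref{pro: stron slope ineq trs}), then invoke the uniqueness statements in both Theorem \ref{thm:li HN filtr} and Theorem \ref{thm:khmac HN filtr} to conclude the two filtrations coincide. Your added check that strict subobjects in $\MTRS$ are exactly restrictions at flats is a worthwhile point to make explicit, but it is the same argument the paper runs.
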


This illustrates that the proto-abelian structure of $\mathbb{T}$-matroids ultimately yields an analogue of the classical Harder-Narasimhan filtration of vector bundles in the context of tropical geometry. As noted in Section 8 of \cite{khan2024tropical}, this machinery can also be used to give Harder-Narasimhan filtrations of (modular) tropical vector bundles on those tropical varieties which can be embedded into tropical toric varieties $\text{trop}(X_\Sigma)$. 
 
\bibliography{quiver}\bibliographystyle{alpha}

\end{document}